\documentclass[12pt]{amsart}
\usepackage{amsmath,amsthm,amssymb}
\usepackage{enumerate}
\usepackage{graphicx}
\usepackage{float}
\usepackage{epstopdf}
\usepackage{cite}
\usepackage[margin=1in]{geometry}
\usepackage{tikz}
\usetikzlibrary{matrix}
\usetikzlibrary{arrows}
\usetikzlibrary{positioning}
\usepackage{overpic}
\usepackage{mathtools}
\usepackage{subcaption}
\usepackage{tikz,graphicx}
\usetikzlibrary{matrix}
\usepackage{todonotes}
\usepackage[all]{xy}

\theoremstyle{plain}
\newtheorem{thm}{Theorem}[section]
\newtheorem{cor}[thm]{Corollary}
\newtheorem{lem}[thm]{Lemma}

\theoremstyle{definition}
\newtheorem{defn}[thm]{Definition}
\theoremstyle{remark}
\newtheorem{rem}[thm]{Remark}
\newtheorem{ex}[thm]{Example}

%
%%% A few standard macros that we can all use
%
\newcommand{\R}{\mathbb{R}}
\newcommand{\T}{\mathbb{T}}

\newcommand{\Z}{\mathbb{Z}}
\newcommand{\Q}{\mathbb{Q}}

%%below are my personal macros.

\newcommand{\cfk}{\mathit{CFK}}
\newcommand{\hfk}{\mathit{HFK}}
\newcommand{\mcL}{\mathcal{L}}
\newcommand{\h}{\mathcal{H}}
\newcommand{\cH}{\mathcal{H}}
\newcommand{\x}{\mathsf{x}}
\newcommand{\X}{\mathsf{X}}
\newcommand{\str}{\mathsf{str}}
\newcommand{\Y}{\mathsf{Y}}
\newcommand{\sm}{\mathsf{sm}}
\DeclareMathOperator{\unknot}{unknot}
\DeclareMathOperator{\gr}{gr}
\DeclareMathOperator{\id}{id}
\DeclareMathOperator{\Sym}{Sym}

\DeclareMathOperator{\In}{In}
\DeclareMathOperator{\Out}{Out}

\DeclareMathOperator{\Tor}{Tor}

\DeclareMathOperator{\rk}{rk}
\newcommand{\sln}{\mathfrak{sl}_{n}}

\begin{document}

\title{A Spectral Sequence from Khovanov Homology to Knot Floer Homology}
\author{Nathan Dowlin}
\thanks{The author was partially supported by NSF grant DMS-1606421}

\maketitle

\begin{abstract}

A well-known conjecture of Rasmussen states that for any knot $K$ in $S^{3}$, the rank of the reduced Khovanov homology of $K$ is greater than or equal to the rank of the reduced knot Floer homology of $K$. This rank inequality is supposed to arise as the result of a spectral sequence from Khovanov homology to knot Floer homology. Using an oriented cube of resolutions construction for a homology theory related to knot Floer homology, we prove this conjecture.

\end{abstract}

%\todo{Add HD for bivalent vertices, and the decorated vertex}
%\todo{change indices in MOY section}
%\todo{change D to x(D) in sect 5}
%\todo{add reduced version in sect 4}

%\tableofcontents

\section{Introduction}

Khovanov homology and knot Floer homology are two knot invariants which have many commonalities despite significant differences in their constructions. In their simplest forms, they both assign to each knot or link in $S^{3}$ a bigraded abelian group. The main purpose of this paper is to give a proof of Rasmussen's conjecture \cite{Rasmussen3}:

\begin{thm} \label{mainthmintro}
For any knot $K$ in $S^{3}$, there is a spectral sequence from $\overline{Kh}(K)$ to $\delta$-graded $\widehat{\hfk}(m(K))$, where $\overline{Kh}(K)$ is the reduced Khovanov homology of $K$ and $\widehat{\hfk}(m(K))$ is the reduced knot Floer homology of the mirror of $K$. Both homologies are defined with coefficients in $\Q$.

\end{thm}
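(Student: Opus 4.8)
The plan is to construct an intermediate homology theory — an "oriented cube of resolutions" model — that simultaneously sees the Khovanov-style unoriented skein structure and the knot-Floer-style oriented structure, and then extract the spectral sequence by filtering one differential on this model by the other. The starting point is the observation that both $\overline{Kh}$ and $\widehat{\hfk}$ can be realized via cube-of-resolutions constructions: Khovanov homology uses the $0$- and $1$-resolutions of each crossing with the $\mathfrak{sl}_2$ TQFT, while knot Floer homology (after work of Ozsváth--Szabó on the cube of resolutions, and its reinterpretation) uses the oriented (singularized) resolution together with an edge ring encoding the algebra. I would work with an algebraically-defined complex — built from the braid or knot diagram $D$ of $m(K)$ — carrying two anticommuting (or filtered) differentials: a "vertical" differential $d_v$ whose homology computes a version of $\widehat{\hfk}$, and a "horizontal" differential $d_h$ coming from the edge maps between resolutions, such that the associated graded of the total complex with respect to the horizontal filtration has homology isomorphic to $\overline{Kh}(K)$.

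\textbf{Key steps, in order.} First, set up the unoriented cube of resolutions for the diagram $D$ of $m(K)$, assigning to each vertex an appropriate module over a polynomial edge ring, with internal differential built from the $\mathfrak{sl}_n$-type or matrix-factorization data; verify that for $n$ equal to the relevant value this recovers $\hfk$ on one page and Khovanov-type data on another. Second, identify the total differential $D_{\mathrm{tot}} = d_0 + d_1 + \cdots$ decomposed by the cube filtration, and check that $d_0$ (the internal vertex differential) has homology giving, at each vertex, the correct local contribution so that the $E_1$ page of the cube spectral sequence is the Khovanov complex $\overline{Kh}(K)$. Third, run the spectral sequence of the filtered complex $(C, D_{\mathrm{tot}})$: its $E_2$ page is $\overline{Kh}(K)$ (or $E_1 \Rightarrow$ with $E_1$ the Khovanov chain complex and $d_1$ the Khovanov differential), and it converges to $H_*(C, D_{\mathrm{tot}})$. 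Fourth, independently compute $H_*(C, D_{\mathrm{tot}})$ by a change of filtration (filtering instead by the oriented/Alexander-type grading), showing it equals $\delta$-graded $\widehat{\hfk}(m(K))$. Finally, reconcile the gradings: check that the cube filtration is compatible with the $\delta$-grading on both ends so that the spectral sequence is one of $\delta$-graded objects, and confirm everything works over $\Q$.

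\textbf{The main obstacle} I expect is Step 1 together with the grading bookkeeping in Step 5: producing a single chain-level object that genuinely computes knot Floer homology (not merely something with the same Euler characteristic) while retaining enough of the cube-of-resolutions combinatorics to see Khovanov homology as a page. The technical heart is likely an algebraic "oriented cube of resolutions" for a theory like $\mathfrak{sl}_n$ homology specialized appropriately, or a direct model built from the knot Floer cube of resolutions of Ozsváth--Szabó, and one must prove an invariance statement (independence of the diagram $D$, up to the relevant Reidemeister-type moves) at the chain level so that the spectral sequence itself — not just its pages — is an invariant. Controlling the differentials' filtration levels precisely enough that the $\delta$-grading descends correctly, and handling the reduction (basepoint) consistently on both sides, will require care; the convergence of the spectral sequence is then a formal consequence of the filtration being bounded, since the cube is finite.
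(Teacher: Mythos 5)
Your high-level skeleton is right: build an oriented-cube-of-resolutions complex over a polynomial edge ring, filter by cube height, show the $E_2$ page is the Khovanov complex and the total homology is knot Floer. That is indeed the architecture of the paper's proof. However, you are missing several of the ideas that make this work, and one of your steps as written would fail.

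First, you never identify the crucial intermediate theory. The complex at a vertex of the oriented cube computes the knot Floer homology of a \emph{singular} knot, and there is no reason for that to look anything like a tensor power of $\Q[X]/X^{2}$. The paper does not filter a model of $\widehat{\hfk}$ itself; it filters a model of a modified theory $\hfk_{2}$ (obtained from the master complex by setting $V_{i}=U_{a(i)}+U_{b(i)}$, and setting the double-point $XX$ to a unit), whose defining feature is that on a completely singular diagram $S$ it satisfies $\hfk_{2}(S) \cong Kh(\sm(S))$, while on honest knots its $\delta$-graded reduced version agrees with $\widehat{\hfk}$. Without this substitution, your Step 2 (``check that $d_{0}$ has homology giving the correct local contribution so that the $E_{1}$ page is the Khovanov complex'') has no mechanism behind it: the local homologies simply would not be the Frobenius algebra summands. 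Establishing this identification requires genuine work (the MOY II and MOY III computations of the paper), not a routine check.

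Second, you do not address the obstruction that blocks running the oriented cube of resolutions over the untwisted ring $R=\Q[U_{1},\dots,U_{m}]$ in the first place. For a generic diagram of $K$, the linear terms $L(v)$ need not form a regular sequence over $R/N(S)$, and the homology at a cube vertex is then a Tor group rather than the single quotient $R/(N+L)$ you need for the edge maps to be determined. The paper has to restrict to a special class $\mathcal{D^{R}}$ of partially singular diagrams (those containing the template $S_{2n}$) and then prove two nontrivial lemmas: that every knot admits such a diagram $D$ with $\sm(D)=K$, and that $\widehat{\hfk}_{2}(D) \cong \widehat{\hfk}(\sm(D))$ as $\delta$-graded vector spaces. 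Your proposal treats ``build the cube complex for a diagram of $m(K)$'' as if it were available off the shelf; this is precisely where the construction would collapse.

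Third, you worry about proving chain-level invariance of the spectral sequence under Reidemeister moves; this is a red herring. Theorem \ref{mainthmintro} only asserts the existence of a spectral sequence with the stated pages, and both pages are already known invariants. The mirroring, which you do not explain, arises because the oriented exact triangle for the positive crossing (singularization $\to$ smoothing) runs opposite to Khovanov's $(0\text{-resolution}\to 1\text{-resolution})$ for that crossing, so the $E_{2}$ page naturally computes $\overline{Kh}$ of the mirror. In short: right scaffolding, but the load-bearing lemmas --- the $\hfk_{2}$ substitution with its MOY computations, the $\mathcal{D^{R}}$ regular-sequence condition, and the $\delta$-graded comparison $\widehat{\hfk}_{2}(D)\cong \widehat{\hfk}(\sm(D))$ --- are absent, and Step 1 as stated would not produce a complex whose vertex homologies are Khovanov-type.
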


\begin{cor} \label{rk}
For any knot $K$ in $S^{3}$ there is a rank inequality 
\[ \rk(\overline{Kh}(K)) \ge \rk(\widehat{\hfk}(K)) \]
\end{cor}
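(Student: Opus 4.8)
The plan is to deduce this directly from Theorem~\ref{mainthmintro}, using only two standard facts. First, I would recall the elementary bound satisfied by any spectral sequence of $\Q$-vector spaces: if $\{E_r, d_r\}$ is such a spectral sequence with $E_1 \cong V$ converging to $W$, then each differential satisfies $\dim_\Q E_{r+1} = \dim_\Q \Ker(d_r) - \dim_\Q \Image(d_r) \le \dim_\Q E_r$, so the dimensions are non-increasing in $r$ and the sequence stabilizes to $E_\infty \cong W$; hence $\rk(W) \le \rk(V)$. Since everything in sight is a $\Q$-vector space, no $\Tor$ terms intervene and ranks add over the bigrading, so in particular the total rank is insensitive to whether we remember the $\delta$-grading. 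Applying this to the spectral sequence produced by Theorem~\ref{mainthmintro} gives
\[
\rk\big(\overline{Kh}(K)\big) \ \ge\ \rk\big(\widehat{\hfk}(m(K))\big).
\]

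The second ingredient is the behavior of knot Floer homology under mirroring. The hat-flavored knot Floer complex of $m(K)$ is the $\Q$-dual of that of $K$, which yields an isomorphism $\widehat{\hfk}_d(m(K),s) \cong \widehat{\hfk}_{-d}(K,-s)$ for all $d,s$, and in particular
\[
\rk\big(\widehat{\hfk}(m(K))\big) \ =\ \rk\big(\widehat{\hfk}(K)\big).
\]
Combining the two displayed relations gives $\rk(\overline{Kh}(K)) \ge \rk(\widehat{\hfk}(K))$, as claimed.

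There is no real obstacle here beyond Theorem~\ref{mainthmintro} itself; the only points worth double-checking are that the spectral sequence there is genuinely one of $\Q$-vector spaces, so that the rank count above is valid, and that its $E_\infty$-page is $\widehat{\hfk}(m(K))$ on the nose rather than merely an associated graded object or a proper summand thereof --- both of which are built into the statement being invoked. Granting that, the corollary is immediate.
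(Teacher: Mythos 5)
Your proposal is correct and is essentially the argument the paper has in mind: the corollary is stated without a separate proof, as an immediate consequence of Theorem~\ref{mainthmintro}, and the paper already records the needed mirroring identity $\widehat{\hfk}(K,i)\cong\widehat{\hfk}(m(K),-i)$ in the surrounding discussion. You spell out the standard rank-monotonicity of spectral sequences over a field, which is precisely what makes the implication immediate. (One cosmetic point: the paper's indexing sets $E_0=\overline{Kh}(K)$ rather than $E_1$, but this changes nothing in the argument.)
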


Khovanov homology was developed by Khovanov in \cite{Khov1} based on the representation theory of the quantum group $U_{q}(\mathfrak{sl}_{2})$. It categorifies the Jones polynomial, where a \emph{categorification} of a knot polynomial is a multi-graded abelian group whose graded Euler characteristic returns the polynomial in question. The two gradings are the homological grading $\gr_{h}$ and the quantum grading $\gr_{q}$. There is also a diagonal grading called the $\delta$-grading given by $\gr_{\delta}=\gr_{q}-2\gr_{h}$.

Knot Floer homology of Ozsv\'{a}th-Szab\'{o} \cite{OS1} and Rasmussen \cite{Rasmussen2} is constructed as a Lagrangian Floer homology in an auxiliary symplectic manifold. It categorifies the Alexander polynomial, and is known to detect the unknot \cite{ozsvath2004holomorphic} and the trefoils \cite{ghiggini2008knot}. The two gradings on knot Floer homology are the Maslov grading $M$ and the Alexander grading $A$, with $\delta$-grading $2M-2A$. In Theorem \ref{mainthmintro}, the $\delta$-grading on Khovanov homology descends to the $\delta$-grading on knot Floer homology (up to an overall shift).

Spectral sequences from Khovanov homology to a Floer-theoretic invariant have become a recurring theme in knot theory. In 2005, Ozsv\'{a}th and Szab\'{o} constructed a spectral sequence from the (odd) Khovanov homology of $K$ to the Heegaard Floer homology of the double branched cover of $K$, and in 2010 Kronheimer and Mrowka constructed a spectral sequence from Khovanov homology to instanton Floer homology \cite{unknotdetector}. The latter is particularly relevant to this paper, as knot Floer homology is conjecturally isomorphic to instanton Floer homology. The  spectral sequence in this paper seems to be the analog on the Heegaard Floer side.

A knot $K$ is called $R Kh$-\emph{thin} (resp. $R \hfk$-\emph{thin}) if the reduced Khovanov homology (resp. knot Floer homology) of $K$ with coefficients in $R$ lies in a single $\delta$-grading. The spectral sequence has the following immediate implications:

\begin{itemize}

\item If $K$ is $\Q Kh$-thin, then $K$ is $\Q \hfk$-thin.

\item If $K$ is $\Z_{2}Kh$-thin and $K'$ is obtained from $K$ via Conway mutation, then 
\[ \widehat{\hfk}(K) \cong \widehat{\hfk}(K') \] as $\delta$-graded $\Q$-vector spaces.

\end{itemize}

\noindent
The latter follows from the fact that Khovanov homology with $\Z_{2}$ coefficients is mutation invariant (\hspace{1sp}\cite{BloomMutation}, \cite{WehrliMutation}) together with the fact that any $\Z_{2}Kh$-thin knot is $\Q Kh$-thin by the universal coefficient theorem. We don't need to include mirroring in these statements because
\[ \widehat{\hfk}(K,i) \cong \widehat{\hfk}(m(K),-i) \]

\noindent
where $\widehat{\hfk}(K,i)$ denotes the reduced knot Floer homology of $K$ in $\delta$-grading $i$.

Theorem \ref{mainthmintro} and its generalization to links at the end of the section also gives a new proof of the following results, where the original proofs all rely on the Kronheimer-Mrowka spectral sequence or the Ozsv\'{a}th-Szab\'{o} spectral sequence:

\begin{itemize}

\item Khovanov homology detects the unknot \cite{unknotdetector}, the unlink \cite{batson2015link, xie2018earrings} the trefoils \cite{baldwin2018khovanov}, and the Hopf links \cite{baldwin2018khovanov2}.

\item If $K$ is $\Q Kh$-thin, then the Alexander polynomial detects both the genus of $K$ and the fiberedness of $K$.

\end{itemize}

\noindent
The genus and fiberedness detection follows from the fact that if $K$ is $\Q Kh$-thin then it is $\Q \hfk$-thin, and for a $\Q \hfk$-thin knot the Alexander polynomial detects bigraded $\widehat{\hfk}(K)$ up to a shift based on the signature. One can then use the fact that knot Floer homology detects genus \cite{ozsvath2004holomorphic} and fiberedness \cite{ni2007knot}.

Although it has structural similarities with the Kronheimer-Mrowka spectral sequence, the construction is quite different. It is based on an \emph{oriented} cube of resolutions for knot Floer homology rather than an unoriented cube of resolutions, and it is defined combinatorially. It also comes with additional grading information.

\begin{thm}

Let $E_{k}(K)$ denote the spectral sequence in Theorem \ref{mainthmintro} with $E_{0} = \overline{Kh}(K)$. Then the differential $d_{k}$ has bigrading $(2k+3, 4k+4)$ with respect to $(\gr_{h}, \gr_{q})$. 

\end{thm}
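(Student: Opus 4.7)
The proof proceeds by analyzing the filtered chain complex $(C,d)$ underlying the spectral sequence of Theorem~\ref{mainthmintro}, which comes from the oriented cube of resolutions construction. The bigrading calculation splits into two parts: (i) showing that $d$ shifts the $\delta$-grading by $-2$, which gives a single linear relation between $\Delta\gr_h$ and $\Delta\gr_q$ for every $d_k$; and (ii) identifying the precise $\gr_h$-shift of $d_k$, which pins down the remaining degree of freedom.

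For (i), the chain complex $C$ carries a well-defined bigrading $(\gr_h,\gr_q)$ compatible with Khovanov homology on $E_0$ and with the $(M,A)$-bigrading on $E_\infty$ via $\gr_\delta = \gr_q - 2\gr_h$ matching $2M - 2A$ (up to overall shift, as noted in the introduction). Since both the Khovanov and knot Floer differentials have $\delta$-grading $-2$, every component of $d$ does as well, and this property is inherited by each page differential. Thus
\[
\Delta\gr_q \,-\, 2\,\Delta\gr_h \,=\, -2 \quad\text{for every } k.
\]

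For (ii), the plan is to track the filtration on $C$, which is essentially by the cube length of a map, i.e., the number of edge-steps in the oriented cube of resolutions (up to a normalization). The length-$1$ component recovers the Khovanov differential, whose homology is absorbed into $E_0 = \overline{Kh}(K)$. Further analysis of the oriented cube shows that only odd-length components survive as higher differentials in the spectral sequence: length $3$ induces $d_0$, length $5$ induces $d_1$, and in general length $2k+3$ induces $d_k$. Since each edge of the cube shifts $\gr_h$ by $1$ (matching the Khovanov convention), this gives $\Delta\gr_h(d_k) = 2k+3$; combined with (i), $\Delta\gr_q(d_k) = 2(2k+3) - 2 = 4k+4$, yielding the claimed bigrading.

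The hard part is part (ii): establishing that $d_k$ corresponds precisely to the length-$(2k+3)$ component of $d$, and in particular ruling out nontrivial contributions from even-length maps as well as from length-$1$ beyond the formation of $\overline{Kh}(K)$. This parity phenomenon is a combinatorial feature of the oriented cube of resolutions and of the squaring-to-zero relations in its algebraic structure; verifying it requires a careful examination of the filtration and of the length components developed in the construction underlying Theorem~\ref{mainthmintro}. Once this identification is in place, the bigrading formula follows immediately from the $\delta$-grading constraint of part (i).
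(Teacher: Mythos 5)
Your part (i) is exactly right and matches the paper: the spectral sequence differentials are homogeneous of degree $-2$ with respect to the Khovanov $\delta$-grading (inherited from $\gr_{2}$ on $\widehat{C}_{2}(D)$), which yields the single relation $\Delta\gr_{q}-2\,\Delta\gr_{h}=-2$. You also correctly observe that $\Delta\gr_{h}(d_{k})$ equals the filtration jump of the $k$-th page differential, i.e.\ the cube height shift, so that once you know the odd-index differentials (in the indexing where $E_{2}^{\mathrm{old}}=\overline{Kh}$) are the non-trivial ones with jumps $3,5,7,\dots$, the reindexing $d_{k}^{\new}=d_{2k+3}^{\mathrm{old}}$ gives $(2k+3,4k+4)$.

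The gap is in part (ii). You assert the parity phenomenon (``only odd-length components survive'') but neither prove it nor point to the mechanism that actually produces it, and your hint---that it is a consequence of ``squaring-to-zero relations''---is not correct. The total differential on $\widehat{C}_{2}(D)$ has only a vertex term $d_{0}$ and an edge term $d_{1}$ (the higher face maps are zero), so there are no ``higher-length components'' of $d$ to analyze; the higher page differentials arise from zig-zags in $d_{0},d_{1}$, and nothing about $d^{2}=0$ forces them to vanish in even filtration jump. What the paper actually uses is an additional $\mathbb{Z}_{2}$ grading coming from the matrix factorization factor $\mathcal{L}_{D}^{+}$: the vertex differential $d_{0}$ (from the $L(v)/L^{+}(v)$ maps) has $\mathbb{Z}_{2}$-degree $1$ while the edge map $d_{1}$ has $\mathbb{Z}_{2}$-degree $0$, so the page differential $d_{r}$ has $\mathbb{Z}_{2}$-degree $r+1$. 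Since the vertex homology $H_{*}(\widehat{C}_{2}(D),d_{0})$ has a single generator at each cube vertex and all the edge maps are non-trivial, the entire $E_{1}$ page (and all its subquotients) sits in a single $\mathbb{Z}_{2}$-degree, forcing $d_{2i}=0$ for $i\geq 1$. This is the ingredient your argument needs and does not supply.
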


\noindent
In particular, all differentials increase the homological grading, and they do so by an odd number. They are also homogeneous of degree -2 with respect to the Khovanov $\delta$-grading.

The first discovered knot for which this spectral sequence is non-trivial is the $(4,5)$ torus knot. In the case of instanton Floer homology, this spectral sequence still hasn't been computed, though some progress has been made in this direction \cite{lobb2013spectral}. However, in our case it can be computed quite easily using a grading argument. There is a single non-trivial differential of bigrading $(5, 8)$, as shown in Figure \ref{t45}.

\begin{figure}[h!]
    \centering
\begin{tikzpicture}[scale=.74]
  \draw[->] (0,0) -- (10.5,0) node[right] {$t$};
  \draw[->] (0,0) -- (0,8.5) node[above] {$q$};
  \draw[step=1] (0,0) grid (10,8);
 % \draw (5.250,-01) node[below] {$\overline{Kh}(T(4,5))$};
  \draw (0.5,-.2) node[below] {$0$};
  \draw (1.5,-.2) node[below] {$1$};
  \draw (2.5,-.2) node[below] {$2$};
  \draw (3.5,-.2) node[below] {$3$};
  \draw (4.5,-.2) node[below] {$4$};
  \draw (5.5,-.2) node[below] {$5$};
  \draw (6.5,-.2) node[below] {$6$};
  \draw (7.5,-.2) node[below] {$7$};
  \draw (8.5,-.2) node[below] {$8$};
  \draw (9.5,-.2) node[below] {$9$};
  \draw (-.2,0.5) node[left] {$12$};
  \draw (-.2,1.5) node[left] {$14$};
  \draw (-.2,2.5) node[left] {$16$};
  \draw (-.2,3.5) node[left] {$18$};
  \draw (-.2,4.5) node[left] {$20$};
  \draw (-.2,5.5) node[left] {$22$};
  \draw (-.2,6.5) node[left] {$24$};
  \draw (-.2,7.5) node[left] {$26$};
  \fill (0.5, 0.5) circle (.15);
  \fill (2.5, 2.5) circle (.15);
  \fill (3.5, 3.5) circle (.15);
  \fill (4.5, 3.5) circle (.15);
  \fill (5.5, 5.5) circle (.15);
  \fill (6.5, 4.5) circle (.15);
  \fill (7.5, 6.5) circle (.15);
  \fill (8.5, 6.5) circle (.15);
  \fill (9.5, 7.5) circle (.15);
  \draw[-stealth, red, thick, scale=1](4.68,3.5) [bend right = 50] to (9.45,7.4);
\end{tikzpicture}
\caption{The only non-zero differential in $E_{k}(T(4,5))$ has homological grading $5$ and quantum grading $8$.} \label{t45}
\end{figure}
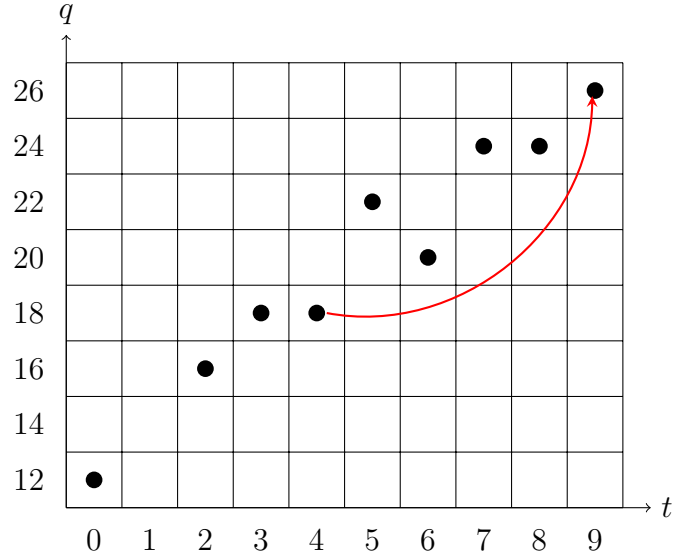

Painting with the broadest of strokes, the spectral sequence comes from a cube of resolutions for $\hfk_{2}(K)$, a knot homology theory defined by the author \cite{Dowlin} for knots and singular knots satisfying the following properties:

\begin{itemize}

\item The reduced theory $\widehat{\hfk}_{2}(K)$ is isomorphic to reduced, $\delta$-graded $\widehat{\hfk}(K)$.
\vspace{1mm}

\item For any completely singular diagram $S$, $\hfk_{2}(S) \cong Kh(\sm(S))$ where $\sm(S)$ is the digram obtained by replacing each singularization in $S$ with the unoriented smoothing as in Figure \ref{fig2t}.

\end{itemize}

\noindent
We construct an oriented cube of resolutions for $\hfk_{2}(K)$ such that the edge maps are the Khovanov edge maps. The induced higher face maps give the spectral sequence. The exact triangle goes the opposite direction from the Khovanov one, so the spectral sequence converges to the knot Floer homology of the mirror of $K$. The construction is similar in spirit to \cite{alishahi2018link}, and many similar methods are used for comparing it to Khovanov homology.

\begin{figure}[h!]
\tiny
\begin{subfigure}{.5\textwidth}
 \centering
\def\svgwidth{5cm}
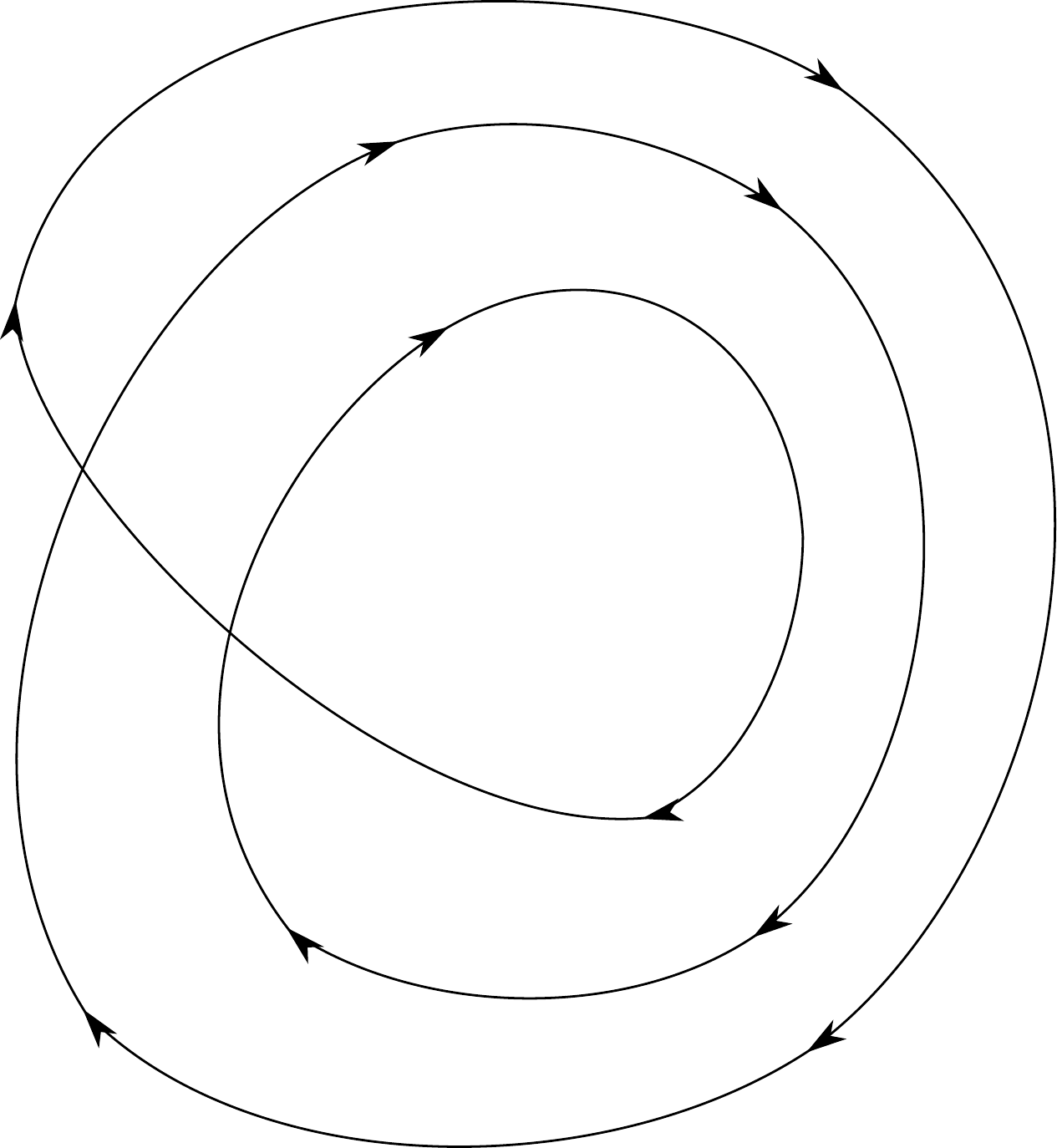 
\caption{A singular braid $S$}
\end{subfigure}%
\begin{subfigure}{.5\textwidth}
  \centering
  \def\svgwidth{5cm}
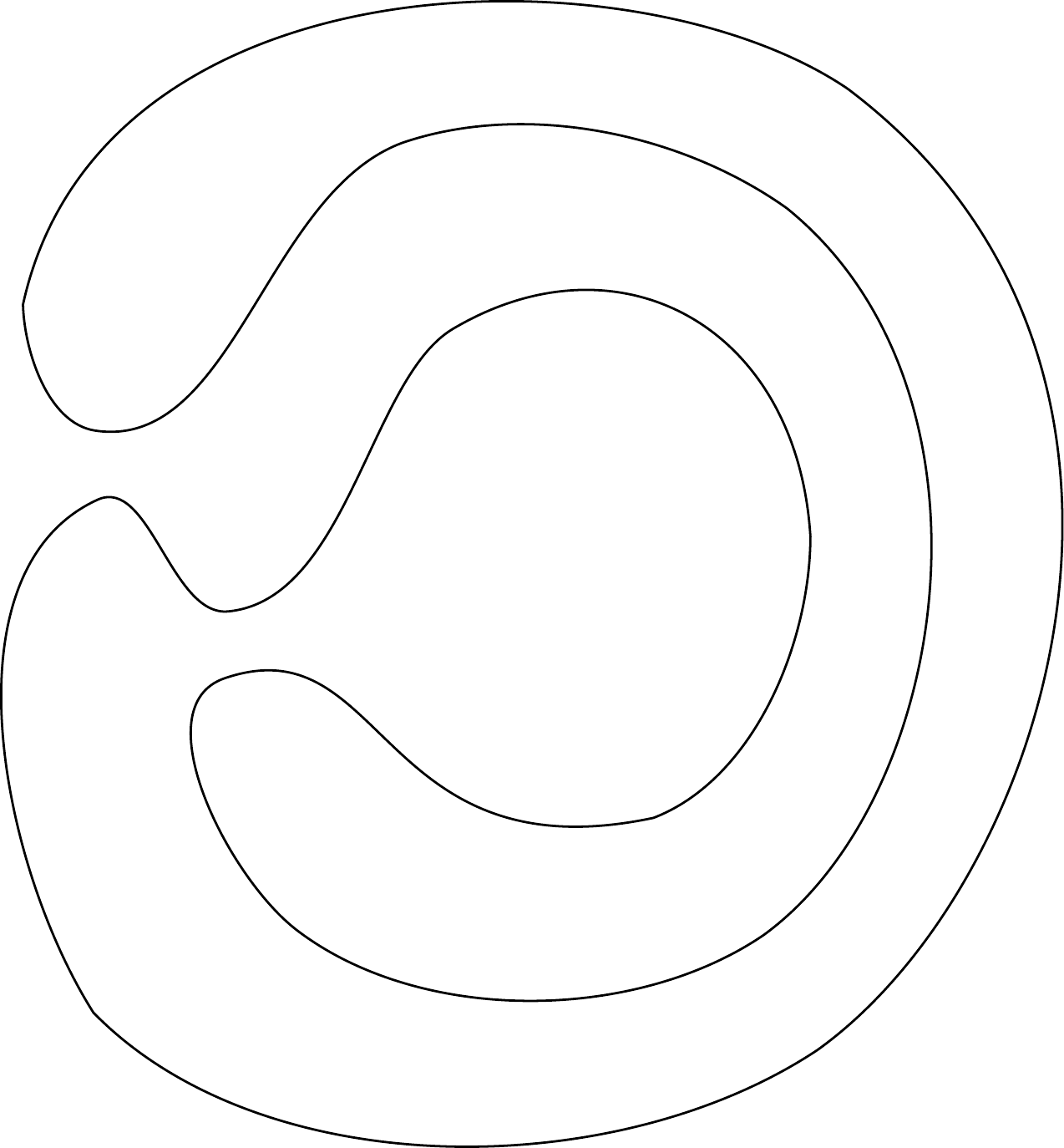 \caption{The smoothing $\sm(S)$}
  \label{fig2b}
\end{subfigure}
\caption{}\label{fig2t}
\label{crossings}
\end{figure}

The oriented cube of resolutions for knot Floer homology was originally defined with twisted coefficients by Ozsv\'{a}th and Szab\'{o} \cite{Szabo}, and was later modified to the standard ground ring by Manolescu \cite{Manolescu}. The Ozsv\'{a}th and Szab\'{o} construction has the advantage of being combinatorial due to a regular sequence argument, while Manolescu's has the advantage of being more easily related to $\widehat{\hfk}(K)$. 

The construction of the oriented cube of resolutions for $\hfk_{2}$ requires working with untwisted coefficients but also for it to be combinatorial -- it turns out that this is possible if the diagram used has `sufficiently many' singularizations, but not for the standard diagram for a knot or link. The set of (partially singular) diagrams for which this property holds will be denoted $\mathcal{D^{R}}$, since the property relates to regular sequences.

This strangeness in the objects allowed is overcome by the following two lemmas:

\begin{lem}

For any partially singular diagram $D$, there is an isomorphism \[\widehat{\hfk}_{2}(D) \cong \widehat{\hfk}(\sm(D))\] as $\delta$-graded vector spaces.

\end{lem}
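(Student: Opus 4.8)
The plan is to build a bridge between the two constructions of $\widehat{\hfk}_2$ and of $\widehat{\hfk}$ of an oriented smoothing, going through the auxiliary class of diagrams $\mathcal{D}^R$ and the resolution cubes that govern both sides. First I would recall that $\widehat{\hfk}_2(D)$ is, by definition, the homology of a complex assembled from an oriented cube of resolutions of the partially singular diagram $D$, where each vertex of the cube is a completely singular diagram and the second bulleted property in the excerpt identifies $\hfk_2$ of such a vertex with (unreduced) Khovanov homology of the corresponding unoriented smoothing. On the other side, $\widehat{\hfk}(\sm(D))$ has its own oriented resolution cube (the Ozsv\'ath--Szab\'o / Manolescu construction), whose vertices are also indexed by resolutions of the crossings of $\sm(D)$. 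The strategy is to match these two cubes vertex-by-vertex and edge-by-edge, so that they compute the same homology.

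The key steps, in order: (1) Reduce to the case $D \in \mathcal{D}^R$. A general partially singular diagram $D$ can be connected to one in $\mathcal{D}^R$ by introducing extra singularizations (equivalently, by the Reidemeister-type or stabilization moves that are shown elsewhere in the paper to preserve $\widehat{\hfk}_2$); since $\sm(D)$ is unchanged up to the corresponding planar isotopy/Reidemeister moves, $\widehat{\hfk}(\sm(D))$ is also preserved, so it suffices to prove the isomorphism for $D \in \mathcal{D}^R$. (2) For $D \in \mathcal{D}^R$, use the regular-sequence argument (this is exactly what the defining property of $\mathcal{D}^R$ buys us) to show that the homology of the oriented cube of resolutions for $\hfk_2(D)$ can be computed as an iterated mapping cone over the cube, with each vertex contributing $\hfk_2$ of a completely singular diagram. (3) Apply the identification $\hfk_2(S)\cong Kh(\sm(S))$ at each completely singular vertex $S$, and check that the edge maps in the $\hfk_2$-cube agree with the maps in the resolution cube for $Kh(\sm(D))$ — i.e.\ that singularizing one crossing of $D$ corresponds on the Khovanov side to the saddle cobordism between the $0$- and $1$-smoothings. (4) Invoke the known identification of this Khovanov-style oriented resolution cube of $\sm(D)$ with $\widehat{\hfk}(\sm(D))$ in the $\delta$-grading (the content of the Ozsv\'ath--Szab\'o/Manolescu cube of resolutions for knot Floer homology, once one passes to the $\delta$-graded collapse), and track the gradings to confirm the isomorphism is one of $\delta$-graded vector spaces, with the $\hfk_2$ $\delta$-grading matching the $2M-2A$ grading up to the global shift.

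I expect the main obstacle to be step (2)–(3): making the regular-sequence argument give a clean iterated-mapping-cone description that is simultaneously compatible with the \emph{oriented} cube structure on the Khovanov side. The subtlety is that $\hfk_2$ is built from a curved/filtered complex with differentials that a priori mix cube directions, and one must verify that the associated graded (or the pages of the relevant spectral sequence) collapses exactly onto the Khovanov resolution complex, with no stray higher differentials surviving at the level of homology. This is where the hypothesis $D\in\mathcal{D}^R$ is essential: the regularity of the defining sequence is precisely what forces the auxiliary variables to act freely, killing the unwanted terms and leaving only the edge saddle maps. A secondary, more bookkeeping-type difficulty is the grading match: one must pin down the normalization so that the $\delta$-grading on $\hfk_2$ (which is engineered to reproduce $2M-2A$) lines up with the $\delta=\gr_q - 2\gr_h$ convention inherited from the Khovanov cube, and confirm that the overall shift is independent of the chosen diagram $D$.
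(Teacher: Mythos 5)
Your proposal takes a fundamentally different route from the paper, and it has a serious gap that I don't think can be closed. The paper's actual proof is a direct comparison of Heegaard diagrams and does not involve any cube of resolutions. Let $\cH$ be a Heegaard diagram for $D$ and let $\cH'$ be a diagram for $\sm(D)$ obtained by deleting the $XX$ basepoints and swapping some $X$'s with $O$'s. One passes to the ``tilde'' complexes $\widetilde{\cfk}_{2}(D)$ and $\widetilde{\cfk}(\sm(D))$, obtained by setting every $U_i=0$ (hence every $V_i=0$), which relate to the reduced homologies by tensoring with an explicitly computable number of copies of $V=\Q\oplus\Q$. At $U_i=0$ the only remaining difference between $\widetilde{\cfk}_{2}(\cH)$ and $\widetilde{\cfk}(\cH')$ is the factor $(-2)^{n_{XX}(\phi)}$, and this unit is absorbed by the change of basis $x\mapsto(-1/2)^{A(x)}x$. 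After matching the $V$-exponents, the isomorphism follows. No $\mathcal{D}^R$ hypothesis and no cube structure is used; the lemma is needed \emph{precisely} so that, later in the paper, the $E_\infty$ page of the cube filtration (which is $\widehat{\hfk}_2(\sm(D))$) can be identified with $\widehat{\hfk}(\sm(D))$.

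The gap in your strategy is in step (3)--(4). Applying $\hfk_2(S)\cong Kh(\sm(S))$ at each vertex lands you on a Khovanov-type cube, and you then propose to ``invoke the known identification of this Khovanov-style oriented resolution cube of $\sm(D)$ with $\widehat{\hfk}(\sm(D))$.'' No such identification exists: the Ozsv\'ath--Szab\'o/Manolescu cube of resolutions for $\widehat{\hfk}(\sm(D))$ has knot Floer homology of singular links at its vertices (governed by the nonlocal ideal $N(S)$ and linear ideal $L(S)$), not Khovanov modules. Establishing such an identification \emph{is} the content of the theorem the lemma is meant to help prove, so the argument is circular: you would be using the main spectral sequence to prove a lemma that is an input to the main spectral sequence. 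Step (1) is also unnecessary -- and unjustified, since you'd need to know the moves you apply preserve $\widehat{\hfk}_2(D)$, which is itself part of what you're trying to establish -- because the lemma holds for any partially singular $D$ and the direct Heegaard-diagram proof makes no reference to $\mathcal{D}^R$.
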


\begin{lem}

For any link $L$ there is a diagram $D \in \mathcal{D^{R}}$ such that $\sm(D)$ is a diagram for $L$.

\end{lem}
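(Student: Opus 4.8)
The plan is to start from an arbitrary diagram $D_0$ for the link $L$ and produce a new diagram $D$ that (a) has enough singularizations to land in $\mathcal{D^{R}}$, and (b) smooths back to a diagram for $L$. The key observation is that the unoriented smoothing operation $\sm$ has a right inverse in the following weak sense: at any crossing of a diagram we may insert a pair of canceling crossings (a Reidemeister II fingermove) and then replace one of the resulting bigon crossings — or a suitably chosen crossing — by a singularization, in such a way that smoothing that singularization returns the original local picture up to planar isotopy. So the construction is local and combinatorial: first arrange $D_0$ into a convenient form (e.g.\ a braid closure or a Morse diagram), then at each crossing perform the insertion so that every crossing of the original diagram is ``covered'' by a nearby singularization.

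The steps I would carry out, in order: (1) Recall the precise defining condition of $\mathcal{D^{R}}$ from \cite{Dowlin} — that the relevant sequence of polynomial relations coming from the edges of the cube of resolutions is a regular sequence — and isolate a combinatorial sufficient condition on the diagram, phrased in terms of the singularizations being ``spread out'' enough (for instance, that between any two consecutive crossings along each strand there is a singularization, or that every region of the diagram is adjacent to a singularization). (2) Show that a local modification — replacing a crossing by the configuration ``singularization, then the original crossing, then its inverse crossing'' (or inserting a singular bigon) — does not change the smoothed diagram up to isotopy, since smoothing the inserted singularization undoes the Reidemeister II move. (3) Apply this modification at every crossing of $D_0$ (or at enough crossings) to obtain $D$, and verify that $D$ now satisfies the combinatorial sufficient condition from step (1), hence $D \in \mathcal{D^{R}}$. (4) Observe that by construction $\sm(D)$ is isotopic to $D_0$, which is a diagram for $L$, completing the proof.

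The main obstacle I anticipate is step (1): translating the algebraic regularity condition defining $\mathcal{D^{R}}$ into a clean, checkable combinatorial criterion on diagrams, and then ensuring the local modification in step (2) actually achieves it globally rather than merely locally. In particular one must be careful that inserting singularizations near one crossing does not spoil the regularity contribution of a neighboring region, so the bookkeeping of how the linear/polynomial relations interact across the whole cube needs to be done with some care; this is where the regular-sequence argument of Ozsv\'{a}th--Szab\'{o} \cite{Szabo} and its adaptation in \cite{Dowlin} will be invoked. Once the combinatorial criterion is in hand, steps (2)--(4) should be routine, since they only involve Reidemeister moves and the elementary behavior of $\sm$ on local pictures.
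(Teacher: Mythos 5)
Your proposal takes a genuinely different route from the paper. The paper's proof is global rather than local: it presents $L$ as the plat closure of a braid $D_1$ on $2n$ strands, then prepends a fixed singular block $S'_{2n}$ (the two-layer chain $S_{2n}$ of $2n-1$ singularizations together with $n-1$ extra ones) and takes the braid closure. Smoothing $S'_{2n}$ converts the braid closure into the plat closure, so $\sm(D)$ recovers $L$, and since $D$ contains $S_{2n}$ it lies in $\mathcal{D^{R}}$ by Lemma~\ref{regsequence}. The entire technical weight is carried by that one lemma, whose proof invokes the generalized Kauffman state filtration of Ozsv\'{a}th--Stipsicz--Szab\'{o} to show the homology lies in a single algebraic grading, which is what gives the regular sequence property.

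The gap in your proposal is precisely the step you flag as ``the main obstacle'': you never actually isolate a combinatorial sufficient condition for $\mathcal{D^{R}}$-membership, and this is the hard part of the argument, not a routine bookkeeping matter. ``Singularizations are spread out'' is not a condition one can check; what is needed is a structural hypothesis (such as containing $S_{2n}$) for which the algebraic-grading argument can be run, and it is not at all clear that inserting singularizations crossing-by-crossing produces such a structure or that the regular-sequence verification can be done locally and then glued. A separate, smaller issue is that your proposed local move is not correct as written: replacing a crossing $c$ by ``singularization, then $c$, then $c^{-1}$'' and smoothing the singularization yields a turnback followed by a cancelling pair, i.e.\ a turnback, not the original crossing. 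One can insert a pair of stacked singularizations (a singular bigon, MOY~II) so that both turnbacks cancel under isotopy after smoothing, but then one is no longer localizing near crossings in the way you describe, and the orientation bookkeeping matters. In short, steps (2)--(4) are plausible once a criterion is in hand, but step (1) is the substance of the lemma and your proposal leaves it unresolved.
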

\noindent
An example of a diagram $D$ in $ \mathcal{D^{R}}$ whose smoothing is the trefoil is shown in Figure \ref{tref}.

\begin{figure}[h!]
\tiny
\begin{subfigure}{.5\textwidth}
 \centering
\def\svgwidth{5cm}
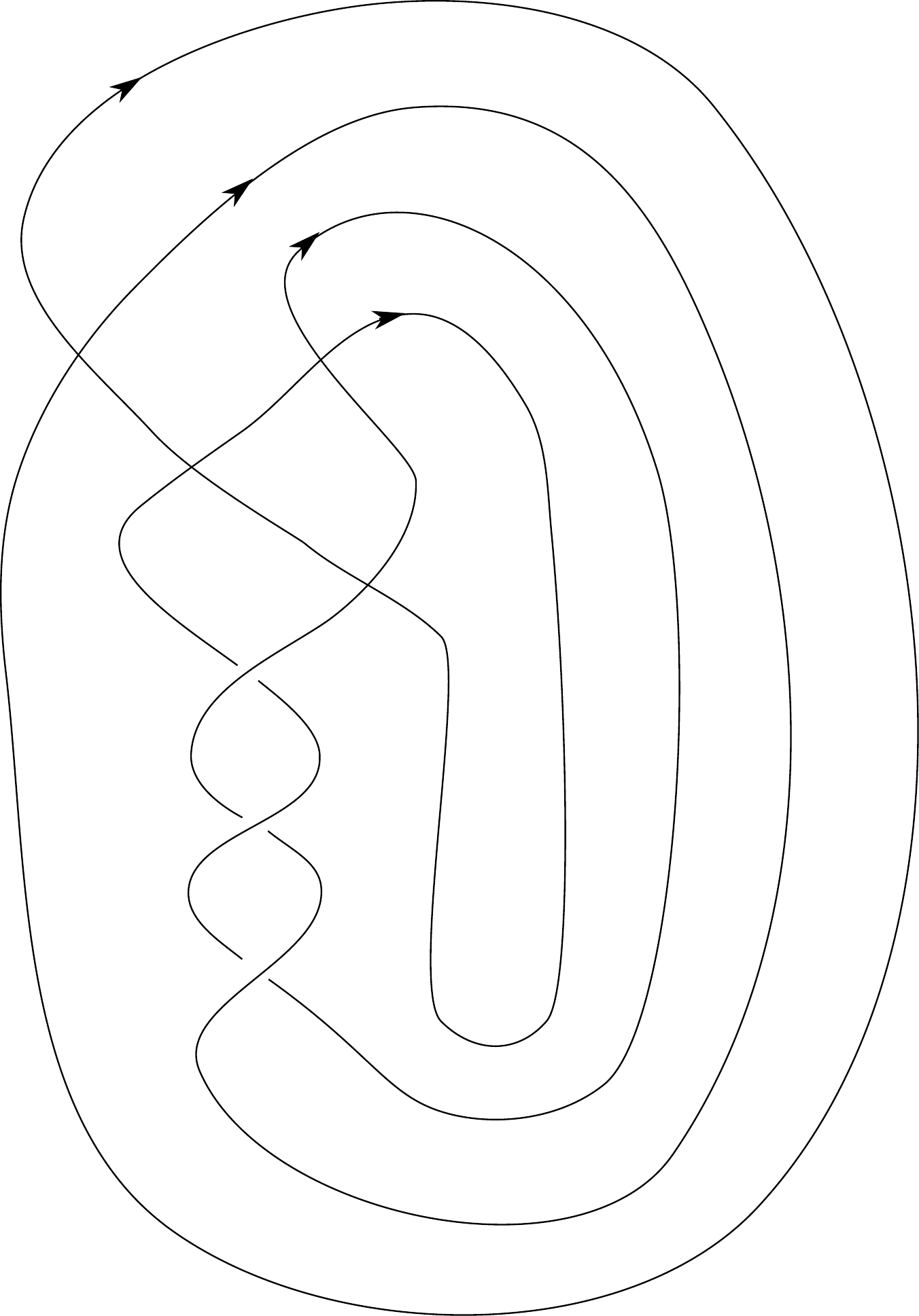 
\caption{A diagram $D$ in $\mathcal{D^{R}}$}
\end{subfigure}%
\begin{subfigure}{.5\textwidth}
  \centering
  \def\svgwidth{5cm}
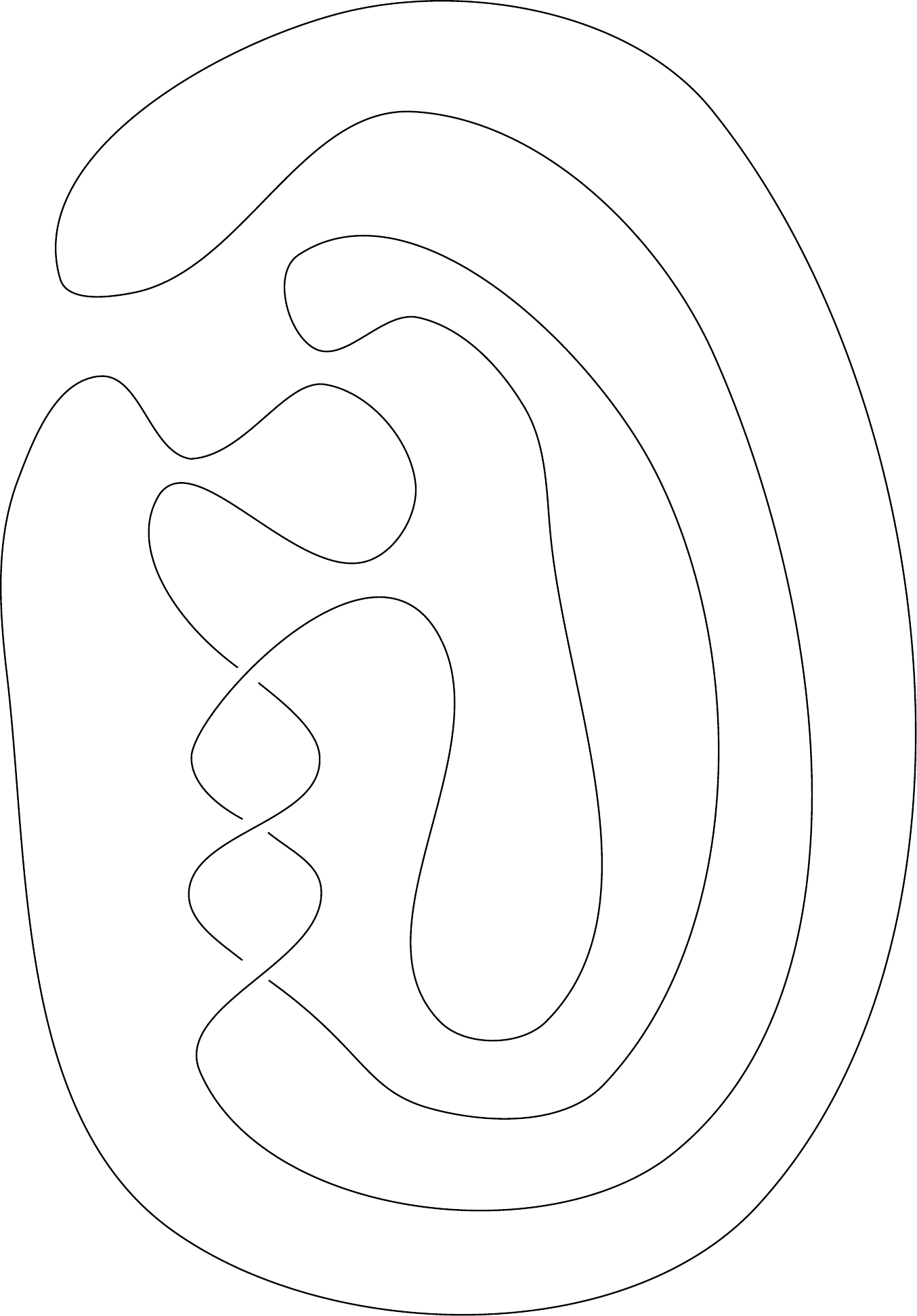  \caption{The smoothing $\sm(D)$ is isotopic to the right-handed trefoil}
\end{subfigure}
\caption{}\label{tref}
\end{figure}

The structure of the proof of Theorem \ref{mainthmintro} is as follows: given a knot $K$, let $D \in \mathcal{D^{R}}$ be a diagram with $\sm(D)$ a diagram for $K$, and let $\widehat{C}_{2}(D)$ denote the oriented cube of resolutions for $\widehat{\hfk}_{2}(D)$. The spectral sequence induced by the cube filtration on $\widehat{C}_{2}(D)$ has $E_{2}$ page isomorphic to $\overline{Kh}(m(\sm(D)))=\overline{Kh}(m(K))$ and $E_{\infty}$ page $\widehat{\hfk}_{2}(\sm(D)) = \hfk_{2}(K)$. Mirroring $K$ gives the spectral sequence in Theorem \ref{mainthmintro}. One caveat is that the complex $\widehat{C}_{2}(D)$ only admits a \emph{relative} grading, as there is some difficulty in understanding absolute gradings when moving from $D$ to $\sm(D)$.

This spectral sequence can be generalized to links using the pointed Khovanov homology of Baldwin, Levine, and Sarkar \cite{BLS}:

\begin{thm}

Let $L$ be a link in $S^{3}$, and let $\mathbf{p}=\{p_{1},...,p_{l}\}$ be a collection of basepoints, exactly one on each component of $L$. Then up to an overall grading shift, there is a spectral sequence from $Kh(L, \mathbf{p})$ to $\delta$-graded $\widehat{\hfk}(m(L))$, where $Kh(L, \mathbf{p})$ is the pointed Khovanov homology of $(L, \mathbf{p})$.

\end{thm}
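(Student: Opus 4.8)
The plan is to rerun the argument behind Theorem~\ref{mainthmintro} essentially verbatim, with the collection $\mathbf{p}$ playing the role of the single basepoint used to define the reduced theory. First I would invoke the second Lemma to choose a diagram $D \in \mathcal{D^{R}}$ with $\sm(D)$ a diagram for the mirror $m(L)$, and decorate $D$ with basepoints corresponding to $\mathbf{p}$, one on each component. Then I would form the oriented cube of resolutions $\widehat{C}_{2}(D)$ computing $\widehat{\hfk}_{2}(D)$ with respect to this basepoint set, with edge maps the Khovanov edge maps, exactly as in the knot case. The cube filtration yields a spectral sequence whose $E_{\infty}$ page computes $\widehat{\hfk}_{2}(D)$, which by the first Lemma is $\delta$-graded $\widehat{\hfk}(\sm(D)) = \widehat{\hfk}(m(L))$; this is the target of the spectral sequence.

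The remaining task is to identify the $E_{2}$ page. As in the knot case, the $E_{1}$ page is a cube of resolutions whose vertices are the groups $\hfk_{2}(S)$ for completely singular diagrams $S$, and the only surviving internal differential is the Khovanov edge differential, so $E_{2}$ is the homology of the resulting Khovanov-type complex. The point is that, once the basepoints $\mathbf{p}$ are inserted, the isomorphism $\hfk_{2}(S) \cong Kh(\sm(S))$ should refine to an isomorphism with the \emph{pointed} Khovanov chain complex of $\sm(S)$ in the sense of Baldwin--Levine--Sarkar \cite{BLS} --- each basepoint $p_{i}$ contributing a reduced-type marking over $\Q[X_{i}]/(X_{i}^{2})$ --- with matching edge maps. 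Granting this, the $E_{2}$ page is $Kh(m(\sm(D)), \mathbf{p}) = Kh(m(m(L)), \mathbf{p}) = Kh(L, \mathbf{p})$, and the grading statements are inherited from the knot case up to an overall shift. When $l = 1$ this is exactly the proof of Theorem~\ref{mainthmintro}, which makes the strategy plausible; indeed the pointed theory of \cite{BLS} was introduced precisely as the correct link-theoretic analog of reduced Khovanov homology.

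I expect the main obstacle to be this last identification at the chain level. It requires going back to the definition of $\hfk_{2}$ for singular links in \cite{Dowlin} and checking that inserting one basepoint per component reproduces the pointed Khovanov chain groups, quantum grading, and edge maps on the nose, rather than some a priori different reduced-type theory; one must also check that the regular-sequence condition defining $\mathcal{D^{R}}$ is insensitive to the extra markings, so that the same $D$ still works. A secondary and more routine difficulty, already present for knots, is that $\widehat{C}_{2}(D)$ carries only a relative grading, so the $\delta$-grading on $Kh(L, \mathbf{p})$ is matched with that on $\widehat{\hfk}(m(L))$ only after an overall shift; here one should also confirm that the normalization of link knot Floer homology used in the first Lemma is the one compatible with $\mathbf{p}$, namely a single $w$- and $z$-basepoint on each component. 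Everything else --- existence of $D$, the formal behavior of the cube filtration, and the bigradings of the higher differentials --- transfers directly from the knot case.
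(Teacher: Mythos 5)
Your proposal is correct and follows essentially the same route as the paper, with one inversion of emphasis worth flagging. You frame the argument as ``generalize the knot-case proof of Theorem~\ref{mainthmintro} by inserting $l$ basepoints,'' and you anticipate that the main obstacle will be identifying the $E_2$ page with the pointed Khovanov complex of \cite{BLS}. In the paper the logic runs the other way: the whole apparatus is built for links from the start, and the knot theorem is the $l=1$ special case of the link theorem, not the other way around. In particular, the reduced complex $\widehat{C}_2(D)$ is \emph{defined} as $C_2^-(D) \otimes \bigotimes_{j=1}^{l} (R \xrightarrow{U_{i_j}} R)$ with one marked edge per component of $\sm(D)$, and the pointed Khovanov complex $CKh(D,\mathbf{p})$ is defined as $CKh^-(D) \otimes \bigotimes_{j=1}^{l} (R \xrightarrow{U_{i_j}} R)$ in exactly the same way. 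Once Theorem~\ref{E2thm} identifies the $E_1$/$E_2$ pages of the \emph{minus} complex $C_2^-(D)$ with $CKh^-(m(\sm(D)))$ and $Kh^-(m(\sm(D)))$, the pointed identification is not an additional verification but is immediate: tensoring both sides of that isomorphism with the same $\bigotimes_j (R \xrightarrow{U_{i_j}} R)$ produces, tautologically, the pointed Khovanov complex. This is exactly Corollary~\ref{khovanovE2}, and it is a one-paragraph observation rather than a delicate chain-level refinement.

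Your other concerns are correctly identified but are handled cleanly in the existing framework: the regular-sequence condition defining $\mathcal{D^R}$ depends only on the diagram $D$ and not on the choice of basepoints, so Lemma~\ref{regsequence} applies unchanged; and the relative-grading caveat is real, which is precisely why the theorem is stated ``up to an overall grading shift.'' The proof you should write is exactly the three-line one in the paper: choose $D \in \mathcal{D^R}$ with $\sm(D)$ a diagram for $m(L)$, run the cube-filtration spectral sequence on $\widehat{C}_2(D)$, and cite Corollary~\ref{khovanovE2} for the $E_2$ page and Theorem~\ref{reducediso} for the $E_\infty$ page. Your version arrives at the same place but spends effort worrying about a step that the definitions make automatic.
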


\noindent
The homology $Kh(L, \mathbf{p})$ will be denoted $\widehat{Kh}(L)$ since it does not depend on the points $\mathbf{p}$.

\begin{cor}
For any $l$-component link $L$ in $S^{3}$, there is a rank inequality 
\[ 2^{l-1} \rk(\overline{Kh}(L)) \ge \rk(\widehat{\hfk}(L)) \]
\end{cor}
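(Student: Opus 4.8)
The plan is to deduce the corollary by concatenating three ingredients: the rank inequality produced by the link spectral sequence of the preceding theorem, the behavior of link Floer homology under mirroring, and a purely Khovanov-theoretic comparison between the pointed Khovanov homology $\widehat{Kh}(L)=Kh(L,\mathbf{p})$ and the reduced Khovanov homology $\overline{Kh}(L)$. Concretely, I aim to establish
\[
2^{l-1}\,\rk\bigl(\overline{Kh}(L)\bigr)\;\ge\;\rk\bigl(\widehat{Kh}(L)\bigr)\;\ge\;\rk\bigl(\widehat{\hfk}(m(L))\bigr)\;=\;\rk\bigl(\widehat{\hfk}(L)\bigr),
\]
and then read off the stated bound from the outer terms. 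For the middle inequality I would invoke the link version of the spectral sequence: it begins with $\widehat{Kh}(L)=Kh(L,\mathbf{p})$ and converges to $\delta$-graded $\widehat{\hfk}(m(L))$, so since passing from one page to the next can only preserve or decrease total rank while the $E_\infty$ page has the same rank as the limit, we get $\rk\widehat{Kh}(L)\ge\rk\widehat{\hfk}(m(L))$. The rightmost equality is the link analogue of the identity $\widehat{\hfk}(K,i)\cong\widehat{\hfk}(m(K),-i)$ recorded in the introduction; it holds because the link Floer complex of $m(L)$ is dual to that of $L$, so the two total ranks over $\Q$ agree.

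The remaining point, and the substantive one, is the outer inequality $\rk\widehat{Kh}(L)\le 2^{l-1}\rk\overline{Kh}(L)$, for which I would appeal to the structure of Baldwin--Levine--Sarkar's pointed Khovanov homology \cite{BLS}. Reduced Khovanov homology $\overline{Kh}(L)$ is $Kh(L,\mathbf{p}_0)$ for a single basepoint $\mathbf{p}_0$ on one component, whereas $\widehat{Kh}(L)=Kh(L,\mathbf{p})$ uses one basepoint on each of the $l$ components. Interpolating between $\mathbf{p}_0$ and $\mathbf{p}$ by introducing the $l-1$ missing basepoints one component at a time, each step is controlled by the long exact sequence (equivalently, the mapping cone) relating $Kh(L,\mathbf{q})$ and $Kh(L,\mathbf{q}\cup\{p'\})$ when $p'$ lies on a component not already marked; such a sequence changes total rank by at most a factor of $2$. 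Iterating these $l-1$ steps gives the desired bound, and combining it with the previous paragraph proves the corollary, with the case $l=1$ recovering Corollary \ref{rk}.

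I expect the main obstacle to be precisely this last step. One must verify that the pointed Khovanov homology appearing in the spectral sequence is exactly the Baldwin--Levine--Sarkar invariant $Kh(L,\mathbf{p})$ with one basepoint per component (which is what makes it independent of $\mathbf{p}$ and justifies the notation $\widehat{Kh}(L)$), and one must extract the factor-of-$2$ estimate in the direction actually needed, working over $\Q$ where one cannot simply tensor by a two-dimensional Frobenius algebra as over $\Z_2$. By contrast, convergence of the spectral sequence and the mirroring symmetry of $\widehat{\hfk}$ are either established earlier in the paper or entirely standard, so the proof should be short once the Khovanov-side comparison is pinned down.
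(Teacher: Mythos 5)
Your proof is essentially correct and follows the route the paper leaves implicit (the corollary is stated without a written proof, but the evident argument is exactly the chain of three inequalities you describe). The middle inequality is the rank collapse along the spectral sequence of the preceding theorem, the right-hand equality is the mirror symmetry of reduced link Floer homology, and the outer inequality compares the pointed invariant $\widehat{Kh}(L)=Kh(L,\mathbf{p})$ to $\overline{Kh}(L)$.

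The one point you flag as a possible obstacle is in fact not one, and your own phrasing already contains the resolution. Looking at the paper's definition, $CKh(D,\mathbf{p})=CKh^{-}(D)\otimes\bigotimes_{j=1}^{l}(R_{X}^{-}\xrightarrow{X_{i_{j}}}R_{X}^{-})$, and one is free to place $p_{1}$ on the marked edge $e_{1}$, so that $CKh(D,\mathbf{p})\cong\overline{CKh}(D)\otimes\bigotimes_{j=2}^{l}(R_{X}^{-}\xrightarrow{X_{i_{j}}}R_{X}^{-})$. Each remaining factor exhibits the next complex as the mapping cone of the chain map $X_{i_{j}}$ acting on the previous one, and the long exact sequence of a cone gives $\rk H_{*}(\mathrm{Cone}(f))\le 2\,\rk H_{*}(C)$ over any field, with no appeal to a $\Z_{2}$-specific Frobenius-algebra splitting. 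Iterating over the $l-1$ remaining factors yields $\rk\widehat{Kh}(L)\le 2^{l-1}\rk\overline{Kh}(L)$, which is the bound you need. Your other worry, that the $E_{2}$ page of the link spectral sequence really is the Baldwin--Levine--Sarkar invariant $Kh(L,\mathbf{p})$, is handled explicitly in Corollary~\ref{khovanovE2}.
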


Khovanov homology is known to be isomorphic to Khovanov-Rozansky $\mathfrak{sl}_{2}$ homology \cite{KR,Hughes}. Together with Rasmussen's spectral sequence from HOMFLY-PT homology to $\mathfrak{sl}_{2}$ homology \cite{Rasmussen}, the previous theorem gives a proof of the following conjecture of Dunfield, Gukov, and Rasmussen \cite{Gukov} though it is missing one of the gradings on knot Floer homology:

\begin{cor}

For any link $L$ in $S^{3}$, there is a spectral sequence from $\overline{H}_{H}(L)$ to $\delta$-graded $\widehat{\hfk}(m(L))$, where $\overline{H}_{H}(L)$ is the reduced HOMFLY-PT homology of $L$.

\end{cor}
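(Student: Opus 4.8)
The plan is to concatenate three spectral sequences. First, the preceding theorem supplies, for any link $L$ with one basepoint on each component, a spectral sequence with $E_{2}$-page $\widehat{Kh}(L)$ converging to $\delta$-graded $\widehat{\hfk}(m(L))$. Second, by the identification of Khovanov homology with $\mathfrak{sl}_{2}$ Khovanov--Rozansky homology \cite{KR,Hughes}, the group $\widehat{Kh}(L)$ is (suitably normalized) reduced $\mathfrak{sl}_{2}$ homology. Third, Rasmussen's construction \cite{Rasmussen} equips the reduced HOMFLY-PT homology $\overline{H}_{H}(L)$ with a differential $d_{2}$, part of the family $\{d_{N}\}$, whose associated spectral sequence converges to reduced $\mathfrak{sl}_{2}$ homology. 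Splicing the last two onto the front of the first produces a spectral sequence from $\overline{H}_{H}(L)$ to $\delta$-graded $\widehat{\hfk}(m(L))$; the diagonal grading survives each step because every differential involved is homogeneous with respect to the relevant $\delta$-type grading, so the $\delta$-grading on $\overline{H}_{H}(L)$ descends to the one on $\widehat{\hfk}(m(L))$ up to an overall shift.

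Since composing spectral sequences is not a formal operation, I would carry this out at the chain level. Recall that the spectral sequence of the preceding theorem is the one induced by the cube filtration on the oriented cube of resolutions $\widehat{C}_{2}(D)$, for a diagram $D\in\mathcal{D^{R}}$ with $\sm(D)$ a diagram for $L$, and that its $E_{1}$-page is the (pointed) reduced $\mathfrak{sl}_{2}$ Khovanov chain complex of $m(\sm(D))$. Rasmussen's spectral sequence, on the other hand, is induced by a filtration --- by powers of the HOMFLY-PT variable --- on a complex computing reduced $\mathfrak{sl}_{2}$ homology. Working over $\Q$, I would choose splittings realizing this second filtration as a lexicographic refinement of the cube filtration on $\widehat{C}_{2}(D)$: first by cube degree, then by Rasmussen's degree on the associated graded. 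The resulting filtered complex still has homology $\widehat{\hfk}_{2}(\sm(D)) \cong \delta$-graded $\widehat{\hfk}(\sm(D))$, and its spectral sequence has an early page equal to $\overline{H}_{H}(L)$ and $E_{\infty}$-page isomorphic to $\delta$-graded $\widehat{\hfk}(L)$. Replacing $L$ by $m(L)$ exactly as in the proof of the preceding theorem then gives the statement, with the same relative-versus-absolute grading caveat.

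The hard part will be making this refinement honest: one must see the reduced $\mathfrak{sl}_{2}$ chain complex at the $E_{1}$-page of the cube filtration not merely up to isomorphism but with enough of its chain-level structure to carry Rasmussen's deformation --- concretely, to check that the passage $\widehat{C}_{2}(D)\leadsto\overline{Kh}(m(\sm(D)))$ factors through the $\mathfrak{sl}_{2}$ Khovanov--Rozansky complex compatibly with the HOMFLY-PT filtration, so that the deformed differential genuinely lifts to a self-map of a refinement of $\widehat{C}_{2}(D)$. A secondary, purely bookkeeping issue is reconciling the reduced/pointed normalizations for multi-component links: Rasmussen's reduced HOMFLY-PT homology is taken with respect to a single marked component, whereas the preceding theorem uses $\widehat{Kh}(L)$ with a basepoint on every component, so one either invokes the corresponding multi-pointed version of Rasmussen's spectral sequence or absorbs the discrepancy into the grading shifts. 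Once these are handled, finite-dimensionality over $\Q$ makes everything else routine.
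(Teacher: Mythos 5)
The paper's proof of this corollary is exactly your first paragraph and nothing more: the identification $\widehat{Kh}\cong$ reduced $\mathfrak{sl}_{2}$ Khovanov--Rozansky homology, Rasmussen's spectral sequence from HOMFLY-PT to $\mathfrak{sl}_{2}$ homology, and the main theorem, combined in a single sentence. No chain-level argument is offered or attempted.

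Your observation that composing spectral sequences is not a formal operation is therefore not addressed in the paper, and it is well taken: the two spectral sequences live on unrelated filtered complexes, so there is no automatic way to splice them into one. If ``a spectral sequence from $A$ to $C$'' is read loosely as a finite chain of spectral sequences --- which suffices for the rank inequality and is a common informal usage in this area --- the paper's argument is complete as stated. If it is read strictly as asserting a single filtered complex, which is what the Dunfield--Gukov--Rasmussen conjecture literally asks for, then what the paper offers is a sketch, and your concern points at a genuine gap.

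Your chain-level plan --- a lexicographic refinement of the cube filtration on $\widehat{C}_{2}(D)$ by a lift of Rasmussen's filtration to the $E_{1}$ page --- is a sensible way to close that gap, but you correctly flag where it gets stuck: the $E_{1}$ page of the cube filtration is identified with the reduced $\mathfrak{sl}_{2}$ complex only up to isomorphism, not as the specific chain complex carrying the HOMFLY-PT filtration, and no chain-level compatibility of the kind you would need is established in the paper. So your proposal is strictly more ambitious than what the paper does, but it remains incomplete at exactly the point you identify. Your secondary remark about one basepoint (reduced HOMFLY-PT) versus one basepoint per component (pointed Khovanov) is also real for links and would need the multi-pointed version of Rasmussen's construction; for knots it is vacuous.
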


The paper is organized as follows: in Section 2, we describe the construction of the complex $\widehat{C}_{2}(L)$ and the spectral sequence since it is purely algebraic and requires no background on the two theories. Section 3 provides background on Khovanov homology, knot Floer homology, the knot Floer cube of resolutions, and the theory $\hfk_{2}$. Section 4 is devoted to computing $\hfk_{2}^{-}(S)$ for any complete resolution $S$ as an $R$-module. Section 5 gives a proof that the $E_{2}$ page is Khovanov homology, and Section 6 shows that the $E_{\infty}$ page is knot Floer homology. Section 7 puts it all together to prove the existence of the spectral sequence and computes it for the $(4,5)$ torus knot.

\subsection*{Acknowledgments} I would like to thank Akram Alishahi, John Baldwin, Adam Levine, Andy Manion, Ciprian Manolescu, and Zolt\'{a}n Szab\'{o} for helpful discussions and valuable comments.

\section{Construction of the complex $\widehat{C}_{2}(D)$}\label{consection}

\subsection{The cube of resolutions} The complex $C^{-}_{2}(D)$ is constructed as an oriented cube of resolutions, and $\widehat{C}_{2}(D)$ is obtained by reducing this complex. Let $D_{+}$, $D_{-}$, $D_{s}$, and $D_{\x}$ be planar diagrams which agree away from a crossing, and at that crossing $D_{+}$ has a positive crossing, $D_{-}$ has a negative crossing, $D_{s}$ has the oriented smoothing, and $D_{\x}$ has the singularization. The complex $C^{-}_{2}(D)$ being an oriented cube of resolutions means that it decomposes over the crossings as mapping cones:
\[ C^{-}_{2}(D_{+}) = C^{-}_{2}(D_{\x}) \to C^{-}_{2}(D_{s}) \]
\[ C^{-}_{2}(D_{-}) = C^{-}_{2}(D_{s}) \to C^{-}_{2}(D_{\x}) \]

\noindent
For a positive crossing, $D_{\x}$ is called the 0-resolution and $D_{s}$ the 1-resolution, and vice versa for the negative crossing (see Figure \ref{resolutions3}).

\begin{figure}[!h]
\centering
\scriptsize
\def\svgwidth{8cm}
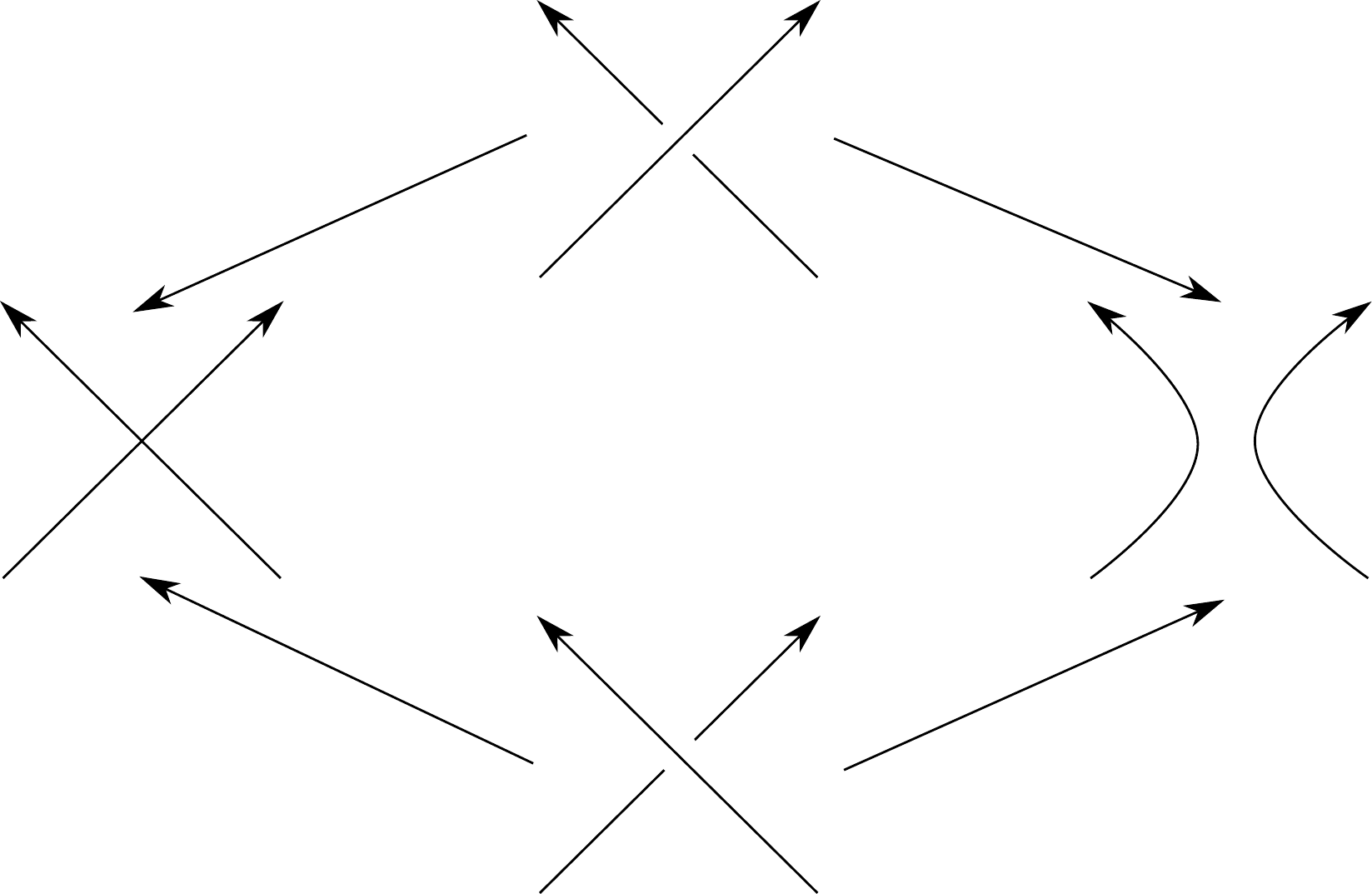 
\caption{0- and 1-resolutions of positive and negative crossings}\label{resolutions3}
\end{figure}

Let $c(D)$ denote the crossings of $D$, and let $I:c(D) \to \{0,1\}$ be a vertex in the cube. The diagram $D_{I}$ is obtained by replacing each crossing $c$ with the $I(c)$-resolution -- such a diagram is called a \emph{complete resolution}. Iterating over all crossings of $D$ gives cube complex where the complex at a vertex $I$ is $C^{-}_{2}(D_{I})$. The height of a vertex $I$ is given by
\[ \sum_{c} I(c) \]

The height induces a filtration on the complex $(C^{-}_{2}(D),d)$. For a general cube of resolutions complex one can write $d=d_{0}+d_{1}+...+d_{k}$, where $k$ is the number of crossings in $D$. However, on $C^{-}_{2}(D)$, we have $d=d_{0}+d_{1}$, i.e. the higher face maps are all zero. The differential $d_{0}$ is called the vertex map and $d_{1}$ the edge map.

\subsection{The ground ring and the non-local ideal}

Let $D$ be a decorated, partially singular braid diagram. Viewing $D$ as an oriented 4-valent graph with additional over/under data at the crossings, let $e_{1},...,e_{m}$ denote the edges of $D$, and let $R=\Q[U_{1},...,U_{m}]$. Without loss of generality, suppose that the decorated edge is labeled $e_{1}$ and that it is on the left-most strand of the braid. Let $S=D_{I}$ be a complete resolution of $D$, so that $S$ has the same edges as $D$. It can have both bivalent and 4-valent vertices coming from smoothings and singularizations, respectively.

Viewing $S$ as an oriented, decorated graph in $\R^{2}$, let $\Omega$ be a smoothly embedded disc in $\R^{2}$ whose boundary intersects $S$ transversely and only at edges. Moreover, suppose $\Omega$ does not contain the decorated edge $e_{1}$, though it is allowed to intersect $e_{1}$. Let $\In(\Omega)$ (resp. $\Out(\Omega)$) denote the set of edges which intersect the boundary of $\Omega$ and are oriented into (resp. out of) $\Omega$. Then $\Omega$ determines a homogeneous polynomial $P(\Omega)$ in $R$ defined as

\[ P(\Omega) = \prod_{e_{i} \in \In(\Omega)} U_{i} - \prod_{e_{j} \in \Out(\Omega)} U_{j} \]

\begin{defn}

The non-local ideal $N(S)$ is the ideal in $R$ generated by the polynomials $P(\Omega)$ over all choices of $\Omega$.

\end{defn}

\noindent
Note that the polynomials $P(\Omega)$ are homogeneous because the number of incoming edges at each vertex $v$ is equal the the number of outgoing edges at $v$.

\subsection{The family of diagrams $\mathcal{D^{R}}$}

Let $v_{4}(D)$ denote the set of singular points in $D$. The resolution $S$ has two types of 4-valent vertices -- those which were 4-valent vertices in $D$, and those which are singularizations of crossings in $D$. We will write the latter as $v_{4}(I)$ so that $v_{4}(S) = v_{4}(D) \cup v_{4}(I)$. For each $v \in v_{4}(S)$, define \[L(v) = U_{a}^{(v)}+U_{b}^{(v)}-U_{c}^{(v)}-U_{d}^{(v)}\] 
where $e^{(v)}_{a}, e^{(v)}_{b}$ are the two outgoing edges at $v$ and $e^{(v)}_{c},e^{(v)}_{d}$ are the two incoming edges at $v$ as in Figure \ref{4vabcd}. Similarly, define 
\[ L^{+}(v) = U_{a}^{(v)}+U_{b}^{(v)}+U_{c}^{(v)}+U_{d}^{(v)} \]

\begin{figure}[h!]
 \centering
\def\svgwidth{5cm}
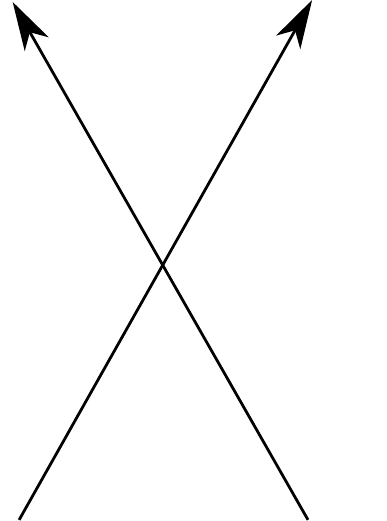 
\caption{A singularization $v$ with the four adjacent edges labeled} \label{4vabcd}
\end{figure}

\begin{defn}

The family $\mathcal{D^{R}}$ consists of all decorated, partially singular braid diagrams $D$ satisfying the following properties. First, we require that the decorated edge is left-most in the diagram. Second, for any complete resolution $S=D_{I}$ of $D$: 

\begin{itemize}

\item $S$ is connected
\item The linear terms $L(v)$ for $v \in v_{4}(I)$ form a regular sequence over $R/N(S)$

\end{itemize}

\end{defn}

\noindent
In Section \ref{lastsection} we give a sufficient condition for a diagram to be in $\mathcal{D^{R}}$.

\subsection{Definition of $\widehat{C}_{2}(D)$}

We now have the necessary tools to define the complex $C^{-}_{2}(D)$ for any $D \in \mathcal{D^{R}}$. First we will define a complex $C_{2}^{-}(D)$, then the reduced complex $\widehat{C}_{2}(D)$ will be defined in terms of the minus complex.

The complex $C^{-}_{2}(S)$ for each complete resolution $S=D_{I}$ will be defined first, then the edge maps will be added. For each complete resolution $S$, the linear ideal $L_{I}(S)$ is the ideal generated by $L(v)$ over all $v$ in $v_{4}(I)$. For the remaining 4-valent vertices $v_{4}(D)$, we instead have a Kozsul complex
\[ \mathcal{L}_{D}^{+} = \bigotimes_{v \in v_{4}(D)}  \xymatrix{R\ar@<1ex>[r]^{L(v)}&R\ar@<1ex>[l]^{L^{+}(v)}} \]

\begin{defn}
The complex $C^{-}_{2}(S)$ is given by 
\[ C^{-}_{2}(S) = R/(N(S)+L_{I}(S)) \otimes \mathcal{L}_{D}^{+} \]
\end{defn}

\begin{lem}
The differential satisfies $d^{2}=0$ on $C^{-}_{2}(S)$.
\end{lem}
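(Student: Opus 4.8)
The plan is to show that the total differential $d = d_0 + d_1$ on $C^{-}_2(S)$ squares to zero by verifying that each graded piece $d_0^2$, $d_0 d_1 + d_1 d_0$, and $d_1^2$ vanishes separately, since these are the components of $d^2$ with respect to the height filtration. Actually, since we are working with a single complete resolution $S$ rather than the full cube, the only differential present is the internal differential of $C^{-}_2(S) = R/(N(S) + L_I(S)) \otimes \mathcal{L}_D^{+}$, namely the Koszul differential on the tensor factor $\mathcal{L}_D^{+} = \bigotimes_{v \in v_4(D)} \bigl( R \xrightarrow{L(v)} R \bigr)$ (together with its companion map $L^{+}(v)$). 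So the heart of the matter is to check that this is a well-defined differential on the quotient ring tensored against the Koszul-type complex.

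First I would recall that $\mathcal{L}_D^{+}$ is a tensor product, over the unsingularized $4$-valent vertices $v \in v_4(D)$, of the two-term complexes $R \xrightarrow{L(v)} R \xleftarrow{L^{+}(v)} R$; I would pin down exactly what the differential on this object is — presumably it is the standard ``matrix factorization''-style differential built from the pair $(L(v), L^{+}(v))$, whose square on each tensor factor is multiplication by $L(v) \cdot L^{+}(v)$. Then $d^2$ on $\mathcal{L}_D^{+}$ is multiplication by $\sum_{v \in v_4(D)} L(v) L^{+}(v)$ (the cross terms cancel because the factors for distinct vertices (anti)commute, exactly as in a Koszul complex / tensor product of Koszul-type complexes). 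So the claim reduces to showing that for each $v \in v_4(D)$, the element $L(v) L^{+}(v)$ acts as zero on $C^{-}_2(S)$ — equivalently that $L(v) L^{+}(v) \in N(S) + L_I(S)$, or better yet that it already lies in $N(S)$.

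The main step, and the one I expect to be the crux, is precisely this: for a genuine (non-singularized) $4$-valent vertex $v$ of $D$ with outgoing edges $e_a, e_b$ and incoming edges $e_c, e_d$, we have
\[
L(v) \cdot L^{+}(v) = (U_a + U_b - U_c - U_d)(U_a + U_b + U_c + U_d) = (U_a + U_b)^2 - (U_c + U_d)^2,
\]
and I would like to identify this with an element of the non-local ideal $N(S)$. The idea is that the local relation around $v$ should be expressible via polynomials $P(\Omega)$ for small discs $\Omega$ surrounding $v$: a disc enclosing just $v$ gives $P(\Omega) = U_a U_b - U_c U_d$, and one must combine such generators — together with discs that enclose $v$ and route around one of the strands — to produce $(U_a+U_b)^2 - (U_c+U_d)^2$. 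I would work out the precise combination, most likely writing $(U_a+U_b)^2 - (U_c+U_d)^2 = (U_a - U_c)(U_a+U_b+U_c-U_d) + \cdots$ or directly exhibiting it as an $R$-linear combination of products $U_i U_j - U_k U_\ell$ coming from admissible discs; this is where the specific geometry of $S$ and the transversality/orientation conditions on $\Omega$ enter. Once $L(v) L^{+}(v) \in N(S)$ for every $v \in v_4(D)$, the differential descends to the quotient and $d^2 = 0$ follows formally. A subtlety to double-check is that the quotient by $N(S) + L_I(S)$ interacts correctly with tensoring by $\mathcal{L}_D^{+}$, i.e. that the Koszul differentials $L(v), L^{+}(v)$ are well-defined maps on $R/(N(S)+L_I(S))$ — which is automatic since they are multiplication maps — and that no additional relations are needed beyond what has been assumed.
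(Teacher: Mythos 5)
Your setup is right: $d^2$ on $\mathcal{L}_D^{+}$ is multiplication by $\omega = \sum_{v \in v_4(D)} L(v)L^{+}(v)$, and the question is whether $\omega$ vanishes in $R/(N(S)+L_I(S))$. But the reduction you then make — ``the claim reduces to showing that for each $v \in v_4(D)$ the element $L(v)L^{+}(v)$ lies in $N(S)+L_I(S)$'' — is a wrong turn. That is not what the paper shows, and it is not generally true. The local disc around a single non-singularized vertex $v$ only gives you the quadratic relation $Q(v) = U_a U_b - U_c U_d \in N(S)$, so modulo $N(S)$ you get $L(v)L^{+}(v) \equiv U_a^2 + U_b^2 - U_c^2 - U_d^2$, and there is no reason for this remaining term to lie in $N(S)+L_I(S)$ one vertex at a time. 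You even flag this as ``where the specific geometry of $S$ enters'' and leave it as a hope; that hope is misplaced.

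The vanishing is a \emph{global} cancellation, not a termwise one. After replacing each $L(v)L^{+}(v)$ by $U_a^2+U_b^2-U_c^2-U_d^2$ using $Q(v)\in N(S)$, the paper does two more things that are missing from your proposal. First, it brings in the \emph{other} family of $4$-valent vertices, $v_4(I)$ (the singularizations coming from resolved crossings): for these $L(v)\in L_I(S)$ and $Q(v)\in N(S)$, hence $U_a^2+U_b^2-U_c^2-U_d^2 = L(v)L^{+}(v) - 2Q(v)$ lies in $N(S)+L_I(S)$, so adding these terms to the sum changes nothing modulo the ideal. This upgrades the sum from $v_4(D)$ to all of $v_4(S)$. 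Second, the sum over \emph{all} $4$-valent vertices of $S$ of $U_a^2+U_b^2-U_c^2-U_d^2$ vanishes identically, because each $U_i^2$ appears once with a $+$ sign (as an outgoing edge at one vertex) and once with a $-$ sign (as an incoming edge at another), so the sum telescopes to zero. Your proposal never enlarges the index set to include $v_4(I)$ and never invokes this edge-by-edge cancellation, so it has no mechanism to produce the final vanishing.
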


\begin{proof}

The factor $\mathcal{L}_{D}^{+}$ is a matrix factorization, meaning $d^{2}=\omega I$ where \[\omega = \sum_{v \in v_{4}(D)} L(v)L^{+}(v)\] For each $v \in v_{4}(D)$, the quadratic relation $Q(v) = U_{a}^{(v)}U_{b}^{(v)} - U_{c}^{(v)}U_{d}^{(v)}$ is an element of $N(S)$, so over $R/(N(S) + L_{I}(S))$, $\omega = \sum_{v \in v_{4}(D)} (U_{a}^{(v)})^{2}+(U_{b}^{(v)})^{2} - (U_{c}^{(v)})^{2}-(U_{d}^{(v)})^{2}$.

For each 4-valent vertex $v$ in $v_{4}(I)$, both $L(v)$ and $Q(v)$ are in $N(S) + L_{I}(S)$, so $(U_{a}^{(v)})^{2}+(U_{b}^{(v)})^{2} - (U_{c}^{(v)})^{2}-(U_{d}^{(v)})^{2}$ is as well. Thus, $\sum_{v \in v_{4}(I)} (U_{a}^{(v)})^{2}+(U_{b}^{(v)})^{2} - (U_{c}^{(v)})^{2}-(U_{d}^{(v)})^{2}$ is in $N(S) + L_{I}(S)$. So over $R/(N(S) + L_{I}(S))$,

\begin{equation*}
\begin{split}
\omega = & \sum_{v \in v_{4}(D)} (U_{a}^{(v)})^{2}+(U_{b}^{(v)})^{2} - (U_{c}^{(v)})^{2}-(U_{d}^{(v)})^{2} \\
= & \sum_{v \in v_{4}(S)} (U_{a}^{(v)})^{2}+(U_{b}^{(v)})^{2} - (U_{c}^{(v)})^{2}-(U_{d}^{(v)})^{2} =0 \\
\end{split}
\end{equation*}

\end{proof}

We write $I \lessdot J$ if $I$ and $J$ are two vertices in the cube that differ at a single crossing $c$, with $I(c)=0$ and $J(c)=1$. The edge maps consist of a set of maps $d_{I,J}$ for all $I \lessdot J$. Let $I\lessdot J$, and suppose $I$ and $J$ differ at a positive crossing. We define $\phi_{+}$ to be the unique $R$-module map
\[    \phi_{+}:   R/(N(D_{I})+L_{I}(D_{I})) \to R/(N(D_{J})+L_{J}(D_{J})) \]

\noindent
such that $\phi_{+}(1)=1$. The edge map $d_{I,J}: C^{-}_{2}(D_{I}) \to C^{-}_{2}(D_{J})$ is given by $\phi_{+} \otimes \id$.

Similarly, let $I\lessdot J$, and suppose $I$ and $J$ differ at a negative crossing. We define $\phi_{-}$ to be the unique $R$-module map
\[    \phi_{-}:   R/(D_{I})+L_{I}(D_{I})) \to R/(N(D_{J})+L_{J}(D_{J})) \]

\noindent
such that $\phi_{-}(1)=U_{b}^{(v)} - U_{c}^{(v)}$, where $v$ is the 4-valent vertex in $D_{J}$ corresponding to the singularization of the crossing $c$. The edge map $d_{I,J}$ is given by $\phi_{-} \otimes \id$.

In order to make the edges anti-commute instead of commute, an edge assignment is required. 

\begin{defn}

An edge assignment $\epsilon$ is a function from the edges of the cube to $\Z_{2}$ such that for each 2-dimensional face, the sum of the assignments of the four edges is $1 \in \Z_{2}$. If $I \lessdot J$, we denote the sign assignment on the edge from $I$ to $J$ by $\epsilon_{I,J}$.

\end{defn}

We define the edge map $d_{1}$ by 
\[  d_{1} = \sum_{I \lessdot J} (-1)^{\epsilon_{I,J}} d_{I,J} \]

\noindent
The total complex is given by 
\[ C_{2}^{-}(D) = \bigoplus_{I} C_{2}^{-}(D_{I}) \]

\noindent 
with differential $d_{0}+d_{1}$.

This can be generalized to a reduced homology theory as follows. Let $D$ be a diagram in $\mathcal{D^{R}}$ such that $\sm(D)$ is an $l$-component link, and choose points $\mathbf{p}=\{p_{1},...,p_{l}\}$ on edges $e_{i_{1}},...,e_{i_{l}}$ such that each $e_{i_{j}}$ lies on a different component of $\sm(D)$. 

\begin{defn}
 The \emph{reduced} complex $\widehat{C}_{2}(D)$ is defined to be the tensor product
 \[ \widehat{C}_{2}(D) =  C_{2}^{-}(D) \otimes \bigotimes_{j=1}^{l} R \xrightarrow{U_{i_{j}}} R \]
 
\end{defn}

\noindent
These complexes comes equipped with a grading a relative grading $\gr_{2}$, as well as the filtration induced by the cube. The height of a vertex $I$ in the cube is given by 
\[ |I| = \sum_{c} I(c) \] 
In the reduced complex, the maps $R \xrightarrow{U_{i_{j}}} R$ are defined to have cube grading $1$.

The grading $\gr_{2}$ is defined so that both the differential and multiplication by $U_{i}$ are homogeneous of degree $-2$.  The entire factor $\mathcal{L}_{D}^{+}$ lies in the same grading, so it suffices to specify the grading of the generator $1 \in R/(N(D_{I})+L_{I}(D_{I}))$. Let $|v_{4}(D_{I})|$ denote the number of 4-valent vertices in $D_{I}$. Then up to a grading shift (since it is a relative grading),
\[ \gr_{2}( 1 \in R/(N(D_{I})+L_{I}(D_{I}))) = |v_{4}(D_{I})| -|I| \]

\noindent
It can be checked that the differential is homogeneous of degree $-2$ with respect to this grading, so it is consistent with the relative version.

\begin{rem}

A more natural definition for $C_{2}^{-}(D_{I})$ would be to treat all of the 4-valent vertices the same by assigning the complex
\[ R/N(D_{I}) \otimes \mcL^{+} \]

\noindent
where
\[ \mathcal{L}^{+} = \bigotimes_{v \in v_{4}(D_{I})}  \xymatrix{R\ar@<1ex>[r]^{L(v)}&R\ar@<1ex>[l]^{L^{+}(v)}} \]

For diagrams in $\mathcal{D^{R}}$, this complex is quasi-isomorphic to the one we define. However, it is not clear how to define the edge maps on this larger complex. Essentially, it is the problem of lifting a map from the tensor product
$R/N(S) \otimes R/L_{I}(S)$ to the derived tensor product $R/N(S) \otimes ^{L} R/L_{I}(S)$. If this could be accomplished, then the construction would work for any braid diagram and not just those in $\mathcal{D^{R}}$.

\end{rem}

\section{Background}

In this section, we will give background on Khovanov homology, knot Floer homology, and the theory $\hfk_{2}$.

\subsection{Khovanov homology} We will start with background on Khovanov homology, including the minus version of the theory as defined in \cite{alishahi2017lee} and the pointed Khovanov homology of Baldwin, Levine, and Sarkar \cite{BLS}.

\subsubsection{The Khovanov complex}
 Let $L$ be a link in $S^{3}$ with diagram $D \subset \mathbb{R}^{2}$. Let $c(D)$ denote the crossings in $D$, and viewing  $D$ as a 4-valent graph, let $E=\{e_{1}, e_{2}, ..., e_{m}\}$ denote the edges of $D$. The \emph{ Khovanov edge ring} is defined to be 
\[  R_{X} := \mathbb{Q}[X_{1}, X_{2},...,X_{m}]/\{X_{1}^{2}=X_{2}^{2}=...=X_{m}^{2}=0 \}   \]

\noindent
with each variable $X_{i}$ corresponding to the edge $e_{i}$. 

While our construction is an oriented cube of resolutions complex, the Khovanov complex is an \emph{unoriented} cube of resolutions. Each crossing $c$ can be resolved in two ways, the 0-resolution and the 1-resolution (see Figure \ref{resolutions2}). For each $I: c(D) \to \{0,1\}$, let $D_{I}$ denote the diagram obtained by replacing the crossing $c$ with the $I(c)$-resolution. The diagram $D_{I}$ is a disjoint union of circles - denote the number of circles by $k_{I}$. The vertex $I$ determines an equivalence relation on $E$, where $e_{p} \sim_{I} e_{q}$ if $e_{p}$ and $e_{q}$ lie on the same component of $D_{I}$.

\begin{figure}[ht]
\vspace{4mm}
\centering
\begin{overpic}[width = .7\textwidth]{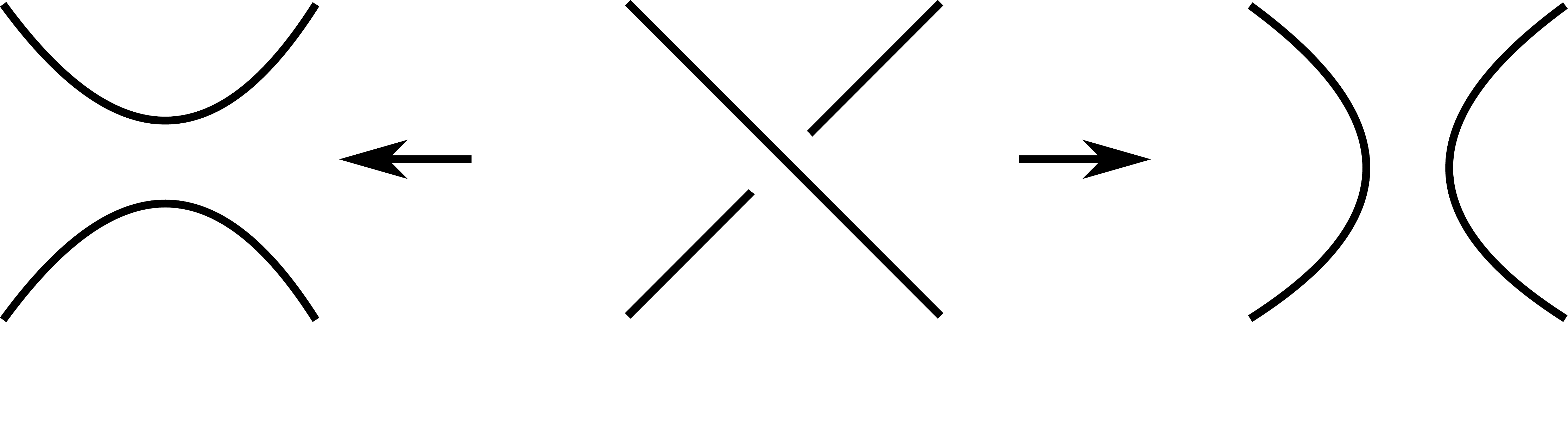}
\put(9.5, 19){$\bullet$}
\put(9.5, 13.75){$\bullet$}
\put(86.3,16.5){$\bullet$}
\put(91.7,16.5){$\bullet$}
\put(0,29){$e_{a}$}
\put(0,4){$e_{c}$}
\put(19, 29){$e_{b}$}
\put(19,4){$e_{d}$}
\put(0,-2){0-resolution}
\put(40,29){$e_a$}
\put(40,4){$e_c$}
\put(59,29){$e_b$}
\put(59,4){$e_d$}
\put(80,29){$e_a$}
\put(80,4){$e_c$}
\put(99,29){$e_b$}
\put(99,4){$e_d$}
\put(80,-2){1-resolution}
\end{overpic}
\caption{}\label{resolutions2}
\end{figure}

The module $C_{Kh}(D_{v})$ is defined to be a quotient of the ground ring:
\[ CKh(D_{I}) := R_{X} / \{X_{p}=X_{q} \text{ if } e_{p} \sim_{I} e_{q} \}  \]

There is a partial ordering on the vertices of the cube obtained by setting $I \le J$ if $I(c) \le J(c)$ for all $c$. As in the previous section, we will write  $I \lessdot J$ if  $I \le J$ and they differ at a single crossing, i.e. there is some $c_{0}$ for which $I(c_{0})=0$ and $J(c_{0})=1$, and $I(c)=J(c)$ for all $c \ne c_{0}$. Corresponding to each edge of the cube, i.e. a pair $(I \lessdot J)$, there is an embedded cobordism in $\mathbb{R}^{2} \times [0,1]$ from $D_{I}$ to $D_{J}$ constructed by attaching a 1-handle near the crossing $c$ where $I(c)<J(c)$. This cobordism is always a pair of pants, either going from one circle to two circles (when $k_{I}=k_{J}-1$) or from two circles to one circle (when $k_{I}=k_{J}+1$). We call the former a \emph{merge} cobordism and the latter a \emph{split} cobordism.

For each vertex $I$ of the cube, the complex $CKh(D_{I})$ is naturally isomorphic to $\mathcal{A}^{\otimes k_{I}}$, where $\mathcal{A}$ is the Frobenius algebra $\mathbb{Q}[X]/(X^{2}=0)$. The multiplication and comultiplication maps of $\mathcal{A}$ are:

%The homomorphisms corresponding to merge and split cobordisms are given using the multiplication and comultiplication maps of the Frobenius algebra. More precisely, if the edges at the corresponding crossing are $e_{i}$, $e_{j}$, $e_{k}$, $e_{l}$ as in Figure \ref{resolutions}, then:
\begin{displaymath}
m:\mathcal{A}\otimes_{\mathbb{Q}}\mathcal{A}\rightarrow\mathcal{A}:\begin{cases}
\begin{array}{lcc}
1 \mapsto 1&,&X_{1} \mapsto X\\
X_{1}X_{2} \mapsto 0&,&X_{2} \mapsto X
\end{array}
\end{cases}
\end{displaymath}
and 
\begin{displaymath}
\Delta:\mathcal{A}\rightarrow\mathcal{A}\otimes_{\mathbb{Q}}\mathcal{A}:\begin{cases}
1 \mapsto X_{1}+X_{2}\\
X \mapsto X_{1}X_{2}
\end{cases}
\end{displaymath}

%\begin{equation*}
%\begin{aligned}[c]
%& 1 \xmapsto{m} 1\\
%& X_{i} \xmapsto{m} X_{i}=X_{j}\\
%& X_{j} \xmapsto{m} X_{i}=X_{j}\\
%& X_{i}X_{j} \xmapsto{m} 0
%\end{aligned}
%\qquad \hspace{10mm}\qquad
%\begin{aligned}[c]
%&1 \xmapsto{\Delta} X_{j}+X_{k}\\
%&X_{i} \xmapsto{\Delta} X_{j}X_{k}\\
%\end{aligned}
%\end{equation*}

%\vspace{2mm}
%\noindent

%In terms of the quotients $R_{u}$ and $R_{v}$, the map $m$ is projection, while $\Delta$ is multiplication by $X_{j}+X_{k}$. Note that $X_{j}+X_{k}=X_{i}+X_{l}$.

The chain complex $CKh(D)$ is defined to be the direct sum of the $CKh(D_{I})$ over all vertices in the cube:

\[ CKh(D) := \bigoplus_{I} CKh(D_{I}) \]

The differential decomposes over the edges of the cube. When $I \lessdot J$ corresponds to a merge cobordism, define 
\[   \partial_{I,J}: CKh(D_{I}) \to CKh(D_{J})    \]

\noindent
to be the Frobenius multiplication map, and when $I \lessdot J$ corresponds to a split cobordism, define $\partial_{I,J}$ to be the comultiplication map. Note that as $R_{X}$-module maps, the map $m$ is projection, while $\Delta$ is multiplication by $X_{b}+X_{c}$, where $e_{a}$, $e_{b}$, $e_{c}$, $e_{d}$ are the edges at the corresponding crossing as in Figure \ref{resolutions2}. Note that $X_{b}+X_{c}=X_{a}+X_{d}$.

Let $\epsilon$ be a sign assignment on the cube. Then 
\[   \partial = \sum_{I \lessdot J} (-1)^{\epsilon_{I,J}} \partial_{I,J}   \]

The Khovanov complex is bigraded, with a homological grading and a quantum grading. Up to an overall grading shift, the homological grading is just the height in the cube. Setting $n_{+}$ equal to the number of positive crossings in $D$ and $n_{-}$ the number of negative crossings in $D$, we have 
\[ \mathrm{gr}_{h}(CKh(D_{I})) = |I|-n_{-}\]

For each vertex $v$ of the cube, the quantum grading of the generator $1$ of $CKh(D_{I})$ is given by 
\[ \mathrm{gr}_{q}(1 \in CKh(D_{I})) = n_{+}-2n_{-}+|I|+k_{I} \]

\noindent
and each variable $X_{i}$ has quantum grading $-2$. With respect to the bigrading $(\mathrm{gr}_{h}, \mathrm{gr}_{q})$, the differential $\partial$ has bigrading $(1,0)$. The Khovanov homology $Kh(L)$ is the homology of this complex
\[ Kh(L) = H_{*}(CKh(D), \partial)       \]

\noindent
Since the homology does not depend on the diagram $D$ for $L$, we write $Kh(L)$ instead of $Kh(D)$. 

\begin{rem}

The Khovanov complex typically uses an explicit sign assignment instead of an arbitrary one, but it is well-known that any two sign assignments give chain homotopy equivalent complexes, where the chain homotopy equivalence is obtained by changing the sign of basis elements at each vertex. The more general framework will make it easier to compare the Khovanov complex to our construction.

\end{rem}

\subsubsection{Various versions of Khovanov homology} Making simple modifications to the ground ring results in different versions of Khovanov homology. Let \[ R^{-}_{X} = \mathbb{Q}[X_{1}, X_{2},...,X_{m}]/\{X_{1}^{2}=X_{2}^{2}=...=X_{m}^{2} \}  \text{ and } \overline{R}_{X} = R^{-}_{X}/X_{1}=0\]

\begin{defn}

The minus complex $CKh^{-}(D)$ is obtained by repeating the construction of $CKh(D)$ over the ground ring $R^{-}_{X}$. The associated homology is denoted $Kh^{-}(L)$.

\end{defn}

\begin{defn}

The reduced complex $\overline{CKh}(D)$ is obtained by repeating the construction of $CKh(D)$ over the ground ring $\overline{R}_{X}$. The reduced homology is denoted $\overline{Kh}(L)$.

\end{defn}

Note that $\overline{CKh}(D)$ is quasi isomorphic to the complex \[CKh^{-}(D) \otimes (R^{-}_{X}\{-1,-1\} \xrightarrow{X_{1}} R^{-}_{X}\{0,1\})\]

\noindent
where $R^{-}_{X}\{i,j\}$ is a copy of the ground ring $R^{-}_{X}$ whose generator $1$ has bigrading $(i,j)$. This homology is not a link invariant, but rather an invariant of the link $L$ together with a marked component (the one on which $e_{1}$ lies), so the reduced homology is actually only an invariant for knots.

In \cite{BLS}, Baldwin, Levine, and Sarkar generalize the notion of reduced to give a link invariant called pointed Khovanov homology. Let $D$ be a diagram for a link $L$, and let $\mathbf{p}=\{p_{1},...,p_{k} \}$ be a set of basepoints on the edges of $D$, with $p_{j}$ on edge $e_{i_{j}}$. The data $(L, \mathbf{p})$ is called a \emph{pointed link}.

\begin{defn}

Let $D$ be a diagram for a pointed link $(L, \mathbf{p})$. The \emph{pointed Khovanov homology} $Kh(L, \mathbf{p})$ is defined to be the homology of the following complex:
\[ CKh(D, \mathbf{p}) = CKh^{-}(D) \otimes \bigotimes_{j=1}^{k}  (R_{X}^{-}\{-1,-1\}  \xrightarrow{X_{i_{j}}} R_{X}^{-}\{0,1\} )   \]

\end{defn}

\begin{rem}

Our convention of having the reducing maps be edge maps (i.e. have homological grading 1 instead of 0) is different from that of \cite{BLS}, but it fits better with the framework we will be working with.

\end{rem}

Let $(L, \mathbf{p})$ be an $l$-component pointed link with exactly one $p_{j}$ on each component. The homology $Kh(L, \mathbf{p})$ is a link invariant, i.e. it doesn't depend on the location of the basepoints. We will denote this homology $\widehat{Kh}(L)$.

\subsubsection{Moving to the ground ring $R$} Khovanov homology is a module over $\Q[X_{1},...,X_{m}]$ with various relations, depending on the version. However, for all of them we can simply view it as a $\Q[X_{1},...,X_{m}]$-module. In order to relate Khovanov homology to $\widehat{C}_{2}(D)$, we will need to view it instead as a module over $R=\Q[U_{1},...,U_{m}]$. 

Suppose $D$ is a braid diagram on $N$ strands, numbered $1,...,N$ from left to right. For $i=1,...,m$ let $\mathsf{str}(i)$ denote the strand on which $e_{i}$ lies. We define the action of $U_{i}$ by 
\[ U_{i} = (-1)^{\str(i)} X_{i} \]

\noindent
Then for each resolution $D_{I}$, the complex $CKh^{-}(D_{I})$ is given by 
\[ CKh^{-}(D_{I}) = R/ \{U_{i}^{2}=U_{j}^{2} \text{ for all }i,j,\text{ and } U_{i}=U^{\str(i)-\str(j)}_{j} \text{ if } e_{i} \sim_{I} e_{j} \} \]

\noindent
The merge map is multiplication, while the split map is multiplication by $(-1)^{\str(b)}(U_{b} - U_{c})$. The factor $(-1)^{\str(b)}$ can be absorbed into the sign assignment; for each edge $I \lessdot J$, define 
\[ \epsilon_{I,J}' = \begin{cases} 
      \epsilon_{I,J} & \text{ if the edge is a merge map} \\
      \epsilon_{I,J}+\str(b) & \text{ if the edge is a split map}
   \end{cases} \]
   
\noindent
Then the differential can be written
\[   \partial = \sum_{I \lessdot J} (-1)^{\epsilon'_{I,J}} \partial'_{I,J}   \]

\noindent
where $\partial'_{I,J}$ is projection for a merge map and multiplication by $U_{b} - U_{c}$ for a split map.

As with Khovanov homology, the reduced complex is obtained by setting $U_{1}=0$, and the unreduced complex is obtained by setting $U_{1}^{2}=0$. The pointed complex is given by 
\[ CKh(D, \mathbf{p}) = CKh^{-}(D) \otimes \bigotimes_{j=1}^{k}  (R \xrightarrow{U_{i_{j}}} R)   \]

\noindent
where the tensor product is taken over $R$.

\subsection{Knot Floer homology}
We will assume that the reader has a basic knowledge of knot Floer homology -- for background see \cite{OS1}, or \cite{manolescu2014introduction} for a survey paper.

Knot Floer homology is a Lagrangian Floer theory where the symplectic manifold and the two Lagrangians are constructed from a Heegaard diagram for the link $L$. When $L$ is a link in $S^{3}$, this Heegaard diagram can always be on $\Sigma = S^{2}$. We will limit our discussion to these planar Heegaard diagrams.

\subsubsection{Planar Heegaard Diagrams} 

\begin{defn}

A (multipointed) planar Heegaard diagram $\mathcal{H}$ for a link $L$ in $S^{3}$ is a tuple $(\Sigma, \alpha, \beta, \bf{O}, \bf{X})$ where 

$\bullet$ $\Sigma = S^{2}$ 

$\bullet$ $\alpha$ (resp. $\beta$) is a set of disjoint embedded circles $\alpha_{1},..., \alpha_{k-1}$ (resp. $\beta_{1},...,\beta_{k-1}$) such that the $\{\alpha_{i}\}$ and $\{ \beta_{i} \}$ intersect transversely.

$\bullet$ $\bf{O}$ (resp. $\bf{X})$ is a set of basepoints $O_{1},...,O_{k}$ (resp. $X_{1},...,X_{k}$) such that each component of $\Sigma  \backslash \alpha$ contains both an $\bf{O}$ basepoint and an $\bf{X}$ basepoint, and similarly for each component of $\Sigma  \backslash \beta$.

\end{defn}

The Heegaard diagram $\cH$ determines a $k$-bridge presentation for $L$ as follows: each $X$ basepoint can be connected to a unique $O$ basepoint by an arc in $S^{2}$ that doesn't cross any $\alpha$ circles -- we will call this an $\alpha$-\emph{side arc}. Similarly, each $O$ basepoint can be connected to a unique $X$ basepoint by a $\beta$-\emph{side arc}, or an arc that doesn't cross any $\beta$ circles. If we join these arcs together and specify that the $\alpha$-side arcs pass under the $\beta$-side arcs, this gives a $k$-bridge presentation for a link $L$. It is oriented so that each $\alpha$-side arc is oriented from $O$ to $X$ and each $\beta$-side arc is oriented from $X$ to $O$.

An important piece of data that we'll need to keep track of for this construction is which two $O$ basepoints are connected to each $X$ basepoint. Let $O_{a(i)}$ be the basepoint connected to $X_{i}$ on the $\alpha$ side and $O_{b(i)}$ the basepoint connected to $X_{i}$ on the $\beta$ side.

\begin{defn}

A \emph{punctured} Heegaard diagram for a knot or link is a Heegaard diagram with three extra pieces of data: $p, \alpha_{g+k}$, and $\beta_{g+k}$. The point $p \in \Sigma$ can be viewed as a puncture on $S^{2}$, and the extra $\alpha$ an $\beta$ circles are required to separate the $\bf{O}$ and $\bf{X}$ basepoints from $p$. 

\end{defn}

\begin{figure}[h!]
\tiny
\begin{subfigure}{.5\textwidth}
 \centering
\def\svgwidth{5cm}
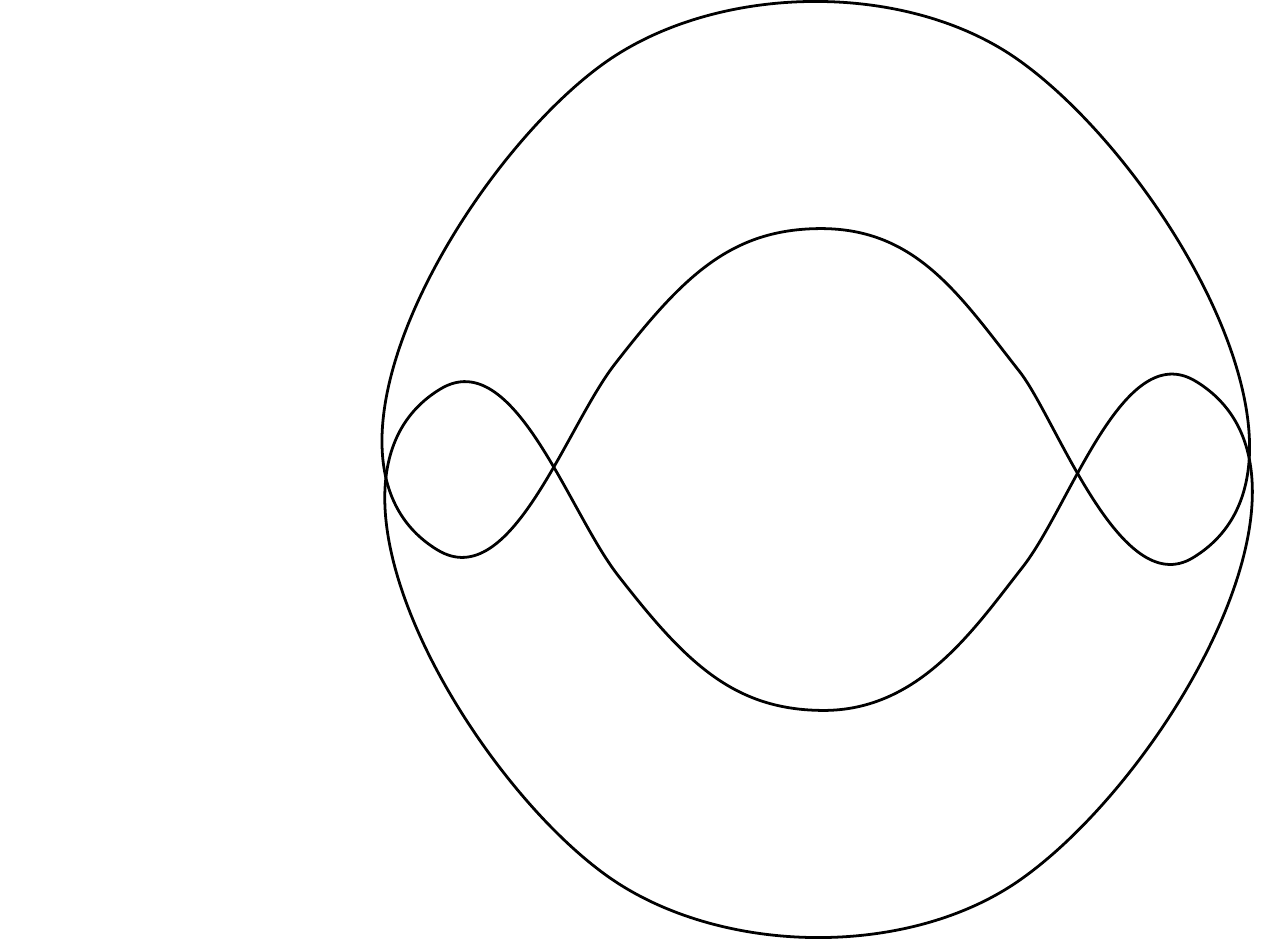
\end{subfigure}%
\begin{subfigure}{.5\textwidth}
  \centering
  \def\svgwidth{5cm}
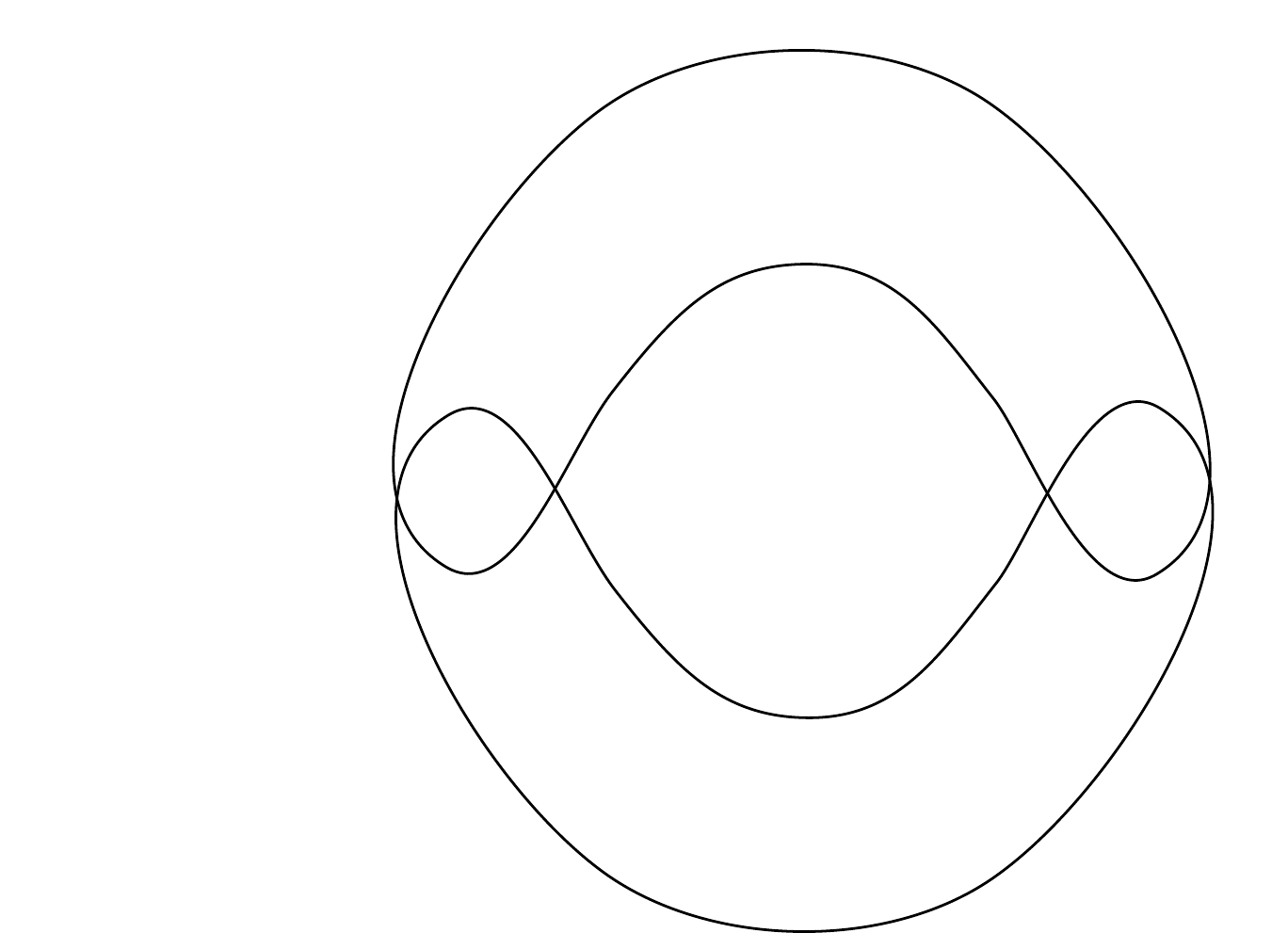
\end{subfigure}
\caption{Unpunctured (left) and punctured (right) Heegaard diagrams for the Hopf link}\label{something}
\end{figure}

\begin{rem} \label{remark1} 

A punctured Heegaard diagram for $L$  can be viewed as a Heegaard diagram for a the link $L\sqcup (\unknot)$ for which the extra unknotted component contains only the basepoints $O_{0}$ and $X_{0}$, and $O_{0}=X_{0}$. The point $p$ is recording the location of these two basepoints. This puncture comes up in \cite{BLS} as well when they are moving from a reduced to an unreduced theory.

\end{rem}

\subsubsection{Knot Floer Chain Complexes} All of our complexes will be defined over $\Q$. Suppose $\cH$ is an unpunctured Heegaard diagram. The corresponding symplectic manifold is $\Sym_{k-1}(\Sigma)$, with Lagrangians $\T_{\alpha}=\alpha_{1} \times \alpha_{2} \times... \times \alpha_{k-1}$ and $\T_{\beta} = \beta_{1} \times \beta_{2} \times... \times \beta_{k-1}$. 

\begin{defn}

The \emph{minus complex} for knot Floer homology $\cfk^{-}(\cH)$ is freely generated over $\Q[U_{1},...,U_{k}]$ by the intersection points $\T_{\alpha} \cap \T_{\beta}$. The differential is given by 

\[  \partial^{-}(x) = \sum_{y \in \mathbb{T_{\alpha}} \cap \mathbb{T_{\beta}}} \sum_{\substack{\phi \in \pi_{2}(x,y) \\ \mu(\phi)=1 \\ n_{\bf{X}}(\phi)=0}}    \# \widehat{\mathcal{M}}(\phi) U_{1}^{n_{O_{1}}(\phi)}\cdot \cdot \cdot U_{k}^{n_{O_{k}}(\phi)} y \]

\end{defn}

\noindent
In this definition, $n_{q}(\phi)$ counts the multiplicity of the pseudo-holomorphic curve $\phi$ at a point $q \in \Sigma$, $\mu$ is the Maslov index, and $ \#  \widehat{\mathcal{M}}(\phi)$ is a signed count of the number of curves in the moduli space $\mathcal{M}(\phi)$. 

In order to define the differential with signs, we need to choose a trivialization of the determinant line bundle $det(\phi)$ for each class $\phi$ subject to certain compatibility conditions. Such a choice determines an orientation on each moduli space $\widehat{\mathcal{M}}(\phi)$ allowing us to count the discs with signs, and the compatibility conditions guarantee that $(\partial^{-})^{2}=0$. All of these choices together make up a \emph{system of orientations}. It was shown in \cite{Akram} that one can always find a system of orientations in which the $\alpha$-degenerations come with positive sign and the $\beta$-degenerations come with negative sign -- we will always work with such a system of orientations. Any two systems of orientations with this property are strongly equivalent, so they have the same chain homotopy type \cite{SucharitSign}.

It was shown by Oszv\'{a}th and Szab\'{o} that for any link $L$, the chain homotopy type of $\cfk^{-}(\cH)$ does not depend on the choice of Heegaard diagram for $L$. The homology of this complex is denoted $\hfk^{-}(L)$ and by a minor abuse of notation, the chain complex is denoted $\cfk^{-}(L)$.

The complex comes equipped with two gradings, the Maslov grading $M$ and the Alexander grading $A$. In this paper we will only define them as relative gradings, though absolute versions can be pinned down with some extra work. If $\phi \in \pi_{2}(x,y)$, then
\[ M(x) - M(y) = \mu(\phi) - 2n_{\bf{O}}(\phi) \hspace{3mm} \text{ and }\hspace{3mm}A(x) - A(y) = n_{\bf{X}}(\phi) - n_{\bf{O}}(\phi) \]

\noindent
Multiplication by any of the $U_{i}$ lowers Maslov grading by 2 and lowers Alexander grading by 1. It follows from these definitions that the differential $\partial^{-}$ drops Maslov grading by 1 and preserves Alexander grading.

The same theory can be defined for punctured Heegaard diagrams, with the restriction that we don't count discs which pass through the puncture $p$:

\[  \partial^{-}(x) = \sum_{y \in \mathbb{T_{\alpha}} \cap \mathbb{T_{\beta}}} \sum_{\substack{\phi \in \pi_{2}(x,y) \\ \mu(\phi)=1 \\ n_{\bf{X}}(\phi)=0 \\ n_{p}=0} }   \# \widehat{\mathcal{M}}(\phi) U_{1}^{n_{O_{1}}(\phi)}\cdot \cdot \cdot U_{k}^{n_{O_{k}}(\phi)} y \]

\begin{defn}

Let $L$ be a link in $S^{3}$. The \emph{unreduced} knot Floer complex is given by $\cfk(L) = \cfk(\cH)$, where $\cH$ is a punctured Heegaard diagram for $L$. The unreduced homology is denoted $\hfk(L)$.

\end{defn}

Let $C\{i,j\}$ denote the complex $C$ shifted in Maslov grading by $i$ and Alexander grading by $j$.

\begin{lem}\label{lem2.7}

Let $\cH$ and $\cH'$ be a punctured and an unpunctured Heegaard diagram, respectively, for the same link $L$. Then 

\[ \hfk(\cH) \cong \hfk(\cH') \otimes (R\{0,0\} \oplus R\{-1,0\}) \]

\noindent
where the tensor product is taken over $R$.

\end{lem}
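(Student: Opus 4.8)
The plan is to compare the two complexes $\cfk(\cH)$ and $\cfk(\cH')$ directly, using Remark \ref{remark1} to reinterpret the puncture. Recall from that remark that a punctured Heegaard diagram $\cH$ for $L$ is the same data as an (unpunctured) Heegaard diagram for the split link $L \sqcup U$, where $U$ is an unknot whose pair of basepoints $O_0 = X_0$ sits exactly at the puncture point $p$, so that no holomorphic disc is permitted to pass through $p$. Thus it suffices to understand how $\cfk$ behaves under disjoint union with an unknot carrying a single basepoint (rather than the usual two). First I would set up a Heegaard diagram $\cH''$ for $L \sqcup U$ obtained from $\cH'$ by adding, in a small disc disjoint from $\cH'$, one new $\alpha$-circle $\alpha_0$ and one new $\beta$-circle $\beta_0$ meeting transversely in two points, with the basepoint $p$ (playing the role of $O_0 = X_0$) placed appropriately so that the two intersection points of $\alpha_0 \cap \beta_0$ have Maslov gradings differing by $1$ and the same Alexander grading. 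Because the new circles are geometrically split from the rest of the diagram, every generator of $\cfk(\cH'')$ is a pair $(x, \theta)$ with $x \in \T_\alpha^{\cH'} \cap \T_\beta^{\cH'}$ and $\theta \in \{\theta^+, \theta^-\}$ the two intersection points in the new region.

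The key step is a disc-counting argument in $\Sym_k(\Sigma)$. Any homotopy class $\phi \in \pi_2((x,\theta),(y,\theta'))$ decomposes as a product of a class in the "old" region and a class in the "new" region, since $\Sigma$ is a connected sum of the two pieces along a region containing none of the relevant basepoints except $p$. The constraint $n_p(\phi) = 0$ forces the new-region component to be either constant or a disc avoiding $p$; one checks that the only index-$1$ classes avoiding $p$ in the two-point region are constant (a small bigon would necessarily cover $p$). Hence the differential is "diagonal": $\partial^-(x,\theta) = (\partial^- x, \theta)$, i.e. $\cfk(\cH'') \cong \cfk(\cH') \otimes V$ where $V$ is the free $R$-module on $\theta^+, \theta^-$ with zero differential. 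Computing the gradings of $\theta^\pm$ relative to one another — using $M(\theta^+) - M(\theta^-) = \mu(\phi_0) - 2n_{\mathbf O}(\phi_0)$ and the Alexander analogue for the bigon class $\phi_0$ connecting them, together with the placement of $p$ — gives the shift: one of them sits in bigrading $\{0,0\}$ and the other in $\{-1,0\}$ (up to the overall relative indeterminacy). This yields $\hfk(\cH'') \cong \hfk(\cH') \otimes (R\{0,0\} \oplus R\{-1,0\})$. Finally, invoking invariance of $\cfk(\cH)$ under change of punctured Heegaard diagram (the punctured analogue of Ozsv\'ath--Szab\'o invariance), we identify $\hfk(\cH)$ with $\hfk(\cH'')$ and conclude.

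The main obstacle I anticipate is the disc-counting step, specifically verifying rigorously that no nonconstant index-$1$ holomorphic disc in the new region avoids $p$, and more generally that the connected-sum/degeneration argument correctly forbids mixed classes with $n_p = 0$ — this is where one must be careful about which region of $\Sigma \setminus (\alpha \cup \beta)$ contains $p$ and how the stabilization circles are positioned. A secondary subtlety is pinning down the relative Maslov and Alexander grading shift of $\theta^\pm$ consistently with the stated $\{0,0\} \oplus \{-1,0\}$, which depends on the precise convention for where $p$ lies relative to $\alpha_0$ and $\beta_0$; since the lemma is only asserted up to the relative grading already in play, this amounts to a bookkeeping check rather than a deep point. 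Everything else — the $R$-module structure and the tensor decomposition — follows formally once the diagonal form of the differential is established.
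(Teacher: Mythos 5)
Your stabilization approach is the right one, and the grading bookkeeping, the tensor structure of the generators, and the final invariance step are all fine. The genuine gap is in the disc-counting step. The claim that "the only index-$1$ classes avoiding $p$ in the two-point region are constant (a small bigon would necessarily cover $p$)" is false, and no placement of $\alpha_0, \beta_0, p$ can fix it. Two transverse circles on $S^{2}$ meeting at $\theta^{\pm}$ always bound four regions: the lens (inside both), the outside (outside both, where all the basepoints of $\cH'$ live), and two crescents (inside exactly one of $\alpha_0, \beta_0$). Since both $\alpha_0$ and $\beta_0$ must separate $p$ from the other basepoints, $p$ lies in the lens, while the two crescents contain neither $p$ nor any basepoint. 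Each crescent is a genuine bigon from $\theta^{-}$ to $\theta^{+}$, which is the differential direction (using the lens bigon $\phi_{0} \in \pi_{2}(\theta^{+},\theta^{-})$ with $\mu(\phi_{0})=1$ and $n_{p}(\phi_{0})=1$, the punctured grading formula gives $M(\theta^{-}) = M(\theta^{+}) + 1$), and each has a unique holomorphic representative. As written, your proof does not rule these out.

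What saves the lemma is that the two crescent contributions cancel, and this is a sign issue rather than a positioning issue. It relies on the coherent orientation the paper fixes, in which $\alpha$-boundary degenerations count $+1$ and $\beta$-boundary degenerations count $-1$: the disc inside $\alpha_{0}$ is the lens plus one crescent, the disc inside $\beta_{0}$ is the lens plus the other, and matching the two-story ends against the $\alpha$- and $\beta$-boundary degenerations of the corresponding Maslov-index-$2$ moduli spaces forces the two crescent counts to have opposite signs (this is essentially the quasi-stabilization analysis treated carefully in \cite{Zemke}). Without such a step, your proposal is consistent with $\partial\theta^{-} = \pm 2\,\theta^{+}$, which would kill $\theta^{\pm}$ entirely and give $\hfk(\cH) \cong \hfk(\cH')$, contradicting the claimed rank-two tensor factor. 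That sign cancellation is the missing ingredient; the rest of your outline is sound.
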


There is also a reduced theory:

\begin{defn}

Let $L$ be a $l$-component link, and suppose we order the $O_{i}$ such that $O_{1},...,O_{l}$ each lie on a different component. Then the reduced complex $\widehat{\cfk}(L)$ is given by 

\[ \widehat{\cfk}(L) = \cfk^{-}(L) \otimes \left( \bigotimes_{i=1}^{l} R \xrightarrow{U_{i}} R \right) \]

\noindent
The homology of this complex, i.e. the reduced knot Floer homology, is denoted $\widehat{\hfk}(L)$.

\end{defn}

This homology is well-defined because if $O_{i}$ and $O_{j}$ lie on the same component of $L$, then the actions of $U_{i}$ and $U_{j}$ on $\cfk^{-}(L)$ are homotopic. This is true on $\cfk(L)$ as well.

\subsection{The oriented cube of resolutions for knot Floer homology} \label{cube}

In \cite{Szabo}, Oszv\'{a}th and Szab\'{o} constructed an oriented cube of resolutions for knot Floer homology. This complex was defined with twisted coefficients, i.e. over $R[t^{-1}, t ]]$, in order to make it particularly well-behaved. However, the additional $t$ variables make it harder to compare the homology to the quantum invariants, so in this section we will describe the $t=1$ specialization defined by Manolescu in \cite{Manolescu}.

Let $D$ be a decorated braid diagram for a link $L$, and let $\cH$ be the Heegaard diagram for $L$ which has the diagram in Figure \ref{HDCrossing} at each crossing. We also allow the diagram $D$ to have bivalent vertices -- the Heegaard diagram at each bivalent vertex is shown in Figure \ref{bivalentHD}. The marked edge will be treated as a special bivalent vertex splitting the edge into two edges, which will always be labeled $e_{1}$ and $e_{2}$. The Heegaard diagram at the marked edge is shown in Figure \ref{MarkedEdgeHD}.

 Knot Floer homology is typically defined using just the information in a Heegaard diagram, but there is no known diagram for which the associated chain complex splits as an oriented cube of resolutions. However, using the diagrams in Figure \ref{crossings} together with some algebraic techniques, there is larger complex $C^{-}_{F}(D)$ which is chain homotopy equivalent to $\mathit{CFK}^{-}(L)$ such that $C^{-}_{F}(D)$ does split into a cube of resolutions.

\begin{figure}[h!]
\tiny
\begin{subfigure}{.5\textwidth}
 \centering
   \begin{overpic}[width=.9\textwidth]{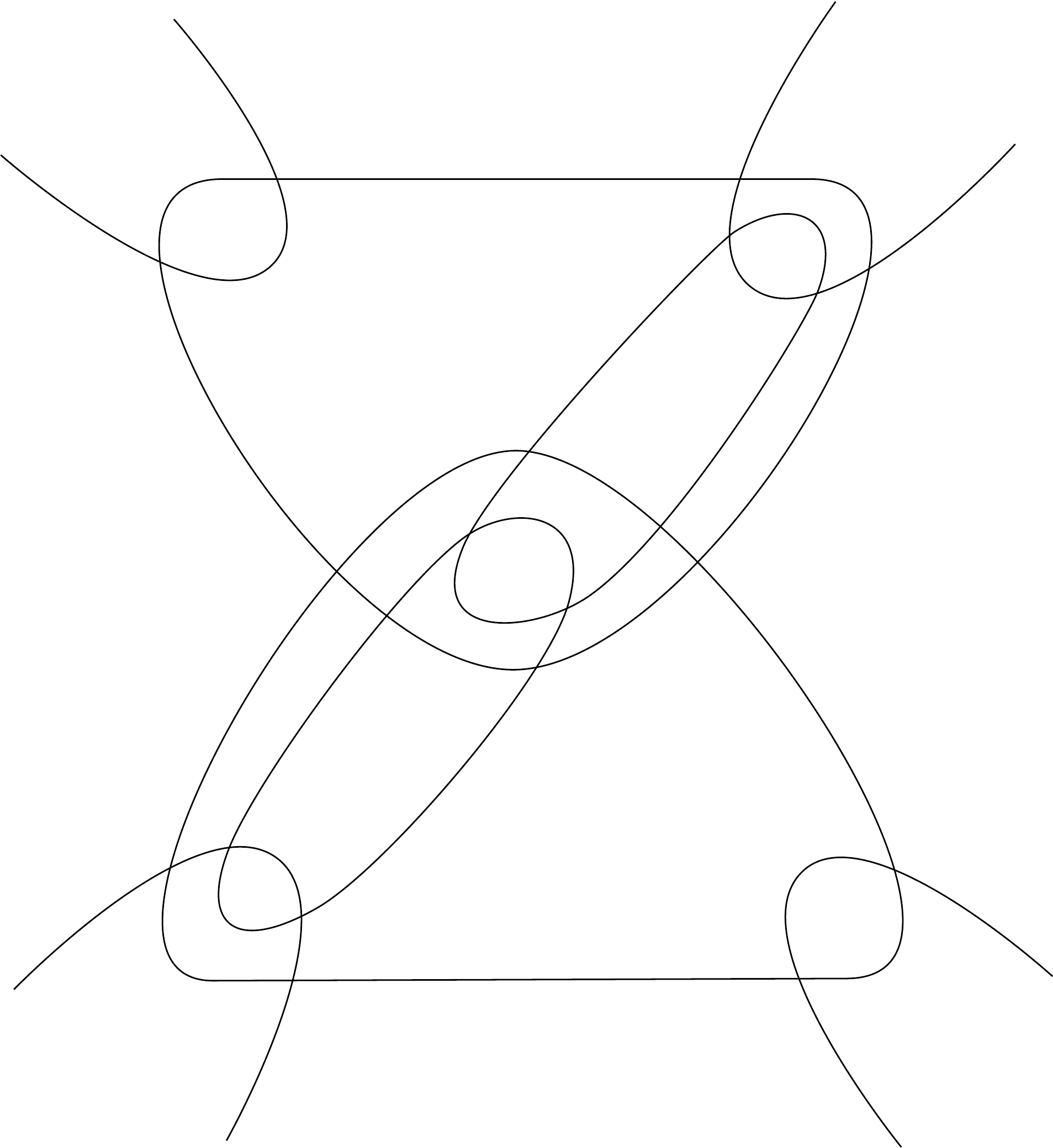}
   \put(17.6,79){$O_{a}$}
   \put(66,77){$O_{b}$}
   \put(43,49.7){$X$}
   \put(53,47.2){$X$}
   \put(21,21.6){$O_{c}$}
   \put(71,19){$O_{d}$}
   \put(43,86){$\alpha_{1}$}
   \put(50,70){$\alpha_{2}$}
   \put(45,12){$\beta_{1}$}
   \put(40,30.4){$\beta_{2}$}
   \end{overpic}
  \caption{Positive crossing}  \label{fig2a}
\end{subfigure}%
\begin{subfigure}{.5\textwidth}
  \centering
   \begin{overpic}[width=.9\textwidth]{initial_diagram.pdf}
   \put(17.6,79){$O_{a}$}
   \put(66,77){$O_{b}$}
   \put(43,49.7){$X$}
   \put(34,52){$X$}
   \put(21,21.6){$O_{c}$}
   \put(71,19){$O_{d}$}
   \put(43,86){$\alpha_{1}$}
   \put(50,70){$\alpha_{2}$}
   \put(45,12){$\beta_{1}$}
   \put(40,30.4){$\beta_{2}$}
   \end{overpic}
  \hspace{7mm} \caption{Negative crossing}
  \label{fig2b}
\end{subfigure}
\caption{The Heegaard diagrams at positive and negative crossings}\label{HDCrossing}
\end{figure}

\begin{figure}[h!]
\scriptsize
\begin{subfigure}{.5\textwidth}
 \centering
\def\svgwidth{1.5cm}
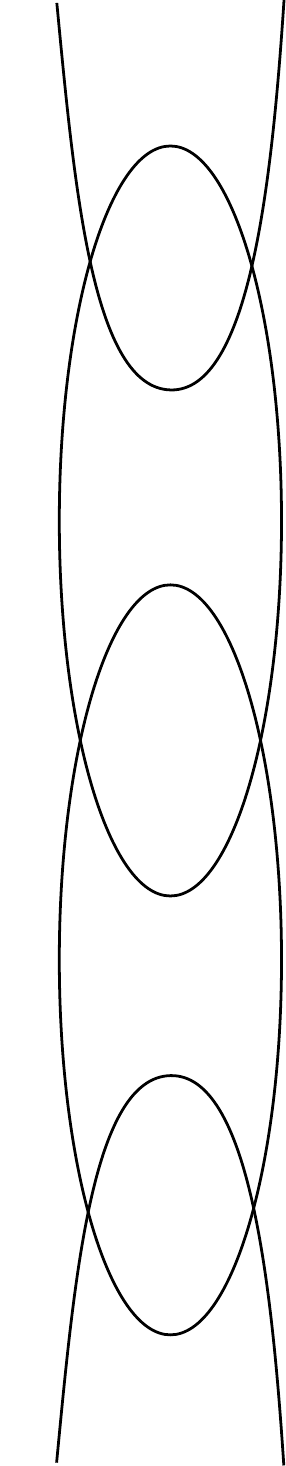
  \caption{A bivalent vertex}  \label{bivalentHD}
\end{subfigure}%
\begin{subfigure}{.5\textwidth}
  \centering
\def\svgwidth{1.5cm}
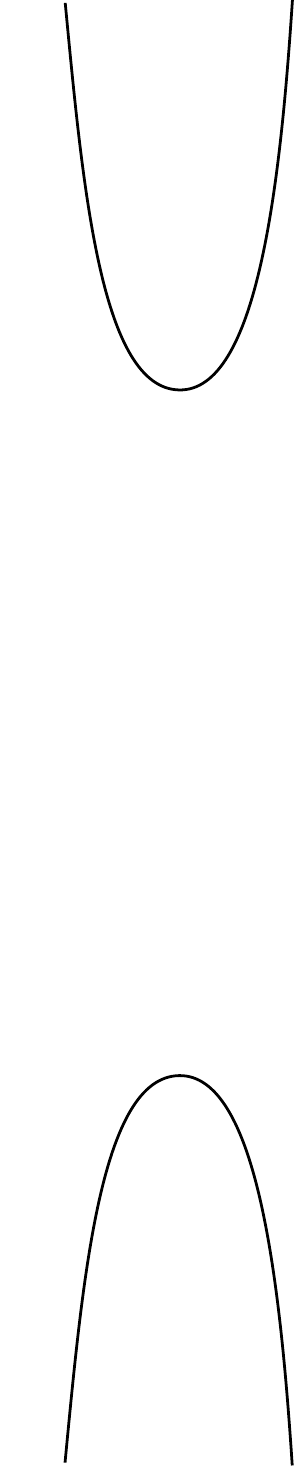
\caption{The decorated edge}
  \label{MarkedEdgeHD}
\end{subfigure}
\caption{The Heegaard diagrams at a bivalent vertex and the decorated edge}
\end{figure}

Note that the diagrams for the two crossings use the same $\alpha$ and $\beta$ curves, but differ in the location of one of the $X$ basepoints. The diagram in Figure \ref{HDCrossing10} is thus able to include the data from both the positive and the negative crossings, as well as the oriented smoothing. More specifically, taking the $A_{0}$ and $A^{+}$ labels to denote $X$ basepoints gives a positive crossing, taking $A_{0}$ and $A^{-}$ to denote $X$ basepoints gives a negative crossing, and taking the two $B$ labels to be $X$ basepoints gives the oriented smoothing. Given a braid diagram $D$ with a choice of crossing $c$, let $D_{+}$ denote the diagram where that crossing is a positive crossing, $D_{-}$ the diagram where it is a negative crossing, $D_{s}$ the oriented resolution, and $D_{\x}$ the singularization.

\begin{figure}[h!]
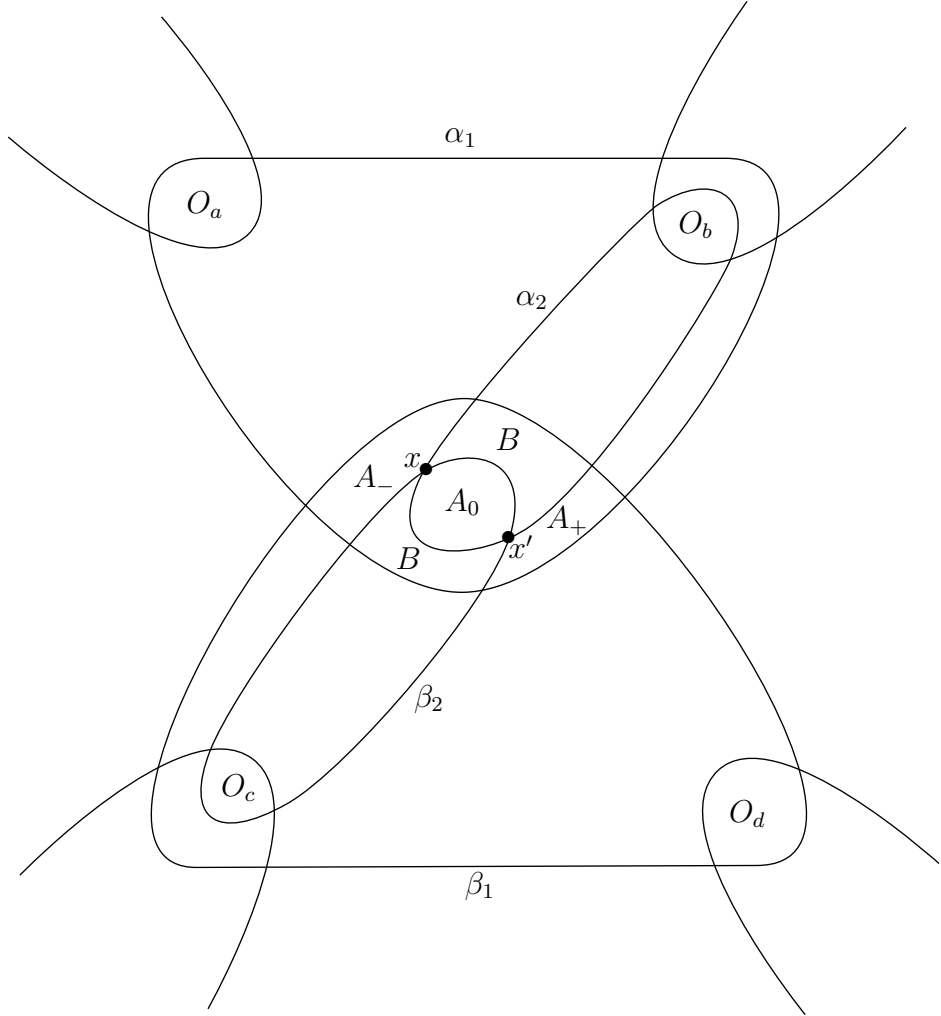
 

 \centering
   \begin{overpic}[width=.75\textwidth]{initial_diagram.pdf}
   \put(39,54){$x$}
   \put(40.4,53){$\bullet$}
   \put(49.3,45){$x'$}
   \put(48.5,46.3){$\bullet$}
   \put(17.6,79){$O_{a}$}
   \put(66,77){$O_{b}$}
   \put(48,55.7){$B$}
   \put(43,49.7){$A_{0}$}
   \put(38.2,44){$B$}
   \put(34,52){$A_{-}$}
   \put(53,48){$A_{+}$}
   \put(21,21.6){$O_{c}$}
   \put(71,19){$O_{d}$}
   \put(43,86){$\alpha_{1}$}
   \put(50,70){$\alpha_{2}$}
   \put(45,12){$\beta_{1}$}
   \put(40,30.4){$\beta_{2}$}
   \end{overpic}
\caption{The Heegaard diagram at a crossing}\label{HDCrossing10}
\end{figure}

\subsubsection{The negative crossing} We will start by defining the cube of resolutions for a negative crossing. Since there are $X$ basepoints at $A_{0}$ and $A^{-}$, the intersection point $x$ becomes special, as it induces a filtration on the complex. Let $\X$ denote the component of the knot Floer complex with generators containing the $x$ intersection point, and let $\Y$ denote the component of the complex with generators which do not contain $x$. If we let $\Phi_{B}$ denote those differentials with multiplicity $0$ at $A_{0}$ and $A^{-}$ and multiplicity $1$ at one of the $B$'s, then 
\[ \mathit{CFK}^{-}(D_{-}) \cong \Y \xrightarrow{\hspace{3mm}\Phi_{B}\hspace{3mm}} \X \]

The cube complex $C^{-}_{F}(D_{-})$ is defined to be the complex in Figure \ref{negcomplex}, where $\Phi_{A^{-}}$ counts discs with multiplicity $0$ at the $B$ basepoints and $1$ at $A_{0}$ or $A^{-}$ and $\Phi_{A^{-}B}$ counts discs with multiplicity $1$ at one of the $B$ basepoints and multiplicity $1$ at $A_{0}$ or $A^{-}$ (and multiplicity $0$ at the other in both cases). The proof that $d^{2}=0$ comes from counting Maslov index $2$ degenerations (see \cite{Szabo}, p. 33).

\begin{figure}[!h]
\centering
\begin{tikzpicture}
  \matrix (m) [matrix of math nodes,row sep=5em,column sep=6em,minimum width=2em] {
     \X & \X \\
     \Y & \X \\};
  \path[-stealth]
    (m-1-1) edge node [left] {$\Phi_{A^{-}}$} (m-2-1)
            edge node [above] {$1$} (m-1-2)
            edge node [right]{$\Phi_{A^{-}B}$} (m-2-2)
    (m-2-1.east|-m-2-2) edge node [above] {$\Phi_{B}$} (m-2-2)
    (m-1-2) edge node [right] {$U_{a}+U_{b}-U_{c}-U_{d}$ \hspace{ 15mm}} (m-2-2);
\end{tikzpicture}
\caption{Complex for the negative crossing} \label{negcomplex}
\end{figure}

Note that $C_{F}(D_{-})$ admits two filtrations - a horizontal filtration and a vertical filtration (the filtrations induced by the horizontal and vertical coordinates, respectively). Using the vertical filtration, we see that 

\[C^{-}_{F}(D_{-}) \cong \mathit{CFK}^{-}(D_{-}) \]

\noindent
where the chain homotopy equivalence comes from contracting the quotient complex
\[ \X \xrightarrow{\hspace{3mm}1\hspace{3mm}} \X \]

The interesting filtration on this complex is the horizontal filtration - this filtration corresponds to height in the cube. The complex in the lower filtration level is given by 
\[ \Y \xrightarrow{\hspace{3mm}\Phi_{A^{-}}\hspace{3mm}} \X \]

\noindent
We define $C^{-}_{F}(D_{s})$ to be this complex. Note that this is exactly the complex obtained by placing $X$ basepoints at the $B$ markings, so it gives the knot Floer homology of the oriented smoothing, $\mathit{CFK}^{-}(D_{s})$. 

The complex in the higher filtration level is given by 
\[ \X \xrightarrow{\hspace{1mm}U_{a}+U_{b}-U_{c}-U_{d}\hspace{1mm}} \X \]

\noindent
The chain complex $\X$ is the knot Floer complex of the Heegaard diagram complex obtained by deleting $\alpha_{2}$ and $\beta_{2}$ from the Heegaard diagram, giving the diagram in Figure \ref{singdiagram}. This is the Heegaard diagram used to describe the singularization $D_{\x}$. The differential is analogous to standard Heegaard diagrams, with the $XX$ basepoint being blocked

\[  \partial^{-}(x) = \sum_{y \in \mathbb{T_{\alpha}} \cap \mathbb{T_{\beta}}} \sum_{\substack{\phi \in \pi_{2}(x,y) \\ \mu(\phi)=1 \\ n_{\bf{X}}(\phi)=0 \\   n_{\bf{XX}}(\phi)=0} }   \# \widehat{\mathcal{M}}(\phi) U_{1}^{n_{w_{1}}(\phi)}\cdot \cdot \cdot U_{k}^{n_{w_{k}}(\phi)} y \]

Since the linear term $U_{a}+U_{b}-U_{c}-U_{d}$ depends only on the four edges adjacent to the crossing and not the sign of the crossing, we define 
\[ C^{-}_{F}(D_{\x}) = \X \xrightarrow{\hspace{1mm}U_{a}+U_{b}-U_{c}-U_{d}\hspace{1mm}} \X \]

\noindent
This gives the cube of resolutions decomposition 
\[ C^{-}_{F}(D_{-}) = C_{F}(D_{s}) \xrightarrow{\hspace{10mm}} C_{F}(D_{\x}) \]

\begin{figure}[!h]
\centering
   \begin{overpic}[width=.7\textwidth]{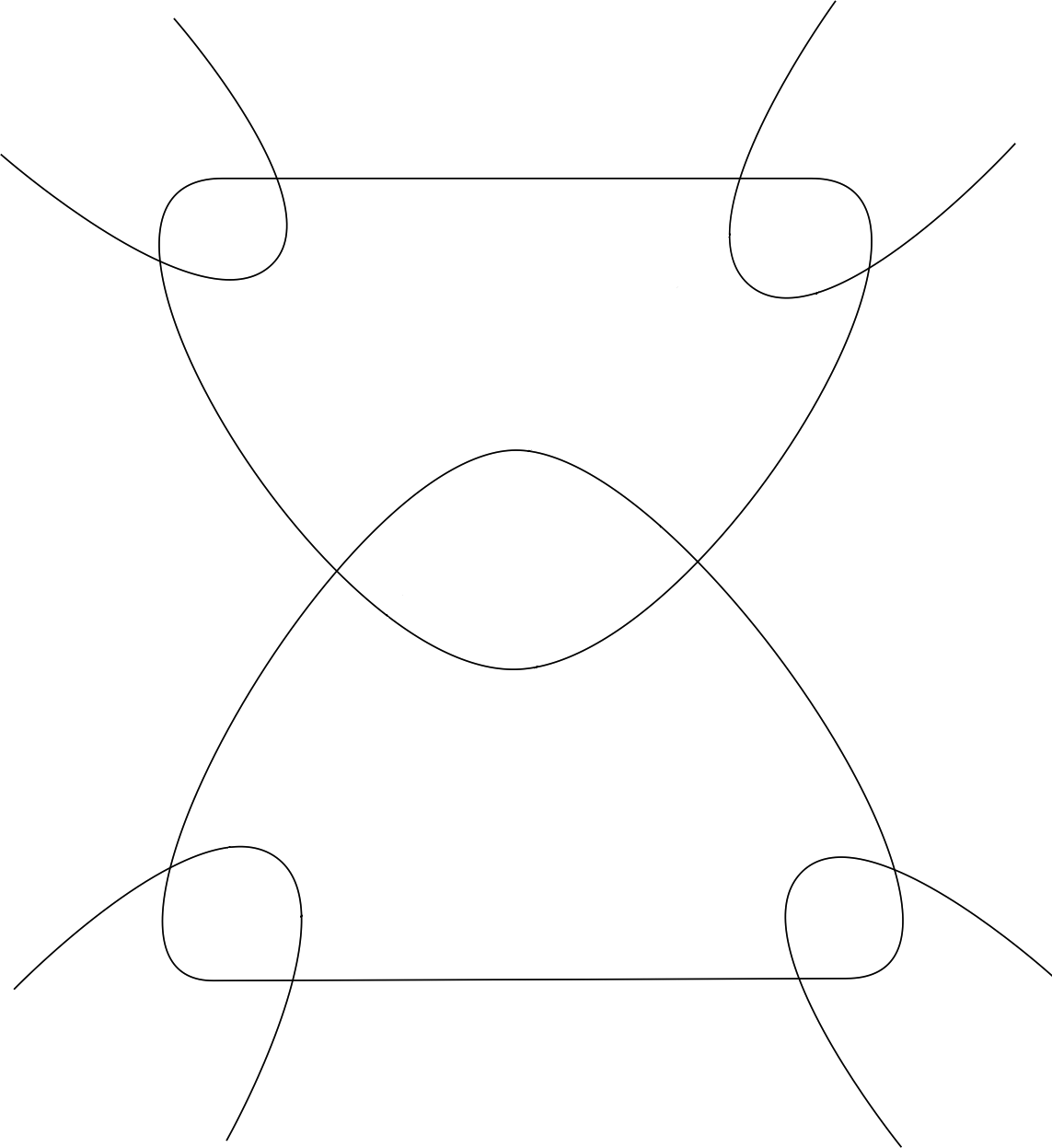} 
   \put(17.6,79){$O_{1}$}
   \put(68,79){$O_{2}$}
   \put(19,19.5){$O_{3}$}
   \put(72,19){$O_{4}$}
   \put(41.5,50){$XX$}
   \put(43,86){$\alpha$}
   \put(1,17){$\alpha$}
   \put(89,18){$\alpha$}
   \put(1,81){$\beta$}
   \put(87,83){$\beta$}
   \put(45,11.5){$\beta$}
   \put(13.2,76.4){$\bullet$}
   \put(23.25,83.7){$\bullet$}
   \put(24,85.8){$a_{2}$}
   \put(10.5,75.2){$a_{1}$}
   \put(63.8,83.7){$\bullet$}
   \put(75,76){$\bullet$}
   \put(61,85.5){$b_{1}$}
   \put(76.5,74.5){$b_{2}$}
   \put(28.55,49.55){$\bullet$}
   \put(25,49.55){$c_{1}$}
   \put(60,50.3){$\bullet$}
   \put(62,50.3){$c_{2}$}
   \put(14.1,23.5){$\bullet$}
   \put(11.35,25.5){$d_{1}$}
   \put(24.6,13.8){$\bullet$}
   \put(25.9,11.34){$d_{2}$}
   \put(68.7,14){$\bullet$}
   \put(66.6,12.3){$e_{1}$}
   \put(77.15,23.3){$\bullet$}
   \put(78.5,25){$e_{2}$}
   \end{overpic}
   \caption{Heegaard diagram for a singularization}\label{singdiagram}
\end{figure}

\subsubsection{The positive crossing} The positive crossing has a similar story, except that the focus is on the intersection point $x'$. Let $\X'$ denote the component of the complex with generators containing the $x'$ intersection point, and let $\Y'$ denote the the component of the complex with generators that do not contain $x'$. Then  
\[ \mathit{CFK}^{-}(D_{+}) \cong \X' \xrightarrow{\hspace{3mm}\Phi'_{B}\hspace{3mm}} \Y' \]

\noindent
where $\Phi'_{B}$ counts discs with multiplicity $1$ at one of the $B$ markings and $0$ at $A_{0}$ and $A^{+}$. We can now define the cube complex for the positive crossing $C^{-}_{F}(D_{+})$ to be the complex in Figure \ref{poscomplex}. The maps are defined analogously to the positive crossing diagram, where $\Phi_{A^{+}}$ counts discs with multiplicity $0$ at the $B$ basepoints and $1$ at $A_{0}$ or $A^{+}$ and $\Phi_{A^{+}B}$ counts discs with multiplicity $1$ at one of the $B$ basepoints and multiplicity $1$ at $A_{0}$ or $A^{+}$ (and multiplicity $0$ at the other in both cases).

\begin{figure}[!h]
\centering
\begin{tikzpicture}
  \matrix (m) [matrix of math nodes,row sep=5em,column sep=6em,minimum width=2em] {
     \X' & \Y' \\
     \X' & \X' \\};
  \path[-stealth]
    (m-1-1) edge node [left] {$U_{a}+U_{b}-U_{c}-U_{d}$} (m-2-1)
            edge node [above] {$\Phi'_{B}$} (m-1-2)
            edge node [right]{$\Phi_{A^{+}B}$} (m-2-2)
    (m-2-1.east|-m-2-2) edge node [above] {1} (m-2-2)
    (m-1-2) edge node [right] {$\Phi_{A^{+}}$ \hspace{ 15mm}} (m-2-2);
\end{tikzpicture}
\caption{Complex for the positive crossing} \label{poscomplex}
\end{figure}

Looking at the vertical filtration, we see that 
\[ C^{-}_{F}(D_{+}) \cong \mathit{CFK}^{-}(D_{+}) \]

\noindent
The horizontal filtration gives us the cube of resolutions. The complex in the higher filtration level is the knot Floer complex of the oriented smoothing
\[ \X' \xrightarrow{\hspace{3mm}\Phi_{A^{+}}\hspace{3mm}} \Y' \]

\noindent
which is isomorphic to $C^{-}_{F}(D_{s})$. The complex in the higher lower level is 
\[ \X' \xrightarrow{\hspace{1mm}U_{a}+U_{b}-U_{c}-U_{d}\hspace{1mm}} \X' \]

\noindent
where $\X'$, like $\X$ in the negative crossing case, is the complex coming from the diagram in Figure \ref{singdiagram}. Thus, the complex in the higher filtration level is precisely $C^{-}_{F}(D_{\x})$. We therefore have the decomposition
\[ C^{-}_{F}(D_{+}) = C^{-}_{F}(D_{\x}) \xrightarrow{\hspace{10mm}} C^{-}_{F}(D_{s}) \]

For a diagram $D$, the cube of resolutions complex $C^{-}_{F}(D)$ is defined to be the complex obtained by iterating this construction over all crossings in the diagram (see \cite{Szabo}, p. 37 for details on how to iterate). For a positive crossing, we call the singularization the 0-resolution of the crossing, and the smoothing the 1-resolution of the crossing. For a negative crossing, the smoothing is the 0-resolution and the singularization is the 1-resolution. As with $C_{2}^{-}(D)$, the height in the cube gives a grading, with respect to which it can be decomposed as 
\[ d = d_{0}+d_{1}+...+d_{k} \]

\noindent
where $d_{j}$ increases the cube grading by $j$. Unlike $C_{2}^{-}(D)$, $d_{i}$ for $i \ge 2$ can be non-zero.

\subsubsection{Reduced and Unreduced Versions} There are also a reduce and unreduced complexes $\overline{C}_{F}(D)$ and $C_{F}(D)$ analogous to the reduced and unreduced knot Floer complexes. If $D$ is a diagram for an $l$-component link and $e_{i_{1}},...,e_{i_{l}}$ each lie on a different component, then

\[ \overline{C}_{F}(D) =C^{-}_{F}(D)  \otimes \left( \bigotimes_{j=1}^{l} R \xrightarrow{U_{i_{j}}} R \right) \]

For the unreduced complex $C_{F}(D)$, simply replace the unpunctured planar diagram in the construction of $C_{F}^{-}(D)$ with a punctured diagram.

\subsubsection{Complete Resolutions and the Non-Local Ideal}

The object at a vertex in the cube is a \emph{complete resolution}, also known as a singular graph. It has only bivalent and 4-valent vertices (no crossings). Given a complete resolution $S$, 

\[ C^{-}_{F}(S) = \cfk^{-}(\cH) \otimes \left( \bigotimes_{v \in v_{4}(S)} R \xrightarrow{ \hspace{3mm} L(v) \hspace{3mm}} R \right) \]

\noindent
where $\cH$ is a Heegaard diagram for $S$ and $L(v)=U^{(v)}_{a}+U^{(v)}_{b}-U^{(v)}_{c}-U^{(v)}_{d}$ as in Section \ref{consection}. The complex $\cfk^{-}(\cH)$ is defined in the usual way, with both $X$ and $XX$ basepoints blocked:

\[  \partial^{-}(x) = \sum_{y \in \mathbb{T_{\alpha}} \cap \mathbb{T_{\beta}}} \sum_{\substack{\phi \in \pi_{2}(x,y) \\ \mu(\phi)=1 \\ n_{\bf{X}}(\phi)=0 \\   n_{\bf{XX}}(\phi)=0} }   \# \widehat{\mathcal{M}}(\phi) U_{1}^{n_{w_{1}}(\phi)}\cdot \cdot \cdot U_{k}^{n_{w_{k}}(\phi)} y \]

\noindent
The relative Maslov and Alexander gradings on this complex are given by 
\[ M(x) - M(y) = \mu(\phi) +2n_{\bf{XX}}(\phi) - 2n_{\bf{O}}(\phi) \]
\[ A(x) - A(y) = n_{\bf{X}}(\phi)+2n_{\bf{XX}}(\phi) - n_{\bf{O}}(\phi) \]

\subsubsection{The Non-local Ideal} Let $\cH$ be an unpunctured Heegaard diagram for $S$. It turns out that when $S$ is a complete resolution, the homology of $\cfk^{-}(\cH)$ can be computed \cite{Szabo, Manolescu}. Viewing $S$ as a graph in $S^{2}$, let $N(S)$  be the non-local ideal defined in Section \ref{consection}, and let $L(S)$ be the ideal generated by $L(v)$ over all $v \in v_{4}(S)$.

\begin{lem}\label{nonloc}

Let $\cH$ be a Heegaard diagram for $S$. Then 
\[H_{*}(\cfk^{-}(\cH)) \cong R/N(S) \]

\end{lem}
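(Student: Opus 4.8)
The plan is to reduce the statement to the computations of Ozsv\'ath--Szab\'o \cite{Szabo}, carried out with twisted coefficients over $R[t^{-1},t]]$, and of Manolescu \cite{Manolescu}, who treats the $t=1$ specialization; these are exactly where an isomorphism of the form $H_*(\cfk^{-}(\cH))\cong R/N(S)$ is proved for a complete resolution, and what remains is to check that the conventions used here — the placement of the doubled basepoint $XX$, the way the $4$-valent vertices are organized, and the generating set $\{P(\Omega)\}$ of $N(S)$ from Section \ref{consection} — match theirs. So rather than a new argument I would spell out the underlying one. First I would fix the concrete unpunctured Heegaard diagram $\cH$ for $S$ assembled by gluing the local pieces of Figures \ref{bivalentHD}, \ref{MarkedEdgeHD}, and \ref{singdiagram} along the edges of $S$, and enumerate $\mathbb{T}_{\alpha}\cap\mathbb{T}_{\beta}$. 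Since each $\alpha$- and each $\beta$-circle is supported near a single vertex of $S$, a generator is determined by a local choice at every vertex subject only to the global constraint that each circle is used exactly once; this exhibits the generators as a finite set of combinatorial ``states'' of the singular graph, among which there is a distinguished state $\mathbf{x}_{0}$ of extremal Maslov grading.

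Next I would identify the differential. For this standard diagram all Maslov index one holomorphic curves counted by $\partial^{-}$ (with $n_{\mathbf{X}}=n_{\mathbf{XX}}=0$) are embedded bigons and rectangles supported near a single edge or a single $4$-valent vertex, so $\partial^{-}$ is entirely combinatorial. Counting these: the bigons along bivalent vertices cancel states in pairs, identifying the $U$-variables along an edge up to the appropriate powers, while the rectangles local to each $4$-valent vertex cancel further pairs of states; after this sequence of acyclic cancellations the homology is generated over $R$ by the image of $\mathbf{x}_{0}$. The remaining point is to compute the annihilator of $\mathbf{x}_{0}$: for each smoothly embedded disc $\Omega\subset\R^{2}$ meeting $S$ transversely in edges (and not containing $e_{1}$) one produces a domain whose boundary degeneration records the relation $\big(\prod_{e_{i}\in\In(\Omega)}U_{i}-\prod_{e_{j}\in\Out(\Omega)}U_{j}\big)\mathbf{x}_{0}=0$, i.e.\ $P(\Omega)\,\mathbf{x}_{0}=0$ (the local $4$-valent contributions are accounted for by the fact, already used in the proof that $d^{2}=0$ on $C^{-}_{2}(S)$, that each quadratic $Q(v)=U^{(v)}_{a}U^{(v)}_{b}-U^{(v)}_{c}U^{(v)}_{d}$ lies in $N(S)$); conversely one shows these relations are exhaustive, yielding $H_*(\cfk^{-}(\cH))\cong R/N(S)$ as $R$-modules.

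The main obstacle is precisely this two-sided matching between domains and ideal generators. The ``$\supseteq$'' direction — realizing every $P(\Omega)$ by an actual domain in $\cH$ — is bookkeeping about which regions of $\cH$ support positive domains, but the ``$\subseteq$'' direction, that there are no further relations, requires a lower bound on the homology: one either exhibits the obvious surjection $R/N(S)\twoheadrightarrow H_*(\cfk^{-}(\cH))$ and shows it is an isomorphism by comparing graded ranks, or invokes the regular-sequence / filtration arguments of \cite{Szabo, Manolescu} directly (an alternative being an induction on the number of $4$-valent vertices, peeling one off via a mapping-cone exact triangle and tracking the newly imposed relation). Along the way one must check the relative Maslov and Alexander gradings are consistent so that no cancellation occurs between distinct gradings, and connectedness of $S$ enters in guaranteeing that $\mathbf{x}_{0}$ generates and that the answer is the cyclic module $R/N(S)$ rather than a direct sum of such.
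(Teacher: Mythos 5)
The paper does not actually prove this lemma -- it is stated as a citation to \cite{Szabo, Manolescu}, with the one remark that the key ingredient is the \emph{algebraic grading} $\gr_{N}(x)=M(x)-2A(x)$ (which the differential drops by $1$ and the $U_{i}$ preserve), so that the argument reduces to showing $H_{*}(\cfk^{-}(\cH))$ is concentrated in a single $\gr_{N}$-grading and then computing the homology in that grading. Your proposal takes the same route of deferring to those references, and your more granular account of generators as states, local bigon/rectangle cancellations, and domain realizations of $P(\Omega)$ is a fair sketch of what those references actually do; you do not name $\gr_{N}$ explicitly, but your remark about ensuring no cancellation occurs between distinct gradings is precisely the role it plays.
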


A key ingredient in this proof is a grading on the knot Floer complex called the \emph{algebraic grading}, denoted $\gr_{N}$. It is defined by 
\[ \gr_{N}(x) = M(x) - 2 A(x) \]

\noindent
The differential is homogeneous of degree $-1$ with respect to $\gr_{N}$, and $U_{i}$ preserves $\gr_{N}$. Lemma \ref{nonloc} is proved by showing that $H_{*}(\cfk^{-}(\cH))$ lies in a single algebraic grading, then computing the homology in that grading.

The complex $C_{F}^{-}(S)$ is given by  $\cfk^{-}(\cH) \otimes \mathcal{L}(S)$, where 
\[ \mathcal{L}(S) = \bigotimes_{v} R \xrightarrow{ \hspace{3mm} L(v) \hspace{3mm}} R \]

\noindent
The elements $L(v)$ form a regular sequence in $R$, so the homology of $C_{F}^{-}(S)$ can be described in terms of Tor groups.

\begin{cor}

The homology $H_{*}(C_{F}^{-}(S))$ is given by $\Tor_{R}(R/N(S), R/L(S)) $.

\end{cor}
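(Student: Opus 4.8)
The plan is to identify $C^-_F(S) = \cfk^-(\cH) \otimes_R \mathcal{L}(S)$ with the derived tensor product $R/N(S) \otimes_R^{\mathbf L} R/L(S)$ and then read off its homology; the two ingredients are that $\mathcal{L}(S)$ resolves $R/L(S)$ and that $\cfk^-(\cH)$ has the homotopy type of $R/N(S)$.

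First I would observe that, since $\{L(v):v\in v_4(S)\}$ is a regular sequence in $R$, the complex $\mathcal{L}(S)=\bigotimes_{v}\bigl(R\xrightarrow{L(v)}R\bigr)$ is the Koszul complex of that sequence, hence a finite free resolution of $R/L(S)$. In particular $\mathcal{L}(S)$ is a bounded complex of flat $R$-modules, so the functor $C\mapsto C\otimes_R\mathcal{L}(S)$ preserves quasi-isomorphisms of $R$-complexes (filter the mapping cone by Koszul degree; the associated graded is acyclic termwise by flatness, and the filtration is finite). Next I would note that, using the algebraic grading $\gr_N$ as a homological grading, $\cfk^-(\cH)$ is a bounded complex of free $R$-modules: it is free on the finitely many intersection points, each $U_i$ preserves $\gr_N$, and the differential has $\gr_N$-degree $-1$, so only finitely many $\gr_N$-degrees occur. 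By Lemma \ref{nonloc} and its proof, $H_*(\cfk^-(\cH))\cong R/N(S)$ is concentrated in a single $\gr_N$-degree, so truncation gives a zig-zag of quasi-isomorphisms $\cfk^-(\cH)\simeq R/N(S)$, up to an overall grading shift that is immaterial since these gradings are only relative. Tensoring over $R$ with $\mathcal{L}(S)$ then yields $C^-_F(S)\simeq R/N(S)\otimes_R\mathcal{L}(S)$, and since $\mathcal{L}(S)$ is a free resolution of $R/L(S)$, the homology of the right-hand side is $\Tor^R_*(R/N(S),R/L(S))$ by definition. This identification respects the $R$-module structure, so no splitting argument is needed.

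The classical shadow of this is the hyper-$\Tor$ spectral sequence of the double complex $\cfk^-(\cH)\otimes_R\mathcal{L}(S)$ filtered by Koszul degree: freeness of the Koszul terms gives $E^1$-page $H_*(\cfk^-(\cH))\otimes_R\mathcal{L}(S)$, whose $d^1$ computes $\Tor$, and because $H_*(\cfk^-(\cH))$ sits in a single $\gr_N$-degree the $E^1$-page is supported on a single row in the (Koszul degree, $\gr_N$) bigrading, so every $d^r$ with $r\ge 2$ moves off that row and vanishes; the sequence degenerates at $E^2=\Tor$. The only genuinely non-formal input is the statement, extracted from the proof of Lemma \ref{nonloc}, that $H_*(\cfk^-(\cH))$ lies in a single $\gr_N$-grading; that is the step I expect to carry the weight, and everything around it is standard homological algebra. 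One small point to keep straight is that $\mathcal{L}(S)$ here is the plain Koszul complex on the $L(v)$, not the matrix-factorization variant $\mathcal{L}_{D}^{+}$ of Section \ref{consection}, so that the regular-sequence hypothesis applies exactly as stated.
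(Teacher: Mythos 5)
Your proposal is correct and fills in, in careful detail, exactly the argument the paper leaves implicit: the Koszul complex $\mathcal{L}(S)$ on the regular sequence $\{L(v)\}$ is a finite free resolution of $R/L(S)$, and $\cfk^{-}(\cH)$ is a bounded complex of free $R$-modules whose homology (by Lemma~\ref{nonloc}) is $R/N(S)$ in a single $\gr_N$-degree, so their tensor product computes $\Tor_R(R/N(S),R/L(S))$. The paper offers no further proof beyond the one-sentence remark preceding the corollary, so your write-up is a faithful expansion of the intended reasoning rather than a different route; the one simplification you could make is to note that, because the homology of $\cfk^{-}(\cH)$ sits in the lowest $\gr_N$-degree and the differential drops $\gr_N$ by one, $\cfk^{-}(\cH)$ is itself a free resolution of $R/N(S)$, so the quasi-isomorphism is a direct surjection and no zig-zag or truncation is needed.
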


Since $\Tor^{0}$ is just the standard tensor product, the homology in the lowest algebraic grading is given by $R/(N(S)+L(S))$ - in particular, it is generated by $1 \in R$. Oszv\'{a}th and Szab\'{o} compute the higher edge maps among these lowest grading generators, because with twisted coefficients, $\Tor^{0}$ is the only non-trivial term. Since each vertex in the cube has a single generator, the edge maps are uniquely determined by an element of $R$.

Let $D_{I}$ and $D_{J}$ be two connected complete resolutions of a braid diagram $D$ with $I \lessdot J$. The $\Tor^{0}$ portion of the edge maps are described as follows:

\begin{lem}[\hspace{1sp}\cite{Szabo}]

 On $\Tor^{0}$, the positive crossing edge map is multiplication by $(-1)^{\epsilon_{I,J}}$, while the negative crossing edge map is multiplication by $(-1)^{\epsilon_{I,J}}(U_{b}-U_{c})$, where $\epsilon$ is an edge assignment.

\end{lem}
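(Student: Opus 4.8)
\emph{Proof proposal.}

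The plan is to leverage the corollary of Lemma~\ref{nonloc}: on each of the connected complete resolutions $D_I$ and $D_J$ the summand $\Tor^0$ of $H_*(C^-_F(\cdot))$ equals the cyclic $R$-module $R/(N(\cdot)+L(\cdot))$, generated by the class of $1\in R$. Since every chain-level map comprising the edge differential $d_{I,J}$ is $R$-linear, the map it induces on $\Tor^0$ is automatically multiplication by a single ring element $r_{I,J}$, well defined modulo $N(D_J)+L(D_J)$. The whole problem is thus to choose a cycle representing the generator of $\Tor^0$ at $D_I$, apply $d_{I,J}$, and read off the resulting class in $R/(N(D_J)+L(D_J))$.

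First I would isolate, as part of the proof of Lemma~\ref{nonloc}, the canonical intersection point $\Theta_S$ that generates $H_*(\cfk^-(\cH_S))$ in its (unique) lowest algebraic grading; after tensoring with the Koszul factor $\bigotimes_v (R\xrightarrow{L(v)}R)$, the generator of $\Tor^0$ is represented by $\Theta_S\otimes 1$ with $1$ in the degree-$0$ slot of each Koszul line. Because the edge differential attached to a single crossing is supported near that crossing while the rest of the diagram is unchanged — and the canonical generators are compatible under singularizing and smoothing a crossing — it is enough to run the computation one crossing at a time, using the local Heegaard picture of Figure~\ref{HDCrossing10} together with the single-crossing cube complexes of Figures~\ref{negcomplex} and \ref{poscomplex}.

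For a positive crossing, $D_I=D_{\x}$ is the $0$-resolution and $D_J=D_s$ the $1$-resolution. In Figure~\ref{poscomplex} the class $\Theta_{D_{\x}}\otimes 1$ sits at the degree-$0$ corner $\X'$ (the target of multiplication by $U_a+U_b-U_c-U_d$), and the only component of the differential leaving that corner into the $1$-resolution column is the map labeled $1$; one checks that this map carries $\Theta_{D_{\x}}$ to a cycle representing the $\Tor^0$ generator of $C^-_F(D_s)$, so $r_{I,J}=1$, and after incorporating the sign assignment we get multiplication by $(-1)^{\epsilon_{I,J}}$. For a negative crossing, $D_I=D_s$ is the $0$-resolution and $D_J=D_{\x}$ the $1$-resolution; here the component of $d_{I,J}$ reaching the degree-$0$ corner of $C^-_F(D_{\x})$ is the map $\Phi_B$ (the other incoming maps, such as $\Phi_{A^-B}$, raise the algebraic grading and so die on $\Tor^0$), and the essential local input is that the only Maslov-index-one disks contributing to $\Phi_B$ on $\Theta_{D_s}$ are the two small bigons through the $B$-basepoints of Figure~\ref{HDCrossing10} — one over the $O_b$-region, one over the $O_c$-region — which, for the system of orientations in which $\alpha$-degenerations carry positive sign, contribute $U_b$ and $-U_c$ respectively. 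This yields $r_{I,J}=U_b-U_c$, hence multiplication by $(-1)^{\epsilon_{I,J}}(U_b-U_c)$.

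The main obstacle is exactly this last holomorphic-disk count together with the generator-compatibility claims: verifying that precisely those two bigons contribute to $\Phi_B$ on the canonical generator with the stated signs, that $\Theta_{D_{\x}}$ really does represent the $\Tor^0$ generator of $C^-_F(D_s)$, and that no term lying in $N(D_J)+L(D_J)$ has been overlooked. This is the heart of the model-diagram analysis in \cite{Szabo}, carried out there over the twisted ring $R[t^{-1},t]]$, and of Manolescu's $t=1$ reworking in \cite{Manolescu}; it is also where the orientation conventions (the system with positive $\alpha$-degenerations, as in the rest of the paper) have to be matched against the edge assignment $\epsilon$. In practice the cleanest route is to quote the computation of the twisted edge maps from \cite{Szabo} directly and observe that it is unchanged by the specialization $t=1$, since $\Tor^0$ is the underived tensor product and all the relevant structure maps are defined integrally.
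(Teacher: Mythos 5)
The statement in the paper is a direct citation to \cite{Szabo}: no proof is given in the text, and the subsequent use of the computation (in the proof of Lemma~\ref{lem5.4}) simply invokes ``Ozsv\'ath and Szab\'o show that the edge map \ldots carries $\mathbf{y}_0$ to $(-1)^{\epsilon_{I,J}}(U_b-U_c)\mathbf{x}_0$.'' Your sketch is a faithful reconstruction of the structure of that argument: you correctly observe that $\Tor^0$ at each vertex is cyclic so the edge map is multiplication by a single class in $R/(N(D_J)+L_J(D_J))$, correctly locate the $\Tor^0$ generator in the relevant corner of the local four-square complex for each crossing type (the target of the $L(v)$ arrow for the singularization; the $\Y$ summand for the smoothing), and correctly single out the ``$1$'' map (positive crossing) and $\Phi_B$ (negative crossing) as the only components hitting that corner. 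Your closing observation — that the cleanest path is to quote the twisted computation from \cite{Szabo} and note it survives the $t=1$ specialization since $\Tor^0$ is underived — is exactly the role this lemma plays in the paper. Two small remarks: for the negative crossing, the clean reason $\Phi_{A^-B}$ is irrelevant is that the $\Tor^0$ cycle $\mathbf{y}_0$ lies in $\Y$, not $\X_\alpha$, so $\Phi_{A^-B}$ is never applied to it (your algebraic-grading phrasing is equivalent but one step removed); and the bigon count giving $U_b-U_c$ with the stated signs is asserted rather than carried out, which is fine since that is precisely what is being cited, but the orientation bookkeeping (matching the $\alpha$/$\beta$-degeneration convention against the edge assignment $\epsilon$) is where a from-scratch verification would take real care.
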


\subsection{The homology theory $\hfk_{2}(L)$}

Given a Heegaard diagram $\cH$, there is a more general complex called the \emph{master complex}, which we will denote $\cfk_{U,V}(\cH)$. The ground ring is $\Q[U_{1},...U_{k},V_{1},...,V_{k}]$, and the differential is given by 

\[  \partial_{U,V}(x) = \sum_{y \in \mathbb{T_{\alpha}} \cap \mathbb{T_{\beta}}} \sum_{\substack{\phi \in \pi_{2}(x,y) \\ \mu(\phi)=1}}    \# \widehat{\mathcal{M}}(\phi) U_{1}^{n_{O_{1}}(\phi)}\cdot \cdot \cdot U_{k}^{n_{O_{k}}(\phi)}V_{1}^{n_{X_{1}}(\phi)} \cdot \cdot \cdot V_{k}^{n_{X_{k}}(\phi)} y \]

\noindent
Multiplication by any of the $V_{i}$ preserves Maslov grading but raises Alexander grading by 1. Unfortunately $\partial_{U,V}^{2} \ne 0$, but it can be computed by counting the Maslov index 2 degenerations.

\begin{lem}[\hspace{1sp}\cite{Dowlin}] The map $\partial_{U,V}: \cfk_{U,V}(\cH) \to \cfk_{U,V}(\cH)$ satisfies
\begin{equation} \label{d2}
\partial_{U,V}^{2} = \sum_{i=1}^{k} (U_{a(i)}-U_{b(i)})V_{i}     
\end{equation}
\end{lem}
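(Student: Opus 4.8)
The plan is to run the standard argument that $\partial^{2}$ enumerates the ends of the one-dimensional moduli spaces of Maslov index two holomorphic disks, while carefully tracking the extra ends that survive because \emph{no} basepoints are blocked in the definition of $\partial_{U,V}$. Fix $x \in \T_{\alpha}\cap\T_{\beta}$ and a class $\psi \in \pi_{2}(x,y)$ with $\mu(\psi)=2$. The coefficient of the monomial $U_{1}^{n_{O_{1}}(\psi)}\cdots V_{k}^{n_{X_{k}}(\psi)}\,y$ in $\partial_{U,V}^{2}x$ is $\sum_{\psi=\phi_{1}*\phi_{2}}\#\widehat{\mathcal{M}}(\phi_{1})\cdot\#\widehat{\mathcal{M}}(\phi_{2})$, which is the signed number of two-story ends of the compactified $1$-manifold $\widehat{\mathcal{M}}(\psi)=\mathcal{M}(\psi)/\R$. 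By Gromov compactness the remaining ends are boundary degenerations: configurations consisting of a Maslov index zero disk from $x$ to $y$ together with an index two disk whose boundary lies entirely on $\T_{\alpha}$ or entirely on $\T_{\beta}$. For a generic almost complex structure the index zero disk must be constant, so $y=x$; hence $\partial_{U,V}^{2}$ is diagonal, and (up to an overall sign fixed by the orientation of $\mathcal{M}(\psi)$) it equals the count of $\alpha$- and $\beta$-boundary degenerations, weighted by their monomials.

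The next step is to identify these boundary degenerations. For each $i$, the basepoints $X_{i}$ and $O_{a(i)}$ lie in the same component of $\Sigma\setminus\alpha$, joined by the $\alpha$-side arc, and the domain $\mathcal{A}_{i}$ of that component is an $\alpha$-boundary degeneration with $n_{O_{a(i)}}(\mathcal{A}_{i})=n_{X_{i}}(\mathcal{A}_{i})=1$ and all other basepoint multiplicities zero; a standard index computation gives $\mu(\mathcal{A}_{i})=2$. Symmetrically, $X_{i}$ and $O_{b(i)}$ lie in the same component of $\Sigma\setminus\beta$, whose domain $\mathcal{B}_{i}$ is a $\beta$-boundary degeneration with $n_{O_{b(i)}}(\mathcal{B}_{i})=n_{X_{i}}(\mathcal{B}_{i})=1$. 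Since the domain of any $\alpha$- (resp.\ $\beta$-) boundary degeneration carrying a holomorphic representative is a nonnegative combination of components of $\Sigma\setminus\alpha$ (resp.\ $\Sigma\setminus\beta$), the index two condition forces it to be one of the $\mathcal{A}_{i}$ (resp.\ $\mathcal{B}_{i}$).

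I would then invoke the Ozsv\'{a}th--Szab\'{o} computation that these boundary-degeneration moduli spaces are cut out transversally with signed count $\pm 1$ \cite{Szabo}, together with the system of orientations of \cite{Akram} in which $\alpha$-degenerations contribute $+1$ and $\beta$-degenerations contribute $-1$. Collecting contributions, the basepoint $X_{i}$ contributes $U_{a(i)}V_{i}-U_{b(i)}V_{i}$ to $\partial_{U,V}^{2}$, and summing over $i=1,\dots,k$ yields the stated identity. (One can also phrase this more locally: writing $\partial_{U,V}=\partial^{-}+\sum_i V_i\Phi_i+O(V^{2})$ where $\Phi_i$ counts classes with $n_{X_i}(\phi)=1$, the first-order-in-$V$ part of $\partial_{U,V}^{2}=0\ (\mathrm{mod}\ V^{2})$ reduces to showing $\partial^{-}\Phi_i+\Phi_i\partial^{-}=(U_{a(i)}-U_{b(i)})\,\mathrm{id}$, which is the same degeneration argument restricted to the basepoint $X_i$.)

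The main obstacle is the sign bookkeeping: one must verify that, with the chosen orientation system, the $\alpha$-degeneration at $X_{i}$ really enters with coefficient $+U_{a(i)}V_{i}$ and the $\beta$-degeneration with $-U_{b(i)}V_{i}$ (rather than with a common sign, which would make the right-hand side collapse or flip), and that the overall sign of the boundary-degeneration ends relative to the two-story ends is the one producing exactly $\sum_i (U_{a(i)}-U_{b(i)})V_i$. This amounts to matching the orientation conventions on $\mathcal{M}(\psi)$, the gluing of the constant disk to a boundary degeneration, and the normalization in \cite{Akram}; everything else is the routine transversality-and-Gromov-compactness package, exactly as in the proof that $(\partial^{-})^{2}=0$.
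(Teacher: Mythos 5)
Your proof is correct and is the same argument the paper alludes to just before stating the lemma (``it can be computed by counting the Maslov index 2 degenerations''); the paper itself cites \cite{Dowlin} rather than reproving it, and your write-up—ends of Maslov index $2$ moduli spaces, two-story breakings giving $\partial_{U,V}^{2}$, index $2$ $\alpha$- and $\beta$-boundary degenerations giving the $U_{a(i)}V_{i}$ and $U_{b(i)}V_{i}$ terms, with the sign fixed by the Alishahi orientation system—matches the referenced argument. The sign bookkeeping you flag is the only thing genuinely left to check, and you identify exactly the right conventions (\cite{Akram}'s $\pm 1$ for $\alpha$- vs.\ $\beta$-degenerations, and the relative orientation of the two-story ends) that settle it.
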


The same theory can be defined for punctured Heegaard diagrams, again with the restriction that we don't count discs which pass through the puncture $p$. Since the region containing the puncture $p$ is blocked, $\partial_{U,V}^{2}$ is the same as in the unpunctured case. The relative gradings are given by 
\[ M(x) - M(y) = \mu(\phi) - 2n_{\bf{O}}(\phi) - 2n_{p}(\phi) \hspace{3mm}  \text{ and }\hspace{3mm}A(x) - A(y) = n_{\bf{X}}(\phi) - n_{\bf{O}}(\phi) \]

In order to make $\cfk_{U,V}(\cH)$ into an (uncurved) chain complex, some identifications need to be made in the ground ring. 

\begin{defn}

Define the chain complex $\cfk_{2}(\cH)$ to be the quotient 
\begin{equation}\label{substitute}
\cfk_{2}(\cH) = \cfk_{U,V}(\cH)/\{ V_{i} = U_{a(i)} + U_{b(i)} \} 
\end{equation}

\noindent
The induced differential on this complex will be denoted $\partial_{2}$.

\end{defn}

Plugging into (\ref{d2}) shows that $\partial_{2}^{2}=0$. Define $\hfk_{2}(L)$ to be the homology of this complex.

\begin{rem}

More generally, one can substitute $V_{i}=(U_{a(i)}^{n} - U_{b(i)}^{n})/(U_{a(i)} - U_{b(i)})$ to get a complex $\cfk_{n}(L)$ whose homology appears to be related to the $\sln$ homology of Khovanov and Rozansky.

\end{rem}

\begin{defn}

Let $\cfk_{2}^{-}(L)$ (resp. $\cfk_{2}(L)$) be the chain complex $\cfk(\cH)$ where $\cH$ is an unpunctured (resp. punctured) Heegaard diagram for $L$. Let $\hfk^{-}_{2}(L)$ and $\hfk_{2}(L)$ be the associated homologies.

\end{defn}

As with regular knot Floer homology, multiplication by $U_{i}$ is homotopic to multiplication by $U_{j}$ if $O_{i}$ and $O_{j}$ lie on the same component of $L$, so there is a well-defined reduced complex obtained from the minus complex: 

\[ \widehat{\cfk}_{2}(L) = \cfk^{-}_{2}(L) \otimes \left( \bigotimes_{j=1}^{l} R \xrightarrow{U_{i_{j}}} R \right) \]

\noindent
The reduced homology is denoted $\widehat{\hfk}_{2}(L)$.

The differential $\partial_{2}$ is not homogeneous with respect to the Maslov and Alexander gradings, but if we define the $\delta$-grading $\gr_{\delta} = 2M - 2A$, then $\partial_{2}$ is homogeneous of degree $-2$ with respect to $\gr_{\delta}$ and each $U_{i}$ has grading $-2$. With respect to the $\delta$-grading, the homology theories $\hfk_{2}(L)$, $\hfk_{2}^{-}(L)$, and $\widehat{\hfk}_{2}(L)$ are graded link invariants \cite{Dowlin}. The unreduced and reduced invariants are finite-dimensional over $\Q$, while the minus invariant is not.

\subsection{Extension of $\hfk_{2}$ to singular links}

Let $\cH$ be a partially singular Heegaard diagram for a singular link $S$. The complex $\cfk^{-}_{2}(S)$ is obtained by adding differentials to both $\cfk^{-}(\cH)$ and $\mathcal{L}(S)$.

Define $\cfk^{-}_{2}(\cH)$ to have the same generators as $\cfk^{-}(\cH)$, but with differential

\[  \partial(x) = \sum_{y \in \mathbb{T_{\alpha}} \cap \mathbb{T_{\beta}}} \sum_{\substack{\phi \in \pi_{2}(x,y) \\ \mu(\phi)=1}}    \# \widehat{\mathcal{M}}(\phi) (-2)^{n_{XX}(\phi)} U_{1}^{n_{O_{1}}(\phi)}\cdot \cdot \cdot U_{k}^{n_{O_{k}}(\phi)}V_{1}^{n_{X_{1}}(\phi)} \cdot \cdot \cdot V_{k}^{n_{X_{k}}(\phi)} y \]

\noindent
with the same identification $V_{i}=U_{a(i)}+U_{b(i)}$. Note that we have set $XX=-2$, a unit in the ground ring, so the singularization bears a strong resemblance to the unoriented smoothing from the Heegaard Floer perspective.

The Koszul complex $\mathcal{L}(S)$ is replaced with a Koszul matrix factorization $\mathcal{L}^{+}(S)$:

\[\mathcal{L}^{+}(S) = \bigotimes_{v}   \xymatrix{R\ar@<1ex>[r]^{L(v)}&R\ar@<1ex>[l]^{L^{+}(v)}} \] 

\noindent
where $L^{+}(v)=U_a^{(v)}+U_b^{(v)}+U_c^{(v)}+U_d^{(v)}$. When $D_{I}$ is a complete resolution of a partially singular diagram $D$, it will also be useful to have the matrix factorization
\[\mathcal{L}_{I}^{+}(D_{I}) = \bigotimes_{v_{4}(I)}   \xymatrix{R\ar@<1ex>[r]^{L(v)}&R\ar@<1ex>[l]^{L^{+}(v)}} \] 

\noindent
so that 
\[ \mathcal{L}^{+}(D_{I}) = \mathcal{L}_{I}^{+}(D_{I}) \otimes \mathcal{L}_{D}^{+}(D_{I}) \]

\begin{defn}

The complex $\cfk^{-}_{2}(S)$ is defined to be $\cfk^{-}_{2}(\cH) \otimes \mathcal{L}^{+}(S)$. The differential on this complex is denoted $\partial_{2}$.

\end{defn}

\noindent
The fact that $\partial_{2}^{2}=0$ follows from the Maslov index 2 degenerations canceling the component of $\partial_{2}^{2}$ coming from $\mathcal{L}^{+}$ (see \cite{Dowlin}, Lemma 3.6).

There is also an unreduced complex $\cfk_{2}(S)$ obtained by replacing the unpunctured Heegaard diagram for $S$ with a punctured Heegaard diagram. The reduced complex is defined as follows.

Let $D$ be a partially singular diagram, and suppose $\sm(D)$ is an $l$-component link. Let $\cH$ be a Heegaard diagram for $D$ with $O_{i_{j}}$ chosen such that $O_{i_{1}},...,O_{i_{l}}$ are each on a different component of $\sm(D)$.

\begin{defn}

Define the reduced complex $\widehat{\cfk}_{2}(D)$ by 
\[\widehat{\cfk}_{2}(D) = \cfk^{-}_{2}(D) \otimes \left( \bigotimes_{j=1}^{l} R \xrightarrow{U_{i_{j}}} R \right)\]

\end{defn}

\noindent
The associated homology is denoted $\widehat{\hfk}_{2}(D)$.

In \cite{Me2}, we showed that the unreduced theory $\hfk_{2}(S)$ is particularly well-behaved when $S$ is a complete resolution.

\begin{lem}[\hspace{1sp}\cite{Me2}] \label{isolemma}

For a completely singular link $S$, the homology $\hfk_{2}(S)$ is isomorphic to $Kh(\sm(S))$ as graded vector spaces.

\end{lem}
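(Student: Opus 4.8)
The plan is to compute both sides of the claimed isomorphism for a completely singular link $S$ and match them. First I would unwind the definition: for a completely singular link $S$ there are no crossings, so $\mathcal{L}^{+}(S) = \mathcal{L}_{D}^{+}(S)$ is a pure Koszul matrix factorization over all the $4$-valent vertices $v_{4}(S)$, and $\cfk_{2}(S) = \cfk_{2}(\cH) \otimes \mathcal{L}^{+}(S)$ where $\cH$ is a punctured Heegaard diagram for $S$. The key observation is that with the substitution $V_{i} = U_{a(i)}+U_{b(i)}$ and the singularization weight $n_{XX} \mapsto -2$, the complex $\cfk_{2}(\cH)$ should be identified (after passing to the algebraic grading as in Lemma \ref{nonloc}) with an explicit $R$-module resolution whose homology is $R/N(S)$ up to the Koszul/matrix-factorization corrections. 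Concretely, I expect that the homology of $\cfk_{2}(\cH)$ is supported in a single algebraic grading $\gr_{N}$ by exactly the same argument that proves Lemma \ref{nonloc}, and equals $R/N(S)$ tensored with the appropriate shift; then tensoring with $\mathcal{L}^{+}(S)$ computes $\hfk_{2}(S)$ as the homology of a Koszul matrix factorization of $\sum_{v} L(v)L^{+}(v)$ over $R/N(S)$.

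Next I would compute the Khovanov side. The diagram $\sm(S)$ is obtained by replacing each singularization with its unoriented smoothing, yielding a disjoint union of circles (since $S$ is a planar singular link diagram with only $4$-valent vertices, smoothing them all gives a crossingless diagram). Thus $Kh(\sm(S))$ is just $\mathcal{A}^{\otimes k}$ where $k$ is the number of circles and $\mathcal{A} = \mathbb{Q}[X]/(X^{2})$ — there are no differentials. So the content of the lemma is purely a dimension count together with a grading match: one must check that $\dim_{\mathbb{Q}} \hfk_{2}(S) = 2^{k}$ and that the two $\delta$-gradings agree up to an overall shift. The dimension count I would extract from the matrix-factorization description: tensoring $R/N(S)$ with the Koszul matrix factorization $\bigotimes_{v}(R \xrightarrow{L(v)} R \xleftarrow{L^{+}(v)} R)$ and using that $N(S)$ contains all the quadratic relations $Q(v) = U_{a}^{(v)}U_{b}^{(v)} - U_{c}^{(v)}U_{d}^{(v)}$, the potential $\omega = \sum_{v} L(v)L^{+}(v)$ becomes $\sum_{v} ((U_a^{(v)})^2 + (U_b^{(v)})^2 - (U_c^{(v)})^2 - (U_d^{(v)})^2)$, which over $R/N(S)$ is the sum of local "edge-square" contributions — and one shows the resulting homology is a free module over $\mathbb{Q}[X_{1},\dots,X_{m}]/(X_i^2, \text{edge identifications})$ of the expected rank, exactly matching $CKh(\sm(S))$ viewed over the Khovanov edge ring. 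Here I would lean heavily on the calculation of $\hfk_{2}^{-}(S)$ promised for Section 4 of the paper; this lemma is essentially the reduction of that computation modulo the puncture.

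The grading match is the remaining step: I would compare $\gr_{\delta} = 2M - 2A$ on the Floer side (with the correction terms $+2n_{\bf{XX}}$ appearing in the relative Maslov and Alexander formulas for singular diagrams, which cancel in $2M-2A$ up to a vertex-count shift) against $\gr_{q} - 2\gr_{h} = \gr_{q}$ on the Khovanov side (since $\sm(S)$ has no crossings, $\gr_h = 0$ after the normalization by $n_\pm$), checking they agree up to the global shift recorded in the statement of Lemma \ref{isolemma}. I expect the main obstacle to be the Floer-side computation of $H_*(\cfk_{2}(\cH))$ — i.e. verifying that the homology is concentrated in a single algebraic grading and identifying it precisely, including keeping careful track of the $(-2)^{n_{XX}}$ weights and the punctured region — since this is where the holomorphic-disc input enters and where the singular $XX$ basepoints make the standard arguments of \cite{Szabo, Manolescu} require modification. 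Once that computation is in hand, the matrix-factorization bookkeeping and the grading comparison are routine.
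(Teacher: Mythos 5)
Your proposal takes a genuinely different route from what the paper does. The statement you are proving is actually cited here from \cite{Me2} without proof, but the paper does re-prove a strengthened $R$-module version (Theorem \ref{hfk2kh}) in Section 4, and that proof proceeds by concrete Heegaard-diagram manipulations: one establishes MOY II and MOY III moves (Lemmas \ref{MOYtwo} and \ref{MOYthree}) via isotopy, handleslides, and quasi-destabilization, then inducts on the number of components of $\sm(S)$, reducing to two terminal diagrams in Figure \ref{2unknots}. You instead propose a global algebraic computation: identify $H_*(\cfk_2(\cH))$ via the algebraic-grading / non-local-ideal machinery of Lemma \ref{nonloc}, tensor with the Koszul matrix factorization $\mathcal{L}^+(S)$, and match the resulting dimension and grading against the trivial Khovanov computation $Kh(\sm(S)) \cong \mathcal{A}^{\otimes k}$. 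Your Khovanov-side analysis is correct — $\sm(S)$ has no crossings and no $4$-valent vertices, so it is a disjoint union of circles with no differential — and your instinct that the proof should reduce to a dimension count plus a grading check over the Khovanov edge ring is a reasonable alternative strategy, closer in spirit to the filtration argument the paper uses later in Section 5 for a related statement.

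The gap is in the Floer-side computation, and it is not a small one. You write that $H_*(\cfk_2(\cH))$ ``is supported in a single algebraic grading $\gr_N$ by exactly the same argument that proves Lemma \ref{nonloc}, and equals $R/N(S)$ tensored with the appropriate shift.'' But $\gr_N = M - 2A$ is not a grading on $\cfk_2(\cH)$: the differential $\partial_2$ counts discs with $n_{\bf X}, n_{\bf{XX}} > 0$, and the relative formulas in the paper give $\gr_N(x) - \gr_N(y) = \mu(\phi) - 2n_{\bf X}(\phi) - 2n_{\bf{XX}}(\phi)$, so $\partial_2$ shifts $\gr_N$ by varying amounts. Only the $\delta$-grading $2M - 2A$ is preserved. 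Thus $\gr_N$ is at best a filtration, and the argument of Lemma \ref{nonloc} — which crucially uses that the differential is homogeneous of degree $-1$ in $\gr_N$ so that the homology can be read off from a single graded piece — does not carry over. Filtering by the Alexander grading does give a spectral sequence whose $E_1$ page involves $R/N(S) \otimes \Tor(\cdot)$-type terms, but the higher differentials (coming from discs through $X$, $XX$, and the $L^+(v)$ components of the matrix factorization) are precisely where the content of the lemma lives, and nothing in your outline constrains them. In fact $H_*(\cfk_2(\cH) \otimes \mathcal{L}^+(S))$ is finite-dimensional over $\Q$ while $R/N(S)$ is an infinitely generated $R$-module, so the higher differentials must be doing a great deal of work, and you cannot defer them to ``routine bookkeeping.'' This is presumably why the paper opts for the local MOY reduction instead: it sidesteps ever needing to know the filtered pages explicitly.
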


\section{The homology $\hfk_{2}^{-}(S)$ as an $R$-module} We will need a stronger version of the previous lemma lemma. In particular, we want to show that this is true for the minus theories as well, and we want to show it as $R$-modules instead of as graded vector spaces.

Before proving this result, we need several lemmas.

\begin{lem}

Let $S$ be a singular braid, and suppose $e_{a},e_{b}$ are two incoming or outgoing edges at a 4-valent vertex $v$, and $e_{c}, e_{d}$ are the two outgoing edges. Then on $\hfk^{-}_{2}(S)$, $U_{a}=-U_{b}$ and $U_{c}=-U_{d}$.

\end{lem}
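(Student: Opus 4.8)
The plan is to exploit the fact that $\widehat{\hfk}_2^-(S)$ is a module over $R = \Q[U_1,\dots,U_m]$, and that the ideal $N(S)+L(S)$ (together with the extra relations coming from the singularizations) is built into the complex $\cfk^-_2(S) = \cfk^-_2(\cH)\otimes\mathcal{L}^+(S)$ via the Koszul factor $\mathcal{L}^+(S)$. The key observation is that at a $4$-valent vertex $v$, the linear term $L(v)=U_a^{(v)}+U_b^{(v)}-U_c^{(v)}-U_d^{(v)}$ is one of the Koszul differentials in $\mathcal{L}^+(v)$, so on homology this element acts as $0$; hence $U_a+U_b = U_c+U_d$ holds on $\hfk^-_2(S)$. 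To upgrade this to $U_a=-U_b$ and $U_c=-U_d$ separately, I need a \emph{second} relation at $v$, namely the quadratic relation $Q(v)=U_a^{(v)}U_b^{(v)} - U_c^{(v)}U_d^{(v)}$, which lies in the non-local ideal $N(S)$ (take $\Omega$ a small disc around $v$). Combining $U_a+U_b=U_c+U_d$ with $U_aU_b=U_cU_d$ shows that $\{U_a,U_b\}$ and $\{U_c,U_d\}$ are the roots of the same monic quadratic, so as sets $\{U_a,U_b\}=\{U_c,U_d\}$; but this alone does not give $U_a=-U_b$.

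The missing ingredient must come from the \emph{global} structure of a braid, not just the local picture at $v$. Since $S$ is a braid, every edge $e_i$ lies on a strand, and I expect that repeatedly applying the vertex relations along the braid — or, more efficiently, using a non-local region $\Omega$ adapted to the braid structure — forces each $U_i$ to equal $(-1)^{\str(i)}$ times a single global variable, up to signs, on the homology. Concretely, I would argue: first establish $U_a+U_b=U_c+U_d$ and $U_aU_b=U_cU_d$ at every $4$-valent vertex as above. Then, tracing a strand through the braid from top to bottom (closing up at the braid closure), the bivalent-vertex Heegaard diagrams in Figure \ref{bivalentHD} identify consecutive edges on a strand up to the appropriate relation, while at each crossing/singularization the two strands interact. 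Using connectedness of $S$ (part of the definition of $\mathcal{D^R}$, and hypothesized here since $S$ is a singular braid whose smoothing we care about), I would show that on $\hfk_2^-(S)$ all the $U_i$ on a fixed strand agree, and that at a $4$-valent vertex the incoming pair and outgoing pair consist of one variable from each of two strands. Then $U_a+U_b=U_c+U_d$ with $\{U_a,U_b\}=\{U_c,U_d\}$ as the two strand-variables $s,t$ forces either the identity relation (no info) or $s=-t$; the point is that the \emph{singular} vertex, unlike a smoothing, swaps the strands, so matching orientations forces $U_a = -U_b$ on the nose.

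The main obstacle I anticipate is precisely pinning down why the singular vertex (as opposed to a plain bivalent vertex or an honest smoothing) yields the antisymmetry $U_a=-U_b$ rather than merely $U_a+U_b=U_c+U_d$. This is where the factor $(-2)^{n_{XX}(\phi)}$ in the definition of $\partial_2$ on singular diagrams, together with the relation $V_i = U_{a(i)}+U_{b(i)}$ and the curvature computation $\partial_{U,V}^2 = \sum_i (U_{a(i)}-U_{b(i)})V_i$, should enter: near the singular point the two $X$ basepoints collide into an $XX$, and the holomorphic disc count produces a relation of the form $(U_a - U_b)(U_a+U_b) = U_a^2 - U_b^2 = 0$ on homology (an analogue of the computation in the $d^2=0$ lemma, where $\omega = \sum (U_a^2+U_b^2 - U_c^2 - U_d^2)$ vanishes). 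From $U_a^2 = U_b^2$ and $U_a + U_b = U_c + U_d$ and $U_aU_b = U_cU_d$ one then extracts $U_a = -U_b$ (the case $U_a = U_b$ being excluded by a grading or connectedness argument, or simply not occurring because a singular vertex genuinely swaps the two incoming strands). I would therefore structure the proof as: (1) recall $L(v)=0$ on homology from the Koszul factor; (2) show $Q(v)\in N(S)$, giving $U_aU_b = U_cU_d$; (3) show $U_a^2 = U_b^2$ on $\hfk_2^-(S)$ using the singular-vertex disc count (equivalently, that $U_a^2 - U_b^2 \in N(S)$ via the region argument in the $d^2=0$ lemma); (4) combine to conclude $U_a = -U_b$ and likewise $U_c = -U_d$.
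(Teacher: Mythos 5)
There is a genuine gap, and you have overlooked the one piece of structure that makes this lemma a one-line argument. The factor $\mathcal{L}^{+}(v)$ at the vertex $v$ is a \emph{matrix factorization}, not a one-sided Koszul complex: it carries two differentials, one multiplying by $L(v)=U_a+U_b-U_c-U_d$ and one multiplying by $L^{+}(v)=U_a+U_b+U_c+U_d$. The paper's proof simply writes down two explicit homotopies $h_1$, $h_2$ on this local factor (take $h_1$ to be the map which is $1$ in both directions, and $h_2$ to be $1$ in one direction and $-1$ in the other). One computes $dh_1+h_1d = L(v)+L^{+}(v) = 2(U_a+U_b)$ and $dh_2+h_2d = \pm\bigl(L(v)-L^{+}(v)\bigr) = \mp 2(U_c+U_d)$. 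Since $2$ is a unit over $\Q$, both $U_a+U_b$ and $U_c+U_d$ are null-homotopic on $\cfk^{-}_{2}(S)$, which is exactly the assertion. No non-local ideal, no quadratic relation, no disc counts, no global strand-chasing.

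Your step (1) is not correct as reasoned: multiplication by $L(v)$ alone is not obviously null-homotopic on a matrix factorization (it is the combinations $L(v)\pm L^{+}(v)$ that admit the evident homotopies), so you cannot conclude ``$L(v)=0$ on homology'' from the Koszul factor in the way you suggest. Even granting it, your step (4) has a further logical gap: $\hfk^{-}_{2}(S)$ is not an integral domain, so $U_a^2=U_b^2$ does not yield $U_a=\pm U_b$, and the proposal to ``exclude $U_a=U_b$ by a grading or connectedness argument'' is not a proof. The quadratic relation $Q(v)\in N(S)$ and the global braid structure, which you lean on heavily, turn out to be entirely unnecessary once you notice that $L^{+}(v)$ is sitting in the complex ready to be paired with $L(v)$.
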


\begin{proof}
The complex $\cfk^{-}_{2}(S)$ includes the matrix factorization
\[  \xymatrix@C=5em{R\ar@<1ex>[r]^{U_{a}+U_{b}-U_{c}-U_{d}}&R\ar@<1ex>[l]^{U_{a}+U_{b}+U_{c}+U_{d}}} \]

\noindent
There are two homotopies on this factorization, $h_{1}$ and $h_{2}$, where $h_{1}$ is given by 
\[  \xymatrix@C=5em{R\ar@<1ex>[r]^{1}&R\ar@<1ex>[l]^{1}} \]

\noindent
and $h_{2}$ is given by 
\[  \xymatrix@C=5em{R\ar@<1ex>[r]^{1}&R\ar@<1ex>[l]^{-1}} \]

\noindent
Then $dh_{1}+h_{1}d = 2(U_{a}+U_{b})$ and $dh_{2}+h_{2}d = 2(U_{c}+U_{d})$. Thus, $U_{a}$ is homotopic to $-U_{b}$ and $U_{c}$ is homotopic to $-U_{d}$.

\end{proof}

\begin{defn}

Given a decorated, partially singular braid $D$, define the \emph{standard planar Heegaard diagram} for $D$ to be the diagram which locally has Figure \ref{HDCrossing} at the crossings, Figure \ref{singdiagram} at the singularizations, Figure \ref{bivalentHD} at the bivalent vertices, and Figure \ref{MarkedEdgeHD} at the decorated edge.

\end{defn}

Consider the change of basis on the complex $\cfk^{-}_{2}(S)$ given by
\[ x \mapsto (-1/2)^{A(x)} x \]

\noindent
This change has the following effects: $\mathcal{L}^{+}(v)$ is now given by
\[ \xymatrix@C=2cm{R\ar@<1ex>[r]^{L(v)}&R\ar@<1ex>[l]^{(-1/2)L^{+}(v)}} \]

\noindent
the basepoint $X_{i}$ now counts with coefficient $(-1/2)(U_{a(i)}+U_{b(i)})$, and the double-point $XX$ counts with coefficient $1$. The last is the significant one, as we can forget about the $XX$ basepoints when calculating $\cfk^{-}_{2}(\cH)$. In particular, it allows the Heegaard diagram at each 4-valent vertex to be isotoped as in Figure \ref{IsotopedHD}. 

\begin{figure}
 \centering
 \small
\def\svgwidth{8cm}
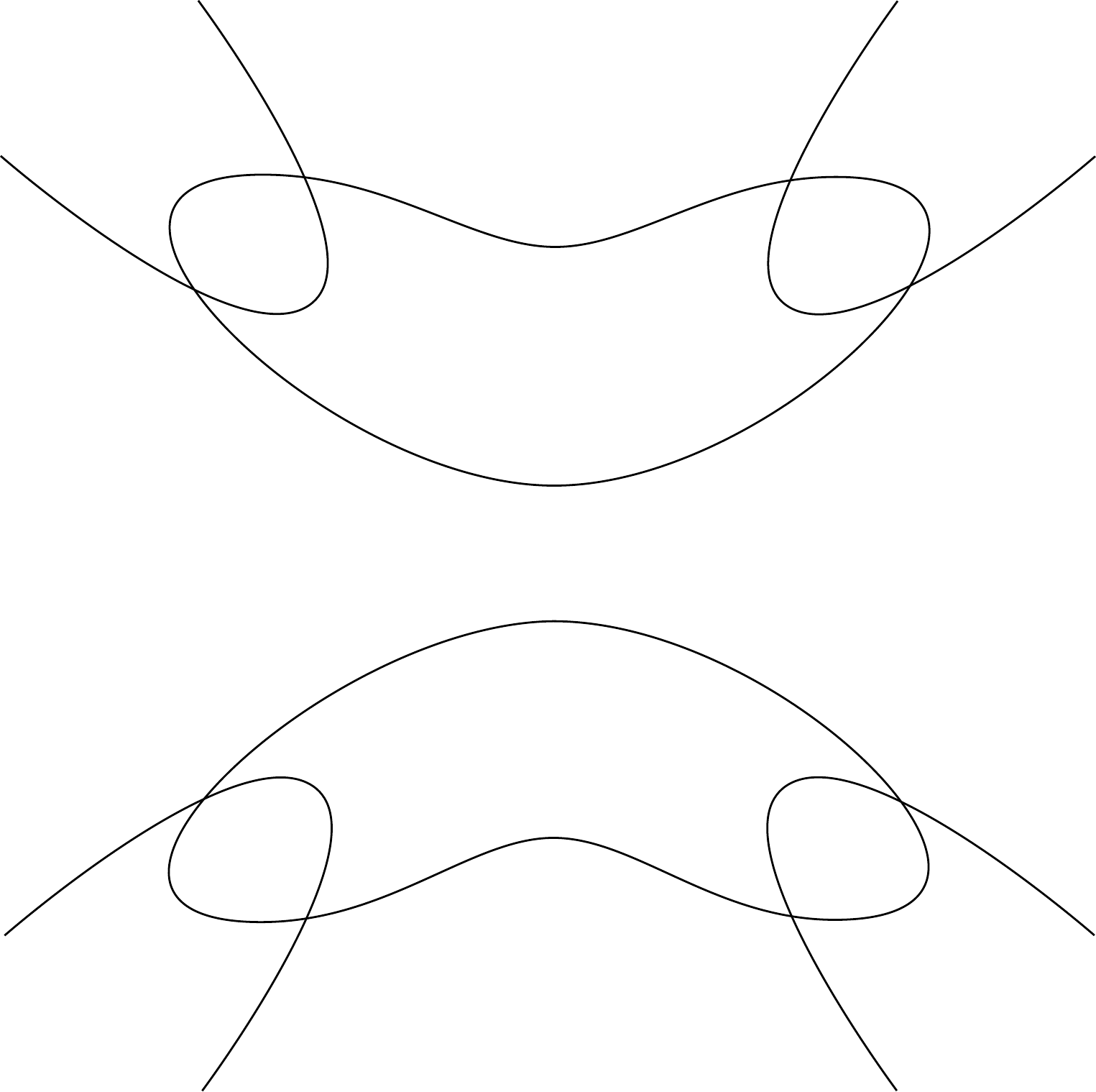
  \caption{A bivalent vertex}  \label{IsotopedHD}
\end{figure}%

\begin{lem} \label{MOYtwo}

Consider the diagrams $S_{II}$ and $S'_{II}$ in Figure \ref{M2}. Then 
\[\hfk_{2}^{-}(S_{II}) \cong \hfk^{-}_{2}(S'_{II})[U_{c}, U_{d}]/(U_{c}=-U_{d}, U_{c}^{2} = U_{1}^{2}) \]

\noindent
as $R$-modules.
\end{lem}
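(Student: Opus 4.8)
The plan is to compute both sides as explicitly as possible using the tools already developed, and to match them as $R$-modules. The diagrams $S_{II}$ and $S'_{II}$ differ in a local region: $S_{II}$ contains a two-vertex piece (presumably a bigon of singularizations, i.e.\ the "$II$" configuration) sitting on two strands that carry the variables $U_{c},U_{d}$ (and the decorated edge $e_1$ somewhere), while $S'_{II}$ has this local piece replaced by the simpler configuration obtained after one of the allowed local moves. Since $\widehat{\hfk}_2$ and $\hfk_2^-$ are built as $\cfk_2^-(\cH)\otimes \mathcal{L}^+(S)$, the first step is to choose a standard planar Heegaard diagram for $S_{II}$ adapted to the local region, apply the change of basis $x\mapsto(-1/2)^{A(x)}x$ so that the $XX$ basepoints can be ignored and the $4$-valent Heegaard pieces are the isotoped ones of Figure \ref{IsotopedHD}, and then identify the local Heegaard picture explicitly.

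Next I would analyze the holomorphic discs supported in the local region. Because the complete resolution $S_{II}$ is in $\mathcal{D^R}$, the linear terms $L(v)$ form a regular sequence over $R/N(S_{II})$, so by the corollary after Lemma \ref{nonloc} the homology of $C_F^-(S_{II})$ (and its $\hfk_2^-$ analogue) is governed by $\Tor_R(R/N(S_{II}),R/L(S_{II}))$, and the key input is to understand how passing from $S_{II}$ to $S'_{II}$ changes the non-local ideal $N$ and the linear ideal $L$. Concretely, deleting the extra local vertex removes one generator $L(v)$ from the Koszul/matrix-factorization factor and simultaneously removes (or trivializes) the corresponding family of non-local relations $P(\Omega)$ coming from discs encircling that region; what survives on the $U_c,U_d$ strands should be precisely the relations $U_c+U_d=0$ and $U_c^2=U_1^2$ (the latter coming from the decorated-edge relation $U_1^2 = (\text{square of any other variable})$ together with $U_c^2=U_d^2$, which holds in $N$ because incoming and outgoing products match at each vertex). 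So the claimed identity is really the statement that tensoring $\hfk_2^-(S'_{II})$ with the extra local piece $\mathcal{L}^+(v)\otimes(\text{relevant }N\text{-relations})$ collapses, via the matrix factorization contraction, to adjoining $U_c,U_d$ with exactly those two relations.

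The cleanest way to organize this is: (1) write $\cfk_2^-(S_{II})$ as $\cfk_2^-(\cH')\otimes(\text{local factor})$ where $\cH'$ is obtained from $\cH$ by deleting the local $\alpha,\beta$ curves — this is legitimate precisely because the post-change-of-basis Heegaard diagram splits as a connected sum in the local region, so holomorphic discs factor; (2) identify $\cfk_2^-(\cH')$ with $\cfk_2^-$ of the diagram underlying $S'_{II}$ up to the finite-rank local tensor factor; (3) compute the local tensor factor's homology as an $R$-module by hand — it is a small matrix factorization over $R/N$, and its homology in the appropriate algebraic grading $\gr_N$ is a free rank-one module over $R/(U_c+U_d,\,U_c^2-U_1^2)$ — using the regular-sequence hypothesis to kill the higher $\Tor$; (4) assemble via a Künneth/universal-coefficients argument over $R$, noting the tensor factor is flat enough (or that one factor is free) so no $\Tor$ correction appears in the final answer. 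The main obstacle I anticipate is step (1)/(2): verifying that the Heegaard diagram genuinely splits as a (Heegaard) connected sum after the change of basis, so that the disc count decomposes multiplicatively and $\cfk_2^-(\cH)\cong\cfk_2^-(\cH')\otimes(\text{local})$ holds on the nose rather than just up to some spectral sequence — this requires a careful local isotopy argument of the type in Figure \ref{IsotopedHD}, and pinning down exactly which $X$/$XX$ basepoints and which $O$ basepoints land in the local summand, together with checking that the leftover relations are exactly $U_c=-U_d$ and $U_c^2=U_1^2$ and not something weaker. Once the local model is nailed down, the rest is bookkeeping with Koszul matrix factorizations and the non-local ideal.
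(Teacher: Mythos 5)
The outline at the top and bottom of your proposal (choose the standard planar Heegaard diagram, change basis so $XX$ counts with coefficient~$1$, isotope so the diagram factors locally, match the leftover matrix--factorization tensor factor) is the same general shape as the paper's proof. But the middle of your argument takes a wrong turn and, more importantly, you miss the step that actually produces the relation $U_{c}^{2}=U_{1}^{2}$.

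First, you invoke the $\Tor_{R}(R/N(S), R/L(S))$ description of homology and the regular-sequence hypothesis. That machinery is a statement about $C_{F}^{-}(S)=\cfk^{-}(\cH)\otimes\mathcal{L}(S)$, i.e.\ the complex with $X$ and $XX$ basepoints blocked and a Koszul complex $\mathcal{L}(S)$. The object in this lemma is $\cfk_{2}^{-}(S)=\cfk_{2}^{-}(\cH)\otimes\mathcal{L}^{+}(S)$, in which discs crossing $X_{i}$ contribute $U_{a(i)}+U_{b(i)}$, discs crossing $XX$ contribute $-2$, and $\mathcal{L}^{+}(S)$ is a matrix factorization rather than a Koszul complex. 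Lemma~\ref{nonloc} and its $\Tor$ corollary therefore do not compute $\hfk_{2}^{-}$, and the claim that the local factor lies in a single algebraic grading $\gr_{N}$ has no justification here. Also, the lemma makes no assumption that $S_{II}\in\mathcal{D^{R}}$; the regular-sequence hypothesis is about the cube complex $C_{2}^{-}$ of a partially singular \emph{braid}, not an intrinsic property of a single complete resolution, so you cannot freely assume it.

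Second, and more seriously, you attribute the relation $U_{c}^{2}=U_{1}^{2}$ to a ``decorated-edge relation'' sitting inside $N(S)$ plus $U_{c}^{2}=U_{d}^{2}$. The actual source in the paper is a specific holomorphic disc: after isotopy the bigon from $y$ to $x$ has coefficient $\pm U_{c}U_{d}$, and there is also a \emph{wrap-around} bigon on $S^{2}$ (the complement of all the $\alpha$-discs, or all the $\beta$-discs) which passes through the $O_{1},O_{2},X$ basepoints at the decorated edge and hence contributes $\pm\tfrac{1}{2}U_{1}(U_{1}+U_{2})$. Combining the two gives the two-step complex $R\xrightarrow{U_{c}U_{d}+\frac{1}{2}U_{1}(U_{1}+U_{2})}R$, and it is only after passing to homology (where $U_{c}=-U_{d}$ and $U_{1}=U_{2}$) that this becomes $U_{c}^{2}=U_{1}^{2}$. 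Without identifying this wrap-around disc the relation simply does not appear, and any purely algebraic derivation from $N(S)$ would have to explain why $U_{1}$ (and not some other variable) occurs --- which it cannot, since from the algebraic side the decorated edge is not distinguished. Finally, once the local model is known the paper does not invoke any K\"unneth or $\Tor$-vanishing argument: it contracts the map by $U_{e}+U_{f}-U_{c}-U_{d}$ in the Koszul factor $\mathsf{K}_{II}$ directly to identify the remaining matrix factorization with $\mathsf{K}'_{II}$. Your steps (3)--(4) should be replaced by this explicit cancellation.
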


\begin{figure}[ht]
\centering
\def\svgwidth{6.5cm}
%auto-ignore
\begingroup%
  \makeatletter%
  \providecommand\color[2][]{%
    \errmessage{(Inkscape) Color is used for the text in Inkscape, but the package 'color.sty' is not loaded}%
    \renewcommand\color[2][]{}%
  }%
  \providecommand\transparent[1]{%
    \errmessage{(Inkscape) Transparency is used (non-zero) for the text in Inkscape, but the package 'transparent.sty' is not loaded}%
    \renewcommand\transparent[1]{}%
  }%
  \providecommand\rotatebox[2]{#2}%
  \ifx\svgwidth\undefined%
    \setlength{\unitlength}{1390.83860382bp}%
    \ifx\svgscale\undefined%
      \relax%
    \else%
      \setlength{\unitlength}{\unitlength * \real{\svgscale}}%
    \fi%
  \else%
    \setlength{\unitlength}{\svgwidth}%
  \fi%
  \global\let\svgwidth\undefined%
  \global\let\svgscale\undefined%  

\tikzset{every picture/.style={line width=0.75pt}} %set default line width to 0.75pt        

\begin{tikzpicture}[x=0.75pt,y=0.75pt,yscale=-.8,xscale=.8]
%uncomment if require: \path (0,300); %set diagram left start at 0, and has height of 300

%Curve Lines [id:da7932562977485182] 
\draw    (243.37,229.35) .. controls (268,208.35) and (293,167.2) .. (293,143.35) .. controls (293,119.86) and (272.95,84.6) .. (245.18,59.49) ;
\draw [shift={(243.9,58.35)}, rotate = 401.52] [color={rgb, 255:red, 0; green, 0; blue, 0 }  ][line width=0.75]    (10.93,-3.29) .. controls (6.95,-1.4) and (3.31,-0.3) .. (0,0) .. controls (3.31,0.3) and (6.95,1.4) .. (10.93,3.29)   ;

%Curve Lines [id:da9176269977881195] 
\draw    (293.99,229.35) .. controls (265,203.35) and (245.37,166.6) .. (245.37,144.01) .. controls (245.37,121.76) and (265.39,84.49) .. (292.12,59.79) ;
\draw [shift={(293.35,58.67)}, rotate = 497.93] [color={rgb, 255:red, 0; green, 0; blue, 0 }  ][line width=0.75]    (10.93,-3.29) .. controls (6.95,-1.4) and (3.31,-0.3) .. (0,0) .. controls (3.31,0.3) and (6.95,1.4) .. (10.93,3.29)   ;

%Shape: Ellipse [id:dp985111728530399] 
\draw  [fill={rgb, 255:red, 3; green, 3; blue, 3 }  ,fill opacity=1 ] (155.56,144.12) .. controls (155.56,143.14) and (156.25,142.35) .. (157.1,142.35) .. controls (157.96,142.35) and (158.65,143.14) .. (158.65,144.12) .. controls (158.65,145.09) and (157.96,145.88) .. (157.1,145.88) .. controls (156.25,145.88) and (155.56,145.09) .. (155.56,144.12) -- cycle ;
%Shape: Ellipse [id:dp8265245231202352] 
\draw  [fill={rgb, 255:red, 3; green, 3; blue, 3 }  ,fill opacity=1 ] (174.44,144.09) .. controls (174.44,143.12) and (175.14,142.32) .. (175.99,142.32) .. controls (176.84,142.32) and (177.53,143.12) .. (177.53,144.09) .. controls (177.53,145.07) and (176.84,145.86) .. (175.99,145.86) .. controls (175.14,145.86) and (174.44,145.07) .. (174.44,144.09) -- cycle ;
%Shape: Ellipse [id:dp8304501763479] 
\draw  [fill={rgb, 255:red, 3; green, 3; blue, 3 }  ,fill opacity=1 ] (193.09,144.13) .. controls (193.09,143.15) and (193.79,142.36) .. (194.64,142.36) .. controls (195.49,142.36) and (196.18,143.15) .. (196.18,144.13) .. controls (196.18,145.11) and (195.49,145.9) .. (194.64,145.9) .. controls (193.79,145.9) and (193.09,145.11) .. (193.09,144.13) -- cycle ;
%Shape: Ellipse [id:dp7322810239774074] 
\draw  [fill={rgb, 255:red, 3; green, 3; blue, 3 }  ,fill opacity=1 ] (333.37,144.12) .. controls (333.37,143.14) and (334.07,142.35) .. (334.92,142.35) .. controls (335.77,142.35) and (336.46,143.14) .. (336.46,144.12) .. controls (336.46,145.09) and (335.77,145.88) .. (334.92,145.88) .. controls (334.07,145.88) and (333.37,145.09) .. (333.37,144.12) -- cycle ;
%Shape: Ellipse [id:dp283318090741278] 
\draw  [fill={rgb, 255:red, 3; green, 3; blue, 3 }  ,fill opacity=1 ] (352.26,144.09) .. controls (352.26,143.12) and (352.95,142.32) .. (353.8,142.32) .. controls (354.66,142.32) and (355.35,143.12) .. (355.35,144.09) .. controls (355.35,145.07) and (354.66,145.86) .. (353.8,145.86) .. controls (352.95,145.86) and (352.26,145.07) .. (352.26,144.09) -- cycle ;
%Shape: Ellipse [id:dp13552790475994692] 
\draw  [fill={rgb, 255:red, 3; green, 3; blue, 3 }  ,fill opacity=1 ] (370.91,144.13) .. controls (370.91,143.15) and (371.6,142.36) .. (372.45,142.36) .. controls (373.31,142.36) and (374,143.15) .. (374,144.13) .. controls (374,145.11) and (373.31,145.9) .. (372.45,145.9) .. controls (371.6,145.9) and (370.91,145.11) .. (370.91,144.13) -- cycle ;
%Shape: Ellipse [id:dp6145275525214446] 
\draw  [fill={rgb, 255:red, 3; green, 3; blue, 3 }  ,fill opacity=1 ] (522.56,144.12) .. controls (522.56,143.14) and (523.25,142.35) .. (524.1,142.35) .. controls (524.96,142.35) and (525.65,143.14) .. (525.65,144.12) .. controls (525.65,145.09) and (524.96,145.88) .. (524.1,145.88) .. controls (523.25,145.88) and (522.56,145.09) .. (522.56,144.12) -- cycle ;
%Shape: Ellipse [id:dp9236102736665599] 
\draw  [fill={rgb, 255:red, 3; green, 3; blue, 3 }  ,fill opacity=1 ] (541.44,144.09) .. controls (541.44,143.12) and (542.14,142.32) .. (542.99,142.32) .. controls (543.84,142.32) and (544.53,143.12) .. (544.53,144.09) .. controls (544.53,145.07) and (543.84,145.86) .. (542.99,145.86) .. controls (542.14,145.86) and (541.44,145.07) .. (541.44,144.09) -- cycle ;
%Shape: Ellipse [id:dp5990298780908629] 
\draw  [fill={rgb, 255:red, 3; green, 3; blue, 3 }  ,fill opacity=1 ] (560.09,144.13) .. controls (560.09,143.15) and (560.79,142.36) .. (561.64,142.36) .. controls (562.49,142.36) and (563.18,143.15) .. (563.18,144.13) .. controls (563.18,145.11) and (562.49,145.9) .. (561.64,145.9) .. controls (560.79,145.9) and (560.09,145.11) .. (560.09,144.13) -- cycle ;
%Shape: Ellipse [id:dp5365943783494582] 
\draw  [fill={rgb, 255:red, 3; green, 3; blue, 3 }  ,fill opacity=1 ] (700.37,144.12) .. controls (700.37,143.14) and (701.07,142.35) .. (701.92,142.35) .. controls (702.77,142.35) and (703.46,143.14) .. (703.46,144.12) .. controls (703.46,145.09) and (702.77,145.88) .. (701.92,145.88) .. controls (701.07,145.88) and (700.37,145.09) .. (700.37,144.12) -- cycle ;
%Shape: Ellipse [id:dp9356464154556017] 
\draw  [fill={rgb, 255:red, 3; green, 3; blue, 3 }  ,fill opacity=1 ] (719.26,144.09) .. controls (719.26,143.12) and (719.95,142.32) .. (720.8,142.32) .. controls (721.66,142.32) and (722.35,143.12) .. (722.35,144.09) .. controls (722.35,145.07) and (721.66,145.86) .. (720.8,145.86) .. controls (719.95,145.86) and (719.26,145.07) .. (719.26,144.09) -- cycle ;
%Shape: Ellipse [id:dp45398319491667793] 
\draw  [fill={rgb, 255:red, 3; green, 3; blue, 3 }  ,fill opacity=1 ] (737.91,144.13) .. controls (737.91,143.15) and (738.6,142.36) .. (739.45,142.36) .. controls (740.31,142.36) and (741,143.15) .. (741,144.13) .. controls (741,145.11) and (740.31,145.9) .. (739.45,145.9) .. controls (738.6,145.9) and (737.91,145.11) .. (737.91,144.13) -- cycle ;
%Straight Lines [id:da8623317233813026] 
\draw    (668.33,213.35) -- (591.29,72.44) ;
\draw [shift={(590.33,70.69)}, rotate = 421.33000000000004] [color={rgb, 255:red, 0; green, 0; blue, 0 }  ][line width=0.75]    (10.93,-3.29) .. controls (6.95,-1.4) and (3.31,-0.3) .. (0,0) .. controls (3.31,0.3) and (6.95,1.4) .. (10.93,3.29)   ;

%Straight Lines [id:da030044182539240127] 
\draw    (589.67,214.69) -- (666.71,73.11) ;
\draw [shift={(667.67,71.35)}, rotate = 478.55] [color={rgb, 255:red, 0; green, 0; blue, 0 }  ][line width=0.75]    (10.93,-3.29) .. controls (6.95,-1.4) and (3.31,-0.3) .. (0,0) .. controls (3.31,0.3) and (6.95,1.4) .. (10.93,3.29)   ;

% Text Node
\draw (109.94,143.86) node [scale=1.25]  {$S_{II} =$};
% Text Node
\draw (476.94,143.86) node [scale=1.25]  {$S'_{II} =$};
% Text Node
\draw (241.73,67.73) node  [align=left] {$e_{e}$};
% Text Node
\draw (299.27,67.57) node  [align=left] {$e_{f}$};
% Text Node
\draw (234.73,225) node  [align=left] {$e_{a}$};
% Text Node
\draw (303.34,225) node  [align=left] {$e_{b}$};
% Text Node
\draw (236.59,143.17) node  [align=left] {$e_{c}$};
% Text Node
\draw (304.59,144.17) node  [align=left] {$e_{d}$};
% Text Node
\draw (589.73,90.73) node  [align=left] {$e_{e}$};
% Text Node
\draw (671.27,90.57) node  [align=left] {$e_{f}$};
% Text Node
\draw (585.73,198) node  [align=left] {$e_{a}$};
% Text Node
\draw (672.34,196) node  [align=left] {$e_{b}$};

\end{tikzpicture}

\endgroup%
\caption{The diagrams for MOY II}\label{M2}
\end{figure}

\begin{proof}

Consider the standard planar diagram for $S_{II}$. With the change of basis to make $XX$ count with coefficient $1$, each $\alpha$ and $\beta$ curve at a 4-valent vertex can be isotoped as in Figure \ref{IsotopedHD}. The resulting Heegaard diagram $\cH$ for $S_{II}$ is shown in Figure \ref{SIIHD}. After additional isotopy, we get the diagram $\cH'$ in Figure \ref{SII2HD}. There are two holomorphic curves from $x$ to $y$ which count with coefficient $1$ and $-1$ respectively, so they cancel. There is one bigon in the diagram from $y$ to $x$ which comes with coefficient $\pm U_{c}U_{d}$. However, there is also a `large' bigon from $y$ to $x$ which wraps around the back of $S^{2}$. This bigon with either the interiors of the $\alpha$ circles removed or the interiors of the $\beta$ circles removed will contribute, depending on the choice of almost complex structure. Thus, the coefficient will come from the basepoints passed through at the marked edge where the $\alpha$ and $\beta$ circle are missing (see Figure \ref{MarkedEdgeHD}). Without loss of generality, suppose the disc that contributes is the complement of the $\alpha$ circles. Then it comes with coefficient $\pm(1/2)U_{1}(U_{1}+U_{2})$.

\begin{figure}[ht]
\begin{subfigure}{.28\textwidth}
 \centering
 \scriptsize
\def\svgwidth{3.6cm}
\input{SII.pdf_tex}
  \caption{The Heegaard diagram $\cH$ for $S_{II}$}  \label{SIIHD}
 \end{subfigure} \hspace{2mm}
\begin{subfigure}{.28\textwidth}
 \centering
 \tiny
\def\svgwidth{3.6cm}
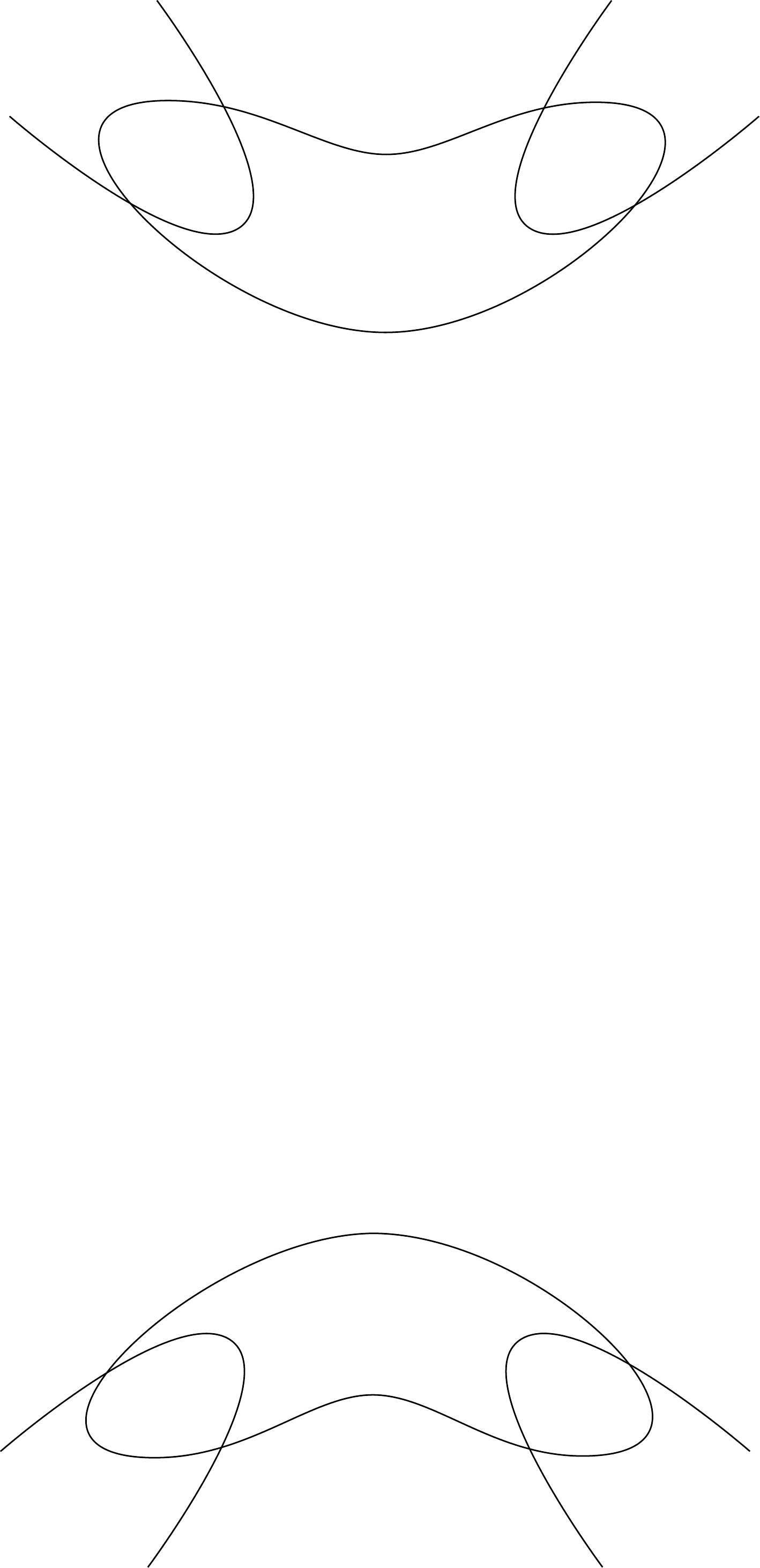
  \caption{The Heegaard diagram $\cH'$ for $S_{II}$ after isotopy}  \label{SII2HD}
  \end{subfigure}\hspace{2mm}
  \begin{subfigure}{.28\textwidth}
 \centering
 \tiny
\def\svgwidth{3.6cm}
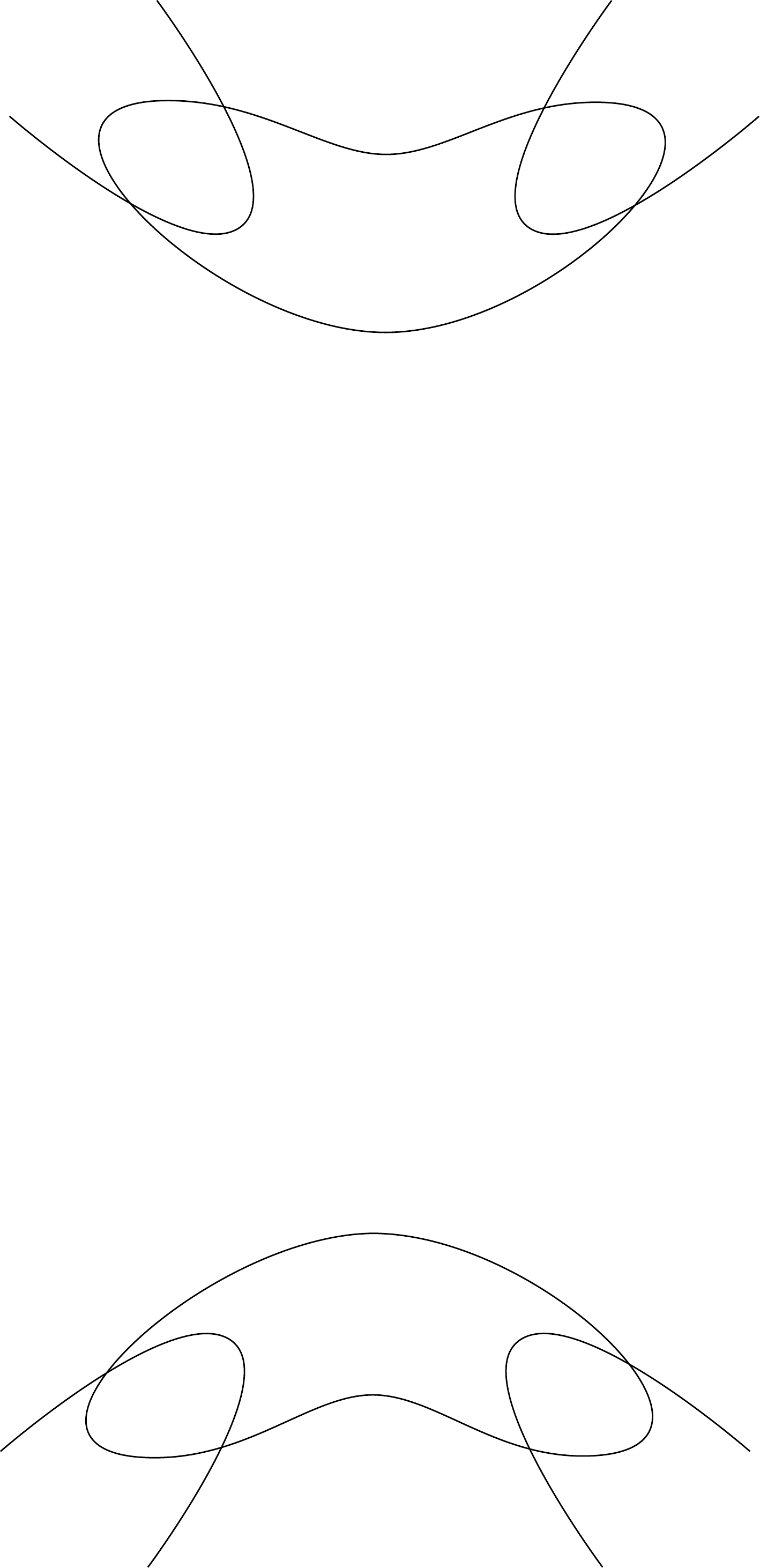
  \caption{The Heegaard diagram $\cH''$ for $S'_{II}$}  \label{SII3HD}
  \end{subfigure}
  \caption{}
\end{figure}%

Thus, the complex $\cfk_{2}^{-}(\cH')$ is given by 
\[ \cfk_{2}^{-}(\cH') \cong \cfk_{2}^{-}(\cH'')[U_{c},U_{d}] \otimes (R \xrightarrow{U_{c}U_{d}+\frac{1}{2}U_{1}(U_{1}+U_{2})} R) \]

\noindent
where $\cH''$ is the Heegaard diagram for $S'_{II}$ in Figure \ref{SII3HD}. The two 4-valent vertices in $S_{II}$ contribute the Koszul complex 
\[ \mathsf{K}_{II} = 
(\xymatrix@C=2.5cm{R\ar@<1ex>[r]^{U_{e}+U_{f} -U_{c}-U_{d}}&R\ar@<1ex>[l]^{-\frac{1}{2}(U_{e}+U_{f} +U_{c}+U_{d})}} ) \otimes (\xymatrix@C=2.5cm{R\ar@<1ex>[r]^{U_{c}+U_{d} -U_{a}-U_{b}}&R\ar@<1ex>[l]^{-\frac{1}{2}(U_{c}+U_{d} +U_{a}+U_{b})}}) 
\]

\noindent
while the single 4-valent vertex in $S'_{II}$ contributes 
\[ \mathsf{K}'_{II} = \xymatrix@C=2.5cm{R\ar@<1ex>[r]^{U_{e}+U_{f}-U_{a}-U_{b}}&R\ar@<1ex>[l]^{-\frac{1}{2}(U_{e}+U_{f}+U_{a}+U_{b})}} \]

\noindent
Let $\mathsf{K}_{0}$ denote the Koszul complex coming from the remaining 4-valent vertices. Then 
\[ \cfk^{-}_{2}(S_{II}) \cong \cfk^{-}_{2}(\cH') \otimes \mathsf{K}_{II} \otimes \mathsf{K}_{0} \]
\[ \cfk^{-}_{2}(S'_{II}) \cong \cfk^{-}_{2}(\cH'') \otimes \mathsf{K}'_{II} \otimes \mathsf{K}_{0} \]

\noindent
Substituting for $\cfk^{-}_{2}(\cH')$, we get
\[ \cfk^{-}_{2}(S_{II}) \cong \cfk_{2}^{-}(\cH'')[U_{c},U_{d}] \otimes (R \xrightarrow{U_{c}U_{d}+\frac{1}{2}U_{1}(U_{1}+U_{2})} R)  \otimes \mathsf{K}_{II} \otimes \mathsf{K}_{0} \]

\noindent
After canceling the map by $U_{e}+U_{f} -U_{c}-U_{d}$ in the first factor of $ \mathsf{K}_{II}$, we get
\[ \cfk^{-}_{2}(S_{II}) \simeq \cfk^{-}_{2}(\cH')[U_{c}] \otimes (R \xrightarrow{U_{c}U_{d}-\frac{1}{2}U_{1}(U_{1}+U_{2})} R) \otimes  (\xymatrix@C=2.5cm{R\ar@<1ex>[r]^{U_{c}+U_{d} -U_{a}-U_{b}}&R\ar@<1ex>[l]^{-\frac{1}{2}(U_{c}+U_{d} +U_{a}+U_{b})}}) \otimes \mathsf{K}_{0} \]

\noindent
with $U_{d}=U_{e}+U_{f}-U_{c}$, where $\simeq$ denotes quasi-isomorphism. Then the factor   \[     (\xymatrix@C=2.5cm{R\ar@<1ex>[r]^{U_{c}+U_{d} -U_{a}-U_{b}}&R\ar@<1ex>[l]^{-\frac{1}{2}(U_{c}+U_{d} +U_{a}+U_{b})}})\] is precisely $\mathsf{K}'_{II}$, so we have 
\[ \cfk^{-}_{2}(S_{II}) \simeq \cfk^{-}_{2}(S'_{II})[U_{c}] \otimes (R \xrightarrow{U_{c}U_{d}+\frac{1}{2}U_{1}(U_{1}+U_{2})} R)   \]

Since on homology $U_{c}=-U_{d}$ and $U_{1}=U_{2}$, this gives 
\[ \hfk_{2}^{-}(S_{II}) \cong \hfk^{-}_{2}(S'_{II})[U_{c}]/U_{c}^{2}=U_{1}^{2} \]

\noindent
with $U_{d}$ acting by $-U_{c}$.

\end{proof}

\begin{lem} \label{MOYthree}

Consider the diagrams $S_{IIIa}$, $S'_{IIIa}$, $S_{IIIb}$, and $S'_{IIIb}$ in Figure \ref{M3}. Then 
\[\hfk_{2}^{-}(S_{IIIa}) \cong \hfk_{2}^{-}(S'_{IIIa})[U_{f},U_{g},U_{h},U_{i}]/(U_{f}=U_{i} = -U_{h}=-U_{g}=U_{c}) \]

\noindent
and
\[\hfk_{2}^{-}(S_{IIIb}) \cong \hfk_{2}^{-}(S'_{IIIb})[U_{d},U_{g},U_{h},U_{i}]/(U_{d}=U_{i} = -U_{h}=-U_{g}=U_{a}) \]

\noindent
as $R$-modules.

\end{lem}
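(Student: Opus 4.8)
The plan is to follow the template of the proof of Lemma~\ref{MOYtwo}: I would establish the statement for $S_{IIIa}$ in detail and deduce the one for $S_{IIIb}$ by an entirely analogous argument, $S_{IIIb}$ being obtained from $S_{IIIa}$ by a reflection of the local picture that interchanges incoming and outgoing edges and replaces the role of $e_c$ by that of $e_a$. First I would pass to the standard planar Heegaard diagram $\cH$ for $S_{IIIa}$ and perform the change of basis $x \mapsto (-1/2)^{A(x)} x$, so that the double basepoints $XX$ count with coefficient $1$. As in the previous lemma, this lets the Heegaard diagram at each 4-valent vertex of $S_{IIIa}$ be isotoped into the local form of Figure~\ref{IsotopedHD}, at the cost of having each basepoint $X_i$ contribute $-\frac{1}{2}(U_{a(i)}+U_{b(i)})$ to the differential.

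Next I would isotope the resulting diagram and read off $\partial_2$. I expect the picture to decompose, exactly as in the $S_{II}$ case, as a tensor product of: (i) a Koszul matrix factorization $\xymatrix@C=1.6cm{R\ar@<1ex>[r]^{L(v)}&R\ar@<1ex>[l]^{-\frac{1}{2}L^{+}(v)}}$ for each 4-valent vertex, together with a factor $\mathsf{K}_0$ coming from the 4-valent vertices outside the local picture; (ii) several cancelling pairs of bigons between adjacent intersection points, which come with opposite signs and drop out; and (iii) possibly one ``large'' bigon wrapping the back of $S^2$ and meeting the marked edge, contributing a term of the shape $\pm\frac{1}{2}U_1(U_1+U_2)$ as in Lemma~\ref{MOYtwo}. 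Discarding the cancelling pairs identifies $\cfk^{-}_2(S_{IIIa})$ with $\cfk^{-}_2(\cH'')$, where $\cH''$ is the diagram obtained after these simplifications, tensored with the surviving Koszul and bigon factors.

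The core of the argument is then a sequence of Gaussian eliminations. Using the linear terms $L(v)$ attached to the 4-valent vertices of $S_{IIIa}$ that do not survive in $S'_{IIIa}$, I would cancel the corresponding acyclic Koszul factors one at a time; each cancellation is an $R$-module homotopy equivalence that substitutes one of $U_f, U_g, U_h, U_i$ by a linear combination of the remaining edge variables, and, once all of them have been performed, what is left over is precisely the Koszul and Heegaard complex of $S'_{IIIa}$. Since MOY~III is a rank-preserving relation, the large-bigon factor (if it appears at all) must here be absorbed into one of the Koszul factors rather than producing an extra summand, in contrast with the rank-two factor in Lemma~\ref{MOYtwo}. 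Passing to homology and invoking the local relations $U_a=-U_b$ and $U_c=-U_d$ at each 4-valent vertex then forces the chain of identifications $U_f = U_i = -U_g = -U_h = U_c$, giving the stated $R$-module isomorphism; the identification $U_d = U_i = -U_g = -U_h = U_a$ for $S_{IIIb}$ is obtained by the same computation on the reflected diagram.

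The main obstacle I anticipate is the bookkeeping in this last step: with more 4-valent vertices present than in Lemma~\ref{MOYtwo}, one must verify that the Koszul differentials can be cancelled in an order leaving exactly the complex of $S'_{IIIa}$ and no spurious terms, and one must pin down which holomorphic discs---in particular any large disc wrapping $S^2$, whose contribution depends on the chosen almost complex structure---actually enter the count. As before, the decisive simplification is that after the change of basis the $XX$ basepoints disappear from the disc count, so that each local picture reduces to an ordinary Heegaard diagram and the $S_{II}$ analysis applies within it.
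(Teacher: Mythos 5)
Your overall blueprint---change of basis to make $XX$ count with coefficient $1$, local isotopy of the Heegaard diagram at each $4$-valent vertex, Gaussian elimination, and then passing to homology and invoking $U_{a}=-U_{b}$, $U_{c}=-U_{d}$ from the earlier lemma---matches the paper's strategy and does land on the correct relations $U_{f}=U_{i}=-U_{h}=-U_{g}=U_{c}$. The gap is in the middle step, and it is a real one.

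After the local isotopy the paper does not proceed by cancelling pairs of bigons; it performs \emph{handleslides} so that the resulting diagram $\cH'$ is visibly obtained from the small diagram $\cH''$ by two $(0,3)$ quasi-stabilizations, and then invokes the quasi-stabilization formula (citing Zemke) to produce an explicit tensor decomposition: $\cfk_{2}(\cH') \cong \cfk_{2}(\cH'')[U_{f},U_{g},U_{h},U_{i}]$ tensored with two Koszul-type factors whose forward arrows are $U_{i}-U_{f}$ and $U_{f}-U_{c}$ (and whose backward arrows are $U_{g}-U_{h}$ and $U_{g}-P$, where $P$ is a coefficient that never needs to be computed). The Gaussian eliminations are carried out first on $U_{i}-U_{f}$ and $U_{f}-U_{c}$ from \emph{this} decomposition---not directly on the $L(v)$'s of the extra $4$-valent vertices as you propose---and only afterwards are two of the three factors of $\mathsf{K}_{IIIa}$ eliminated, producing $U_{f}=U_{i}=U_{c}$, $U_{g}=U_{h}$, $U_{i}+U_{h}=U_{d}+U_{e}$, which identify the surviving factor with $\mathsf{K}'_{IIIa}$. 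By contrast, your proposal speculates that a possible ``large bigon wrapping $S^{2}$'' must be ``absorbed into one of the Koszul factors'' because ``MOY III is rank-preserving.'' That is a hope, not an argument: without the quasi-stabilization decomposition you have no control over which holomorphic terms actually appear, so you cannot verify that the eliminations leave precisely $\cfk^{-}_{2}(S'_{IIIa})$ with no spurious factors. You need to locate and invoke the quasi-stabilization structure before the Gaussian elimination step can be justified.
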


\begin{figure}[ht]
\centering
\def\svgwidth{6.5cm}
\input{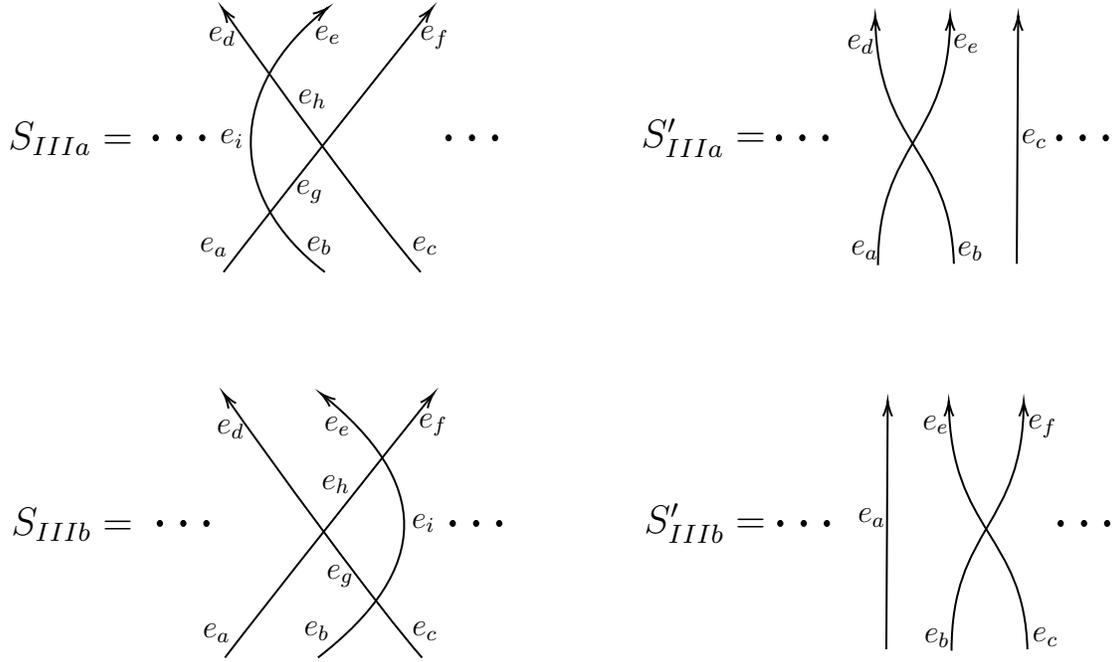}
\caption{The diagrams for MOY IIIa and IIIb}\label{M3}
\end{figure}

\begin{proof}

Consider the standard planar Heegaard diagram for $S_{IIIa}$. With the change of basis to make $XX$ count with coefficient $1$, each $\alpha$ and $\beta$ curve at a 4-valent vertex can be isotoped as in Figure \ref{IsotopedHD}. The resulting Heegaard diagram $\cH$ for $S_{IIIa}$ is shown in Figure \ref{SIIIaHD}. Applying isotopy and handleslides, we get the diagram $\cH'$ in Figure \ref{Handleslide}.

\begin{figure}[ht]
\begin{subfigure}{.49\textwidth}
 \centering
 \scriptsize
\def\svgwidth{7cm}
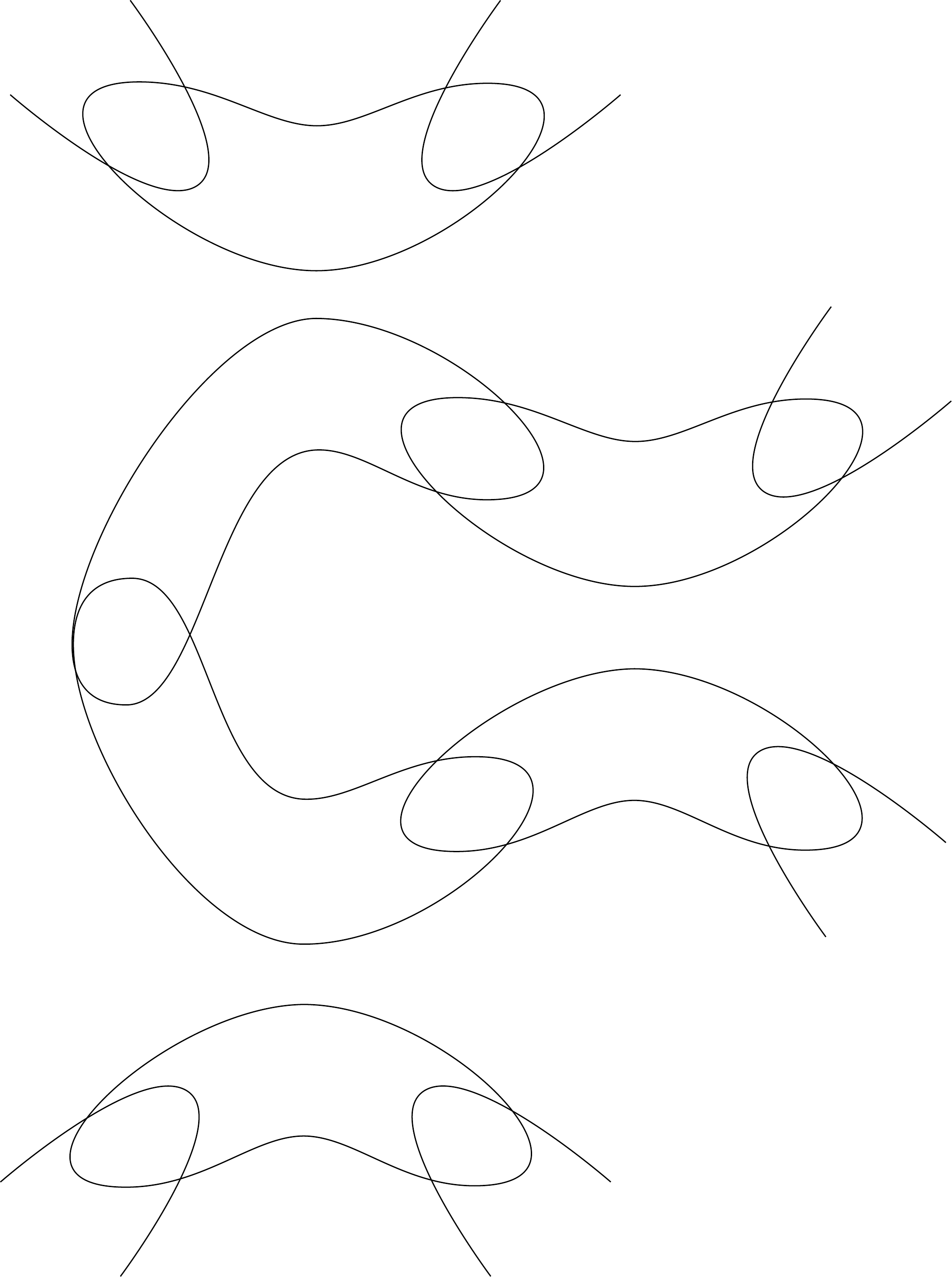
  \caption{The Heegaard diagram $\cH$ for $S_{IIIa}$}  \label{SIIIaHD}
 \end{subfigure}
\begin{subfigure}{.5\textwidth}
 \centering
 \tiny
\def\svgwidth{8cm}
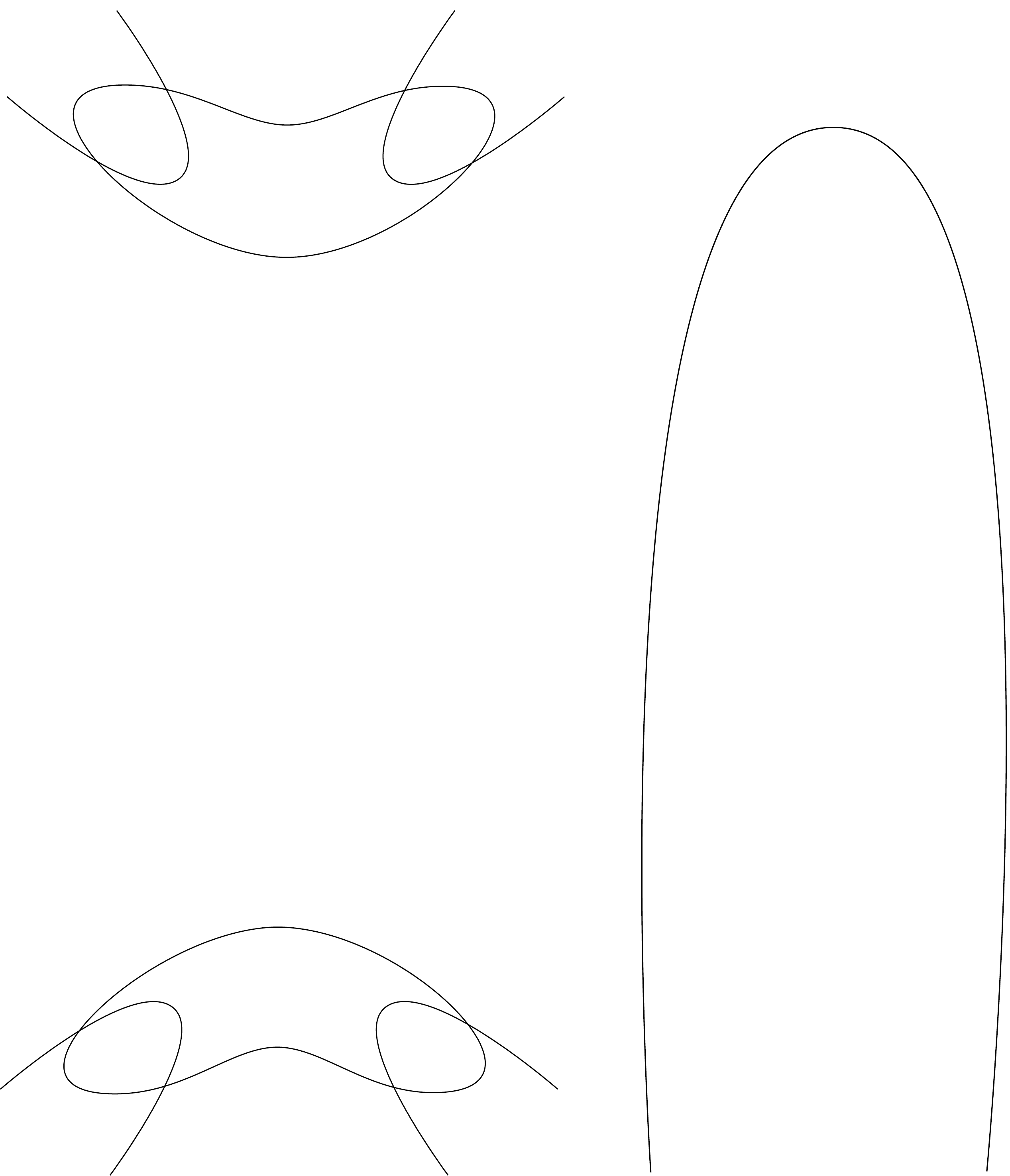
  \caption{The Heegaard diagram $\cH'$ for $S_{IIIa}$ after isotopy and handleslides}  \label{Handleslide}
  \end{subfigure}
  \caption{}
\end{figure}%

\begin{figure}
 \centering
 \footnotesize
\def\svgwidth{6cm}
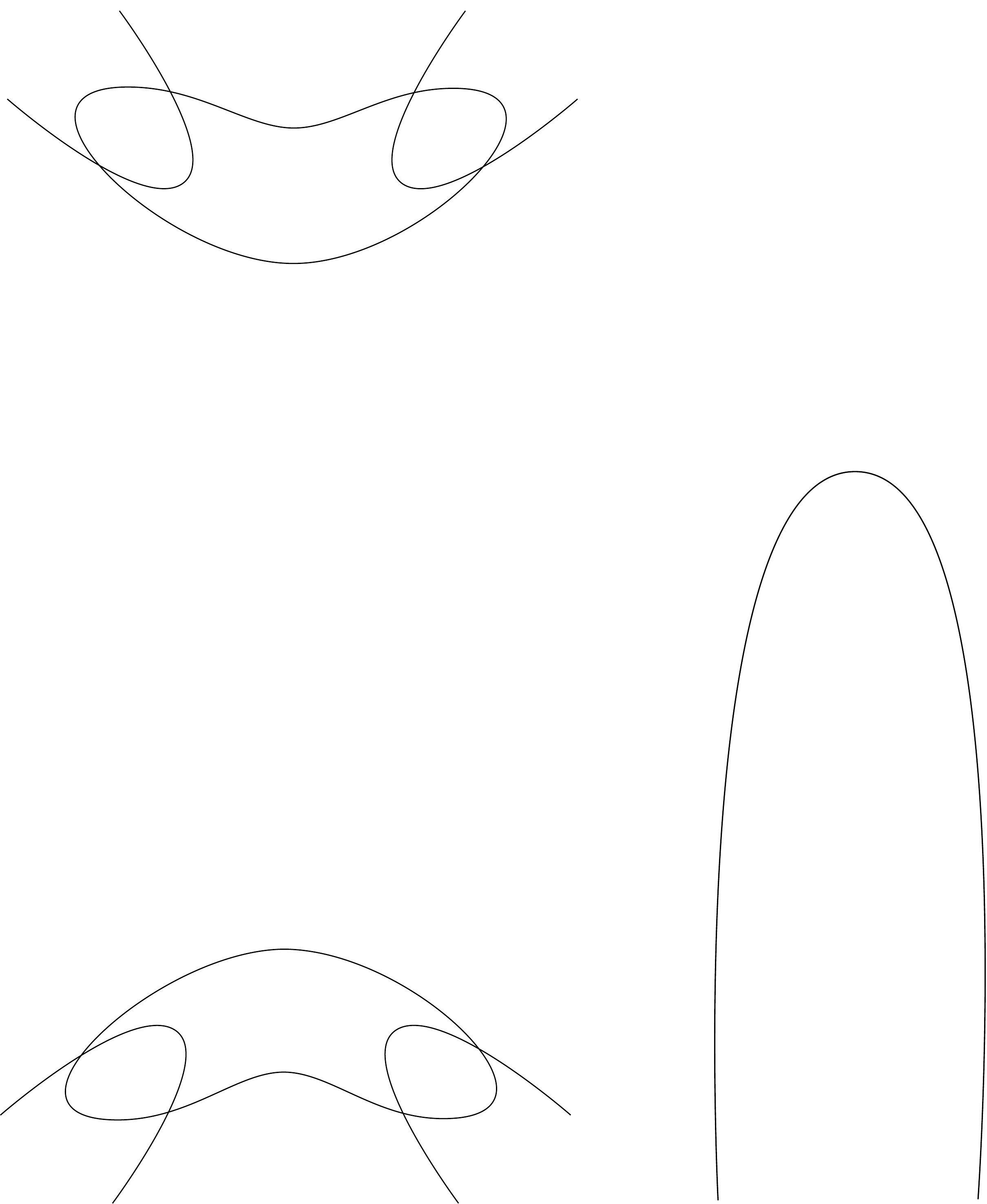
  \caption{The destabilized diagram $\cH''$}  \label{destabHD}
\end{figure}%

The Heegaard diagram $\cH'$ corresponds to two $(0,3)$ stabilizations of the diagram $\cH''$ in Figure \ref{destabHD}. In particular, 
\begin{equation}\label{eqIII}
 \cfk_{2}(\cH') \cong \cfk_{2}(\cH'')[U_{f}, U_{g}, U_{h}, U_{i}] \otimes (\xymatrix@C=1.5cm{R\ar@<1ex>[r]^{U_{i}-U_{f}}&R\ar@<1ex>[l]^{U_{g}-U_{h}}} ) \otimes (\xymatrix@C=1.5cm{R\ar@<1ex>[r]^{U_{f}-U_{c}}&R\ar@<1ex>[l]^{U_{g}-P}}) \end{equation}

\noindent
where $P$ is the contribution of the basepoint connected to $O_{f}$ not pictured in Figure \ref{SIIIaHD}. See \cite{Zemke} for a detailed treatment of quasi-stabilization in the master complex.

The three 4-valent vertices in $S_{IIIa}$ contribute the Koszul complex 
\[ \mathsf{K}_{IIIa} = 
(\xymatrix@C=2.5cm{R\ar@<1ex>[r]^{U_{d}+U_{e} -U_{i}-U_{h}}&R\ar@<1ex>[l]^{-\frac{1}{2}(U_{d}+U_{e} +U_{i}+U_{h})}} ) \otimes (\xymatrix@C=2.5cm{R\ar@<1ex>[r]^{U_{i}+U_{g} -U_{a}-U_{b}}&R\ar@<1ex>[l]^{-\frac{1}{2}(U_{i}+U_{g} +U_{a}+U_{b})}})  \otimes (\xymatrix@C=2.5cm{R\ar@<1ex>[r]^{U_{h}+U_{f}-U_{g}-U_{c}}&R\ar@<1ex>[l]^{-\frac{1}{2}(U_{h}+U_{f}+U_{g}+U_{c})}}) 
\]

\noindent
while the single 4-valent vertex in $S'_{IIIa}$ contributes 
\[ \mathsf{K'}_{IIIa} = \xymatrix@C=2.5cm{R\ar@<1ex>[r]^{U_{d}+U_{e}-U_{a}-U_{b}}&R\ar@<1ex>[l]^{-\frac{1}{2}(U_{d}+U_{e}+U_{a}+U_{b})}} \]

\noindent
Let $\mathsf{K}_{0}$ denote the Koszul complex coming from the remaining 4-valent vertices. Then 
\[ \cfk^{-}_{2}(S_{IIIa}) \cong \cfk^{-}_{2}(\cH') \otimes \mathsf{K}_{IIIa} \otimes \mathsf{K}_{0} \]
\[ \cfk^{-}_{2}(S'_{IIIa}) \cong \cfk^{-}_{2}(\cH'') \otimes \mathsf{K'}_{IIIa} \otimes \mathsf{K}_{0} \]

Applying equation \ref{eqIII}, we get
\[\cfk^{-}_{2}(S_{IIIa}) \cong  \cfk^{-}_{2}(\cH'')[U_{f},U_{g}, U_{h}, U_{i}] \otimes (\xymatrix@C=1.5cm{R\ar@<1ex>[r]^{U_{i}-U_{f}}&R\ar@<1ex>[l]^{U_{g}-U_{h}}} ) \otimes (\xymatrix@C=1.5cm{R\ar@<1ex>[r]^{U_{f}-U_{c}}&R\ar@<1ex>[l]^{U_{g}-P}})  \otimes \mathsf{K}_{IIIa} \otimes \mathsf{K}_{0} \]

\noindent
Contracting the maps by $U_{i}-U_{f}$ and $U_{f}-U_{c}$, we get
\[\cfk^{-}_{2}(S_{IIIa}) \simeq  \cfk^{-}_{2}(\cH'')[U_{g}, U_{h}] \otimes \mathsf{K}_{IIIa} \otimes \mathsf{K}_{0} \]
\noindent
with $U_{f}=U_{i}=U_{c}$, where $\simeq$ denotes quasi-isomorphism. Similarly, we contract the maps by $U_{d}+U_{e}-U_{i}-U_{h}$ and $U_{h}+U_{f}-U_{g}-U_{c}$  in $\mathsf{K}_{IIIa}$ to substitute for $U_{g}$ and $U_{h}$, giving
\[\cfk^{-}_{2}(S_{IIIa}) \simeq  \cfk^{-}_{2}(\cH'')\otimes  (\xymatrix@C=2.5cm{R\ar@<1ex>[r]^{U_{i}+U_{g} -U_{a}-U_{b}}&R\ar@<1ex>[l]^{-\frac{1}{2}(U_{i}+U_{g} +U_{a}+U_{b})}}) \otimes \mathsf{K}_{0} \]

\noindent
with $U_{f}=U_{i}=U_{c}, U_{g}=U_{h}$, and $U_{i}+U_{h}=U_{d}+U_{e}$. These relations make the middle term equal to $\mathit{K}'_{IIIa}$, so 
\[\cfk^{-}_{2}(S_{IIIa}) \simeq  \cfk^{-}_{2}(\cH'')\otimes \mathit{K}'_{IIIa} \otimes \mathsf{K}_{0} \cong \cfk^{-}_{2}(S'_{IIIa}) \]

This proves the lemma for $S_{IIIa}$ and $S'_{IIIa}$ - the argument for $S_{IIIb}$ and $S'_{IIIb}$ is similar.
\end{proof}

The isomorphism in Lemma \ref{MOYtwo} is called a MOY II move, while the isomorphisms in Lemma \ref{MOYthree} are called MOY III moves. We can now put these lemmas together to get our main theorem of the section:

\begin{thm} \label{hfk2kh}

For a completely singular link $S$, the homology $\hfk_{2}^{-}(S)$ is isomorphic to $Kh^{-}(\sm(S))$ as an $R$-module.

\end{thm}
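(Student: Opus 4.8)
The plan is to reduce $S$ step by step, via the MOY moves of Lemmas~\ref{MOYtwo} and~\ref{MOYthree}, to a disjoint union of embedded circles, verifying at each step that $\hfk_2^-$ and $Kh^-(\sm(-))$ are changed in the same way as $R$-modules. The starting observation is that, since $S$ is the closure of a completely singular braid, every $4$-valent vertex occupies a crossing position, and the unoriented smoothing at such a position is a Temperley--Lieb generator; hence $\sm(S)$ is the trace closure of a product of Temperley--Lieb generators, i.e. a disjoint union of $k$ circles, and $Kh^-(\sm(S))$ is $\mathcal{A}^{\otimes k}$ over $R$ (after the substitution $U_i = (-1)^{\str(i)} X_i$ of Section~3, which identifies the circle variables and all the $U_i^2$). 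So the theorem amounts to producing a chain of $R$-module isomorphisms from $\hfk_2^-(S)$ to this module.

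I would argue by induction on $|v_4(S)|$. When $|v_4(S)| = 0$, $S$ is already a disjoint union of circles, the matrix factorization factor $\mathcal{L}^{+}$ is trivial, and $\cfk_2^-(\mathcal{H})$ for the standard planar Heegaard diagram of an unlink computes directly to $R$ modulo the relations equating the $U_i$ along each component (with the signs dictated by $\str$) and setting all $U_i^2$ equal --- which is exactly $Kh^-(\sm(S))$ under the above substitution. For the inductive step, one finds inside $S$ a configuration to which a MOY move applies: two $4$-valent vertices sharing two edges (the picture $S_{II}$ of Lemma~\ref{MOYtwo}), three $4$-valent vertices in the $S_{IIIa}$ or $S_{IIIb}$ pattern of Lemma~\ref{MOYthree}, a nugatory $4$-valent vertex (two of whose legs are joined by an arc), or a free unknotted component. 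That such a configuration always exists when $|v_4(S)| \ge 1$, and that repeatedly removing them terminates in a disjoint union of circles, is exactly the statement that the Markov trace on the Temperley--Lieb algebra is computed using only the relations corresponding to these moves. In each case Lemmas~\ref{MOYtwo} and~\ref{MOYthree} (and, for the last two configurations, the elementary fact that a disjoint circle or a nugatory vertex contributes a tensor factor $R[U_c]/(U_c^2 = U_1^2) \cong \mathcal{A}$, proved as in Lemma~\ref{MOYtwo}) describe the effect on $\hfk_2^-$; on the other side $\sm$ sends the move to an isotopy of $\sm(S)$, possibly disjoint union with one more unknot, so $Kh^-(\sm(S))$ is changed by the same factor of $\mathcal{A}$ or not at all. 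Composing the isomorphisms proves the theorem.

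The part that requires genuine care, as opposed to bookkeeping, is matching the $R$-\emph{module} structures --- not merely the underlying groups --- at each step; the graded vector space statement is already Lemma~\ref{isolemma} (or its minus analogue). For instance, the MOY~II isomorphism introduces new variables $U_c, U_d$ with $U_c = -U_d$ and $U_c^2 = U_1^2$, where $e_1$ is the marked edge, and one must check this is compatible with the relation $X_i^2 = X_j^2$ built into $R_{X}^{-}$, so that the new circle's variable squares to the common value $X_1^2 = U_1^2$; likewise the signs $(-1)^{\str(i)}$ collected by the edge-relabelings in the MOY moves must be absorbed precisely as in the Khovanov sign assignment $\epsilon'$ of Section~3. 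One also has to confirm that these local moves may be performed in the presence of the rest of the diagram --- the remaining Koszul factor $\mathsf{K}_{0}$ and $\cfk_2^-$ of the complement are untouched, which is built into the proofs of Lemmas~\ref{MOYtwo} and~\ref{MOYthree} --- and that the combinatorial reduction to an unlink is well-founded.
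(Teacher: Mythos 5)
Your proposal is correct in its essentials and leans on exactly the same technical engine as the paper — the MOY II and MOY III isomorphisms of Lemmas~\ref{MOYtwo} and~\ref{MOYthree}, plus an elementary computation for a lone unknotted component — and you correctly identify that the genuine work is matching $R$-module structures (signs via $\str$, the relation $U_i^2 = U_1^2$) rather than graded dimensions. Where you diverge is in how the reduction is organized. You induct on $|v_4(S)|$ and appeal to the combinatorics of the Markov trace on the Temperley--Lieb algebra to assert that some simplifying configuration (MOY II pair, MOY III triangle, nugatory vertex, or free circle) is always available. The paper instead inducts on the number of components of $\sm(S)$: it fixes an \emph{innermost} non-marked component $\mathcal{C}$ (which always exists since the marked component is leftmost), applies only those MOY III moves whose triangle lies inside $\mathcal{C}$ until none remain, and then observes that $\mathcal{C}$ necessarily appears either as the bigon of a MOY II move or as a free right-most unknotted strand. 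This ``innermost component'' scaffolding is what makes the termination and the case analysis actually transparent — it localizes the search for a simplifying move to a disc bounded by $\mathcal{C}$. Your Temperley--Lieb termination claim is plausible and I believe true, but as written it is an assertion rather than an argument; in particular the closure introduces wrap-around arcs that can make two vertices share a pair of edges without bounding a bigon face, so ``a MOY II configuration is present'' needs the innermost-circle structure (or an equivalent planarity argument) to be justified. Your ``nugatory vertex'' case is also not literally one of the paper's lemmas; the paper handles the analogous situation as the one-vertex base-case diagram in Figure~\ref{2unknots}, computed directly from the Heegaard diagram in Figure~\ref{unknotHD}, and you would need to supply that computation rather than gesture at ``as in Lemma~\ref{MOYtwo}.'' With those two points tightened, your proof would match the paper's.
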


\begin{proof}

We will prove this by induction on the number of components in $\sm(S)$. Recall that $S$ is a singular braid with the decorated edge on the left-most strand. Viewing this diagram as being in $\R^{2}$ instead of $S^{2}$, each component has a well-defined interior. A component is called \emph{innermost} if its interior does not contain any other components.

Let $\mathcal{C}$ be a component of $\sm(S)$ which is not the marked component and is innermost. Note that as long as there is more than one component this is possible, as the marked component will never be contained in another component. Consider the triangular region bounded by $e_{g},e_{h},e_{i}$ in the two MOY III moves in Figure \ref{M3}. We say that a MOY III move \emph{simplifies} $\mathcal{C}$ if this triangular region is in the interior of $\mathcal{C}$ in $\sm(S)$.

After applying MOY III moves that simplify $\mathcal{C}$ until there are none left, we get a diagram $S'$ with $\hfk^{-}(S) \cong \hfk^{-}(S')$. There are two possibilities for how the component $\mathcal{C}$ appears in $S'$: as the two edges $e_{c}$ and $e_{d}$ in the MOY II move (see Figure \ref{M2}), or it is the right-most strand in the braid making up an unknotted component. In the first case, we can apply a MOY II move to get a diagram $S''$ with 
\[ \hfk^{-}_{2}(S) \cong \hfk_{2}^{-}(S'')[U_{i}]/U_{i}^{2}=U_{1}^{2} \]

\noindent
and for each $e_{j}$ which lies on $\mathcal{C}$, $U_{j}$ acts on $\hfk^{-}_{2}(S)$ by $(-1)^{\str(i)-\str(j)}U_{i}$. In the second case, let $S''$ be the diagram obtained by deleting the right-most strand. Then again we have
\[ \hfk^{-}_{2}(S) \cong \hfk_{2}^{-}(S'')[U_{i}]/U_{i}^{2}=U_{1}^{2} \]

\noindent
To see this, apply isotopies and destabilizations to the unknotted component until it is given by the Heegaard diagram in Figure \ref{unknotHD}. Then we get
\[ \cfk^{-}_{2}(S) \cong \cfk_{2}^{-}(S'')[U_{i}] \otimes (R \xrightarrow{U_{i}^{2} - \frac{1}{2}U_{1}(U_{1}+U_{2})} R) \]

\noindent
by the same holomorphic disc count as in the proof of Lemma \ref{MOYtwo}. Taking homology yields the desired formula.

\begin{figure}
 \centering
\def\svgwidth{4cm}
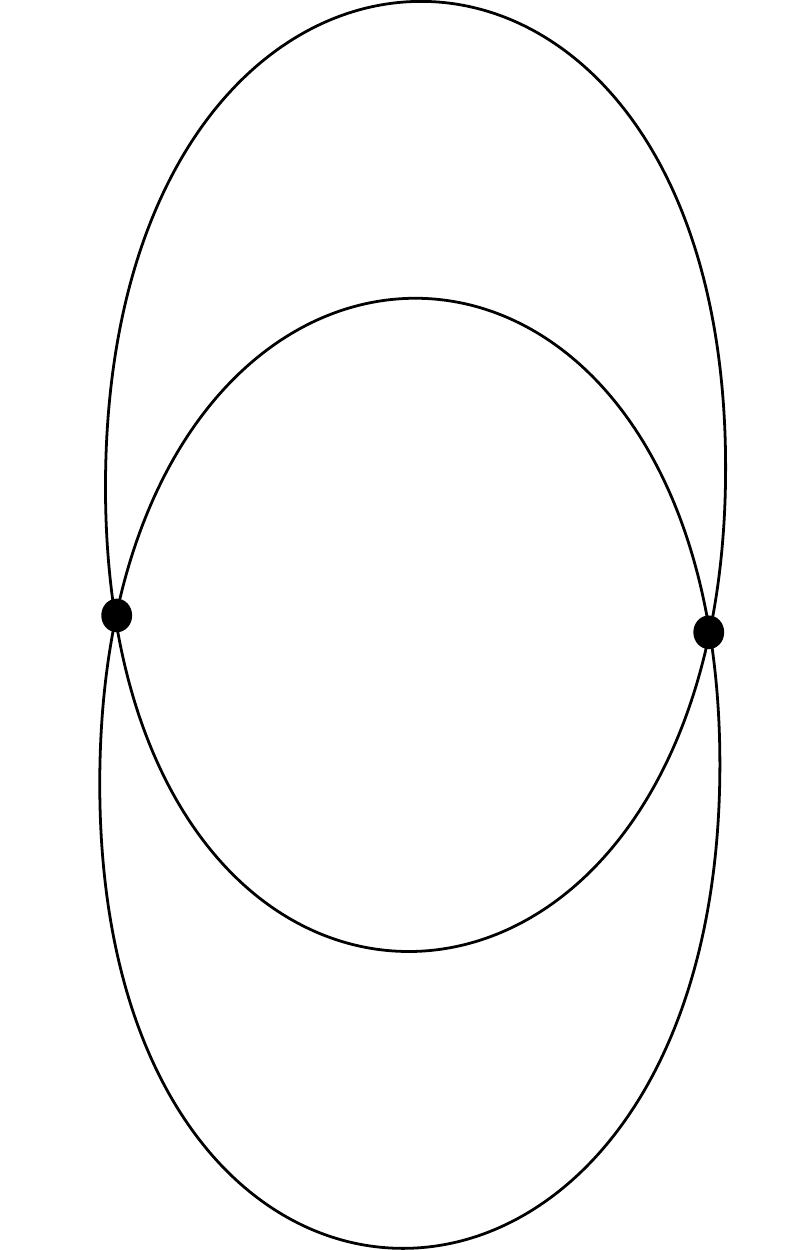
  \caption{The Heegaard diagram for the unknotted component}  \label{unknotHD}
\end{figure}%

This establishes the inductive step. For the base case, apply MOY III moves until the diagram is one of the two shown in Figure \ref{2unknots}. In both cases, the homology can be readily computed to be $\Q[U]$, with each $U_{i}$ acting by $(-1)^{\str(i)} U$.

\end{proof}

\begin{figure}[ht]
\begin{subfigure}{.49\textwidth}
 \centering
 \small
\def\svgwidth{4cm}
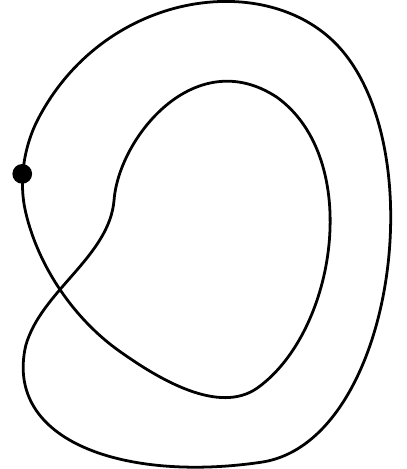
 \end{subfigure}
\begin{subfigure}{.5\textwidth}
 \centering
 \small
\def\svgwidth{4cm}
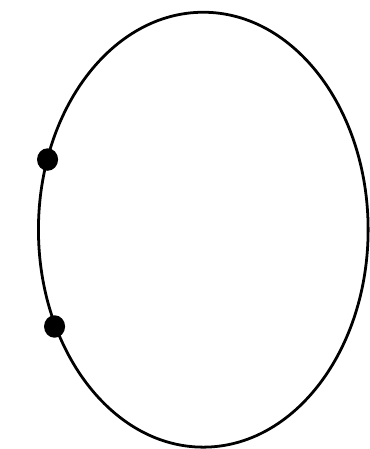
  \end{subfigure}
  \caption{The two possible diagrams for the last component} \label{2unknots}
\end{figure}%

\section{Relationship with Khovanov homology}

In this section we willl show that the $E_{2}$ page of the spectral sequence on $C^{-}_{2}(D)$ induced by the cube filtration is the minus version of Khovanov homology $Kh^{-}(\sm(D))$. The same result for the reduced theories will follow.

The first step is to show that at a vertex in the cube, this complex computes $\hfk^{-}_{2}(S)$. By Theorem \ref{hfk2kh}, $\hfk^{-}_{2}(S) \cong Kh^{-}(\sm(S))$. We will then show that the edge maps are the Khovanov edge maps, proving the theorem.

\subsection{Some algebraic lemmas} Before getting into the proof, we'll need some results in homological algebra which will be useful throughout.

\begin{lem} [\hspace{1sp}\cite{SSBook}]
\label{homalglemma}
Suppose $F$ is a filtered chain map between filtered complexes $C$ and $C'$. If $F$ induces an isomorphism on the homologies of the associated graded objects of $C$ and $C'$, then $F$ induces isomorphisms on all higher pages of the corresponding spectral sequences. In particular, if the spectral sequences are bounded, then $F$ is a quasi-isomorphism.

\end{lem}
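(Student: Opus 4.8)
The plan is to run the standard \emph{morphism of spectral sequences} argument, so the proof is almost entirely formal. First I would recall the general fact (this is the content of the cited reference \cite{SSBook}) that a filtered chain map $F\colon C\to C'$ automatically induces maps $F_r\colon E_r(C)\to E_r(C')$ on every page of the two associated spectral sequences, that each $F_r$ commutes with the page differentials $d_r$, and that $F_{r+1}$ is precisely the map induced by $F_r$ on homology, i.e. $E_{r+1}(C)=H_*(E_r(C),d_r)$ and $F_{r+1}=H_*(F_r)$. The only input needed here is that $F$ respects both the filtration and the differential. With the usual indexing convention the zeroth page $E_0$ is the associated graded complex $\gr(C)$ with its induced differential $d_0$, so $E_1(C)=H_*(\gr(C))$; hence the hypothesis that $F$ induces an isomorphism on the homologies of the associated graded objects says exactly that $F_1\colon E_1(C)\to E_1(C')$ is an isomorphism.

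Next I would prove by induction on $r$ that $F_r$ is an isomorphism for every $r\ge 1$. The base case $r=1$ is the hypothesis. For the inductive step, assume $F_r$ is an isomorphism of bigraded groups intertwining $d_r$ on $E_r(C)$ with $d_r$ on $E_r(C')$. Then $F_r$ is an isomorphism of chain complexes, and an isomorphism of chain complexes induces an isomorphism on homology; that induced map is exactly $F_{r+1}\colon E_{r+1}(C)\to E_{r+1}(C')$. This establishes the first assertion of the lemma.

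For the ``in particular'' clause I would invoke boundedness twice. A bounded spectral sequence degenerates in each bidegree at a finite page, so that $E_\infty^{p,q}=E_r^{p,q}$ for all sufficiently large $r$ at every $(p,q)$; combined with the previous paragraph this shows $F_\infty\colon E_\infty(C)\to E_\infty(C')$ is an isomorphism. Boundedness also yields convergence: the filtration induced on $H_*(C)$ by the filtration on $C$ is finite in each homological degree, and $E_\infty^{p,q}\cong \gr_p H_{p+q}(C)$ naturally in $C$. Thus $F$ induces an isomorphism on the associated graded of $H_*$ with respect to a finite filtration. A routine induction up this finite filtration — comparing the short exact sequences $0\to F_{p-1}H_*(C)\to F_pH_*(C)\to \gr_p H_*(C)\to 0$ with their images under $F$ and applying the five lemma at each step — upgrades this to the conclusion that $H_*(F)$ is an isomorphism, i.e. $F$ is a quasi-isomorphism.

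The only point requiring care — and it is bookkeeping rather than a genuine obstacle — is fixing conventions: one must make sure the homology of $\gr(C)$ sits on the $E_1$ page (not $E_0$ or $E_2$), and that ``bounded'' is taken in the sense that simultaneously guarantees degeneration at a finite page in each bidegree and convergence to $\gr H_*$. Once these conventions are pinned down, every step above is purely formal and uses nothing beyond the naturality of the spectral sequence of a filtered complex and the five lemma.
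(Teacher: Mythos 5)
The paper does not prove this lemma; it is stated with a citation to \cite{SSBook} and used as a black box. Your proof is the standard argument and is correct: the induced maps $F_r$ commute with $d_r$, an isomorphism of chain complexes induces an isomorphism on homology, so by induction $F_r$ is an isomorphism for all $r\ge 1$; boundedness then gives degeneration at a finite page and convergence, and the five-lemma induction up the finite filtration of $H_*$ finishes the quasi-isomorphism claim. This is precisely the content one would find in the cited reference, and your caveat about indexing conventions (placing $H_*(\gr C)$ on the $E_1$ page) is the right thing to flag.
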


\begin{lem} \label{derivedlemma}

Let $C_{1}$, $C_{1}'$ and $C_{2}$ be finitely generated chain complexes over the ring $R=\Q[U_{1},...,U_{n}]$, and suppose $C_{2}$ is a free $R$-module. If $f: C_{1} \to C'_{1}$ is a quasi-isomorphism, then

\[ f \otimes 1: C_{1} \otimes C_{2} \to C'_{1} \otimes C_{2} \]

\noindent
is also a quasi-isomorphism.
\end{lem}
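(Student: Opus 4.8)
The plan is to deduce this from Lemma \ref{homalglemma} by equipping both $C_{1}\otimes_{R} C_{2}$ and $C_{1}'\otimes_{R} C_{2}$ with the filtration coming from the homological degree of the $C_{2}$-factor. Writing $(C_{2})_{p}$ for the degree-$p$ chain group of $C_{2}$, set
\[ F_{p}(C_{1}\otimes C_{2}) = C_{1}\otimes\Bigl(\bigoplus_{q\le p}(C_{2})_{q}\Bigr), \]
the ``stupid'' truncation, and likewise for $C_{1}'\otimes C_{2}$. The differential on $C_{1}\otimes C_{2}$ is $d_{C_{1}}\otimes\id \pm \id\otimes d_{C_{2}}$, so it cannot raise the $C_{2}$-degree and the $F_{p}$ are indeed subcomplexes. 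Because $C_{2}$ is finitely generated over $R$ it has only finitely many nonzero chain groups, so this is a finite (hence bounded and exhaustive) filtration, and $f\otimes\id$ is manifestly filtered.

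Next I would identify the associated graded pieces. In filtration level $p$ the part $\id\otimes d_{C_{2}}$ of the differential lands in $F_{p-1}$ and hence vanishes on the quotient, and $(C_{2})_{p}$ is just an $R$-module with no internal differential, so the associated graded complex in level $p$ is $C_{1}\otimes_{R}(C_{2})_{p}$ with differential $d_{C_{1}}\otimes\id$. Since $C_{2}$ is a free $R$-module, $(C_{2})_{p}\cong R^{\oplus n_{p}}$ for some $n_{p}$, so this complex is isomorphic to $C_{1}^{\oplus n_{p}}$ and $f\otimes\id$ restricts to $f^{\oplus n_{p}}$ on it. As $f$ is a quasi-isomorphism and homology commutes with finite direct sums, $f\otimes\id$ induces an isomorphism on the homology of the associated graded objects of $C_{1}\otimes C_{2}$ and $C_{1}'\otimes C_{2}$. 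Both spectral sequences are bounded because the filtrations are finite, so Lemma \ref{homalglemma} gives that $f\otimes\id$ is a quasi-isomorphism.

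Alternatively one can argue via mapping cones: $f$ is a quasi-isomorphism iff $\mathrm{Cone}(f)$ is acyclic, there is a natural isomorphism of complexes $\mathrm{Cone}(f\otimes\id)\cong\mathrm{Cone}(f)\otimes_{R}C_{2}$, and a short induction on the number of nonzero chain groups of $C_{2}$ --- splitting off the top degree and invoking the long exact homology sequence of the resulting extension --- shows that tensoring an acyclic bounded complex with a bounded complex of free $R$-modules is again acyclic. The only point that genuinely needs care in either approach is boundedness: one uses the finite-generation hypothesis (in particular that $C_{2}$ has only finitely many nonzero chain groups) to guarantee that the filtration is finite / the induction terminates; without this one would need a separate convergence argument. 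Everything else is routine bookkeeping about tensor products distributing over direct sums.
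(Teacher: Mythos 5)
Your first argument is exactly the paper's proof: filter both tensor products by the homological grading of the $C_{2}$ factor, use freeness and finite generation to see that $f\otimes\id$ is an isomorphism on the $E_{1}$ page, and conclude with Lemma \ref{homalglemma}. The mapping-cone variant you sketch is a reasonable alternative, but the main argument matches the paper's.
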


\begin{proof}

If we filter the complexes $C_{1} \otimes C_{2}$ and $C'_{1} \otimes C_{2}$ by the homological grading on $C_{2}$, then $f \otimes 1$ is a filtered chain map which induces an isomorphism on the $E_{1}$ pages of the associated spectral sequences. By the previous lemma, it is a quasi-isomorphism.

\end{proof}

\subsection{Khovanov homology and $\widehat{C}_{2}(D)$}

\begin{lem} For any diagram $D \in \mathcal{D^{R}}$ and any complete resolution $D_{I}$ of $D$, the homology $H^{-}_{2}(D_{I})$ is isomorphic to $\hfk^{-}_{2}(D_{I})$ as graded $R$-modules. 

\end{lem}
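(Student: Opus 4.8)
Write $S=D_I$ and let $\cH$ be the standard planar Heegaard diagram for $S$. Unwinding the definitions, $C^-_2(S)=R/(N(S)+L_I(S))\otimes\mathcal{L}_D^+$ while $\hfk^-_2(D_I)$ is the homology of $\cfk^-_2(S)=\cfk^-_2(\cH)\otimes\mathcal{L}^+(S)$ with $\mathcal{L}^+(S)=\mathcal{L}_I^+(D_I)\otimes\mathcal{L}_D^+$, so both complexes carry the common factor $\mathcal{L}_D^+$. The plan is to build a quasi-isomorphism $\cfk^-_2(S)\simeq C^-_2(S)$ by collapsing the pair $\bigl(\cfk^-_2(\cH),\mathcal{L}_I^+(D_I)\bigr)$ onto $R/(N(S)+L_I(S))$, using two inputs: an $\hfk_2$-analogue of Lemma \ref{nonloc} and the defining regularity hypothesis of $\mathcal{D^{R}}$.

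First I would adapt the proof of Lemma \ref{nonloc} to the $\hfk_2$ differential: computed through the algebraic ($\gr_2$-, equivalently $\delta$-) grading, the homology associated to $\cfk^-_2(\cH)$ is concentrated in the extreme grading, where it is $R/N(S)$ generated by $1$. The generators $P(\Omega)$ of $N(S)$ come from the ``large'' periodic domains wrapping around $S^2$, exactly as before, while the $(-2)^{n_{XX}}$ weights and the substitution $V_i=U_{a(i)}+U_{b(i)}$ affect only strictly lower grading and so do not enter the computation. Running the spectral sequence of a suitable filtration of $\cfk^-_2(S)$ and invoking Lemma \ref{homalglemma} then replaces $\cfk^-_2(S)$ by $R/N(S)\otimes\mathcal{L}^+(S)$ up to quasi-isomorphism. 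For the second input, $D\in\mathcal{D^{R}}$ means the linear forms $L(v)$, $v\in v_4(I)$, form a regular sequence over $R/N(S)$; filtering the $v_4(I)$-factors of $\mathcal{L}^+(S)$ produces an associated graded that is the ordinary Koszul complex of this sequence over $R/N(S)$, whose homology by regularity is $R/(N(S)+L_I(S))$ in degree $0$. Lemmas \ref{derivedlemma} and \ref{homalglemma} collapse $R/N(S)\otimes\mathcal{L}^+(S)$ onto $R/(N(S)+L_I(S))\otimes\mathcal{L}_D^+=C^-_2(S)$. Since every differential in play is homogeneous of degree $-2$ for $\gr_2$ and the $\delta$-grading on $\hfk_2$ agrees with $\gr_2$ up to an overall shift, $H^-_2(D_I)\cong\hfk^-_2(D_I)$ as graded $R$-modules.

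I expect the main obstacle to be the matrix-factorization and curvature bookkeeping. The object $\cfk^-_2(\cH)$ is curved --- its square equals $\sum_{v\in v_4(S)}L(v)L^+(v)$, which is cancelled only by the complete factor $\mathcal{L}^+(S)$ --- so $\cfk^-_2(S)$ does not literally factor through $\cfk^-_2(\cH)\otimes\mathcal{L}_I^+(D_I)$, and both collapsing steps have to be organized through honest filtrations of the genuine complex $\cfk^-_2(S)$, with a verification that the surviving $E_\infty$ page is $C^-_2(S)$ on the nose --- together with its vertex differential $d_0$ --- and not merely an abstractly isomorphic $R$-module. A secondary difficulty is the grading-concentration statement for $\cfk^-_2(\cH)$: adapting Lemma \ref{nonloc} to $\partial_2$ requires ruling out homology outside the extreme grading, and this is precisely where one uses that $S$ is connected, which is part of the hypothesis $D\in\mathcal{D^{R}}$.
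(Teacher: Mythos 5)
Your proposal uses the same essential tools as the paper's proof---the Alexander filtration, Lemma~\ref{nonloc}, the regular-sequence hypothesis on $\{L(v):v\in v_4(I)\}$, and Lemmas~\ref{homalglemma}/\ref{derivedlemma}---but packages the collapse differently. The paper defines one explicit map $f\colon \cfk^-_2(\cH)\otimes\mcL^+_I(D_I)\to R/(N(D_I)+L_I(D_I))$, sending $x_1\otimes y_1$ (the minimal algebraic-grading generator) to $1$ and all other generators to $0$, checks that $f$ is a morphism of curved complexes, and shows in one step that it is a filtered quasi-isomorphism; then $f\otimes\mathrm{id}_{\mcL^+_D}$ finishes by Lemma~\ref{derivedlemma}. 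You instead propose a two-step collapse, $\cfk^-_2(\cH)\rightsquigarrow R/N(S)$ followed by $R/N(S)\otimes\mcL^+_I\rightsquigarrow R/(N+L_I)$. Both routes work and are genuinely equivalent after composing. One adjustment to your framing, though: you do not need to ``adapt the proof of Lemma~\ref{nonloc} to the $\hfk_2$ differential.'' The point is that the associated graded of $\cfk^-_2(\cH)\otimes\mcL^+_I$ with respect to the Alexander filtration is the \emph{ordinary, uncurved} complex $\cfk^-(\cH)\otimes\mcL_I$---the $(-2)^{n_{XX}}$ weights, the $V_i=U_{a(i)}+U_{b(i)}$ substitution, and the $L^+(v)$ arrows all strictly drop Alexander grading, while $L(v)$ preserves it---so Lemma~\ref{nonloc} and the regular-sequence hypothesis apply verbatim to the $E_1$ page, and Lemma~\ref{homalglemma} upgrades this to the total complex. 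The curvature concerns you raise in your last paragraph are real, but the paper handles them simply by treating $\cfk^-_2(\cH)\otimes\mcL^+_I$ and $R/(N+L_I)$ as curved complexes with the same potential $\sum_{v\in v_4(D)}L(v)L^+(v)$ (which annihilates $R/(N+L_I)$); the honest filtration of the uncurved complex you ask for is exactly what you recover after tensoring with $\mcL^+_D$. Your identification of where connectedness enters (in Lemma~\ref{nonloc}) is correct.
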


\begin{proof}

The complex $\cfk^{-}_{2}(D_{I})$ is given by 
\[ \cfk^{-}_{2}(\h(D_{I})) \otimes  \mcL_{I}^{+}(D_{I}) \otimes \mcL_{D}^{+}(D)  \]

\noindent
where $\cH$ is a Heegaard diagram for $D_{I}$. Suppose $\cH$ is the standard planar Heegaard diagram for $D_{I}$. Let $x_{1}$ denote the generator of $\cfk^{-}_{2}(\cH)$ with minimal algebraic grading, and let $y_{1}$ denote the generator of  $\mcL_{I}^{+}$ with minimal algebraic grading.

Let $f: \cfk^{-}_{2}(\cH) \otimes  \mcL_{I}^{+}(D_{I}) \to R/(N(D_{I}) +L_{I}(D_{I}))$ be the map which sends $x_{1} \otimes y_{1}$ to the generator of $R/(N(D_{I}) +L_{I}(D_{I}))$, and is trivial on all other generators. The map $f$ is a morphism of curved complexes. Moreover, if we filter with respect to the Alexander grading, then it follows from Lemma \ref{nonloc} and the fact that $\{L(v)|v \in v_{4}(I) \}$ form a regular sequence over $R/N(D_{I})$ that $f$ induces an isomorphism on the associated graded objects.

The map $f \otimes \id: \cfk^{-}_{2}(D_{I}) \to C^{-}_{2}(D_{I})$ is also chain map. Since $ \mcL_{I}(D_{I})$ is a free $R$-module, $f \otimes \id$ still induces an isomorphism on the associated graded objects with respect to the Alexander grading. It follows from Lemma \ref{homalglemma} that $f \otimes \id$ is a quasi-isomorphism.

\end{proof}

Together with Theorem \ref{hfk2kh}, this gives the following corollary:

\begin{cor} \label{corkhov}

For any $D \in \mathcal{D^{R}}$ and any resolution $D_{I}$ of $D$, there is an isomorphism of graded $R$-modules
\[ C_{2}^{-}(D_{I}) \cong Kh^{-}(\sm(D_{I})) \]

\end{cor}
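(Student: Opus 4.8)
The plan is to read the corollary off directly from the preceding lemma and Theorem~\ref{hfk2kh}. First I would invoke the lemma just proved, which for $D \in \mathcal{D^{R}}$ and any complete resolution $D_{I}$ gives an isomorphism $H^{-}_{2}(D_{I}) \cong \hfk^{-}_{2}(D_{I})$ of graded $R$-modules; concretely it produces a quasi-isomorphism $\cfk^{-}_{2}(D_{I}) \to C^{-}_{2}(D_{I})$, so the homology of the vertex complex $(C^{-}_{2}(D_{I}), d_{0})$ is the singular-diagram invariant $\hfk^{-}_{2}(D_{I})$. Next I would apply Theorem~\ref{hfk2kh}, which identifies $\hfk^{-}_{2}(D_{I})$ with $Kh^{-}(\sm(D_{I}))$ as $R$-modules. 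Composing the two isomorphisms gives $C^{-}_{2}(D_{I}) \cong Kh^{-}(\sm(D_{I}))$. Since $D_{I}$ is a complete resolution, $\sm(D_{I})$ is a disjoint union of circles, so $Kh^{-}(\sm(D_{I}))$ is concentrated in a single homological degree and coincides with its own chain group; thus the statement may be read interchangeably as a quasi-isomorphism of complexes or as an isomorphism on homology, and no genuine computation is needed beyond the two cited results.

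There is really no serious obstacle here; the work is two small compatibility checks. The first is that Theorem~\ref{hfk2kh} is stated for a completely singular link, while a complete resolution $D_{I}$ of a diagram in $\mathcal{D^{R}}$ can also carry bivalent vertices coming from the oriented smoothings of crossings of $D$. I would dispose of this by observing that a bivalent vertex is a trivial stabilization of the standard planar Heegaard diagram: it leaves $\hfk^{-}_{2}$ unchanged apart from introducing one extra variable $U_{i}$ whose action on homology agrees with that of its neighbouring edge variable, and under $\sm$ these two edges lie on the same circle so their actions are likewise identified on $Kh^{-}(\sm(D_{I}))$. Hence the $R$-module identification of Theorem~\ref{hfk2kh} transfers to $D_{I}$ verbatim. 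I would also note that the defining properties of $\mathcal{D^{R}}$, namely connectedness of $D_{I}$ and that the linear terms $L(v)$ for $v \in v_{4}(I)$ form a regular sequence over $R/N(D_{I})$, are precisely the hypotheses used in the preceding lemma (via Lemma~\ref{nonloc}) and in the proof of Theorem~\ref{hfk2kh}, so no additional assumptions are needed.

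The second check concerns the grading. The corollary asks for an isomorphism of \emph{graded} $R$-modules, and the preceding lemma is already an isomorphism of graded $R$-modules on the $\hfk^{-}_{2}$ side; I would complete the picture by invoking the fact, underlying Section~4 and recorded already in Lemma~\ref{isolemma} and in the list of properties of $\hfk_{2}$ in the introduction, that the $\delta$-grading on $\hfk_{2}$ matches the Khovanov $\delta$-grading up to an overall shift. Composing, the $\gr_{2}$-grading on $H^{-}_{2}(D_{I})$ agrees with the Khovanov grading on $Kh^{-}(\sm(D_{I}))$ after a global shift. The only mildly delicate point — and the closest thing here to a difficulty — is pinning that shift down absolutely; but since this part of the paper works with relative gradings (or gradings up to an overall shift), it is enough to carry the shift along symbolically, which is routine.
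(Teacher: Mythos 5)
Your argument is exactly the paper's: the corollary is an immediate composition of the preceding lemma ($H^{-}_{2}(D_{I}) \cong \hfk^{-}_{2}(D_{I})$ as graded $R$-modules) with Theorem~\ref{hfk2kh} ($\hfk^{-}_{2}(S) \cong Kh^{-}(\sm(S))$), and the paper states this in one line. Your two extra compatibility checks — that bivalent vertices arising from oriented smoothings don't obstruct applying Theorem~\ref{hfk2kh}, and that the gradings match up to an overall shift — are sound and are left implicit in the paper, so your write-up is a slightly more careful version of the same proof rather than a different route.
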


%In order to show that the $E_{2}$ page of the spectral sequence on $C^{-}_{2}(D)$ induced by the cube filtration is $Kh^{-}(\sm(D))$, it suffices to show that the edge maps are the Khovanov edge maps.
%
%
%First, we will show that the (relative) $\delta$-grading on the Khovanov complex agrees with the grading $\gr_{2}$ on the $E_{1}$ page. 
%
%Recall that the generator  $1 \in Kh^{-}(D_{I})$ has $\delta$-grading $k_{I}-|I| +n_{+}$. Since we are only interested in a relative grading, we will drop the $n_{+}$ term so that $\gr_{\delta}(1 \in Kh^{-}(D_{I})) = k_{I}-|I| $.
%
%Suppose we pin down an absolute $\gr_{2}$ grading on $C_{2}^{-}(D_{I})$ so that the isomorphsim
%\[C_{2}^{-}(D_{I}) \cong Kh^{-}(D_{I})\]
%identifies $\gr_{2}$ with $\gr_{\delta}$. We can see from the MOY II and III moves that the generator $1 \in R/(N(D_{I})+L_{I}(D_{I}))$ lies in grading $gr_{2} = |v_{4}(D_{I})| - |I|$, which we saw in Section \ref{consection} gives an absolute version of $\gr_{2}$ on the total complex $C_{2}^{-}(D)$.

\begin{thm} \label{E2thm}

Let $D$ be a diagram in $\mathcal{D^{R}}$. The $E_{2}$ page of the spectral sequence on $C_{2}^{-}(D)$ induced by the cube filtration is isomorphic to $Kh^{-}(m(\sm(D)))$ as a (relatively) graded $R$-module.

\end{thm}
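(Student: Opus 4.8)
The plan is to run the spectral sequence attached to the cube filtration on $C_2^-(D)$ and identify its pages explicitly. Since the total differential on $C_2^-(D)$ is $d = d_0 + d_1$ with all higher face maps vanishing, the $E_0$ page is $(\bigoplus_I C_2^-(D_I), d_0)$, the $E_1$ page is $\bigoplus_I H_*(C_2^-(D_I), d_0) = \bigoplus_I H_2^-(D_I)$ with its cube grading, and $E_2$ is the homology of $E_1$ with respect to the map $\overline{d_1}$ induced by the edge part $d_1$. So the theorem reduces to two claims: first, identify the graded $R$-module $E_1$ with the Khovanov chain module of $m(\sm(D))$; second, identify $\overline{d_1}$ with the Khovanov differential. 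Granting both, $E_2 = H_*(CKh^-(m(\sm(D))), \partial) = Kh^-(m(\sm(D)))$, which is the statement.

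For the first claim I would argue as follows. By the preceding lemma $H_2^-(D_I) \cong \hfk_2^-(D_I)$, and by Theorem \ref{hfk2kh} this is isomorphic as a graded $R$-module to $Kh^-(\sm(D_I))$; this is exactly Corollary \ref{corkhov}. The collection $\{\sm(D_I)\}_I$ is obtained from $\sm(D)$ by applying to each crossing $c$ a flat smoothing determined by $I(c)$ together with the sign of $c$, and one matches this vertex by vertex with the cube of flat resolutions of $m(\sm(D))$: the conventions of Section \ref{consection} (the singularization is the $0$-resolution at a positive crossing and the $1$-resolution at a negative crossing) are precisely the conventions for the \emph{mirror}, which is the source of the mirroring in the statement. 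Thus $E_1 \cong CKh^-(m(\sm(D)))$ as graded $R$-modules, where the cube height corresponds to the Khovanov homological grading of $m(\sm(D))$ up to an overall shift, and $\gr_2$ corresponds to $\gr_q$ up to an overall shift; the latter is the arithmetic identity relating $\gr_2(1) = |v_4(D_I)| - |I|$ with the Khovanov formula $\gr_q(1) = n_+ - 2n_- + |I| + k_I$.

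For the second claim the key structural observation is that $Kh^-$ of a planar resolution is a \emph{cyclic} $R$-module, generated by the class of $1$: on each circle every $U_i$ acts, up to sign, as the single variable attached to that circle, so $1$ generates. Via the isomorphism above, $H_2^-(D_I)$ is therefore generated over $R$ by the image of $1 \otimes 1 \in R/(N(D_I)+L_I(D_I)) \otimes \mathcal{L}_D^+$, so any $R$-linear map out of it is pinned down by the image of that generator. The edge map $d_{I,J} = \phi_\pm \otimes \id$ sends $1 \otimes 1 \mapsto \phi_\pm(1) \otimes 1$, with $\phi_+(1) = 1$ and $\phi_-(1) = U_b^{(v)} - U_c^{(v)}$. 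On the Khovanov side, after passing to the ground ring $R$ as in Section \ref{consection}, the edge map at $c$ in the cube of $m(\sm(D))$ is the quotient (merge) map $1 \mapsto 1$ in one case and multiplication by $(-1)^{\str(b)}(U_b - U_c)$ (a split map) in the other. I would check, separating on the sign of $c$, that $\phi_+$ occurs exactly in the merge case and $\phi_-$ exactly in the split case, and that in the split case $U_b^{(v)} - U_c^{(v)}$ agrees with $\pm(U_b - U_c)$, the residual sign being absorbed into the edge assignment (all edge assignments yield chain homotopy equivalent complexes, by the remark in Section 3). This identifies $\overline{d_1}$ with the Khovanov differential $\partial$ on $CKh^-(m(\sm(D)))$, and hence $E_2 \cong Kh^-(m(\sm(D)))$ as relatively graded $R$-modules.

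The hard part will be the second claim, and within it the bookkeeping of mirroring and signs: making precise how the $0/1$ resolution convention of $C_2$ forces the comparison to land on $m(\sm(D))$ rather than $\sm(D)$, correctly matching which of the two flat smoothings at $c$ behaves as a merge versus a split in the global diagram, and reconciling $\phi_-(1) = U_b^{(v)} - U_c^{(v)}$ with the Khovanov split coefficient $X_b + X_c$ after the $\str$-dependent change to the ground ring $R$. The cyclicity observation is what makes this manageable: it reduces the comparison of two maps between potentially large $R$-modules to tracking a single generator, after which the only remaining freedom — the edge assignment — absorbs all residual signs.
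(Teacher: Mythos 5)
Your setup of the cube spectral sequence --- identifying the $E_0$, $E_1$, and $E_2$ pages, reducing the theorem to matching $E_1$ with the Khovanov chain module and the induced edge differential with the Khovanov differential --- is correct and matches the paper, and so is the use of Corollary~\ref{corkhov} for $E_1$ and your observation that the $0/1$ convention produces the mirror. The gap is in your treatment of the edge differential.

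You assert that $H_2^-(D_I)$ is generated over $R$ by the class of $1 \otimes 1 \in R/(N(D_I)+L_I(D_I)) \otimes \mathcal{L}_D^+$ and then compute the edge maps by tracking this chain-level element. But the element $1 \otimes 1_{\mathrm{bot}}$ (with $1_{\mathrm{bot}}$ the lowest generator of the Koszul factor) is in general \emph{not a cycle} for $d_0$: one computes $d_0(1 \otimes 1_{\mathrm{bot}}) = \sum_{v \in v_4(D)} L(v) \otimes 1_v$, and for $v \in v_4(D)$ the linear term $L(v)$ need not lie in $N(D_I)+L_I(D_I)$ (the vertex $v$ is not in $v_4(I)$, and the non-local ideal only contributes the quadratic relation $Q(v)$ at such a vertex, not the linear one). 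So $[1\otimes 1_{\mathrm{bot}}]$ does not exist in homology a priori. Even granting a cycle, the isomorphism $H_2^-(D_I) \cong Kh^-(\sm(D_I))$ of Theorem~\ref{hfk2kh} is built via MOY moves and carries no information identifying any specific chain-level generator with $1$. A symptom of the same issue: the step you flag as ``I would check that $\phi_+$ occurs exactly in the merge case and $\phi_-$ exactly in the split case'' is not available, because merge versus split is a topological property of the cobordism (whether the circle count goes up or down) and is not determined by the sign of the crossing; $\phi_+$ can arise from either.

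The paper argues generator-independently. For the two versions $D_+$, $D_-$ of the crossing, the edge maps in opposite directions compose at the chain level to multiplication: $d_{I,J}\circ d_{J,I} = (U_b - U_c)\cdot\mathrm{id}$, since $\phi_+(1)\phi_-(1) = U_b - U_c$. This passes to $E_1$ and, being $R$-linear, is transported unchanged by any $R$-module isomorphism to $Kh^-(\sm(D_I))$. A degree argument then pins down the individual maps: whichever of $d^*_{I,J}$, $d^*_{J,I}$ is the split cannot send $1$ to a unit (that would not be a well-defined $R$-module map onto the split target), so it has polynomial degree at least $1$; since the composition has degree exactly $1$, the split has degree $1$ and the merge degree $0$, forcing the merge to be $1 \mapsto r$ with $r \in \Q^\times$ and the split to be $1 \mapsto (1/r)(U_b - U_c)$. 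This agrees with the Khovanov edge maps up to rescaling at vertices, which together with the edge assignment absorbs the remaining constants. If you want to rescue your direct computation you would have to exhibit, for every $D_I$, an explicit $d_0$-cycle whose class generates $H_2^-(D_I)$ over $R$ and verify compatibility with the isomorphisms of Theorem~\ref{hfk2kh}; the paper's composition-plus-degree argument is precisely designed to avoid that work.
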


\begin{proof} 

Let $D_{+} \in \mathcal{D^{R}}$ be a diagram with a marked positive crossing $c$, and let $D_{-}$ be the diagram obtained by switching it to a negative crossing. Suppose the outgoing edges are labeled $e_{a}$, $e_{b}$ and the incoming edges are labeled $e_{c}$, $e_{d}$. Note that $D_{-}$ is also an element of $\mathcal{D^{R}}$.

Let $D_{I}$ and $D_{J}$ be two resolutions of $D_{+} $ with $I \lessdot J$. In $C_{2}^{-}(D_{+})$, there is an edge map
\[ d_{I,J}: C_{2}^{-}(D_{I}) \to C_{2}^{-}(D_{J}) \]

\noindent
But $D_{I}$ and $D_{J}$ are also resolutions of $D_{-}$, with $J \lessdot I$, and in $C_{2}^{-}(D_{-})$, there is an edge map
\[ d_{J,I}: C_{2}^{-}(D_{J}) \to C_{2}^{-}(D_{I}) \]

Recall that $d_{I,J}=\phi_{+} \otimes \id$ with $\phi_{+}(1)=1$, while $d_{J,I} = \phi_{-}(1) \otimes \id$ with $\phi_{-}(1)=U_{b}-U_{c}$, so 
\[ d_{I,J} \circ d_{J,I} = d_{J,I} \circ d_{I,J} = U_{b}-U_{c} \]

It follows that the composition of these two edge after taking homology with respect to $d_{0}$ is also multiplication by $U_{b}-U_{c}$. But after taking homology with respect to $d_{0}$, the complex is isomorphic to the Khovanov complex as an $R$-module. We claim that since the edge maps are $R$-module homomorphisms, this composition uniquely determines the induced maps $d^{*}_{I,J} $ and $ d^{*}_{J,I} $ up to multiplication by some non-zero element of $\Q$.

Without loss of generality, suppose $d^{*}_{I,J} $ corresponds to split of two circles and $ d^{*}_{J,I} $ to a merge of two circles. Viewing $d^{*}_{I,J} $ as a map from $Kh^{-}(\sm(D_{I}))$ to $Kh^{-}(\sm(D_{J}))$, we can not have $d^{*}_{I,J}(1)=1$, as this is not an $R$-module map. Thus, it must have polynomial degree at least one. Since $d^{*}_{I,J} \circ d^{*}_{J,I}$ has degree $1$, this implies $d^{*}_{I,J}$ has polynomial degree $1$ and $d^{*}_{J,I}$ has degree $0$.

It follows that $d^{*}_{J,I}(1) = r$ for some $r \in \Q$. Then since $d^{*}_{I,J} \circ d^{*}_{J,I}(1) = U_{b}-U_{c}$, $d^{*}_{I,J}(1) = (1/r)(U_{b}-U_{c})$. Thus, we have shown that $d^{*}_{I,J} $ and $ d^{*}_{J,I} $ are the Khovanov edge maps up to scaling. Any such complex is isomorphic to the Khovanov complex after rescaling the vertices as necessary. The mirroring comes from the fact that after smoothing, the complex $C_{2}^{-}(D_{+})$ maps from the unoriented smoothing to the oriented smoothing which corresponds to $Kh(D_{-})$. Similarly, after smoothing, $C_{2}^{-}(D_{-})$ maps from the oriented smoothing to the unoriented smoothing, which corresponds to $Kh(D_{+})$.

\end{proof}

\begin{cor}\label{khovanovE2}

Let $D$ be a diagram in $\mathcal{D^{R}}$. The $E_{2}$ page of the spectral sequence on $\widehat{C}_{2}(D)$ induced by the cube filtration is isomorphic to the pointed Khovanov homology $\widehat{Kh}(m(\sm(D)))$ as a (relatively) graded vector space.

\end{cor}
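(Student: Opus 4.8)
The plan is to deduce this from Theorem~\ref{E2thm} by seeing how the cube-filtration spectral sequence behaves under the reducing tensor factors. Write $M = \bigotimes_{j=1}^{l} \bigl(R \xrightarrow{\,U_{i_{j}}\,} R\bigr)$, so that $\widehat{C}_{2}(D) = C_{2}^{-}(D) \otimes_{R} M$ by definition, where each reducing map $U_{i_{j}}$ carries cube grading $1$. First I would identify the $E_{1}$ page of the cube filtration on $\widehat{C}_{2}(D)$. With respect to this filtration the total differential splits as $d_{0}+d_{1}$ exactly as for $C_{2}^{-}(D)$: the filtration-preserving part $d_{0}$ is the internal vertex differential of $C_{2}^{-}(D)$ (coming from the matrix factorizations $\mcL_{D}^{+}$) tensored with $\id_{M}$, since both the Khovanov-type edge maps $d_{I,J}$ and the reducing maps $U_{i_{j}}$ strictly raise the cube grading. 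Because $M$ is a bounded complex of free (hence flat) $R$-modules, the functor $-\otimes_{R}M$ is exact and commutes with homology, so
\[ E_{1}\bigl(\widehat{C}_{2}(D)\bigr) \;\cong\; E_{1}\bigl(C_{2}^{-}(D)\bigr) \otimes_{R} M, \]
and under this identification the induced $d_{1}$ is the total differential of the tensor product of the complex $\bigl(E_{1}(C_{2}^{-}(D)), d_{1}\bigr)$ with $M$.

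Next I would feed in Theorem~\ref{E2thm}, using the fact that its proof gives more than the bare statement. Combining Corollary~\ref{corkhov} at each vertex with the identification of the edge maps as the Khovanov merge/split maps up to nonzero scalars, one obtains an isomorphism of chain complexes of $R$-modules
\[ \bigl(E_{1}(C_{2}^{-}(D)), d_{1}\bigr) \;\cong\; CKh^{-}(m(\sm(D))) \]
after rescaling the generators at the vertices of the cube. Tensoring this isomorphism with $M$ over $R$, and recalling the description of the pointed Khovanov complex over the ground ring $R$, namely $CKh(D',\mathbf{p}) = CKh^{-}(D') \otimes_{R} \bigotimes_{j}\bigl(R \xrightarrow{\,U_{i_{j}}\,} R\bigr)$, this yields an isomorphism of chain complexes
\[ \bigl(E_{1}(\widehat{C}_{2}(D)), d_{1}\bigr) \;\cong\; CKh\bigl(m(\sm(D)),\mathbf{p}\bigr), \]
where $\mathbf{p}=\{p_{1},\dots,p_{l}\}$ places a basepoint on each edge $e_{i_{j}}$, hence one on each component of $m(\sm(D))$. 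Taking homology of both sides gives $E_{2}\bigl(\widehat{C}_{2}(D)\bigr) \cong Kh\bigl(m(\sm(D)),\mathbf{p}\bigr) = \widehat{Kh}(m(\sm(D)))$, where the last equality is the basepoint-independence of pointed Khovanov homology \cite{BLS}. The assertion about the relative grading follows by carrying the relative $\gr_{2}$-grading through each of the isomorphisms above, just as in Theorem~\ref{E2thm}; the extra factor $M$ introduces no new grading ambiguity since its two generators per factor lie in the same relative degree.

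The hard part will be the same subtlety that already lurks in Theorem~\ref{E2thm}: one needs the edge-map computation there to be upgraded from "the $E_{2}$ pages are isomorphic" to "the $E_{1}$ pages, equipped with their $d_{1}$ differential, are isomorphic as chain complexes," so that tensoring with $M$ produces the pointed Khovanov complex on the nose rather than only something quasi-isomorphic to it. Granting this, the one further point to verify is that the vertex rescalings used in Theorem~\ref{E2thm} remain valid after tensoring with $M$, which is automatic because those rescalings modify only the $C_{2}^{-}(D_{I})$-factors and act as the identity on $M$. A secondary bookkeeping point is to check that the reducing edges $e_{i_{1}},\dots,e_{i_{l}}$ of $\widehat{C}_{2}(D)$ correspond under Corollary~\ref{corkhov} to edges of $m(\sm(D))$ lying on distinct components, so that $\mathbf{p}$ is an admissible collection of basepoints for pointed Khovanov homology; this is immediate from the fact that $U_{i_{j}}$ denotes the same element of $R=\Q[U_{1},\dots,U_{m}]$ on both sides and that $\sm$ and mirroring do not change which edge lies on which component.

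In summary: (1) compute $E_{1}(\widehat{C}_{2}(D)) = E_{1}(C_{2}^{-}(D))\otimes_{R}M$ with $d_{1}$ the total differential, using flatness of $M$; (2) apply (the proof of) Theorem~\ref{E2thm} to identify $(E_{1}(C_{2}^{-}(D)),d_{1})$ with $CKh^{-}(m(\sm(D)))$ and tensor with $M$ to get the pointed Khovanov complex; (3) take homology and invoke \cite{BLS}; (4) track the relative grading through. The main obstacle is upgrading Theorem~\ref{E2thm} to a chain-level statement.
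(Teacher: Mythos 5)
Your proof is correct and takes essentially the same approach as the paper. The paper's proof of this corollary is terse — it simply asserts that the $(E_1,d_1)$ complex is the pointed Khovanov complex $CKh(m(\sm(D)),\mathbf{p})$ over $R$ and then takes homology — implicitly relying on the fact that the proof of Theorem~\ref{E2thm} already establishes a chain-level isomorphism $(E_1(C_2^-(D)),d_1)\cong CKh^-(m(\sm(D)))$ (via Corollary~\ref{corkhov} at each vertex, the edge-map identification, and vertex rescaling), so the ``subtlety'' you flag about upgrading to a chain-level statement is already handled there; your version simply makes explicit the flatness/freeness argument for why tensoring with $M$ commutes with taking $d_0$-homology, which is the same mechanism the paper's proof is tacitly using.
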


\begin{proof}

Recall that the complex $\widehat{C}_{2}(D)$ is given by
\[ \widehat{C}_{2}(D) =  C_{2}^{-}(D) \otimes \bigotimes_{j=1}^{l} R \xrightarrow{U_{i_{j}}} R \]

\noindent
where one edge $e_{i_{j}}$ lies on each component of $\sm(D)$ and the maps $R \xrightarrow{U_{i_{j}}} R$ have cube grading $1$. Consider the spectral sequence induced by the cube filtration on $\widehat{C}_{2}(D)$. Then the complex $(E_{1}, d_{1})$ is precisely the pointed Khovanov complex $CKh(m(\sm(D)), \bf{p})$, where $ \mathbf{p} = \{p_{1},...,p_{l}\}$ and $p_{j}$ lies on $e_{i_{j}}$. The $E_{2}$ page is the homology of this complex, which is defined to be $\widehat{Kh}(m(\sm(D)))$.

\end{proof}

\section{Relationship with $\widehat{\hfk}(L)$}

In this section we will show that for any $D \in \mathcal{D^{R}}$, $C_{2}^{-}(D)$ is quasi-isomorphic to $\cfk_{2}^{-}(D)$. Thus, in order to get a comparison with $\widehat{\hfk}(\sm(D))$, we will need to relate $\widehat{\cfk}_{2}(D)$ and $\widehat{\hfk}(\sm(D))$.

\subsection{$\widehat{\cfk}_{2}(D)$ and $\widehat{\cfk}(\sm(D))$.}

Let $D$ be a partially singular diagram.

\begin{lem}

There is an isomorphism $\widehat{\hfk}_{2}(D) \cong \widehat{\hfk}(\sm(D))$ as $\delta$-graded vector spaces.

\end{lem}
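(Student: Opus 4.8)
## Proof Proposal

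The plan is to reduce this statement to the analogous statement for the minus theories, which can in turn be analyzed one crossing at a time via the cube of resolutions. The key observation is that both $\widehat{\cfk}_2(D)$ and $\widehat{\cfk}(\sm(D))$ are obtained from minus-type complexes by tensoring with the same Koszul factors $\bigotimes_{j} (R \xrightarrow{U_{i_j}} R)$, one for each component of $\sm(D)$, so by Lemma \ref{derivedlemma} it suffices to establish a $\delta$-graded quasi-isomorphism at the level of $\cfk_2^-(D)$ versus $\cfk^-(\sm(D))$ (after suitably matching up the ground ring actions, i.e. viewing $\cfk^-(\sm(D))$ as an $R$-module as in Section 3.1).

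First I would set up the comparison by induction on the number of crossings of $D$, using the oriented cube of resolutions structure. Both $\cfk_2^-$ and $\cfk^-$ satisfy unoriented/oriented skein exact triangles: for a positive crossing, $\cfk_2^-(D_+)$ is a mapping cone of $\cfk_2^-(D_\x) \to \cfk_2^-(D_s)$ by the construction in Section 2 (applied to $\hfk_2$ rather than the Khovanov-flavored $C_2^-$), while on the Floer side $\cfk^-(\sm(D_+))$ relates to $\cfk^-(\sm(D_-))$, $\cfk^-(\sm(D_s))$ through the standard Khovanov-type skein triangle for knot Floer homology. Actually, the cleaner route is to invoke that the oriented cube of resolutions complex $C_F^-(\sm(D))$ of Oszv\'ath--Szab\'o--Manolescu (Section 3.3) is chain homotopy equivalent to $\cfk^-(\sm(D))$, and to compare $C_F^-$ of the smoothed diagram with $\cfk_2^-(D)$ vertex-by-vertex over the cube. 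At each complete resolution $S = D_I$, Theorem \ref{hfk2kh} identifies $\hfk_2^-(S)$ with $Kh^-(\sm(S))$, while Lemma \ref{nonloc} identifies $H_*(\cfk^-(\cH_S))$ with $R/N(S)$; the point is that after passing to the $E_\infty$-relevant graded pieces these agree, so a filtered chain map inducing these identifications is a quasi-isomorphism by Lemma \ref{homalglemma}.

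Concretely, the steps in order: (1) record the $\delta$-grading conventions on both sides — $\gr_\delta = 2M - 2A$ on $\hfk_2$ and the standard $\delta$-grading on $\widehat{\hfk}$ — and check they are preserved by all maps below up to an overall shift; (2) reduce from $\widehat{\cfk}_2(D)$ to $\cfk_2^-(D)$ and from $\widehat{\cfk}(\sm(D))$ to $\cfk^-(\sm(D))$ using that the reducing Koszul factors are free $R$-modules and Lemma \ref{derivedlemma}; (3) construct a filtered chain map $\Psi : \cfk_2^-(D) \to C_F^-(\sm(D))$ compatible with the cube filtration, using at each vertex the identification coming from Theorem \ref{hfk2kh} combined with Lemma \ref{nonloc} and the explicit description of the $\mathrm{Tor}^0$ edge maps (Lemma \ref{nonloc}'s corollary and the cited Oszv\'ath--Szab\'o edge-map lemma); (4) verify that $\Psi$ induces an isomorphism on the homology of the associated graded, i.e. on each $\hfk_2^-(D_I) \cong Kh^-(\sm(D_I))$ matching $H_*(C_F^-(\sm(D_I)))$; (5) invoke Lemma \ref{homalglemma} to conclude $\Psi$ is a quasi-isomorphism, then take homology of the reduced complexes.

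The main obstacle I expect is step (3)--(4): matching the edge maps. On the $\cfk_2^-$ side the edge maps are the Khovanov merge/comultiplication maps (this is essentially Theorem \ref{E2thm}'s content, but for the genuine Floer-theoretic $\cfk_2^-$ rather than the combinatorial model $C_2^-$), whereas on the $C_F^-(\sm(D))$ side the edge maps are the unoriented-skein cobordism maps of the Floer cube of resolutions; one must check these are chain homotopic after the vertex-wise identifications, and control the higher differentials $d_i$ ($i \ge 2$) that appear in $C_F^-$ but not in $C_2^-$. A further subtlety is that $D$ has "extra" singularizations compared with a standard diagram for $\sm(D)$, so the comparison is not literally between cube complexes of the same size; this is where one uses that $D \in \mathcal{D^R}$ (the regular-sequence condition) to collapse the redundant Koszul factors via the MOY II/III moves of Lemmas \ref{MOYtwo} and \ref{MOYthree}, exactly as in the proof of Theorem \ref{hfk2kh}. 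Handling the absolute versus relative grading discrepancy — noted as a caveat in the introduction — is what forces the statement to be only up to an overall $\delta$-shift.
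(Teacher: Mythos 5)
Your proposal takes a genuinely different route from the paper, and I think it has a real gap.

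The paper's proof is a direct Heegaard Floer argument and never touches the cube of resolutions or Khovanov homology. Let $\cH$ be a Heegaard diagram for $D$ and $\cH'$ the Heegaard diagram for $\sm(D)$ obtained by deleting the $XX$ basepoints and swapping some $X$'s and $O$'s. One sets \emph{all} $U_i=0$ to obtain $\widetilde{\cfk}_2(D)$ and $\widetilde{\cfk}(\sm(D))$. Because the $U$-actions on $O$'s on the same component are homotopic, these ``tilde'' homologies are $\widehat{\hfk}_2(D)\otimes V^{\otimes(n-l)}$ and $\widehat{\hfk}(\sm(D))\otimes V^{\otimes(n'-l)}$ respectively. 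After the change of basis $x\mapsto(-1/2)^{A(x)}x$ (which absorbs the $(-2)^{n_{XX}(\phi)}$ factor), $\widetilde{\cfk}_2(\cH)$ is \emph{literally identical} as a chain complex to $\widetilde{\cfk}(\cH')$ — both block every $X$ and $O$ basepoint, and the $\alpha$, $\beta$ curves and basepoint locations coincide — and the matrix factorization $\mathcal{L}^+(D)$ degenerates to $V^{\otimes m}$. Comparing the exponents $(n-l)$ and $(m+n'-l)=(n-l)$ finishes the proof. No filtration, no cube maps, no Khovanov comparison.

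Your proposal has two substantive problems. First, it aims to produce a quasi-isomorphism between the minus complexes $\cfk_2^-(D)$ and $\cfk^-(\sm(D))$, but these live over genuinely different ground rings: $\cH$ has $n$ $O$-basepoints while $\cH'$ has only $n'=n-m$, so there is no obvious $R$-module map between them, and you cannot simply ``view $\cfk^-(\sm(D))$ as an $R$-module as in Section 3.1'' — that passage is about Khovanov complexes, and the needed identifications of $U$-variables on the Floer side are exactly what the proof must supply. Indeed the paper flags the analogous minus-level comparison ($\hfk_2^-(D)$ versus $\hfk_2^-(\sm(D))$) as \emph{open} in its final remark, so steps (3)--(4) of your plan would amount to proving a statement the paper explicitly does not establish. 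Second, your route goes through $C_F^-$, Lemma~\ref{nonloc}, $\mathrm{Tor}^0$ edge maps, and the MOY moves of Lemmas~\ref{MOYtwo}--\ref{MOYthree}, all of which either require $D\in\mathcal{D^R}$ or concern the \emph{unreduced} $\hfk_2$ of complete resolutions; but the lemma at hand is stated for an arbitrary partially singular diagram $D$, not just those in $\mathcal{D^R}$. The paper's ``set $U_i=0$ and change basis'' trick avoids all of this by reducing the comparison to a count of identical holomorphic discs in two Heegaard diagrams that differ only in basepoint decoration.
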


\begin{proof}

Let $\cH$ be a Heegaard diagram for $D$, and let $\cH'$ be a Heegaard diagram for $\sm(S)$ obtained from $\cH$ by deleting the $XX$ basepoints and swapping some $X$ basepoints with $O$ basepoints. Note that this choice is not unique, as it requires placing an orientation on $\sm(S)$, but the $\delta$-graded, reduced knot Floer homology does not depend on orientation, so  the choice doesn't matter.

Let $m$ be the number of $XX$ basepoints in $\cH$, $n$ be the number of $O$ basepoints in $\cH$, and $n'$ the number of $O$ basepoints in $\cH'$. Then $n'=n-m$. Finally, let $l$ denote the number of components of $L$.

Let $\widetilde{\cfk}_{2}(D)$ be the complex $\cfk^{-}_{2}(D)$ with every $U_{i}=0$, and define $\widetilde{\cfk}(\sm(D))$ similarly. If two $O$ basepoints in $\cH$ lie on the same component of $\sm(D)$, the corresponding $U$ actions are homotopic (up to sign). Similarly, if two $O$ basepoints in $\cH'$ lie on the same component of $\sm(D)$, the corresponding $U$ actions are homotopic. It follows that 
\[ \widetilde{\hfk}_{2}(D) \cong \widehat{\hfk}_{2}(D) \otimes V ^{\otimes n-l} \hspace{2cm} \widetilde{\hfk}(\sm(D)) \cong \widehat{\hfk}(\sm(D)) \otimes V ^{\otimes n'-l} \]

\noindent
where $V = \Q \oplus \Q$ concentrated in $\delta$-grading $0$.

Since  $\cfk_{2}^{-}(D) = \cfk^{-}_{2}(\cH) \otimes \mathcal{L}^{+}(D)$, we can write 
\[\widetilde{\cfk}_{2}(D) = \widetilde{\cfk}_{2}(\cH) \otimes V^{\otimes m} \]

\noindent
as $L(v)$ and $L^{+}(v)$ are both zero in $\widetilde{\cfk}_{2}(D)$.

The complex $\widetilde{\cfk}_{2}(\cH)$ is nearly identical to $\widetilde{\cfk}_{2}(\cH')$ - in both cases all $X$ and $O$ basepoints are blocked, and they have $X$ and $O$ basepoints in the same locations (though some $X$'s have been swapped for $O$'s and vice versa). The only difference is that $\cH$ has a collection of $XX$ basepoints which contribute a factor of $(-2)^{n_{XX}(\phi)}$ to each homotopy class $\phi$. If $x$ is a generator of $ \cfk^{-}_{2}(\cH)$, we can perform a change of basis 

\[ x \mapsto (-1/2)^{A(x)}x \]

\noindent
where $A(x)$ is the Alexander grading of $x$. Then after this change of basis, we have an isomorphism

\[\widetilde{\cfk}_{2}(\cH) \cong \widetilde{\cfk}(\cH')\]

\noindent
The relative $\delta$-grading is given by 
\[ \gr_{\delta}(x) - \gr_{\delta}(y) = 2\mu(\phi) - 2n_{\bf{X}}(\phi)  - 2n_{\bf{O}}(\phi) \]

\noindent
so it is the same on $\widetilde{\cfk}_{2}(\cH)$ and $\widetilde{\cfk}(\cH')$. Thus, the above isomorphism is an isomorphism of (relatively) $\delta$-graded chain complexes.

It follows that 

\[\widetilde{\hfk}_{2}(D) \cong \widetilde{\hfk}_{2}(\cH) \otimes V^{\otimes m} \cong \widetilde{\hfk}(\cH') \otimes V^{\otimes m} \cong \widetilde{\hfk}(\sm(D)) \otimes V^{\otimes m} \]

\noindent
Thus,

\[ \widehat{\hfk}_{2}(D) \otimes V^{\otimes n-l} \cong \widehat{\hfk}(\sm(D)) \otimes V^{\otimes (m+n'-l)} \]

\noindent
Since $m+n'=n$, it follows that $\widehat{\hfk}_{2}(D) \cong \widehat{\hfk}(\sm(D))$ as $\delta$-graded vector spaces.

\end{proof}

\subsection{ $C^{-}_{2}(S)$ and $\cfk^{-}_{2}(S)$} Let $D \in \mathcal{D^{R}}$ be a partially singular braid. In this section we will define a quasi-isomorphism from $\cfk^{-}_{2}(D)$ to $C^{-}_{2}(D)$.

Let $\cH$ be the standard planar Heegaard diagram for $D$. We can write $\cfk^{-}(D) = \cfk^{-}(\cH') \otimes \mathcal{L}_{D}$. We can apply the cube of resolutions construction from Section \ref{cube} to $\cfk^{-}(D)$ to obtain the complex $C^{-}_{F}(D)$. There is a chain homotopy equivalence 
\[ f_{1}: \cfk^{-}(D) \to C^{-}_{F}(D) \]

\noindent
This homotopy equivalence is described at a particular negative and positive crossing, respectively, in Figures \ref{f1minus} and \ref{f1plus}, and the total map is obtained by iterating over all crossings.

\begin{figure}[!h]
\centering
\begin{tikzpicture}
  \matrix (m) [matrix of math nodes,row sep=5em,column sep=6em,minimum width=2em] {
     \X & \X \\
     \Y & \X \\
     \Y & \X \\};
  \path[-stealth]
    (m-1-1) edge node [left] {$\Phi_{A^{-}}$} (m-2-1)
            edge node [above] {$1$} (m-1-2)
            edge node [right]{$\Phi_{A^{-}B}$} (m-2-2)
    (m-2-1.east|-m-2-2) edge node [above] {$\Phi_{B}$} (m-2-2)
    (m-3-1.east|-m-3-2) edge node [above] {$\Phi_{B}$} (m-3-2)
    (m-3-1) edge [dashed] node [left] {$1$} (m-2-1)
    (m-3-2) edge [dashed] node [left] {$1$} (m-2-2)
    (m-1-2) edge node [right] {$L(v)$ \hspace{ 15mm}} (m-2-2);
\end{tikzpicture}
\caption{The chain homotopy equivalence $f_{1}$ for a negative crossing is depicted via the dashed arrows} \label{f1minus}
\end{figure}
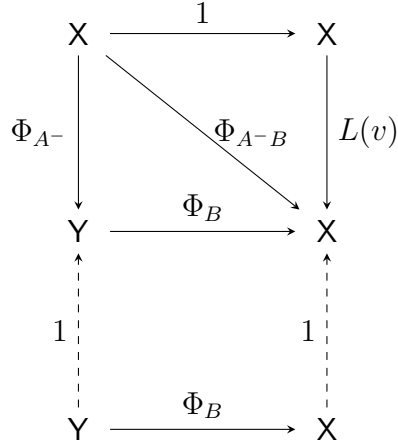

\begin{figure}[!h]
\centering
\begin{tikzpicture}
  \matrix (m) [matrix of math nodes,row sep=5em,column sep=6em,minimum width=2em] {
     \X & \Y \\
     \X & \Y \\
     \X & \X \\};
  \path[-stealth]
    (m-1-1) edge  [dashed]  node [left] {$1$} (m-2-1)
            edge node [above] {$\Phi_{B}$} (m-1-2)
            edge [bend right, dashed] node [left] {$\Phi_{A^{+}B}$} (m-3-1)
    (m-2-1.east|-m-2-2) edge node [above] {$\Phi_{B}$} (m-2-2)
    (m-2-1)    edge node [right]{$\Phi_{A^{-}B}$} (m-3-2)
    (m-3-1.east|-m-3-2) edge node [above] {$1$} (m-3-2)
    (m-2-1) edge  node [right] {$L(v)$} (m-3-1)
    (m-2-2) edge node [right] {$\Phi_{A}$} (m-3-2)
    (m-1-2) edge  [dashed]  node [right] {$1$ \hspace{ 15mm}} (m-2-2);
    
    \node at (3.6,-2.5) {$\Phi_{A}$};
       
    \draw [->, -stealth, dashed] (m-1-2) 
           to [out=-45,in=45] (2.5,-3.5) 
           to [out=-135,in=-60] (m-3-1);
           
\end{tikzpicture}
\caption{The chain homotopy equivalence $f_{1}$ for a positive crossing is depicted via the dashed arrows} \label{f1plus}
\end{figure}

Since $\mathcal{L}_{D}$ appears at each vertex in the cube of resolutions, we can write $C^{-}_{F}(D) = C'_{F}(D) \otimes \mathcal{L}_{D}$. The complex $\cfk^{-}(D)$ also has a $\mathcal{L}_{D}$ tensorand, and $f_{1}$ is the identity on $\mathcal{L}_{D}$. Define 

\[g_{1}: \cfk^{-}(\cH) \to C'_{F}(D) \]

\noindent
so that $f_{1}=g_{1} \otimes \id$.

Similarly, the complex $C_{2}^{-}(D)$ decomposes as a tensor product $C^{-}_{2}(D) = C_{2}'(D) \otimes \mathcal{L}_{D}$, where $C_{2}'(D)$ is the cube complex which has $R/(N(D_{I})+L_{I}(D_{I}))$ at each vertex $I$.

\begin{lem}\label{lem5.4}

The complex $C'_{F}(D)$ is quasi-isomorphic to $C_{2}^{'}(D)$.

\end{lem}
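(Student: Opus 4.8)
The plan is to build the quasi-isomorphism $C'_F(D) \to C'_2(D)$ vertex-by-vertex and then promote it to a map of cube complexes. Recall that both complexes are filtered by the cube height, so by Lemma~\ref{homalglemma} it suffices to produce a filtered chain map inducing an isomorphism on the associated graded pieces — that is, an isomorphism on homology at each vertex $I$ together with compatibility with the edge maps $d_1$ (the higher maps $d_j$, $j\ge 2$, in $C'_F(D)$ do not appear in the associated graded). At a single vertex $I$, the object in $C'_2(D)$ is $R/(N(D_I)+L_I(D_I))$, which is the generator-$1$ piece of the full $\cfk^-_2$-at-a-resolution complex by the lemma preceding Corollary~\ref{corkhov}; the object in $C'_F(D)$ is $C'_F(D_I) = \cfk^-(\h(D_I)) \otimes \mcL_I(D_I)$. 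By Lemma~\ref{nonloc} and the regular-sequence hypothesis built into $\mathcal{D^R}$, $H_*(C'_F(D_I)) \cong \Tor_R(R/N(D_I), R/L_I(D_I))$, and since the $L(v)$ form a regular sequence the higher $\Tor$ vanish, so the homology is exactly $R/(N(D_I)+L_I(D_I))$. This already gives a vertexwise isomorphism.

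Next I would assemble these into an actual chain map. The cleanest route is to first replace $C'_F(D)$ by a quasi-isomorphic cube complex that literally has $R/(N(D_I)+L_I(D_I))$ at each vertex: concretely, apply the homological-perturbation/cancellation lemma to the Koszul factor $\mcL_I(D_I)$ at each vertex, exactly as in the proof that $C_2^-(D_I)$ computes $\hfk_2^-(D_I)$, using Lemma~\ref{derivedlemma} to carry the free tensor factors along. This is where one has to be a little careful: cancelling a Koszul differential introduces higher-length zig-zag contributions to the edge maps, so the resulting cube complex will a priori have nonzero $d_j$ for $j \ge 2$. I would then argue that after this cancellation the edge maps $d_1$ agree, up to an invertible scalar at each vertex (a sign or a unit), with the $\Tor^0$-level edge maps computed by Ozsv\'ath--Szab\'o: for a positive crossing multiplication by $(-1)^{\epsilon_{I,J}}$ and for a negative crossing multiplication by $(-1)^{\epsilon_{I,J}}(U_b - U_c)$. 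These are exactly the maps $\phi_+$ and $\phi_-$ defining $C'_2(D)$, so after rescaling the single generator at each vertex (which changes the edge assignment $\epsilon$ but not the chain-homotopy type, as in the remark on sign assignments) the two $d_1$'s coincide.

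Finally, I would invoke Lemma~\ref{homalglemma} one more time: the identity-on-$E_1$ comparison between the perturbed $C'_F(D)$ and $C'_2(D)$ is a filtered chain map that is an isomorphism on the $E_1$ page (hence on all higher pages), and since both filtrations are bounded (the cube is finite), it is a quasi-isomorphism. Composing with the earlier cancellation quasi-isomorphism $C'_F(D) \to (\text{perturbed } C'_F(D))$ — which is a quasi-isomorphism vertexwise and filtered, hence a quasi-isomorphism of total complexes by the same lemma — gives the desired $C'_F(D) \simeq C'_2(D)$.

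I expect the main obstacle to be the bookkeeping in the middle step: showing that after perturbing away the Koszul differentials the surviving edge map $d_1$ really is the Ozsv\'ath--Szab\'o $\Tor^0$ edge map and not something with extra terms. The vertexwise homology isomorphism and the spectral-sequence comparison arguments are essentially formal given the lemmas already in hand; the content is in matching the edge maps, and in checking that the higher face maps $d_j$ for $j\ge 2$ that the perturbation produces do not interfere with the associated-graded comparison (they don't, since they strictly increase filtration by at least $2$, so they vanish in the associated graded and only affect the chain-homotopy type of the total complex, which is all we are claiming).
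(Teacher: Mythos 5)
There is a genuine gap at the final step of your argument, and it is located exactly where you wave the issue away. You correctly anticipate that cancelling the Koszul differentials at each vertex will a priori produce higher face maps $d_j$ for $j\ge 2$ in the perturbed model of $C'_F(D)$, and you correctly note that $C'_2(D)$ has, by construction, only the edge map $d_1$ and no higher face maps. But you then assert that ``the identity-on-$E_1$ comparison between the perturbed $C'_F(D)$ and $C'_2(D)$ is a filtered chain map,'' with the parenthetical that the higher $d_j$ ``only affect the chain-homotopy type of the total complex, which is all we are claiming.'' This does not hold up: if the perturbed complex has differential $d_1 + d_2 + \cdots$ and $C'_2(D)$ has differential $d_1$ alone, then the identity (or indeed any map whose only nonvanishing component is the cube-degree-zero one) is a chain map if and only if $d_j = 0$ for all $j\ge 2$. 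You cannot invoke Lemma~\ref{homalglemma} until you have an actual filtered chain map in hand, and an abstract isomorphism of $E_1$ pages does not supply one. Nor does the quasi-isomorphism claim become automatic just because the higher $d_j$ ``vanish in the associated graded'' --- those higher components feed into the later pages of the spectral sequence and can change the total homology, so they are precisely what must be controlled.

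The paper closes this gap with a short but essential grading argument that your proposal is missing: the unique surviving generator at each vertex $I$ of the perturbed complex sits in algebraic grading $-|I|$ (up to a global shift), while every component of the differential drops the algebraic grading by exactly~$1$; a face map $d_j$ raises $|I|$ by~$j$, so it would have to drop the algebraic grading by~$j$, which is impossible for $j\ge 2$. Hence the higher face maps vanish identically, the perturbed complex genuinely coincides with $C'_2(D)$ after the sign/generator rescalings you describe, and only then is the comparison map a chain map. Your vertexwise homology computation via $\Tor$ and the identification of the $d_1$ edge maps with the Ozsv\'ath--Szab\'o $\Tor^0$ maps are both correct and match the paper's route; the missing ingredient is exactly this algebraic-grading argument ruling out higher face maps, without which the proof does not go through.
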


\begin{proof}

This lemma has nearly been proved for us in Lemma 5.1 and Proposition 5.2 of \cite{Szabo}. The complex $C'_{F}(D)$ decomposes as a direct sum 

\[ C'_{F}(D) = \bigoplus_{I} C'_{F}(D_{I}) \]

For each diagram $D_{I}$, the complex $C'_{F}(D_{I})$ is given by $\cfk^{-}(\cH_{I}) \otimes \mathcal{L}_{I}(D_{I})$. Since $D \in \mathcal{D^{R}}$, the homology of this complex is $R/(N(D_{I})+L_{I}(D_{I}))$. Ozsv\'{a}th and Szab\'{o} pick out generators for this homology from the cube complex. Before describing these generators, we'll need some notation.

For each crossing, there are three pieces of the corresponding chain complex labeled $\X$. To distinguish them, we will call them $\X_{\alpha}$, $\X_{\beta}$, and $\X_{\gamma}$. A generator of each of these complexes corresponds to a tuple of intersection points in the Heegaard diagram in Figure \ref{HDCrossing2} containing the intersection point $x$. When writing generators of the complex in terms of their intersection points, we will write this intersection point as $x_{\alpha}$, $x_{\beta}$, or $x_{\gamma}$, depending on which summand of the complex it corresponds to.

\begin{figure}[!h]
\centering
\begin{tikzpicture}
  \matrix (m) [matrix of math nodes,row sep=5em,column sep=6em,minimum width=2em] {
     \X_{\alpha} & \X_{\beta} \\
     \Y & \X_{\gamma} \\};
  \path[-stealth]
    (m-1-1) edge node [left] {$\Phi_{A^{-}}$} (m-2-1)
            edge node [above] {$1$} (m-1-2)
            edge node [right]{$\Phi_{A^{-}B}$} (m-2-2)
    (m-2-1.east|-m-2-2) edge node [above] {$\Phi_{B}$} (m-2-2)
    (m-1-2) edge node [right] {$L(v)$ \hspace{ 15mm}} (m-2-2);
\end{tikzpicture}
\caption{A labeling of the $\X$ summands in the negative crossing complex} \label{negcomplex2}
\end{figure}

Let $D_{I}$ and $D_{J}$ be two complete resolutions of $D$ with $I \lessdot J$. Suppose first that the corresponding crossing is negative, so that $D_{I}$ has the smoothing and $D_{J}$ the singularization.

Ozsv\'{a}th and Szab\'{o} show that in $C_{F}'(D_{I})$, there are two homologous generators: $\mathbf{y}_{0}$, which has intersection points $x'$ and $y$, and $\mathbf{y}_{1}$, which has intersection points $u$ and $v$ in Figure \ref{HDCrossing2}. They are both cycles, and they both generate the homology at that vertex in the cube of resolutions. Since they are homologous, if we perform a change of basis $\mathbf{y}_{1} \mapsto  \mathbf{y}_{1}-\mathbf{y}_{0}=\mathbf{y}_{1}'$ at each crossing, then there is a unique generator $\mathbf{y}_{0}$ which generates the homology $R/(N(D_{I})+L_{2}(D_{I}))$. When the crossing is singularized, there is a unique local generator $\mathbf{x}_{0}$, which has intersection points $x^{\gamma}$ and $y$, which generates the homology of $C'_{F}(D_{J})$. They also show that the edge map in the cube of resolutions $C_{F}^{-}(D)$ carries $\mathbf{y}_{0}$ to 
$(-1)^{\epsilon_{I,J}}(U_{b}-U_{c})\mathbf{x}_{0}$, where $\epsilon$ is a sign assignment.

\begin{figure}[h!]
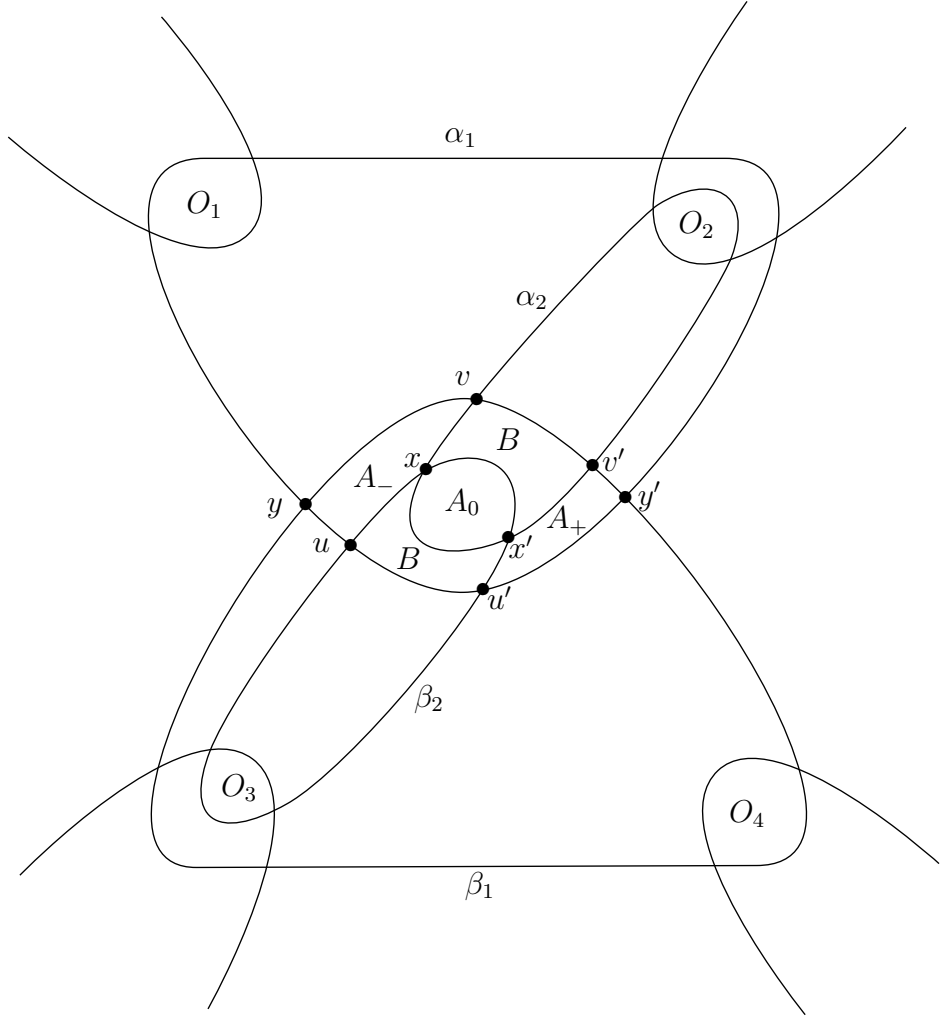
 

 \centering
   \begin{overpic}[width=.75\textwidth]{initial_diagram.pdf}
   \put(39,54){$x$}
   \put(40.4,53){$\bullet$}
   \put(49.3,45){$x'$}
   \put(48.5,46.3){$\bullet$}
   \put(17.6,79){$O_{1}$}
   \put(66,77){$O_{2}$}
   \put(48,55.7){$B$}
   \put(43,49.7){$A_{0}$}
   \put(53,48){$A_{+}$}
   \put(38.2,44){$B$}
   \put(34,52){$A_{-}$}
   \put(21,21.6){$O_{3}$}
   \put(71,19){$O_{4}$}
   \put(43,86){$\alpha_{1}$}
   \put(50,70){$\alpha_{2}$}
   \put(45,12){$\beta_{1}$}
   \put(40,30.4){$\beta_{2}$}

   \put(44,62.1){$v$}
   \put(58.5,53.6){$v'$}
   \put(45.4,59.95){$\bullet$}
   \put(56.8,53.45){$\bullet$}
   \put(28.55,49.55){$\bullet$}
   \put(25.5,49.55){$y$}
   \put(60,50.3){$\bullet$}
   \put(62,50.3){$y'$}
   \put(33,45.5){$\bullet$}
   \put(30,45.65){$u$}
   \put(46,41.2){$\bullet$}
   \put(47.2,40){$u'$}

   \end{overpic}
  
\caption{Labeled intersection points in the Heegaard diagram for a crossing}\label{HDCrossing2}
\end{figure}

Consider now the positive crossing. In this case $D_{I}$ is the singularization and $D_{J}$ the smoothing. The complexes have the same generators of homology $\mathbf{x}_{0}$ for $C'(D_{I})$ and homologous generators $\mathbf{y}_{0}$, $\mathbf{y}_{1}$ for $C'(D_{J})$, though now $\mathbf{x}_{0}$ lies in $\mathsf{X}'_{\beta}$ while $\mathbf{y}_{0}$ lies in $\mathsf{X}'_{\gamma}$. Once again, we perform the change of basis $\mathbf{y}_{1} \mapsto  \mathbf{y}_{1}-\mathbf{y}_{0}=\mathbf{y}_{1}'$ so that there is a unique generator of homology at each vertex in the cube. Ozsv\'{a}th and Szab\'{o} show that the the edge map in the cube of resolutions carries $\mathbf{x}_{0}$ to $(-1)^{\epsilon_{I,J}} \mathbf{y}_{0}$.

\begin{figure}[!h]
\centering
\begin{tikzpicture}
  \matrix (m) [matrix of math nodes,row sep=5em,column sep=6em,minimum width=2em] {
     \X_{\alpha}' & \Y' \\
     \X_{\beta}' & \X_{\gamma}' \\};
  \path[-stealth]
    (m-1-1) edge node [left] {$U_{a}+U_{b}-U_{c}-U_{d}$} (m-2-1)
            edge node [above] {$\Phi_{B'}$} (m-1-2)
            edge node [right]{$\Phi_{A^{+}B}$} (m-2-2)
    (m-2-1.east|-m-2-2) edge node [above] {1} (m-2-2)
    (m-1-2) edge node [right] {$\Phi_{A^{+}}$ \hspace{ 15mm}} (m-2-2);
\end{tikzpicture}
\caption{A labeling of the $\X$ summands in the positive crossing complex}
\end{figure}

Suppose we choose the same assignment on the edges of $C'_{F}(D)$. Then sending each unique generator of the homology at a vertex in $C'_{F}(D)$ to the generator at the corresponding vertex in $C^{-}(D)$ is a quasi-isomorphism. For grading reasons, there are no higher face maps. In particular, the generator at each vertex in the cube has algebraic grading $-|I|$ up to an overall shift and all differentials have algebraic grading $-1$.

\end{proof}

Let $g_{2}$ denote this quasi-isomorphism. This map can be lifted up to the total complexes by tensoring with the identity:

\[ g_{2} \otimes \id: C'_{F}(D) \otimes \mathcal{L}_{D}\to C^{'}(D) \otimes  \mathcal{L}_{D} \]

\noindent
Call this map $f_{2}$. 

\begin{lem}The map $f_{2}$ is a quasi-isomorphism.
\end{lem}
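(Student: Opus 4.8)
The plan is to deduce the statement directly from Lemma~\ref{derivedlemma}. By construction $f_{2}=g_{2}\otimes\id$, where $g_{2}\colon C'_{F}(D)\to C'_{2}(D)$ is the quasi-isomorphism supplied by Lemma~\ref{lem5.4} and the identity is taken on the Koszul factor $\mathcal{L}_{D}=\bigotimes_{v\in v_{4}(D)}\bigl(R\xrightarrow{L(v)}R\bigr)$. Since $v_{4}(D)$ is finite, $\mathcal{L}_{D}$ is a bounded complex of finitely generated free $R$-modules, while $C'_{F}(D)$ and $C'_{2}(D)$ are finitely generated chain complexes over $R$ ($\cfk^{-}$ is free of finite rank on the intersection points and the cube has finitely many vertices). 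Applying Lemma~\ref{derivedlemma} with $C_{1}=C'_{F}(D)$, $C_{1}'=C'_{2}(D)$, $C_{2}=\mathcal{L}_{D}$, and $f=g_{2}$ then shows $f_{2}$ is a quasi-isomorphism. No grading bookkeeping is needed for the claim as stated, since it only asserts an isomorphism on homology.

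The one point requiring care is that on the $C^{-}_{2}$ side the factor attached at each vertex is genuinely the Koszul matrix factorization $\mathcal{L}_{D}^{+}=\bigotimes_{v}\bigl(R\xrightarrow{L(v)}R\xleftarrow{L^{+}(v)}R\bigr)$, which is not itself a chain complex, so Lemma~\ref{derivedlemma} cannot be quoted verbatim. I would handle this by filtering both $C^{-}_{F}(D)=C'_{F}(D)\otimes\mathcal{L}_{D}$ and $C^{-}_{2}(D)=C'_{2}(D)\otimes\mathcal{L}_{D}^{+}$ by the cube height $|I|$: the edge maps strictly increase $|I|$ and the vertex differential $d_{0}$ (the $\cfk^{-}_{2}$ differential together with the $\mathcal{L}$-differential) preserves it, so $f_{2}$ is a filtered chain map. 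On the associated graded it becomes $\bigoplus_{I}\bigl(g_{2}|_{D_{I}}\otimes\id\bigr)$, where $g_{2}|_{D_{I}}\colon C'_{F}(D_{I})\to C'_{2}(D_{I})$ is the vertexwise quasi-isomorphism extracted in the proof of Lemma~\ref{lem5.4}, both sides having homology $R/(N(D_{I})+L_{I}(D_{I}))$. At a single vertex the tensorand $\mathcal{L}_{D}^{+}$ is an honest complex, since its potential $\omega=\sum_{v\in v_{4}(D)}L(v)L^{+}(v)$ vanishes on $R/(N(D_{I})+L_{I}(D_{I}))$ (exactly the computation showing $\partial_{2}^{2}=0$ on $C^{-}_{2}(S)$), so each summand is a quasi-isomorphism by the mapping-cone/Koszul argument underlying Lemma~\ref{derivedlemma}. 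Lemma~\ref{homalglemma} then upgrades the isomorphism on associated graded objects to a quasi-isomorphism of the total complexes; this is legitimate because all complexes are finitely generated and the cube filtration has finite length, so the relevant spectral sequences are bounded.

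In short, there is no essentially new difficulty: the substantive content sits in Lemma~\ref{lem5.4} (the homology at each vertex of $C^{-}_{F}(D)$ is $R/(N(D_{I})+L_{I}(D_{I}))$ with the expected edge maps) and in the vanishing of $\omega$, while the present statement is the routine step of tensoring a quasi-isomorphism with a perfect complex. If one simply reads $\mathcal{L}_{D}$ as the honest Koszul complex throughout, the proof collapses to the single invocation of Lemma~\ref{derivedlemma} in the first paragraph; the only subtlety worth spelling out is the matrix-factorization bookkeeping of the second paragraph.
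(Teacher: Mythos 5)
Your first paragraph is precisely the paper's one-line proof: apply Lemma~\ref{derivedlemma} with $C_{2}=\mathcal{L}_{D}$, a bounded complex of finitely generated free $R$-modules, and $f=g_{2}$ the quasi-isomorphism from Lemma~\ref{lem5.4}. The concern you raise in the second paragraph rests on a misreading of the map's codomain: as defined, $f_{2}=g_{2}\otimes\id$ has target $C'_{2}(D)\otimes\mathcal{L}_{D}$ with the honest Koszul differential, not $C^{-}_{2}(D)=C'_{2}(D)\otimes\mathcal{L}_{D}^{+}$. The paper identifies these two only \emph{as modules} in the paragraph following this lemma, and the fact that $f_{2}\circ f_{1}$ is a chain map for the matrix-factorization differential is a separate lemma, with the quasi-isomorphism statement in that setting appearing as Theorem~\ref{hfkthm} (which filters by the Alexander grading rather than by cube height). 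Your closing sentence recognizes this reading, so there is no gap; your middle paragraph simply anticipates later material rather than proving the lemma at hand.
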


\begin{proof}

This follows from Lemma \ref{derivedlemma}, as $\mathcal{L}_{D}$ is a free $R$-module.

\end{proof}

\begin{cor}\label{happycor}
The map $f_{2} \circ f_{1}: \cfk^{-}(D) \to C_{2}'(D) \otimes  \mathcal{L}_{D}$ is a quasi-isomorphism. 
\end{cor}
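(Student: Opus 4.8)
The plan is to observe that $f_{2} \circ f_{1}$ is a composition of two maps each of which has already been established to be a quasi-isomorphism, and that their domain and codomain match up. First I would recall that $f_{1}: \cfk^{-}(D) \to C^{-}_{F}(D)$ is the chain homotopy equivalence built above by iterating the local homotopy equivalences of Figures \ref{f1minus} and \ref{f1plus} over all crossings of $D$. Being a chain homotopy equivalence of complexes of $R$-modules, $f_{1}$ is in particular a quasi-isomorphism.

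Next I would unwind the identifications set up just before the statement. We have $\cfk^{-}(D) = \cfk^{-}(\cH) \otimes \mathcal{L}_{D}$ and $C^{-}_{F}(D) = C'_{F}(D) \otimes \mathcal{L}_{D}$, with $f_{1} = g_{1} \otimes \id$, while $f_{2} = g_{2} \otimes \id : C'_{F}(D) \otimes \mathcal{L}_{D} \to C_{2}'(D) \otimes \mathcal{L}_{D}$ was shown to be a quasi-isomorphism in the preceding lemma (applying Lemma \ref{derivedlemma} to the quasi-isomorphism $g_{2}$ of Lemma \ref{lem5.4} and to the free $R$-module $\mathcal{L}_{D}$). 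Thus the target of $f_{1}$, namely $C^{-}_{F}(D) = C'_{F}(D) \otimes \mathcal{L}_{D}$, is exactly the source of $f_{2}$, so the composite $f_{2} \circ f_{1}: \cfk^{-}(D) \to C_{2}'(D) \otimes \mathcal{L}_{D}$ is well defined.

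Finally, since each of $f_{1}$ and $f_{2}$ induces an isomorphism on homology, so does $f_{2} \circ f_{1}$, which is precisely the assertion that $f_{2}\circ f_{1}$ is a quasi-isomorphism. There is essentially no obstacle here; the only point requiring a little care is the bookkeeping, namely confirming that the $\mathcal{L}_{D}$-tensorands on the two sides are genuinely identified so that the maps compose, which is immediate from the fact that both $f_{1}$ and $f_{2}$ were defined as $(-)\otimes \id$ on the $\mathcal{L}_{D}$ factor.
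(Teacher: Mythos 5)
Your proof is correct and is exactly the implicit argument the paper intends: $f_1$ is a chain homotopy equivalence (hence a quasi-isomorphism) and $f_2$ was just shown to be a quasi-isomorphism, so their composition is one as well. The paper gives no explicit proof for this corollary precisely because the argument is as immediate as you describe.
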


As modules, $\cfk^{-}(D)$ is canonically isomorphic to $\cfk^{-}_{2}(D)$, and $C'_{2}(D) \otimes  \mathcal{L}_{D}$ is canonically isomorphic to $C^{-}_{2}(D)$, so $f_{2} \circ f_{1}$ can also be viewed as a map from $\cfk^{-}_{2}(D)$ to $C_{2}^{-}(D)$.

\begin{lem}

Viewed as a map from $\cfk^{-}_{2}(D)$ to $C^{-}_{2}(D)$, $f_{2} \circ f_{1}$ is a chain map.

\end{lem}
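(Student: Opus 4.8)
The plan is to exploit the structure of the two complexes as tensor products and the fact that $f_1$ and $f_2$ are built vertex-by-vertex (respectively crossing-by-crossing) from known chain maps. Concretely, $f_1 = g_1 \otimes \id$ is a chain homotopy equivalence $\cfk^{-}(D) \to C^{-}_{F}(D)$ by the construction recalled from \cite{Szabo}, hence in particular a chain map; and $f_2 = g_2 \otimes \id$ is a chain map because $g_2$ is the quasi-isomorphism of Lemma \ref{lem5.4} obtained by sending the unique homology generator at each vertex to the corresponding generator, which Ozsv\'{a}th and Szab\'{o} check intertwines the cube differentials (with a common edge assignment). So the composite $f_2 \circ f_1 \colon \cfk^{-}(D) \to C_2'(D) \otimes \mathcal{L}_D$ is a composition of chain maps, hence a chain map. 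The only content of the present lemma, then, is that when we transport this composite across the \emph{canonical module isomorphisms} $\cfk^{-}(D) \cong \cfk^{-}_2(D)$ and $C_2'(D) \otimes \mathcal{L}_D \cong C^{-}_2(D)$, it remains a chain map for the $\partial_2$-differentials rather than merely for the $\partial^{-}$-differentials.

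The key point to verify is how the differentials on the two sides differ under these module identifications. On the knot Floer side, passing from $\cfk^{-}(D)$ to $\cfk^{-}_2(D)$ means replacing each Koszul factor $\mathcal{L}_D$ (the pair of maps $L(v), $ nothing back) by the matrix factorization $\mathcal{L}^{+}_D$ (the pair $L(v), L^{+}(v)$), and also possibly recording the $V_i$-type data; since $D$ is partially singular with the $v_4(D)$ vertices genuinely singular, the relevant modification of $\partial$ is supported on the $\mathcal{L}_D$-tensorand. On the target side, the analogous modification turns $C_2'(D) \otimes \mathcal{L}_D$ into $C_2'(D) \otimes \mathcal{L}^{+}_D = C^{-}_2(D)$. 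First I would write both $f_1$ and $f_2$ explicitly as $(\,\cdot\,) \otimes \id_{\mathcal{L}_D}$, so that $f_2 \circ f_1 = (g_2 \circ g_1) \otimes \id_{\mathcal{L}_D}$, where $g_2 \circ g_1 \colon \cfk^{-}(\cH) \to C_2'(D)$ is a chain map between the $\mathcal{L}_D$-free parts. Tensoring a chain map with $\id$ on a fixed complex is again a chain map; the content is that the \emph{same} map, now written with codomain tensored against $\mathcal{L}^{+}_D$ instead of $\mathcal{L}_D$, still commutes with the new differential. Since $\mathcal{L}^{+}_D$ and $\mathcal{L}_D$ have the same underlying free module and the extra differential component $L^{+}(v)$ acts by the same element of $R$ on both sides (it is a purely local, edge-labelled polynomial), the map $\id$ on that tensorand automatically intertwines the two copies of the $L^{+}(v)$-map, so $(g_2 \circ g_1) \otimes \id_{\mathcal{L}^{+}_D}$ commutes with $d_0 + d_1$ augmented by the $L^{+}(v)$-terms.

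Thus the verification reduces to checking, term by term in the decomposition $\partial_2 = \partial_0 + \partial_1 + (\text{Koszul terms from }\mathcal{L}^{+}_D)$, that $f_2 \circ f_1$ commutes with each piece: the $\partial^{-}$-part is handled by Corollary \ref{happycor}, the $\mathcal{L}^{+}_D$-Koszul part is handled by naturality of $\otimes \id$ as above, and any cross terms between them must be reconciled. The main obstacle I expect is precisely this last point: on $\cfk^{-}_2(D)$ the differential $\partial_2$ genuinely mixes the Floer generators with the $\mathcal{L}^{+}_D$ and $\mathcal{L}^{+}_I$ factors (the curvature-cancellation that makes $\partial_2^2 = 0$ couples the disc count to the matrix-factorization maps), so one cannot completely split $\partial_2$ as an external tensor of a differential on $\cfk^{-}_2(\cH) \otimes \mathcal{L}^{+}_I$ with the trivial differential on $\mathcal{L}^{+}_D$ — there may be differentials hitting the $\mathcal{L}^{+}_D$ tensorand coming from holomorphic discs passing near the genuine singularizations of $D$. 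I would handle this by noting that these mixed differentials are identical in form on source and target (they are given by the same $L(v), L^{+}(v)$ multiplications and the same $XX$-weighted disc counts, which $g_1$ preserves by construction in Figures \ref{f1minus}–\ref{f1plus}), so that after transporting along the canonical identifications the commutation relation $(f_2 \circ f_1) \circ \partial_2 = d_2 \circ (f_2 \circ f_1)$ holds componentwise. If a residual discrepancy survives, it would be absorbed by adjusting $f_2 \circ f_1$ by a chain homotopy, but I anticipate the naturality of the construction makes the map itself already a chain map, so no correction is needed.
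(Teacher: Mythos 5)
Your proposal correctly identifies the reduction to showing that $g_2 \circ g_1 \colon \cfk^{-}_2(\cH) \to C'_2(D)$ is a morphism of curved complexes, and correctly observes that tensoring with $\id_{\mathcal{L}^{+}_D}$ then takes care of the Koszul factor. Up to that point you are on the same track as the paper. However, you then mischaracterize where the difficulty actually lies. You write that the modification of $\partial$ in passing from $\cfk^{-}(D)$ to $\cfk^{-}_2(D)$ is ``supported on the $\mathcal{L}_D$-tensorand.'' It is not. The differential $\partial_2$ on the Heegaard Floer tensorand $\cfk^{-}_2(\cH)$ itself differs from $\partial^{-}$: it counts holomorphic discs passing through $X$ basepoints, weighted by $V_i = U_{a(i)} + U_{b(i)}$. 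Decomposing by the Alexander filtration, $\partial_2 = d_0 + d_1 + \cdots$ where $d_0 = \partial^{-}$ and the $d_i$ for $i \geq 1$ are these new $X$-crossing terms. You acknowledge near the end that ``the curvature-cancellation ... couples the disc count to the matrix-factorization maps,'' but your proposed handling — that these differentials are ``identical in form on source and target,'' or else can be absorbed by a chain homotopy — does not work. The target $C'_2(D)$ has at each cube vertex the quotient module $R/(N(D_I)+L_I(D_I))$ with no internal holomorphic-disc differential at all, so there is nothing ``identical in form'' to match against; and a homotopy correction is not what is being asserted — the lemma claims $f_2 \circ f_1$ itself is a chain map.

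The actual content of the proof, which is missing from your proposal, is the verification that $(g_2 \circ g_1) \circ d_i = 0$ for all $i \geq 1$. For $i > 1$ this follows from a grading argument ($d_i$ has algebraic grading $1-2i$, while $g_2 \circ g_1$ only survives on minimal-algebraic-grading generators, which would force a cube-grading drop of at least $2i-1$, but the drop is at most $i$). For $i = 1$ a genuine local computation is required: at a negative crossing one checks that the bigon through $A_0$ (coefficient $\pm(U_b+U_c)$) and the rectangle through $A^{-}$ (coefficient $\pm(U_a+U_d)$) combine, after the change of basis $\mathbf{y}_1 \mapsto \mathbf{y}_1 - \mathbf{y}_0$, to the element $\pm(-U_a+U_b+U_c-U_d)$, which vanishes in $R/(N(D_I)+L_I(D_I))$; at a positive crossing one must count compositions of two discs (one from $d_1$, one from the $\Phi_A$ component of $g_1$) and verify that
\[
(U_a^2 - U_d^2) + (U_d - U_a)(U_b + U_c) = (U_a+U_d)L(v) - 2Q(v) \equiv 0
\]
in $R/(N(D_I)+L_I(D_I))$. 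Without this case analysis and computation the proposal does not constitute a proof; it names the right objects but leaves the central verification unperformed, and the alternative it sketches (homotopy correction) proves a weaker statement than the one asserted.
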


\begin{proof} The complex $\cfk^{-}_{2}(D)$ can be written 
\[ \cfk_{2}(D) = \cfk^{-}_{2}(\cH') \otimes \mathcal{L}_{D}^{+}  \]

\noindent
while $C^{-}_{2}(D)$ can be written 
\[ C^{-}_{2}(D) = C'_{2}(D) \otimes \mathcal{L}_{D}^{+}  \]

Since the identity on $\mathcal{L}_{D}^{+}$ is a chain map, it suffices to show that 
\[ g_{2} \circ g_{1}: \cfk^{-}_{2}(\cH) \to C'_{2}(D) \]

\noindent
is a morphism of curved complexes, i.e. it commutes with the differentials. Let $\omega$ denote the curvature on $\cfk^{-}_{2}(\cH)$ so that $d^{2}=\omega I $ on this complex. Note that it makes sense to view $C_{2}'(D)$ as a curved complex with curvature $\omega$ even though $d^{2}=0$ because $\omega C_{2}'(D)=0$.

The complex $ \cfk^{-}_{2}(\cH)$ can be filtered by the Alexander grading, so that $d=d_{0} +d_{1}+...$ where $d_{i}$ decreases the Alexander grading by $2i$. We already have that
\[ (g_{2} \circ g_{1}) \circ d_{0} = d \circ (g_{2} \circ g_{1}) \]

\noindent
since this was the quasi-isomorphism from Lemma \ref{lem5.4}. Thus, it suffices to show that \[(g_{2} \circ g_{1}) \circ d_{i} = 0\] for all $i \ge 1$.

The differential $d_{i}$ has algebraic grading $1-2i$. The generators of homology have minimal algebraic grading at their respective vertices in the cube, so in order for $(g_{2} \circ g_{1}) \circ d_{i}$ to be non-zero, it must decrease the cube grading by at least $2i-1$. However, we see by inspection that it can decrease the cube grading by at most $i$. Thus, $(g_{2} \circ g_{1}) \circ d_{i} = 0$ for all $i > 1$.

The last case to check is $i=1$. The underlying disc $\phi$ must pass through a single $A$ basepoint - let $c$ be the corresponding crossing. The composition $(g_{2} \circ g_{1}) \circ d_{1}(a)$ can only be non-zero in the case that $a$ has minimal algebraic grading at $c$, so we only need to check the cases $a=\mathbf{x}_{0}$ in the case of a negative crossing and $a=\mathbf{y}_{1}$ in the case of a positive crossing. (Note that we don't have to check $\mathbf{y}_{0}$, as it belongs to $C_{F}^{-}(D)$, not $\cfk^{-}(D)$.)

Suppose first that $c$ is a negative crossing, so the only choice of generator where the image of $(g_{2} \circ g_{1}) \circ d_{1}$ may not be zero is $\mathbf{x}_{0}$. We need to check the coefficient of $\partial(\mathbf{x}_{0})$ at $\mathbf{y}_{0}$ and $\mathbf{y}_{1}$.

The bigon containing $A_{0}$ gives a differential from $\mathbf{x}_{0}$ to $\mathbf{y}_{0}$ with coefficient $\pm(U_{b}+U_{c})$, while the rectangle containing $A^{-}$ gives a differential from $\mathbf{x}_{0}$ to $\mathbf{y}_{1}$ with coefficient $\pm(U_{a}+U_{d})$. We have chosen a system of orientations so that these discs come with the same sign - this was why $\mathbf{y}_{0}$ was homologous to $\mathbf{y}_{1}$ instead of $-\mathbf{y}_{1}$. After the change of basis $\mathbf{y}_{1} \mapsto \mathbf{y}'_{1}$, the differential from $\mathbf{x}_{0}$ to $\mathbf{y}_{0}$ has coefficient $\pm(-U_{a}+U_{b}+U_{c}-U_{d})$, which is zero in $R/(N(D_{I})+L_{I}(D_{I}))$. 

The positive crossing case is a bit trickier as the relevant part of $g_{1}$ is now $\Phi_{A}$ instead of an identity map, so we have to count all compositions of two discs with multiplicity 1 at the $A$ basepoints at $c$ which take $\mathbf{y}_{1}$ to $\mathbf{x}_{0}$. The disc involved in $d_{1}$ is allowed to pass through $B$ basepoints as well, while the one in $\Phi_{A}$ is not. Additionally, the $A$ basepoints in $d_{1}$ count with their coefficients $U_{i}+U_{j}$, while in $\Phi_{A}$ no coefficient is picked up.

In the language of \cite{Szabo}, all of the generators involved so far have belonged to the empty cycle $Z = \emptyset$, as these have the lowest algebraic grading. The only way for a disc to map from the empty cycle to a non-empty cycle is by passing through the special $X$ basepoint corresponding to the marked edge (see Figure \ref{MarkedEdgeHD}). Passing through this basepoint would increase the algebraic grading by an additional two, so we can rule out any such discs. It follows that the only non-zero multiplicities our two discs are allowed to have must be in the interior of $\alpha_{1}$ union the interior of $\beta_{1}$ in Figure \ref{HDCrossing2}.

The result now follows from direct computation. In particular, 
\[ d_{1}(\mathbf{y}_{1}) = (U^{2}_{a}-U^{2}_{d})(x,y) + (U_{b}+U_{c})(u,v') +(U_{b}+U_{c})(u',v) \]

\noindent
where the ordered pairs indicate the intersection points. Next we will count the coefficient of each of these intersection points at $\mathbf{x}_{0}=(x',y)$ under $\Phi_{A}$.
\[ \Phi_{A}(x,y) = \mathbf{x}_{0}\] \[ \Phi_{A}(u,v') = U_{d}(U_{b}+U_{c})\mathbf{x}_{0} \] \[ \Phi_{A}(u',v) = -U_{a}(U_{b}+U_{c})\mathbf{x}_{0} \]

Putting this together, we get that 
\begin{equation}
\begin{split}
g_{2} \circ g_{1} \circ d_{1}(\mathbf{y}_{1}) & = (U_{a}^{2}-U_{d}^{2} + (U_{d}-U_{a})(U_{b}+U_{c}))  \\
& = (U_{a}+U_{d})(U_{a}+U_{b}-U_{c}-U_{d}) -2(U_{a}U_{b}-U_{c}U_{d})  \\
& = 0  \in R/(N(D_{I})+L_{I}(D_{I}))
\end{split}
\end{equation}

\noindent
This completes the proof.

\end{proof}

\begin{thm} \label{hfkthm} 

Let $D$ be a diagram in $\mathcal{D^{R}}$, and consider $f_{2} \circ f_{1}$ as a map from $\cfk^{-}_{2}(D)$ to $C^{-}_{2}(D)$. Then up to an overall grading shift, $f_{2} \circ f_{1}$ is a quasi-isomorphism of graded complexes, where the grading on $\cfk^{-}_{2}(D)$ is $\gr_{\delta}$ and the grading on $C^{-}_{2}(D)$ is $\gr_{2}$.
\end{thm}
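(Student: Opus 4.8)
The plan is to leverage two facts already in hand: by the preceding lemma, $f_{2}\circ f_{1}$ is a chain map from $\cfk^{-}_{2}(D)$ to $C^{-}_{2}(D)$, and by Corollary \ref{happycor} the same map of underlying $R$-modules is a quasi-isomorphism when we instead equip the source and target with the \emph{plain} differentials of $\cfk^{-}(D)$ and $C'_{2}(D)\otimes\mcL_{D}$. Both $2$-complexes differ from their plain counterparts only by ``higher order'' terms: on the Floer side, $\partial_{2}=\partial_{0}+\partial_{1}+\cdots$ with $\partial_{0}=\partial^{-}$ and $\partial_{i}$ ($i\ge 1$) strictly lowering the Alexander grading, while on the Koszul side $\mcL^{+}_{D}$ differs from $\mcL_{D}$ by the matrix-factorization back-maps $L^{+}(v)$, $v\in v_{4}(D)$. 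The strategy is therefore to find filtrations on $\cfk^{-}_{2}(D)$ and $C^{-}_{2}(D)$ for which $f_{2}\circ f_{1}$ is filtered and the induced map on associated gradeds is exactly the quasi-isomorphism of Corollary \ref{happycor}, and then to apply Lemma \ref{homalglemma}; the grading statement is a separate degree count.

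Concretely, I would filter $\cfk^{-}_{2}(D)=\cfk^{-}_{2}(\cH)\otimes\mcL^{+}_{D}$ by the Alexander grading on the Floer factor, refined by a $\{0,1\}$-weight on the two summands of each matrix factorization $\mcL^{+}(v)$ chosen so that the Koszul map $L(v)$ preserves the combined filtration while $L^{+}(v)$ strictly lowers it, first applying the change of basis $x\mapsto(-1/2)^{A(x)}x$ so that the $XX$ basepoints count with coefficient $1$. With respect to this filtration the associated graded complex is $\cfk^{-}(D)=\cfk^{-}(\cH)\otimes\mcL_{D}$, with $\partial_{0}=\partial^{-}$ on the nose. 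On $C^{-}_{2}(D)=C'_{2}(D)\otimes\mcL^{+}_{D}$ I would use the corresponding filtration assembled from the algebraic grading $\gr_{N}$, the cube height, and the same weight on $\mcL^{+}_{D}$; the hypothesis $D\in\mathcal{D^{R}}$ enters here, since the regular-sequence property of the $L(v)$, $v\in v_{4}(I)$, is what identifies the homology at each cube vertex as $R/(N(D_{I})+L_{I}(D_{I}))$ and forces the associated graded complex to be precisely $C'_{2}(D)\otimes\mcL_{D}$ with its cube-of-resolutions differential (Lemma \ref{lem5.4}, together with the vanishing of the higher face maps proved there). The computation in the preceding lemma --- that $g_{2}\circ g_{1}$ intertwines $d_{0}$ with the $C'_{2}(D)$-differential and annihilates every $\partial_{i}$ with $i\ge 1$ --- is exactly the statement that $f_{2}\circ f_{1}$ is filtered with associated graded map the quasi-isomorphism of Corollary \ref{happycor}. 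Since $\cfk^{-}_{2}(D)$ and $C^{-}_{2}(D)$ are finitely generated over $R$ in each fixed $\gr_{\delta}$- resp.\ $\gr_{2}$-degree, both filtrations restrict to bounded filtrations in each such degree, the spectral sequences converge, and Lemma \ref{homalglemma} gives that $f_{2}\circ f_{1}$ is a quasi-isomorphism.

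Finally, to upgrade this to a quasi-isomorphism of \emph{graded} complexes I would check that $f_{2}\circ f_{1}$ carries $\gr_{\delta}$-homogeneous elements to $\gr_{2}$-homogeneous elements with a fixed degree offset. The map $f_{1}$ is built from identity maps and the holomorphic-polygon maps $\Phi_{A^{\pm}},\Phi_{A},\Phi_{A^{\pm}B},\dots$, each homogeneous for the algebraic grading, and $g_{2}$ sends the unique minimal-$\gr_{N}$ generator at each cube vertex $I$ to the generator $1\in R/(N(D_{I})+L_{I}(D_{I}))$; on $\cfk^{-}_{2}(D)$ the $\gr_{\delta}$ of the generator at $I$ is $M-2A$ up to an overall shift, while on $C^{-}_{2}(D)$ the generator $1$ at $I$ has $\gr_{2}=|v_{4}(D_{I})|-|I|$ and algebraic grading $-|I|$ up to shift (Lemma \ref{lem5.4}), so chasing the identifications of Sections \ref{consection} and \ref{cube} makes $\gr_{2}(f_{2}\circ f_{1}(x))-\gr_{\delta}(x)$ independent of $x$. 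As all of these gradings are only relative, this pins $f_{2}\circ f_{1}$ down as grading-preserving only up to the overall additive constant in the statement. I expect the real obstacle to be the middle paragraph: producing a filtration on $C^{-}_{2}(D)$ whose associated graded is genuinely $C'_{2}(D)\otimes\mcL_{D}$ --- that is, verifying that the back-maps $L^{+}(v)$ and the Alexander-lowering parts of the vertex and edge maps all strictly drop the filtration --- while keeping $f_{2}\circ f_{1}$ filtered, since this is exactly where the regular-sequence structure of $\mathcal{D^{R}}$ and the bookkeeping from Lemma \ref{lem5.4} must be brought together.
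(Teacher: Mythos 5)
Your proposal is correct and follows essentially the same route as the paper: filter $\cfk^{-}_{2}(D)$ and $C^{-}_{2}(D)$ by Alexander degree (extended to the Koszul factor so that the back-maps $L^{+}(v)$ strictly drop it), observe that $f_{2}\circ f_{1}$ is filtered with associated graded map the quasi-isomorphism of Corollary \ref{happycor}, and invoke Lemma \ref{homalglemma}; the grading statement is a separate degree check. The paper states this more tersely (``$C^{-}_{2}(D)$ inherits an Alexander filtration from the surjection $f_{2}$, the filtration-breaking differentials are precisely the $L^{+}(v)$ terms, and the $\delta$-grading clearly descends to $\gr_{2}$''), but the content --- the filtration, the role of Corollary \ref{happycor} on associated gradeds, and Lemma \ref{homalglemma} --- is the same as what you spell out.
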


\begin{proof}

The complex $C^{-}_{2}(D)$ inherits an Alexander filtration from the surjection $f_{2}$. The differentials which break the Alexander filtration are the maps with coefficient $L^{+}(v)$ in $\mathcal{L}_{D}^{+}$ - they all decrease the Alexander grading by $2$.

The chain map $f_{2} \circ f_{1}$ is a filtered chain map with respect to the Alexander filtration. Moreover, by Corollary \ref{happycor}, it induces an isomorphism on the $E_{1}$ pages of the associated spectral sequences. Applying Lemma \ref{homalglemma}, $f_{2} \circ f_{1}$ induces an isomorphism on the total homologies. The $\delta$-grading on $\cfk^{-}_{2}(D)$ clearly descends to the grading $\gr_{2}$ on $C_{2}^{-}(D)$.

\end{proof}

The quasi-isomorphism $f=f_{2} \circ f_{1}$ extends to a map 
\[ \widehat{f}: \widehat{\cfk}_{2}  \to \widehat{C}_{2}(D) \]

\noindent
by acting as the identity on the factor \[\bigotimes_{j=1}^{l} R \xrightarrow{U_{i_{j}}} R\] By Lemma \ref{derivedlemma}, $\widehat{f}$ is also a quasi-isomorphism, so we have the following corollary:

\begin{cor}

Let $D$ be a diagram in $\mathcal{D^{R}}$. Then there is an isomorphism of (relatively) graded vector spaces
\[ \widehat{\hfk}_{2}(D) \cong \widehat{H}_{2}(D) \]

\end{cor}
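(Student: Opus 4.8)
The plan is to deduce this directly from the quasi-isomorphism $\widehat{f}\colon \widehat{\cfk}_2(D)\to\widehat{C}_2(D)$ constructed just before the statement, by simply passing to homology, and then to track the grading. Concretely, I would first recall that by definition
\[ \widehat{\cfk}_2(D) = \cfk^{-}_2(D)\otimes\bigotimes_{j=1}^{l}\bigl(R\xrightarrow{U_{i_j}}R\bigr), \qquad \widehat{C}_2(D) = C^{-}_2(D)\otimes\bigotimes_{j=1}^{l}\bigl(R\xrightarrow{U_{i_j}}R\bigr), \]
with all tensor products over $R$, and that $\widehat{f}$ is the map $f\otimes\id$ where $f=f_2\circ f_1\colon\cfk^{-}_2(D)\to C^{-}_2(D)$. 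By Theorem~\ref{hfkthm}, $f$ is a quasi-isomorphism. The reducing tensorand $\bigotimes_{j=1}^{l}(R\xrightarrow{U_{i_j}}R)$ has underlying $R$-module a finite direct sum of copies of $R$, hence is finitely generated and free, so Lemma~\ref{derivedlemma} applies with $C_1=\cfk^{-}_2(D)$, $C_1'=C^{-}_2(D)$, and $C_2$ the reducing factor, giving that $\widehat{f}$ is a quasi-isomorphism. Taking homology then yields $\widehat{\hfk}_2(D)=H_*(\widehat{\cfk}_2(D))\cong H_*(\widehat{C}_2(D))=\widehat{H}_2(D)$ as vector spaces.

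To upgrade this to an isomorphism of relatively graded vector spaces, I would observe that Theorem~\ref{hfkthm} already identifies $\gr_\delta$ on $\cfk^{-}_2(D)$ with $\gr_2$ on $C^{-}_2(D)$, up to an overall shift, under $f$. On both sides the reducing maps are multiplication by $U_{i_j}$, which is homogeneous of degree $-2$ with respect to $\gr_\delta$ on $\cfk^{-}_2$ and with respect to $\gr_2$ on $C^{-}_2$ (the latter by construction of $\gr_2$ in Section~\ref{consection}); equivalently one places the two copies of $R$ in the reducing factor in the appropriate adjacent gradings so that the differential has degree $-2$. Hence the grading on $\widehat{\cfk}_2(D)$ restricting from $\gr_\delta$, and that on $\widehat{C}_2(D)$ restricting from $\gr_2$, agree, and $\widehat{f}=f\otimes\id$ is grading-preserving up to the same overall shift. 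The induced isomorphism on homology is therefore one of relatively graded vector spaces.

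There is essentially no serious obstacle here: the statement is a formal corollary, and the only points requiring care are bookkeeping ones—verifying that the hypotheses of Lemma~\ref{derivedlemma} are met (finite generation over $R$ of $\cfk^{-}_2(D)$ and $C^{-}_2(D)$, and freeness of the reducing tensorand) and confirming that the relative gradings on the two reducing factors are normalized compatibly so that $f\otimes\id$ respects them. If one wanted to be thorough, the mildest subtlety is that $\widehat{f}$ must be checked to be a genuine chain map for the reduced complexes, but this is immediate since it is the tensor product of the chain map $f$ (a chain map by the preceding lemma) with the identity on a complex of $R$-modules.
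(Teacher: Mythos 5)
Your proposal is correct and matches the paper's argument exactly: the paper also extends $f = f_2 \circ f_1$ to $\widehat{f} = f \otimes \id$, invokes Lemma~\ref{derivedlemma} with the reducing factor as the free tensorand to conclude $\widehat{f}$ is a quasi-isomorphism, and then takes homology. The grading bookkeeping you spell out is implicit in the paper's treatment via Theorem~\ref{hfkthm} and the degree conventions from Section~\ref{consection}.
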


\noindent
Putting this together with the isomorphism of $\delta$-graded homologies $\widehat{\hfk}_{2}(D) \cong \widehat{\hfk}(\sm(D))$, we get a reduced version of the previous theorem:

\begin{thm}\label{reducediso}

Let $D$ be a diagram in $\mathcal{D^{R}}$. There is an isomorphism of (relatively) graded vector spaces
\[ \widehat{\hfk}(\sm(D)) \cong \widehat{H}_{2}(D) \]

\noindent
where the grading on $\widehat{\hfk}(\sm(D))$ is the $\delta$-grading.

\end{thm}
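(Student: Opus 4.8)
The plan is to deduce this by composing the two isomorphisms established just above, so that essentially no new work beyond careful bookkeeping of gradings is required. First I would recall that the quasi-isomorphism $f = f_{2} \circ f_{1} \colon \cfk^{-}_{2}(D) \to C^{-}_{2}(D)$ of Theorem \ref{hfkthm} extends, by acting as the identity on the reducing factor $\bigotimes_{j=1}^{l}\bigl(R \xrightarrow{U_{i_{j}}} R\bigr)$, to a chain map $\widehat{f} \colon \widehat{\cfk}_{2}(D) \to \widehat{C}_{2}(D)$; since that factor is a free $R$-module, Lemma \ref{derivedlemma} shows $\widehat{f}$ is again a quasi-isomorphism. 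Passing to homology gives $\widehat{\hfk}_{2}(D) \cong \widehat{H}_{2}(D)$ as relatively graded vector spaces, the grading on the source being $\gr_{\delta}$ and on the target being $\gr_{2}$, and these are matched up to an overall shift exactly as in Theorem \ref{hfkthm}.

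Next I would invoke the lemma identifying $\widehat{\hfk}_{2}(D)$ with $\widehat{\hfk}(\sm(D))$ as $\delta$-graded vector spaces, where on the knot Floer side the relevant grading is $\gr_{\delta} = 2M - 2A$. Chaining the two identifications then produces
\[ \widehat{\hfk}(\sm(D)) \;\cong\; \widehat{\hfk}_{2}(D) \;\cong\; \widehat{H}_{2}(D), \]
an isomorphism of relatively graded vector spaces in which the $\delta$-grading on the left corresponds to $\gr_{2}$ on the right up to an overall shift. This is precisely the assertion of the theorem.

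The one point demanding care — and hence the main (if mild) obstacle — is reconciling the two grading conventions: one must confirm that the relative $\delta$-grading appearing in the isomorphism $\widehat{\hfk}_{2}(D) \cong \widehat{\hfk}(\sm(D))$ is literally the grading $\gr_{\delta} = 2M - 2A$ that Theorem \ref{hfkthm} matches with $\gr_{2}$ on $C^{-}_{2}(D)$, so that the composite carries the $\delta$-grading to $\gr_{2}$ with no discrepancy beyond an overall shift (which is harmless, since every grading in sight is defined only relatively). Everything else is formal; in particular, no additional holomorphic disc counts or spectral sequence arguments are needed.
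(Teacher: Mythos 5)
Your proposal is correct and follows exactly the paper's own route: extend $f = f_2 \circ f_1$ to $\widehat{f}$ by the identity on the reducing factor, invoke Lemma \ref{derivedlemma} to conclude $\widehat{f}$ is a quasi-isomorphism (giving $\widehat{\hfk}_2(D) \cong \widehat{H}_2(D)$), and then compose with the earlier lemma identifying $\widehat{\hfk}_2(D)$ with $\delta$-graded $\widehat{\hfk}(\sm(D))$. No gaps.
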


\section{Constructing the spectral sequence} \label{lastsection}

We have now related the $E_{2}$ and $E_{\infty}$ pages of $\widehat{C}_{2}(D)$ to $\widehat{Kh}(m(\sm(D)))$ and $\widehat{\hfk}(\sm(D))$, respectively, for any diagram $D \in \mathcal{D^{R}}$. It remains to show that the diagrams $\mathcal{D^{R}}$ are sufficient.

Let $S_{2n}$ be the singular open braid on $2n$ strands which consists of two layers: on the top layer, it has a single 4-valent vertex between strands $2i-1$ and $2i$ for $i=1,...,n$, and on the bottom layer, it has a single 4-valent vertex between strands $2i$ and $2i+1$ for $i=1,...,n-1$. See Figure \ref{singulartop} for the $n=4$ diagram.

\begin{figure}
 \centering
\def\svgwidth{6cm}
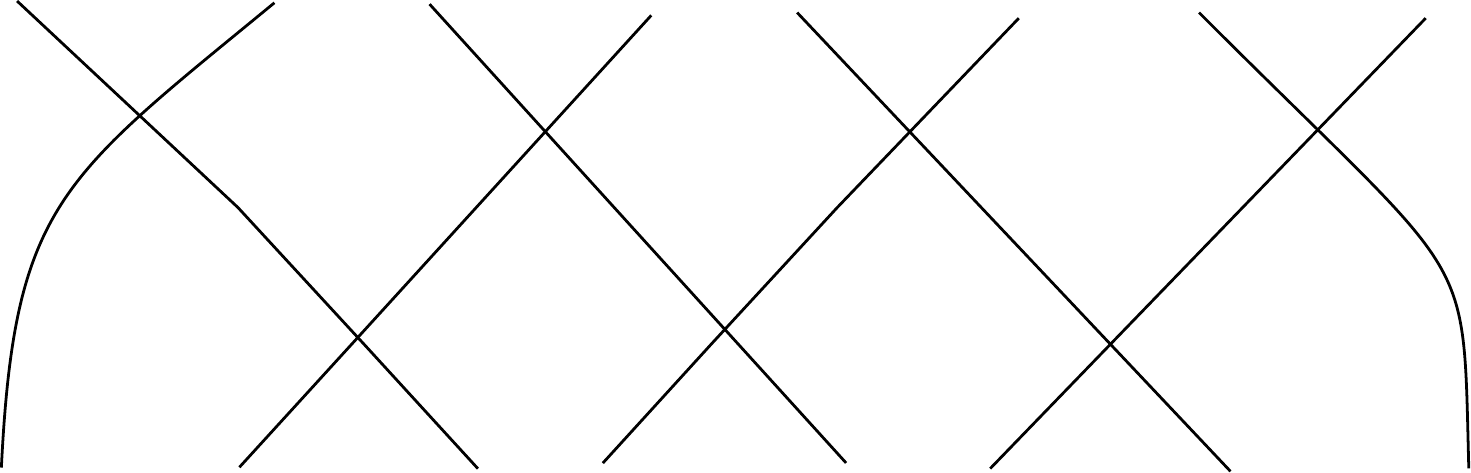
  \caption{The diagram $S_{2n}$ for $n=4$.}  \label{singulartop}
\end{figure}%

\begin{lem} \label{regsequence}

Let $D$ be a (partially singular) braid diagram on $2n$ strands which contains the diagram $S_{2n}$. Then $D$ is an element of $\mathcal{D^{R}}$.

\end{lem}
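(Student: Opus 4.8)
The plan is to check, for an arbitrary complete resolution $S = D_{I}$ of $D$, the two conditions defining $\mathcal{D^{R}}$: that $S$ is connected and that the linear forms $L(v)$, $v \in v_{4}(I)$, form a regular sequence over $R/N(S)$. Connectivity is the easy half. The four-valent vertices of $S_{2n}$ are honest singular points of $D$ rather than crossings, so the resolution leaves them alone and $S$ still contains $S_{2n}$; the underlying graph of $S_{2n}$ is connected and, because of the two staggered rows of vertices, joins strand $i$ to strand $i+1$ for every $i$, hence meets all $2n$ strands. Since every edge of $S$ lies on a strand that, after closing up the braid, runs through $S_{2n}$, the resolution $S$ is connected. (The leftmost decorated edge plays no role here.) The content of the lemma is therefore entirely the regular-sequence condition.

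For that I would reduce to a statement about the ring $A := R/N(S)$. Fix the resolved-crossing vertices $v_{1},\dots,v_{k}$ of $S$, so the required sequence is $L_{j} = L(v_{j})$; these are homogeneous of degree one and $A$ is a finitely generated graded $\Q$-algebra, so $L_{1},\dots,L_{k}$ is $A$-regular as soon as $A$ is Cohen--Macaulay and $\dim A/(L_{1},\dots,L_{k}) = \dim A - k$. Because in a Cohen--Macaulay graded ring every sub-collection of a homogeneous regular sequence is again regular (both dimensions dropping by the respective lengths), it is enough to establish this for the full collection $\{L(v)\}$ ranging over \emph{all} four-valent vertices of $S$ that do not belong to $S_{2n}$ --- this collection contains the $v_{j}$, and the $L(v)$ attached to vertices of $S_{2n}$ may vanish in $A$ and must be excluded. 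So the two facts to prove are: (a) $A = R/N(S)$ is Cohen--Macaulay, and (b) $\dim R/(N(S)+L'(S)) = \dim A - |v_{4}(S)\setminus v_{4}(S_{2n})|$, where $L'(S)$ is the ideal generated by those $L(v)$. I expect (b) to follow from the known description of the rings $R/N(S)$ and $R/(N(S)+L(S))$ for connected complete resolutions (cf.\ \cite{Szabo} and \cite{Manolescu}) together with the matrix-factorization bookkeeping already present in the proof of Theorem \ref{hfk2kh}, where $\hfk^{-}_{2}(S)$ is identified with the one-dimensional module $Kh^{-}(\sm(S))$.

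The main obstacle is (a): proving that the brick wall $S_{2n}$ forces $R/N(S)$ to be Cohen--Macaulay. The approach I would take mirrors the proof of Theorem \ref{hfk2kh}: repeatedly apply the MOY~II and MOY~III isotopies and handleslides of Lemmas \ref{MOYtwo} and \ref{MOYthree} to simplify an innermost component of $\sm(S)$ (the brick wall always provides one, exactly as in that proof), until $S$ is one of the two model diagrams of Figure \ref{2unknots}, for which $R/N(S) \cong \Q[U]$ is visibly Cohen--Macaulay; one then checks that each of these moves replaces $R/N(S)$ by a polynomial extension of the corresponding ring for the simplified diagram, modulo a regular sequence, an operation that preserves Cohen--Macaulayness. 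Alternatively one could try to present $R/N(S)$ directly as a complete intersection in a polynomial ring, with defining equations a judiciously chosen subset of the quadratic vertex relations $Q(v)$ and the global disc relations $P(\Omega)$, using the brick wall to see that this subset is a regular sequence of the correct length. Either way, the combinatorics of $S_{2n}$ enters essentially: without the full brick wall $R/N(S)$ need not be Cohen--Macaulay and the $L(v)$ need not form a regular sequence, which is precisely why \cite{Szabo} passes to twisted coefficients.
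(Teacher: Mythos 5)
Your reduction of the problem to the two defining conditions of $\mathcal{D^{R}}$, and the observation that connectivity is immediate from the fact that $S_{2n}$ meets every strand, is correct and matches the paper. The framing of the remaining content as a regular-sequence statement over $A = R/N(S)$ is also right. But from there the proposal leaves the two load-bearing claims --- (a) that $A$ is Cohen--Macaulay and (b) the dimension drop --- as conjectures rather than proofs, and the sketched route to (a) has a real problem: the MOY~II and MOY~III statements in Lemmas~\ref{MOYtwo} and~\ref{MOYthree} compute $\hfk_{2}^{-}$, i.e.\ the homology of $\cfk^{-}_{2}(\cH)\otimes\mathcal{L}^{+}(S)$ with the $XX$ basepoints counted with weight $-2$, not the ring $R/N(S) = H_{*}(\cfk^{-}(\cH))$ with $XX$ blocked. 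These are genuinely different objects (the entire point of the construction is to interpolate between them), so the MOY move bookkeeping does not transfer to $R/N(S)$ without a separate argument, and it is not clear the analogous moves even hold for $R/N(S)$ with untwisted coefficients.

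The paper's actual proof avoids Cohen--Macaulayness entirely. It uses the equivalence (via the algebraic grading $\gr_{N}$) between ``$\{L(v)\}_{v\in v_{4}(I)}$ is a regular sequence over $R/N(S)$'' and ``$H_{*}(\cfk^{-}(\cH)\otimes\mathcal{L}_{I})$ lies in a single algebraic grading,'' and then establishes the latter through the generalized Kauffman state filtration of Ozsv\'{a}th--Stipsicz--Szab\'{o}. On each filtered piece one gets a Koszul complex on the incoming-edge variables $U_{i_{j}}$, and the concrete observation is that working up the braid layer by layer, each $L(v_{j})$ with $v_{j}\notin S_{2n}$ together with one $U_{i_{j}}$ substitutes for the two incoming edges at $v_{j}$, leaving a ring that is visibly free over the variables living on $S_{2n}$; the remaining $U_{i_{j}}$ for vertices of $S_{2n}$ then form a regular sequence by inspection, and the spectral sequence argument finishes. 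Your intuition that the brick wall $S_{2n}$ is exactly what makes the variables ``substitute out'' cleanly is the right one, but the proof needs to be carried out at the level of $\cfk^{-}(\cH)$ using the OSS filtration (or by a direct commutative-algebra argument with the same layer-by-layer elimination), rather than by importing the $\hfk_{2}^{-}$ MOY isomorphisms as a black box.
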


\begin{proof}

Let $D$ be such a diagram, and let $D_{I}$ be a complete resolution of $D$. We need to show two things:

\begin{itemize}

\item $D_{I}$ is connected
\item The linear terms $L(v)$ for $v \in v_{4}(I)$ form a regular sequence over $R/N(S)$.

\end{itemize}

The diagram $D_{I}$ is clearly connected since $S_{2n}$ connects all $2n$ strands. Thus, it remains to show the regular sequence property.

Let $\cH$ be the standard planar Heegaard diagram for $D_{I}$. Since $H_{*}(\cfk^{-}(\cH)) \cong R/N(S)$, it suffices to show that the homology of $\cfk^{-}(\cH) \otimes \mathcal{L}_{I}$ lies in a single algebraic grading.

In \cite{OSS}, Ozsv\'{a}th, Stipsicz, and Szab\'{o} describe a generalized Kauffman states diagram for a singular braid $D_{I}$ with a marked edge. Suppose $D$ has $l+1$ singular vertices. By adding additional basepoints to the diagram, they give a filtration on the complex $\cfk^{-}(\cH)$ and show that the filtered homology splits over generalized Kauffman states.

To each generalized Kauffman state $\mathfrak{K}$, the filtered complex is given by 

\[\bigotimes_{j=1}^{l} (R \xrightarrow{U_{i_{j}}} R) \]

\noindent
There is a special singular vertex which has the marked edge as an incoming edge - if we label the other singular vertices $v_{1},...,v_{l}$, then each $U_{i_{j}}$ is an incoming edge at $v_{j}$. 

It's not hard to see that the $U_{i_{j}}$, together with the generators of $L_{I}$, form a regular sequence. Breaking $D_{I}$ into an open braid with $S_{2n}$ at the top, we can go layer by layer starting at the bottom. For each singular point $v_{j}$ in $D_{I}$ which is not in $S_{2n}$, we can view $U_{i_{j}}$ and $L(v_{j})$ as substituting for the two incoming edges. Since $L(v_{j})$ is only relating each edge to edges higher in the diagram $D_{I}$, these elements form a regular sequence. After taking homology of the Koszul complex on these elements, we have 

\[ R/\{U_{i_{j}}=0, L(v_{j})=0 \text{ for } v_{j} \in D_{I}\setminus S_{2n} \} \]

\noindent
This module is clearly free over $\Q[\{U_{k}|e_{k} \in S_{2n}\}]$, so the remaining elements $\{U_{i_{j}}|v_{j} \in S_{2n} \}$ form a regular sequence.

This shows that for each generalized Kauffman state, the filtered homology lies in a single algebraic grading. But Ozsv\'{a}th, Stipsicz, and Szab\'{o} show that the lowest algebraic grading generators for each Kauffman state have the same algebraic grading, so the $E_{1}$ page of the spectral sequence induced by the basepoint filtration all lies in a single algebraic grading. It follows that the total homology does as well.

\end{proof}

\begin{cor}

For any link $L$ in $S^{3}$, there is a diagram $D \in \mathcal{D^{R}}$ such that $\sm(D)$ is a diagram for $L$.

\end{cor}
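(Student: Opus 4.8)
\section*{Proof proposal}

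The plan is to deduce this corollary directly from Lemma \ref{regsequence}. By that lemma, any partially singular braid diagram on $2n$ strands that contains $S_{2n}$ (and is decorated on the left-most strand) lies in $\mathcal{D^{R}}$, so it suffices to show: for every link $L$ there exist $n$ and a braid $\gamma \in B_{2n}$ (with crossings only, no singular vertices) such that the braid closure of $D = S_{2n}\cdot\gamma$ satisfies $\sm(D)$ is a diagram for $L$. Decorating an edge on the first strand then makes $D$ an element of $\mathcal{D^{R}}$, finishing the argument.

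First I would identify the planar tangle $\sm(S_{2n})$. Since all strands of a braid are cooriented, the unoriented smoothing at each $4$-valent vertex of $S_{2n}$ is the turnback (a local maximum stacked over a local minimum), and a direct check shows $\sm(S_{2n})$ is a disjoint union of $2n$ embedded arcs: the $2n$ top endpoints are joined in the consecutive-pair matching $(1,2),(3,4),\dots,(2n-1,2n)$, and the $2n$ bottom endpoints are joined in the nested non-crossing matching $(1,2n),(2,3),(4,5),\dots,(2n-2,2n-1)$. Consequently, for any $\gamma \in B_{2n}$ the braid closure of $S_{2n}\cdot\gamma$ smoothes to a plat-type closure of $\gamma$: the bottom endpoints of $\gamma$ are capped by the consecutive-pair matching, and the top endpoints by the nested matching.

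Next I would invoke the classical fact that every link is a plat closure of a braid: if $L$ has bridge number $b$, then for any $n \ge b$ there is a braid in $B_{2n}$ whose plat closure with the standard (consecutive-pair) matchings at top and bottom represents $L$. Precomposing with a fixed braid $\rho \in B_{2n}$ chosen so that, after capping, the nested matching behaves like the consecutive-pair matching (any two non-crossing matchings on $2n$ points become interchangeable once one is allowed to drag the cups through a braid, possibly after a stabilization increasing the strand count), we obtain $\gamma = \rho\cdot(\text{standard plat braid})$ with $\sm(S_{2n}\cdot\gamma)$ a diagram for $L$. Then $D = S_{2n}\cdot\gamma$ is a partially singular braid diagram on $2n$ strands containing $S_{2n}$, so $D \in \mathcal{D^{R}}$ by Lemma \ref{regsequence}, and $\sm(D)$ is a diagram for $L$.

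The main obstacle is the combinatorial bookkeeping of the middle two steps: pinning down precisely which non-crossing matchings the smoothing $\sm(S_{2n})$ produces, and verifying that these particular matchings (rather than the textbook plat matchings) still realize every link as a plat closure once one is allowed to conjugate by a permutation braid and stabilize. This is elementary but genuinely requires a careful picture; everything else is formal, given Lemma \ref{regsequence} and the standard correspondence between braid and plat presentations.
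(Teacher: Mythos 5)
Your proposal is correct and shares the same overall architecture as the paper's argument: both start from a plat presentation of $L$ on $2n$ strands, both use Lemma \ref{regsequence} to conclude that a braid diagram containing $S_{2n}$ (with the decoration on the left strand) lies in $\mathcal{D^{R}}$, and both arrange for $\sm(D)$ to be the plat closure. Your identification of $\sm(S_{2n})$ is also correct: consecutive caps $(1,2),(3,4),\dots$ on one side, and the matching $(1,2n),(2,3),(4,5),\dots,(2n-2,2n-1)$ on the other.

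Where the two arguments genuinely diverge is in how they reconcile the non-standard bottom matching with a standard plat. You keep $S_{2n}$ as is and insert a correcting braid $\rho$ between $S_{2n}$ and the plat braid; this forces you to invoke (and, if done carefully, to prove) the auxiliary combinatorial fact that any non-crossing matching on $2n$ points can be carried to the consecutive one by a braid on $2n$ strands — this is exactly the ``main obstacle'' you flag. The paper instead modifies $S_{2n}$ to a diagram $S'_{2n}$ by appending one more layer of $n-1$ singularizations (between strands $2i-1$ and $2i$ for $i=2,\dots,n$) \emph{below} $S_{2n}$; a short trace shows $\sm(S'_{2n})$ now has the consecutive matching at both ends, so concatenating with a plat braid $D_1$ and taking the braid closure immediately smooths to the ordinary plat closure $\mathsf{p}(D_1)$, with no correcting braid and no auxiliary lemma. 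Since $S'_{2n}$ still contains $S_{2n}$, Lemma \ref{regsequence} applies unchanged. Both routes are valid; the paper's extra singularizations simply absorb the combinatorial step your version handles separately. One small caution if you write your version up in full: the extra singularizations must sit below $S_{2n}$ — putting them above $S_{2n}$ produces closed circles under smoothing — and your braid $\rho$ should likewise be placed on the appropriate side so that the nested caps, not the consecutive ones, are the ones being corrected.
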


\begin{proof}

Let $L$ be a link in $S^{3}$, and let $D_{1}$ be a braid diagram on $2n$ strands whose plat closure $\mathsf{p}(D_{1})$ is a diagram for $L$. Consider the open braid on $2n$ strands which is $S_{2n}$ with an additional singularization between strands $2i-1$ and $2i$ for $i=2,...n$ as in Figure \ref{s1diag}. Call this diagram $S'_{2n}$.

\begin{figure}
 \centering
\def\svgwidth{6cm}
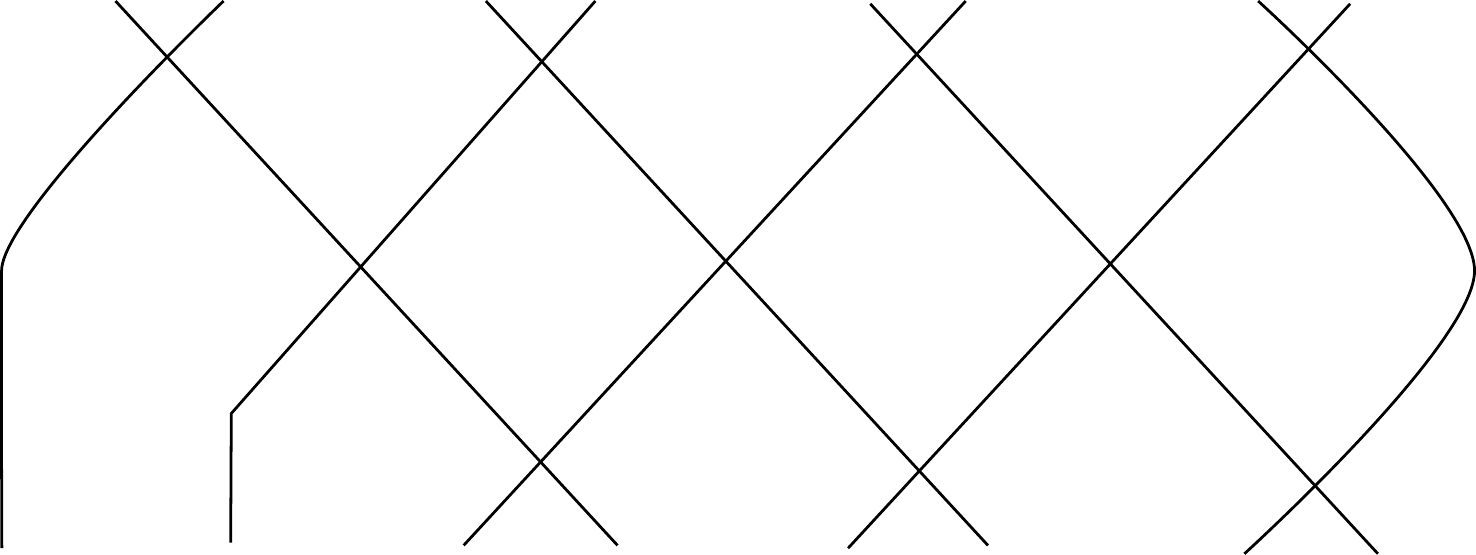
  \caption{The open braid $S_{2n}'$ for $n=4$}  \label{s1diag}
\end{figure}%

Let $D$ be the diagram obtained by concatenating $S'_{2n}$ with $D_{1}$, then taking the braid closure. Then $\sm(D)$ is a diagram for $L$, and since $D$ contains $S_{2n}$, it is an element of $\mathcal{D^{R}}$. See Figure \ref{tref} for an example with the trefoil.

\end{proof}

We now have all the necessary ingredients for the main theorem.

\begin{thm}\label{mainthm}

Let $L$ be a link in $S^{3}$. Then there is a spectral sequence from $\widehat{Kh}(L)$ to $\delta$-graded $\widehat{\hfk}(m(L))$, with the $\delta$-grading on $\widehat{Kh}(L)$ descending to the $\delta$-grading on $\widehat{\hfk}(L)$ up to an overall shift.

\end{thm}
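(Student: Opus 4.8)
The plan is to assemble the ingredients established in Sections \ref{consection} through the present one; the argument is essentially formal once those are in hand. First I would apply the preceding corollary (that $\mathcal{D^{R}}$ is sufficient) to the \emph{mirror} link $m(L)$, obtaining a diagram $D \in \mathcal{D^{R}}$ whose smoothing $\sm(D)$ is a diagram for $m(L)$. I would then equip $\widehat{C}_{2}(D)$ with the filtration by cube height $|I|$, where (as in Section \ref{consection}) the reducing maps $R \xrightarrow{U_{i_{j}}} R$ contribute $1$ to the cube grading. This filtration takes only finitely many values -- between $0$ and the number of crossings of $D$ plus the number of components $l$ -- so it is bounded, and the associated spectral sequence converges to $\widehat{H}_{2}(D)$, the homology of the total complex.

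Next I would identify the two ends. By Corollary \ref{khovanovE2}, the complex $(E_{1}, d_{1})$ is the pointed Khovanov complex of $m(\sm(D))$, so the $E_{2}$ page satisfies
\[ E_{2} \cong \widehat{Kh}(m(\sm(D))) = \widehat{Kh}(m(m(L))) = \widehat{Kh}(L) \]
as relatively graded vector spaces. At the other end, boundedness of the filtration together with Theorem \ref{reducediso} gives
\[ E_{\infty} \cong \widehat{H}_{2}(D) \cong \widehat{\hfk}(\sm(D)) = \widehat{\hfk}(m(L)) \]
as relatively graded vector spaces, where the grading on $\widehat{\hfk}(m(L))$ is the $\delta$-grading. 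This already produces the desired spectral sequence (and, if one prefers, one may reindex so that Khovanov homology appears as $E_{0}$).

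It remains to track the grading. Here I would use the fact recorded in Section \ref{consection} that $d_{0}$, $d_{1}$, and each reducing map $U_{i_{j}}$ are homogeneous of degree $-2$ with respect to $\gr_{2}$; hence the whole construction respects $\gr_{2}$ and every spectral sequence differential $d_{r}$ is homogeneous of $\gr_{2}$-degree $-2$. Under the identification of Corollary \ref{khovanovE2} the relative grading $\gr_{2}$ on $E_{2}$ is the Khovanov $\delta$-grading, and under the identification of Theorem \ref{reducediso} it is the knot Floer $\delta$-grading on $\widehat{\hfk}(m(L))$. Thus the Khovanov $\delta$-grading descends to the knot Floer $\delta$-grading; since $\gr_{2}$ is only a relative grading (there is no canonical way to pin down absolute gradings in passing from $D$ to $\sm(D)$) this holds up to an overall shift, and using $\widehat{\hfk}(m(L),i) \cong \widehat{\hfk}(L,-i)$ one recovers the statement phrased in terms of $\widehat{\hfk}(L)$.

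The main obstacle is not the assembly but the bookkeeping of this last paragraph: one must be certain that the \emph{relative} $\gr_{2}$ grading genuinely coincides (up to sign and shift) with the Khovanov $\delta$-grading on one side and the knot Floer $\delta$-grading on the other, and that the cube filtration really is bounded so that classical convergence applies. Both points are supplied by the cited results, so the proof is short -- all of the substantive work has been carried out in the preceding sections.
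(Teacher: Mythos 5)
Your proposal is correct and follows the paper's own proof essentially verbatim: choose $D \in \mathcal{D^{R}}$ with $\sm(D)$ a diagram for $m(L)$ via the corollary to Lemma \ref{regsequence}, run the cube filtration spectral sequence on $\widehat{C}_{2}(D)$, identify $E_{2}$ via Corollary \ref{khovanovE2} and $E_{\infty}$ via Theorem \ref{reducediso}, and observe that $\gr_{2}$ matches both $\delta$-gradings up to an overall shift. The only difference is that you spell out the boundedness of the filtration and the chain of mirrorings more explicitly than the paper does, which is a modest improvement in exposition rather than a different approach.
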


\begin{proof}

By Lemma \ref{regsequence} there is a diagram $D \in \mathcal{D^{R}}$ such that $\sm(D)$ is a diagram for $m(L)$. Consider the spectral sequence on $\widehat{C}_{2}(D)$ induced by the cube filtration. By Corollary \ref{khovanovE2}, the $E_{2}$ page of this spectral sequence is $\widehat{Kh}(L)$, and by Theorem \ref{reducediso} the $E_{\infty}$ page is $\widehat{\hfk}(m(L))$. The grading $\gr_{2}$ on $\widehat{C}_{2}(D)$ corresponds to the $\delta$-gradings on both Khovanov homology and knot Floer homology, up to an overall grading shift.

\end{proof}

The differentials in this spectral sequence preserve the $\delta$-grading on Khovanov homology, so for $i \ge 1$ the differential $d_{i}$ has bi-grading $(i, 2i-2)$ with respect to $(\gr_{h}, \gr_{q})$.

One property of the complex $\widehat{C}_{2}(D)$ that we not utilized so far is a $\Z_{2}$ grading. In particular, the factor $\mathcal{L}^{+}_{D}$ can be written 
\[ \mathcal{L}_{D}^{+} = \bigotimes_{v \in v_{4}(D)}  \xymatrix{R[0]\ar@<1ex>[r]^{L(v)}&R[1]\ar@<1ex>[l]^{L^{+}(v)}} \]

\noindent
where $R[i]$ indicates a $\Z_{2}$ grading shift. The $U_{i}$ have grading $0$, and we define the remaining factor in $\widehat{C}_{2}(D)$ to lie in $\Z_{2}$ grading $0$.

Then the differential $d_{0}$ is homogeneous of degree $1$ with respect to this grading, while the edge map $d_{1}$ is homogeneous of degree $0$. The homology $H_{*}(\widehat{C}_{2}(D), d_{0})$ has a single generator at each vertex $I$ in the cube. In particular, the homology at each vertex lies in a single $\Z_{2}$ grading. But all of the edge maps are non-trivial, so they must all lie in the \emph{same} $\Z_{2}$ grading. In the spectral sequence induced by the cube filtration, the differential $d_{i}$ has $\Z_{2}$ grading $i+1$. Thus, we have shown the following:

\begin{thm}

Let $(E_{k}(D), d_{k})$ denote the spectral sequence on $\widehat{C}_{2}(D)$ induced by the cube filtration. Then the differential $d_{2i}=0$ for $i \ge 1$.

\end{thm}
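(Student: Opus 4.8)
The plan is to run the whole argument through the $\Z_{2}$ grading introduced above. First I would isolate the two homogeneity facts on which everything rests. With respect to this grading, the vertex differential $d_{0}$ is homogeneous of degree $1$: it is the matrix-factorization differential coming from the factor $\mathcal{L}_{D}^{+}$, whose two arrows $L(v): R[0]\to R[1]$ and $L^{+}(v): R[1]\to R[0]$ interchange the two $\Z_{2}$ summands. The edge differential $d_{1}$ is homogeneous of degree $0$: the Khovanov merge and split maps, the reducing maps (multiplication by $U_{i_{j}}$), and the sign twists $(-1)^{\epsilon'_{I,J}}$ all act within a fixed $\Z_{2}$ grading, the remaining tensor factors of $\widehat{C}_{2}(D)$ having been placed in $\Z_{2}$ grading $0$. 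Because $D\in\mathcal{D^{R}}$ there are no higher face maps, so $d=d_{0}+d_{1}$; consequently the $r$-th differential of the spectral sequence is computed on chain-level representatives by the usual zig-zag $d_{1}\circ(d_{0}^{-1}\circ d_{1})^{r-1}$. This uses $d_{1}$ exactly $r$ times, each of $\Z_{2}$ degree $0$, and $d_{0}^{-1}$ exactly $r-1$ times, each of $\Z_{2}$ degree $-1\equiv 1$; hence $d_{r}$ is homogeneous of $\Z_{2}$ degree $r-1\equiv r+1\pmod{2}$.

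The second step is to show that the $E_{1}$ page lies in a single $\Z_{2}$ grading. As noted above (this is where Corollary \ref{corkhov} enters), the $d_{0}$-homology of $\widehat{C}_{2}(D)$ at each vertex of the cube is generated over $R$ by a single element; since every element of $R$ has $\Z_{2}$ degree $0$, such a module is concentrated in the $\Z_{2}$ grading of that generator. The induced edge map $d_{1}$ along an edge $I\lessdot J$ is a nonzero scalar multiple of a Khovanov merge, split, or reducing map, hence nonzero and, by the first step, $\Z_{2}$-homogeneous of degree $0$; so the vertex generators at $I$ and at $J$ lie in the same $\Z_{2}$ grading. Since the cube is connected, all vertex generators — and therefore all of $E_{1}$ — lie in one $\Z_{2}$ grading.

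Finally I would induct on $r\ge 1$ to show that every page $E_{r}$ lies in a single $\Z_{2}$ grading and that $d_{2i}=0$ for all $i\ge 1$, the base case $r=1$ being the second step. For the inductive step: if $r$ is even then $d_{r}$ has odd $\Z_{2}$ degree by the first step, and an endomorphism of odd degree on a module concentrated in one $\Z_{2}$ grading vanishes, so $d_{r}=0$ and $E_{r+1}=E_{r}$ stays in one $\Z_{2}$ grading; if $r$ is odd then $d_{r}$ has even degree and may be nonzero, but $E_{r+1}=H(E_{r},d_{r})$ is a subquotient of $E_{r}$ and so still lies in one $\Z_{2}$ grading. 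Taking $r=2i$ gives the theorem. The main obstacle is the bookkeeping in the second step — verifying that every constituent of $d_{1}$, in particular the reducing maps and the sign twists, is $\Z_{2}$-homogeneous of degree $0$ and nonzero on $E_{1}$ — together with the appeal to $D\in\mathcal{D^{R}}$ that guarantees $d=d_{0}+d_{1}$ exactly, which is what legitimizes the length-$r$ zig-zag formula for $d_{r}$.
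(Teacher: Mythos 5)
Your proof is correct and follows essentially the same route as the paper: define the $\Z_{2}$ grading via $\mathcal{L}_{D}^{+}$, observe $d_{0}$ has degree $1$ and $d_{1}$ has degree $0$, use the one-generator-per-vertex structure of the $d_{0}$-homology together with nontriviality of the edge maps to conclude $E_{1}$ sits in a single $\Z_{2}$ grading, and read off the $\Z_{2}$ degree of $d_{r}$. (Your degree count $r-1$ agrees with the paper's $i+1$ modulo $2$, which is all that is needed.) One small misattribution: the identity $d=d_{0}+d_{1}$, i.e.\ the absence of higher face maps, is built directly into the construction of $C_{2}^{-}(D)$ in Section~\ref{consection} rather than being a consequence of the regular-sequence condition defining $\mathcal{D^{R}}$; the latter is what makes $C_{2}^{-}(D_{I})$ a bona fide chain complex with the right homology, not what truncates the cube differential.
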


\begin{ex}[The (4,5) torus knot] One of the simplest knots for which this spectral sequence is non-trivial is the (4,5) torus knot. Its homology is depicted in Figure \ref{khovt45}. The $\delta$-graded Poincar\'{e} polynomial is 
\[ 4\delta^{12}+2\delta^{10}+3\delta^{8} \]

\noindent
The $\delta$-graded knot Floer homology of the mirror of $T(4,5)$ has Poincar\'{e} polynomial
\[ 4\delta^{12}+1\delta^{10}+2\delta^{8} \]

\noindent
Thus, the non-trivial differential must map from $\delta$-grading 10 to $\delta$-grading $8$. But we know that it has odd homological grading, so the only possibility is the arrow shown in Figure \ref{nontriv}.

\begin{figure}[h!]
    \centering
    \begin{subfigure}{.5\textwidth}
    \centering
\begin{tikzpicture}[scale=.5]
  \draw[->] (0,0) -- (10.5,0) node[right] {$t$};
  \draw[->] (0,0) -- (0,8.5) node[above] {$q$};
  \draw[step=1] (0,0) grid (10,8);
 % \draw (5.250,-01) node[below] {$\overline{Kh}(T(4,5))$};
  \draw (0.5,-.2) node[below] {$0$};
  \draw (1.5,-.2) node[below] {$1$};
  \draw (2.5,-.2) node[below] {$2$};
  \draw (3.5,-.2) node[below] {$3$};
  \draw (4.5,-.2) node[below] {$4$};
  \draw (5.5,-.2) node[below] {$5$};
  \draw (6.5,-.2) node[below] {$6$};
  \draw (7.5,-.2) node[below] {$7$};
  \draw (8.5,-.2) node[below] {$8$};
  \draw (9.5,-.2) node[below] {$9$};
  \draw (-.2,0.5) node[left] {$12$};
  \draw (-.2,1.5) node[left] {$14$};
  \draw (-.2,2.5) node[left] {$16$};
  \draw (-.2,3.5) node[left] {$18$};
  \draw (-.2,4.5) node[left] {$20$};
  \draw (-.2,5.5) node[left] {$22$};
  \draw (-.2,6.5) node[left] {$24$};
  \draw (-.2,7.5) node[left] {$26$};
  \fill (0.5, 0.5) circle (.15);
  \fill (2.5, 2.5) circle (.15);
  \fill (3.5, 3.5) circle (.15);
  \fill (4.5, 3.5) circle (.15);
  \fill (5.5, 5.5) circle (.15);
  \fill (6.5, 4.5) circle (.15);
  \fill (7.5, 6.5) circle (.15);
  \fill (8.5, 6.5) circle (.15);
  \fill (9.5, 7.5) circle (.15);
%  \draw[-stealth, red, thick, scale=1](4.68,3.5) [bend right = 50] to (9.45,7.4);
\end{tikzpicture}
\caption{Reduced Khovanov homology of $T(4,5)$.} \label{khovt45}
\end{subfigure}
    \begin{subfigure}{.49\textwidth}
    \centering
\begin{tikzpicture}[scale=.5]
  \draw[->] (0,0) -- (10.5,0) node[right] {$t$};
  \draw[->] (0,0) -- (0,8.5) node[above] {$q$};
  \draw[step=1] (0,0) grid (10,8);
 % \draw (5.250,-01) node[below] {$\overline{Kh}(T(4,5))$};
  \draw (0.5,-.2) node[below] {$0$};
  \draw (1.5,-.2) node[below] {$1$};
  \draw (2.5,-.2) node[below] {$2$};
  \draw (3.5,-.2) node[below] {$3$};
  \draw (4.5,-.2) node[below] {$4$};
  \draw (5.5,-.2) node[below] {$5$};
  \draw (6.5,-.2) node[below] {$6$};
  \draw (7.5,-.2) node[below] {$7$};
  \draw (8.5,-.2) node[below] {$8$};
  \draw (9.5,-.2) node[below] {$9$};
  \draw (-.2,0.5) node[left] {$12$};
  \draw (-.2,1.5) node[left] {$14$};
  \draw (-.2,2.5) node[left] {$16$};
  \draw (-.2,3.5) node[left] {$18$};
  \draw (-.2,4.5) node[left] {$20$};
  \draw (-.2,5.5) node[left] {$22$};
  \draw (-.2,6.5) node[left] {$24$};
  \draw (-.2,7.5) node[left] {$26$};
  \fill (0.5, 0.5) circle (.15);
  \fill (2.5, 2.5) circle (.15);
  \fill (3.5, 3.5) circle (.15);
  \fill (4.5, 3.5) circle (.15);
  \fill (5.5, 5.5) circle (.15);
  \fill (6.5, 4.5) circle (.15);
  \fill (7.5, 6.5) circle (.15);
  \fill (8.5, 6.5) circle (.15);
  \fill (9.5, 7.5) circle (.15);
  \draw[-stealth, red, thick, scale=1](4.68,3.5) [bend right = 50] to (9.45,7.4);
\end{tikzpicture}
\caption{The non-trivial differential}\label{nontriv}
\end{subfigure}
\caption{The only non-zero differential in $E_{k}(T(4,5))$ has homological grading $5$ and quantum grading $8$.} \label{t452}
\end{figure}
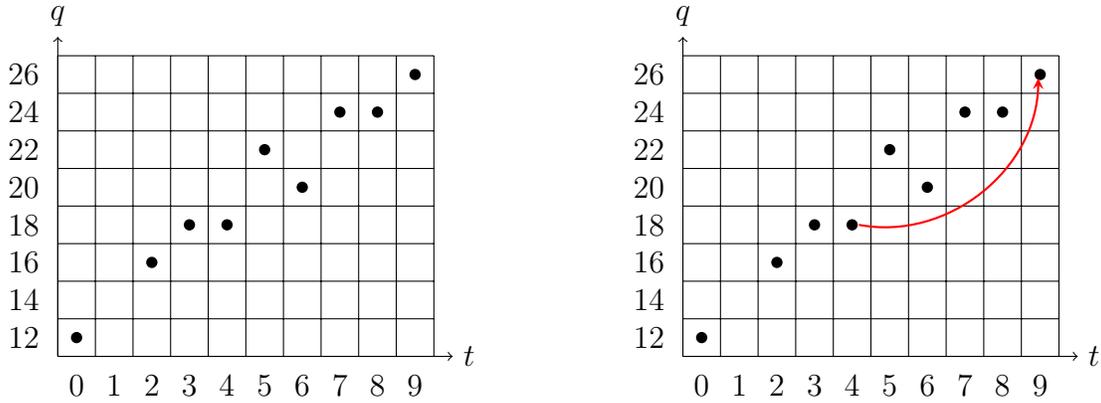

\end{ex}

One interesting aspect of this construction is that it seems likely to extend to a minus theory. In particular, the same argument on $C^{-}_{2}(D)$ gives the following:

\begin{thm}

Let $D \in \mathcal{D^{R}}$. Then there is a spectral sequence from $Kh^{-}(\sm(D))$ to $\hfk_{2}^{-}(D)$.

\end{thm}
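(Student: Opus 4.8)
The plan is to run the cube-filtration spectral sequence on $C^{-}_{2}(D)$ directly, rather than on its reduction $\widehat{C}_{2}(D)$, and to read off the two ends from Theorems \ref{E2thm} and \ref{hfkthm}, which were stated precisely at the level of $C^{-}_{2}(D)$. Concretely, first I would equip $C^{-}_{2}(D)$ with the filtration by cube height $|I|=\sum_{c}I(c)$, exactly as in Section \ref{consection}. Since $D$ has only finitely many crossings this filtration takes finitely many values, so it is bounded; consequently the associated spectral sequence $(E_{k}(D),d_{k})$ collapses at a finite page, $E_{N}=E_{\infty}$ with $N$ one more than the number of crossings of $D$, and $E_{\infty}$ is the associated graded $R$-module of $H_{*}(C^{-}_{2}(D))$ for the induced filtration. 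I would emphasize here that, although $C^{-}_{2}(D)$ is not finitely generated over $\Q$ (it is a module over $R$), this is harmless: convergence of the spectral sequence of a filtered complex requires only the filtration to be bounded, not the complex to be finite-dimensional, and the same remark already underlies Theorem \ref{mainthm}.

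Next I would identify the two ends. The page $(E_{1},d_{1})$ is obtained by taking $d_{0}$-homology vertex by vertex and then equipping the result with the induced edge map; by Corollary \ref{corkhov} the vertex-by-vertex homology is $Kh^{-}(\sm(D_{I}))$, and by Theorem \ref{E2thm} the resulting $E_{2}$ page is $Kh^{-}(m(\sm(D)))$ as a (relatively) graded $R$-module, with $d_{1}$ the Khovanov differential up to rescaling the vertex generators. For the other end, Theorem \ref{hfkthm} gives a quasi-isomorphism $f=f_{2}\circ f_{1}\colon \cfk^{-}_{2}(D)\to C^{-}_{2}(D)$, so $H_{*}(C^{-}_{2}(D))\cong \hfk^{-}_{2}(D)$ and hence $E_{\infty}\cong\hfk^{-}_{2}(D)$. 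The grading $\gr_{2}$ on $C^{-}_{2}(D)$ matches $\gr_{\delta}$ on $\cfk^{-}_{2}(D)$ and the Khovanov $\delta$-grading on $E_{2}$, all up to one overall shift, so the spectral sequence is $\delta$-graded; the mirroring appears for exactly the reason noted for Theorems \ref{mainthmintro} and \ref{mainthm}, namely that the edge maps of $C^{-}_{2}(D)$ point from the unoriented to the oriented smoothing, opposite to the Khovanov exact triangle.

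The one point that genuinely needs checking — and it is the same point already dispatched in the reduced case — is that the $E_{2}$-identification of Theorem \ref{E2thm} and the $E_{\infty}$-identification coming from Theorem \ref{hfkthm} refer to pages of one and the same spectral sequence. For this I would verify that $f_{2}\circ f_{1}$ is filtered for the cube filtration on $C^{-}_{2}(D)$, or equivalently that the cube filtration on $C^{-}_{2}(D)$ agrees with the filtration it receives from $\cfk^{-}_{2}(D)$ via $f$; granting this, Lemma \ref{homalglemma} promotes the quasi-isomorphism to an isomorphism on all pages $E_{k}$ with $k$ large, so $E_{\infty}\cong\hfk^{-}_{2}(D)$ is literally the limit of the spectral sequence whose $E_{2}$ page is $Kh^{-}(m(\sm(D)))$. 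Since $f_{1}$ and $f_{2}$ are assembled crossing by crossing out of maps that do not lower cube height, this is immediate, and I do not expect a real obstacle here beyond bookkeeping; the substance of the theorem is entirely carried by Theorems \ref{E2thm} and \ref{hfkthm}. As with Theorem \ref{mainthm}, because $C^{-}_{2}(D)$ carries only a relative grading the conclusion holds up to an overall grading shift.
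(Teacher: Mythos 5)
Your proposal is correct and coincides with the paper's own treatment, which simply observes that the reduced-case argument of Theorem \ref{mainthm} runs verbatim on $C^{-}_{2}(D)$ once Theorems \ref{E2thm} and \ref{hfkthm} are in place. Your third paragraph is more cautious than necessary: boundedness of the cube filtration already guarantees that $E_{\infty}$ is the associated graded of $H_{*}(C^{-}_{2}(D))$, so identifying $E_{\infty}$ with $\hfk^{-}_{2}(D)$ needs only the quasi-isomorphism of Theorem \ref{hfkthm}, not its compatibility with the cube filtration.
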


\noindent
In order to make this interesting, we need to show that $\hfk_{2}^{-}(D) \cong \hfk^{-}_{2}(\sm(D))$, which hopefully will be worked out in future work.

\newpage

\bibliography{TriplyGradedHomology}{}
\bibliographystyle{alpha}

\end{document}